\newtheorem{theorem}{Theorem}[section]
\newtheorem{lemma}[theorem]{Lemma}
\theoremstyle{definition}
\newtheorem{definition}[theorem]{Definition}
\newtheorem{corollary}[theorem]{Corollary}
\theoremstyle{remark}
\newtheorem{remark}[theorem]{Remark}
\numberwithin{equation}{section}
\newcommand{\abs}[1]{\lvert#1\rvert}
\newcommand{\norm}[1]{\lVert#1\rVert}
\begin{document}

\title[selection of steady finger by undercooling]{Rigorous Results in Existence and Selection of Saffman-Taylor Fingers by Kinetic Undercooling}

\author{Xuming Xie }

\address{Xuming Xie \newline
Department of Mathematics\\
Morgan State University\\
Baltimore, MD 21251, USA}
\email{xuming.xie@morgan.edu}

\begin{abstract}
The selection of Saffman-Taylor fingers by surface tension has been extensively investigated.
In this paper we are concerned with the existence and selection of steadily translating
symmetric finger solutions in a Hele-Shaw cell by small but non-zero
kinetic undercooling ($\epsilon^2 $).
We rigorously conclude that for relative finger width
$\lambda$ near one half,
symmetric finger solutions exist in the asymptotic limit
of undercooling $\epsilon^2 ~\rightarrow ~0$ if
the Stokes multiplier for a relatively simple nonlinear differential
equation is zero. This Stokes multiplier $S$
depends on the parameter
$\alpha \equiv \frac{2 \lambda -1}{(1-\lambda)}\epsilon^{-\frac{4}{3}} $ and
earlier calculations  have shown
this to be zero for  a discrete set of values of $\alpha$. While this result is similar to that obtained previously for Saffman-Taylor fingers by surface tension, the analysis for the problem with kinetic undercooling exhibits a number of subtleties as pointed out by Chapman and King (2003) [The selection of Saffman-Taylor fingers by kinetic undercooling, Journal of Engineering Mathematics 46, 1-32]. The main subtlety is the behavior of the Stokes lines at the finger tip, where the analysis is complicated by non-analyticity of coefficients in the governing equation.\\

\noindent {\bf Keywords}: finger selection, Hele-Shaw, kinetic undercooling, existence, analytic solution

\end{abstract}

\maketitle

\section{Introduction}
\label{intro}
\subsection{Background}
The problem of a less viscous fluid displacing a more viscous fluid in
a Hele-Shaw cell has been the subject of numerous studies since the 1950s.
In a seminal paper,
Saffman \& Taylor \cite{ST} found experimentally that
an unstable planar interface evolves
through finger competition to a steady
translating finger, with relative finger width $\lambda$ close
to one half.
Theoretical
calculations
\cite{Zhuravlev}, \cite{ST}
ignoring surface tension
revealed an one-parameter family of exact steady solutions,
parameterized by width
$\lambda$. When the experimentally determined $\lambda$ is
used, the theoretical shape (usually
referred to in the literature as the Saffman-Taylor finger) agreed well
with experiments for relatively
large displacement rates, or equivalently for small surface tension.
However, in the zero-surface-tension steady-state theory,
$\lambda $ remained undetermined in the (0, 1) interval. The selection
of $\lambda$ remained unresolved until the mid 1980s. Numerical calculations
\cite{McLean81}, \cite{Vandenbroeck83}, \cite{KesslerLevine85}, supported by
formal asymptotic calculations in the steady finger \cite{Combescotetal86},
\cite{Shraiman86}, \cite{Hong86},
\cite{Tanveer87},
\cite{Chapman1} and the closely-related
steady Hele-Shaw bubble problem
\cite{CombescotDombre88,34},
suggest that
a discrete family of solutions exist for which
the limiting shape, as surface tension tends to zero, approaches
the Saffman-Taylor finger with $\lambda = \frac{1}{2}$. Rigorous results were later obtained in \cite{Xie1, Xie2}.\\

The Hele-Shaw problem is similar to the Stefan problem in the context of melting or freezing. Besides surface tension, the physical effect most commonly incorporated in regularizing the ill-posed Stefan problem is kinetic undercooling \cite{35,36,37}, where the temperature on the moving interface is proportional to the normal velocity of the interface. The Stefan problem also arises in some other physical situations such as the diffusion of solvent through glassy polymers \cite{38,39} and the interfacial approximation of the streamer stage in the evolution of sparks and lightning \cite{40}.

For the Hele-Shaw problem, kinetic undercooling regularization first appeared in \cite{Romero, Saff86}.
Local existence of analytic solution was obtained in \cite{24,25} for the time dependent Hele-Shaw problem with kinetic undercooling. Using exponential asymptotics, Chapman and King \cite{Chapman2} analyzed the selection problem of determining  the discrete set of widths of a traveling finger for varying kinetic undercooling. Some numerical studies have been attempted in \cite{Dallaston,Gardiner}. A continuum of corner-free traveling fingers were obtained numerically in \cite{Dallaston} for any finger width above a critical value, while a discrete set of analytic fingers, as predicted in \cite{Chapman2}, were computed in \cite{Gardiner}. The physical connection between the kinetic undercooling effect and the action of the dynamic contact angle was established in \cite{Anjos}.

The aims of this paper are to establish some rigorous results in existence and selection of symmetric analytic solutions to the Hele-Shaw problem with sufficiently small kinetic undercooling.

\subsection{Mathematical Formulation}
We consider the problem of
a finger of inviscid fluid displacing a viscous fluid in a Hele-Shaw cell for small kinetic undercooling (Fig.\ref{fig:1}).
\begin{figure}
\begin{center}
\resizebox{0.75\textwidth}{!}
{\includegraphics{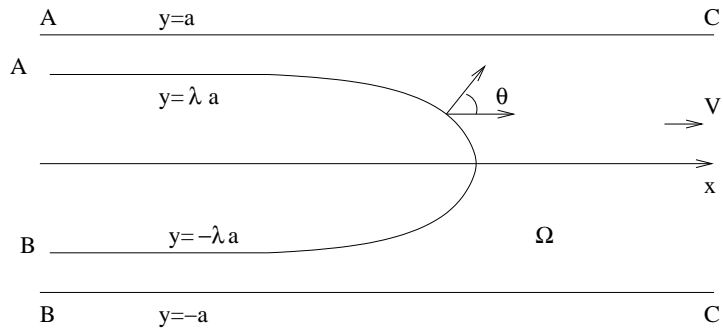}}
\end{center}
\caption{A translating finger in the Hele-Shaw cell}
\label{fig:1}
\end{figure}
 The gap-averaged
velocity $(u, v)$ in the $(x, y)$ plane in the region outside the
finger satisfies
\begin{equation}
\label{1.1}
(u, v) = - \frac{b^2}{12 \mu} \nabla p \equiv \nabla \phi,
\end{equation}
where $b$  is the gap width, $\mu$ the viscosity of the
viscous fluid and $p$ denotes the pressure.  Incompressibility of fluid flow implies zero divergence of
fluid velocity, implying that the velocity potential $\phi$ satisfies
\begin{equation}
\label{1.2}
\Delta \phi = 0  ~~~{\rm for} ~~(x, y) \in \Omega.
\end{equation}
The boundary conditions on the side walls are
\begin{equation}
\label{1.3}
v = 0 ~~{\rm when } ~~y = \pm{a}.
\end{equation}
The far field condition is
\begin{equation}
\label{1.4}
(u, v) = V {\hat x} + O(1) \text{ as } x\to +\infty,
\end{equation}
where ${\hat x}$ is a unit vector in the $x$-direction (along the Hele-Shaw
channel) and $V$ is the constant displacement rate of the fluid far away.
The kinematic condition for a steady finger is
\begin{equation}
\label{1.5}
\frac{\partial\phi}{\partial n}=U\cos\theta ,
\end{equation} 
where $\theta$ is the angle between the interface normal and the positive x -axis (see Fig. \ref{fig:1}) and $U$ is the speed of the finger.
The kinetic undercooling condition
\begin{equation}
\label{1.6}
\phi = c v_n=cU\cos\theta,
\end{equation}
where $c>0$ is the kinetic undercooling parameter. There are a number of different but equivalent formulations \cite{McLean81,Tanveer87,Chapman2,Tanveer2}, here we use the formulation which parallels that in Tanveer \cite{Tanveer2}.
We set the channel half-width $a=1$ and displacement rate $V=1$, which
corresponds to non-dimensionalizing all lengths by $a$ and
velocities by $V$ (consequently time is measured in units of $a/V$).

By integrating (\ref{1.2})
in the domain $\Omega$, the finger width $\lambda$ is related to $U$ as follows:
\begin{equation}
\lambda = \frac{1}{U} .
\end{equation}
In a frame moving with the steady symmetric finger, the condition (\ref{1.5})
transforms, without loss of generality, into
\begin{equation}
\psi =0,
\end{equation}
where $\psi$ is the stream function ( the harmonic conjugate of $\phi $ )
so that $W = \phi +i \psi $ is an analytic function of $z =x+iy$.
The nondimensional kinetic undercooling condition (\ref{1.6})
in the moving frame becomes
\begin{equation}
\label{1.9}
\phi + \frac{1}{\lambda} x = \frac{c}{\lambda}\cos\theta.
\end{equation}
On the side walls, (\ref{1.3}) implies that
\begin{equation}
\psi = \pm \left[\frac{1}{\lambda}-1\right] \text{ on }y=-1 \text{ and }
 y=1 \text{ respectively}
\end{equation}
while the far field condition as $z ~\rightarrow ~+\infty$
in the finger frame is
\begin{equation}
W\sim -\left[ \frac{1}{\lambda}-1\right]z+O(1).
\end{equation}

\begin{figure}
\begin{center}
\resizebox{0.75\textwidth}{!}
{\includegraphics{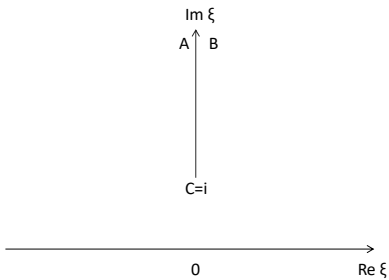}}
\end{center}
\caption{Upper half
$\xi$-plane with a cut on the imaginary axis from $\xi=i$ to $i \infty $}
\label{fig:2}
\end{figure}

We consider the conformal map $z(\xi)$ of the cut upper-half-$\xi$-plane,
as shown in Figure \ref{fig:2},
to the flow domain $\Omega$ in Figure \ref{fig:1}.
The real $\xi$ axis corresponds to the finger boundary,
with $\xi=-\infty,+\infty$ corresponding to the finger tails at
$z=-\infty+\lambda i,z= -\infty -\lambda i$ respectively. The two
sides of the cut correspond to the two side walls respectively.
It is easily seen that the complex potential $W(\xi)$ is given by:
\begin{equation}
W(\xi)=\frac{1-\lambda}{\pi\lambda}\ln~ (\xi^2+1) + constant.
\end{equation}

We define $F(\xi)$ so that
\begin{equation}
\label{1.13}
z(\xi)=-\frac{1}{\pi}\ln~(\xi-i)-\frac{1-2\lambda}{\pi}\ln~(\xi+i)-\lambda i
-\frac{2-2\lambda}{\pi} F(\xi).
\end{equation}
It follows that $F$, as defined above, is analytic in the entire upper half
$\xi$-plane \cite{Tanveer2}.
The kinetic undercooling condition (\ref{1.9}) translates into requiring that on the real
$\xi$ axis:
\begin{equation}
\label{1.14}
\text{Re } F= -\epsilon^2 \frac{\text{Im }(F'+H)}{|F'+H|},
\end{equation}
where
\begin{equation}
\label{1.15}
H(\xi)=\frac{\xi+\gamma i}{\xi ^2+1},\quad \text { with } \gamma =\frac{\lambda}{1-\lambda},~\epsilon^2=\frac{c\pi }{2\lambda(1-\lambda)}.
\end{equation}
For zero kinetic undercooling $\epsilon =0$, it follows that $F=0$; this
corresponds to what is usually referred to in the literature as the
Saffman-Taylor solutions \cite{Zhuravlev,ST}.
This form
a family of exact solutions for symmetric fingers
with arbitrary width of finger $\lambda\in(0,~1)$ .
The Saffman-Taylor solutions, in our formulation,
correspond to the conformal map
\begin{equation}
\label{1.16}
z_0(\xi)=-\frac{1}{\pi}\ln~(\xi-i)-\frac{1-2\lambda}{\pi}\ln~(\xi+i)-\lambda i.
\end{equation}
This is easily seen to be univalent since the boundary correspondence
is one to one.
In particular, the finger shape can be explicitly described
by Re $z$ as a function of Im $z$.

For non-zero kinetic undercooling, no exact solutions exist.
  Numerical and asymptotic
work \cite{Chapman2, Dallaston, Gardiner} suggested that solutions
exist  for infinitely many isolated values of finger width which are larger than $\frac{1}{2}$. The aim of this paper is to obtain rigorous results for this selection problem, in particular the existence of a symmetric and analytic solution for sufficient small undercooling constant $c$.

\subsection{Notations}
\label{sec:1.2}
\begin{definition}\label{def:1.1}
Let $\mathcal{R}$ be an open connected (see Figure \ref{fig:3}) region on complex $\xi$ plane bounded by lines $r_u$ and $r_l$ defined as follows:
$$r_u=r_{u,1}\cup r_{u,2}\cup r_{u,3}\cup r_{u,4}\cup r_{u,5}\cup r_{u,6}$$
$$r_l=\{ \xi :\xi =-bi+re^{-i(\varphi_0+\mu)}\}
\cup\{ \xi :\xi =-bi+re^{i(\pi+\varphi_0+\mu)}\}$$
where $ 0 < b < \min ~(1, \gamma)$,  $ ~0<\varphi_0,~\mu~<~\frac{\pi}{2}$
with $\varphi_0+\mu < \frac{\pi}{2}$ and
$$r_{u,1}=\{ \xi :\xi = \nu_1 i-R+re^{i(\pi-\varphi_0)},0\leq r<\infty\}~,~$$
$$r_{u,2}=\{ \xi : \text{ Im }\xi = \nu_1, -R \le \text{Re }\xi \le -\nu_1\},$$
$$ r_{u,3}=\{ \xi:\xi=re^{3\pi i/4} ,0 \le r\leq\sqrt{2}\nu_1 \} ,~$$
$$r_{u,4}=\{ \xi:\xi=r e^{\pi i/4} \},0 \le r\leq \sqrt{2}\nu_1\}, $$
$$r_{u,5}=\{ \xi : \text{ Im }\xi = \nu_1, \nu_1 \le \text{ Re }\xi \le R \}~,~$$
$$r_{u,6}=\{ \xi :\xi =\nu_1 i+R+re^{i\varphi_0},0\leq r<\infty\},$$
\end{definition}
where $R>0$ is large enough and $0<\nu_1$ is small enough so that Lemma 6.3 -  Lemma 6.8 in the appendix hold.\\

\begin{figure}
\begin{center}
\resizebox{0.75\textwidth}{!}
{\includegraphics{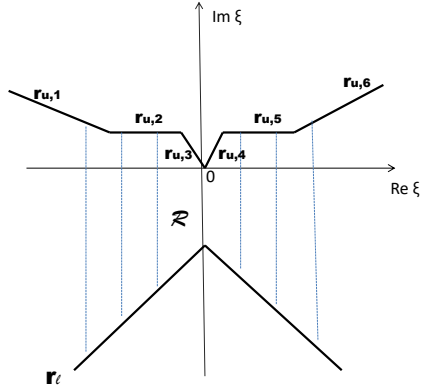}}
\end{center}
\caption{Region $\mathcal{R}$ in the complex $\xi$ plane given in Definition 1.1 }
\label{fig:3}
\end{figure}

\begin{remark}
\label{rem:1.1}
$1-b$, and $\varphi_0$
are chosen independently of $\epsilon$. There are some restrictions
imposed on their values in order that certain lemmas
in the appendix are ensured.

\end{remark}

\begin{definition}\label{def:1.2}
$\mathcal{R}^-=\mathcal{R}\cap \{\text{ Re }\xi~<~ 0\}~;~
\mathcal{R}^+=\mathcal{R}\cap \{\text{ Re }\xi~>~ 0\} $
\end{definition}
Let $0 <\tau <1$ be independent of $\epsilon$, $\nu>0$ be a small number, and $B_\nu=\{ \xi:|\xi|<\nu\}$.
We introduce spaces of functions:
\begin{definition}\label{def:1.3}
 Define
\begin{equation*}
\begin{split}
\mathbf{A}^-_0&=\{F:
F(\xi)~\text{analytic in } \mathcal{R}^-\text{~and continuous in } \overline{\mathcal{R}^-},\\
&~~~\text{ with }\underset{\xi\in\overline{\mathcal{R}^-}}{\sup}~\left\vert(\xi-2i)^{\tau}F(\xi)\right\vert
<\infty \\
&~~~ \text{and } \underset{\xi\in\overline{\mathcal{R}^-}}{\sup}~\left\vert\frac{F'(\xi)-F'(0)}{\sqrt{\xi}}\right\vert <\infty\}
\end{split}
\end{equation*}
$$\norm{F}_0^-:=\underset{\xi\in\overline{\mathcal{R}^-}}{\sup}~\abs{(\xi-2i)^{\tau}F(\xi)}+\epsilon^2 \underset{\xi\in\overline{\mathcal{R}^-\cap B_\nu} }{\sup}~\left\vert\frac{F'(\xi)-F'(0)}{\sqrt{\xi}}\right\vert +|F'(0)|. $$
\end{definition}

\begin{remark}
\label{rem:1.2}
$\mathbf{A}^-_0$ is a Banach space. If $F\in\mathbf{A}_0^-$,
then $F$ satisfies property:
\begin{equation}
\label{1.17}
F(\xi)\sim O(\xi^{-\tau}),~\text{ as } |\xi|\to\infty,~\xi\in
\mathcal{R}^-.
\end{equation}
In Definition 1.4, replacing $\mathcal{R}^-$ with $\mathcal{R}$ we can define $\mathbf{A}\equiv\mathbf{A}_0$ with norm
$$\norm{F}_0:=\underset{\xi\in\overline{\mathcal{R}}}{\sup}~\abs{(\xi-2i)^{\tau}F(\xi)}+\epsilon^2 \underset{\xi\in\overline{\mathcal{R}\cap B_\nu} }{\sup}~\left\vert\frac{F'(\xi)-F'(0)}{\sqrt{\xi}}\right\vert +|F'(0)| .$$
\end{remark}

\begin{remark}
Locally in a neighborhood of $\xi=0$, $\mathbf{A}_0$ contains functions which are analytic at $\xi=0$, it also contains functions $F(\xi)$ which are NOT analytic at
$\xi=0$ such as $F(\xi)=C\xi^{k+3/2}, k=0,1,2, \cdots $ with a branch cut on upper half plane. If $F(\xi)\in \mathbf{A}_0$ is analytic in the upper half plane, then $F(\xi)$ has to be analytic at $\xi=0$.
\end{remark}
\begin{definition}\label{def:1.5}
Let $\mathcal{D}$ be any connected set
in the complex $\xi$-plane; we introduce norms:
$\norm{F}_{k,\mathcal{D}}:=\underset{\xi\in\mathcal{D}}{\sup}~\abs{(\xi-2i)^{k+\tau}F(\xi)},\quad k=0,1,2$.
\end{definition}
\begin{definition}\label{def:1.6}
Let $\delta >0$ be a constant, define space:
$$\mathbf{A}^-_{0,\delta}=\{f:f\in\mathbf{A}_0^-,~\norm{f}^-_0\leq\delta \}. $$
$\mathbf{A}_{0,\delta}$ can be defined similarly.
\end{definition}

\begin{remark}
\label{thm:1.1.2}
For $\lambda$ in a compact subset of (0, 1) and  for sufficiently
small enough $\delta$, if
$F \in \mathbf{A}_{0, \delta}$, then the
mapping $z(\xi)$ given by (\ref{1.13}) is univalent. See Theorem 1.5 in \cite{Xie1}.
\end{remark}

The problem of determining a smooth steady symmetric finger
is equivalent to finding function $F$ analytic 
in the upper-half
$\xi$ plane, which is $\mathbf{C^1}$ in its closure, {\it i.e.}
continuous derivative  on Im $\xi=0$
and is required to satisfy the following conditions:\\
{\bf Condition (i)}:
$F(\xi)$ satisfies (\ref{1.14}) on the real $\xi$ axis.

\noindent{\bf Condition (ii)}:
\begin{equation}
F(\xi) \to 0 \quad \text{ as }\xi\to\pm\infty.
\end{equation}

\noindent{\bf Condition (iii) ( symmetry condition )}:
\begin{equation}
\label{1.19}
\text{Re }F(-\xi)=\text{Re }F(\xi)~,~\text{Im }F(-\xi)=-\text{Im }F(\xi)
\quad \text{for }~\xi ~\text{real}.
\end{equation}

\begin{remark}
\label{rem:1.5}
If $F\in \mathbf{A}_0$ is analytic in the upper half plane, and satisfies symmetry condition (iii) on the
real axis, then it follows from successive Taylor expansions of $F(\xi)$
on the imaginary $\xi$ axis segment $-b ~<~\text{Im  }\xi $,
starting at $\xi = 0$ that
Im $F(\xi)=0$.
From the Schwartz reflection principle, $ F(\xi) = \left [ F(-\xi^*) \right ]^* $,
so $\norm{F}_0^-=\norm{F}^+_0$.
Conversely, if $F\in \mathbf{A}_0^-$ and
satisfies Im $ F (\xi) = 0$
for the imaginary $\xi$ axis
segment $-b ~<~\text{Im  }\xi ~<~0$,
then $F(\xi) = \left [ F(-\xi^*) \right ]^* $ extends $F$ to the right half of $\mathcal{R}$
and $\norm{F}_0^-=\norm{F}_0$ and (1.19) is then automatically satisfied.
\end{remark}

\noindent{\bf Finger problem:} {\it The problem tackled in this paper will be to find
function $F$ analytic in $\{\text{Im }\xi>0\}\cup\mathcal{R}$
so that $F\in\mathbf{A}_{0,\delta}$ for some $\tau$ fixed in (0, 1), $\delta$
 small, so that conditions
(i) and (iii) on the real axis are satisfied.}

\subsection{Main Results}
\label{sec:1.3}
Similar to the finger problem with surface tension \cite{Combescotetal86, Combescotetal88, Tanveer87, Xie1}, the formal strategy of calculation of finger width involves analytic continuation of equation in an inner neighborhood of turning points in the complex plane and ignoring integral contribution and other terms that are formally small. As in Chapman and King \cite{Chapman2}, the problem of determining a smooth steady finger is reduced to determining eigenvalues $\alpha$ so that $G(y)$ is a solution to
\begin{equation}
\label{1.20}
\frac{dG}{dy} -\frac{1}{yG^{2}}=-y-\frac{\alpha}{2^{1/3}y}
\end{equation}
satisfying the condition
\begin{equation}\label{1.21}
yG(y)\to 1 \quad \text{ as } y\to \infty \text{ and } \arg y\in[0, \pi/4),
\end{equation}
and in addition satisfying:
\begin{equation}\label{1.22}
\text{Im  }G=0,\text{ for large enough $y$ on the positive real axis.}
\end{equation}
The finger width $\lambda$ is related to $\alpha$ through
\begin{equation}\label{1.23}
\alpha=\frac{2\lambda-1}{1-\lambda}\epsilon^{-4/3}.
\end{equation}
We introduce additional change in variables:
\begin{equation}\label{1.24}
\eta=\frac{2}{3}y^{3}~,~\psi(\eta)=1-yG(y),
\end{equation}
then (\ref{1.20}) becomes:
\begin{equation}
\label{1.25}
\frac{d\psi}{d\eta}+\psi=-\frac{1}{3\eta}-\frac{1}{3\eta}\psi
+\frac{\alpha}{6^{2/3}\eta^{2/3}}+\frac{1}{2}\sum_{n=2}^{\infty}(-1)^n(n+1)\psi^n
\end{equation}
and matching condition:
\begin{equation}
\label{1.26}
\lim_{\eta\to\infty,~\arg \eta\in \left(0,\frac{3\pi}{4}\right)}\psi(\eta,\alpha) = 0.
\end{equation}
One of the  theorems proved in this paper is
\begin{theorem}\label{thm:1.8}
(1) There exists sufficiently large enough $\rho_0 >0$ such that (\ref{1.25})
with condition (\ref{1.26})
has unique analytic solution $\psi_0(\eta,\alpha)$ in the region $|\eta|>\rho_0,
~\arg\eta\in [0,\frac{3\pi}{4}]$.\\
(2) Further on the real $\eta$ axis as $\eta\to \infty$
\begin{equation}
\mathrm{ Im  }~\psi_0(\eta,\alpha)\sim \tilde S(\alpha)e^{-\eta}.
\end{equation}
(3) Also $\mathrm{ Im }~\psi_0(\eta,\alpha)=0 $ for real $\eta$ and $\eta>\rho_0$
if and only if $\tilde S(\alpha)=0$.
\end{theorem}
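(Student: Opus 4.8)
The plan is to treat \eqref{1.25} as a small perturbation of the linear, constant-coefficient equation $\psi' + \psi = 0$ for large $|\eta|$, and to construct $\psi_0$ by a contraction mapping argument in a weighted space of analytic functions on the sector $S_{\rho_0} = \{|\eta| > \rho_0,\ \arg\eta \in [0, 3\pi/4]\}$. First I would rewrite \eqref{1.25} in integral form by multiplying through by the integrating factor $e^{\eta}$ and integrating from $+\infty$ (along a ray on which $\mathrm{Re}\,\eta \to +\infty$, so that $e^{-\eta}$ decays and the matching condition \eqref{1.26} is built in): this gives
\begin{equation*}
\psi(\eta) = -\int_\eta^\infty e^{-(\eta - s)} \left[\frac{1}{3s} + \frac{1}{3s}\psi(s) - \frac{\alpha}{6^{2/3} s^{2/3}} - \frac{1}{2}\sum_{n=2}^\infty (-1)^n (n+1)\psi(s)^n\right] ds \equiv (\mathcal{T}\psi)(\eta),
\end{equation*}
where the contour from $\eta$ to $\infty$ is chosen (bending toward $\arg s = 0$) so that $\mathrm{Re}(s-\eta) \ge 0$ and $\eta$-independent bounds on $\int |e^{-(\eta-s)}|\,|ds|/|s|^{2/3}$ hold; such a contour exists for all $\arg\eta \in [0, 3\pi/4]$ since that closed sector stays strictly inside the half-plane $\{\mathrm{Re}\,s < 3\pi/4 + \text{(something)}\}$ — more precisely one uses that $\arg\eta < 3\pi/4 < \pi$ so a monotone-decreasing-$\mathrm{Re}\,s$... actually a path along which $\mathrm{Re}\,s$ is nondecreasing toward $+\infty$ can always be found staying in the sector. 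On the ball $\{\|\psi\|\le \delta\}$ of the weighted sup-norm $\|\psi\| = \sup_{S_{\rho_0}} |\eta|^{2/3}|\psi(\eta)|$ (the weight $|\eta|^{-2/3}$ is dictated by the slowest-decaying forcing term $\alpha\eta^{-2/3}$), the estimates $\|\mathcal{T}\psi\| \le C_1(\alpha)/\rho_0^{?} + C_2\|\psi\|/\rho_0^{1/3} + C_3\|\psi\|^2$ and a matching Lipschitz bound show $\mathcal{T}$ is a contraction once $\rho_0$ is large enough (depending on $\alpha$, but $\alpha$ may be taken in a fixed compact set, or one tracks the $\alpha$-dependence explicitly). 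Analyticity of the fixed point follows because $\mathcal{T}$ maps analytic functions to analytic functions (the contour can be deformed locally), giving part (1).

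For part (2), I would extract the exponentially small imaginary part on the positive real axis by a Stokes-line / matched-asymptotics argument: continue the solution $\psi_0$ constructed above around the sector and compare its values approached from $\arg\eta \to 0^+$ and the reflected solution. The homogeneous equation $\psi'+\psi = 0$ has the exponentially small solution $e^{-\eta}$, and the standard exponential-asymptotics mechanism says that crossing the Stokes line $\arg\eta = 0$ the recessive exponential $e^{-\eta}$ switches on with a constant multiplier — precisely, writing $\psi_0 = \psi_{\text{asy}} + (\text{remainder})$ where $\psi_{\text{asy}}$ is the formal (real, for real $\eta$) asymptotic series solution, the difference must be a multiple of $e^{-\eta}$ to leading order, with a coefficient $\tilde S(\alpha)$ that is computed (or was computed in \cite{Chapman2}) via a Borel-transform / inner-region analysis. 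I would define $\tilde S(\alpha)$ by $\tilde S(\alpha) := \lim_{\eta\to+\infty} e^{\eta}\,\mathrm{Im}\,\psi_0(\eta,\alpha)$ and show this limit exists and is finite using that $\mathrm{Im}\,\psi_0 e^{\eta}$ satisfies (from the integral equation, after subtracting the real formal series which kills all algebraic terms) an equation whose right-hand side is integrable at $\infty$, so the limit exists; the content of (2) is then essentially this definition together with the fact that the error beyond the leading $\tilde S(\alpha)e^{-\eta}$ is $o(e^{-\eta})$.

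Part (3) is then nearly immediate from (2) in one direction: if $\tilde S(\alpha) \ne 0$ then $\mathrm{Im}\,\psi_0(\eta,\alpha) \sim \tilde S(\alpha) e^{-\eta} \ne 0$ for all large real $\eta$, so it is not identically zero. For the converse — if $\tilde S(\alpha)=0$ then $\mathrm{Im}\,\psi_0 \equiv 0$ for real $\eta > \rho_0$ — I would argue that $g(\eta) := \mathrm{Im}\,\psi_0(\eta,\alpha)$ on the real axis satisfies a real linear homogeneous ODE obtained by taking imaginary parts of \eqref{1.25} (the coefficients are real-analytic in $\eta$ and in $\mathrm{Re}\,\psi_0$, $\mathrm{Im}\,\psi_0$), which is regular for $\eta > \rho_0$; its solution space is two-dimensional, spanned asymptotically by something like $e^{-\eta}$ (recessive) and $e^{+\eta}\cdot(\text{algebraic})$ (dominant). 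Boundedness of $\psi_0$ forces $g$ into the one-dimensional recessive subspace, i.e. $g(\eta) = \tilde S(\alpha) e^{-\eta}(1+o(1))$; hence $\tilde S(\alpha)=0$ forces $g\equiv 0$ by uniqueness for the linear ODE (a recessive solution vanishing to higher than exponential order must be the zero solution). The main obstacle I anticipate is making the Stokes-multiplier extraction in part (2) rigorous — in particular proving that the exponentially small remainder is genuinely of the pure form $\tilde S(\alpha)e^{-\eta}(1+o(1))$ with no hidden oscillation or logarithmic corrections, which requires careful control of the optimally-truncated asymptotic series and a Borel-plane analysis of the singularity structure, and this is exactly where the non-analyticity issues flagged by Chapman and King (the $\eta^{-2/3}$ and $\eta^{-1}$ terms, which come from the finger-tip singularity) enter and must be shown not to spoil the single-exponential structure.
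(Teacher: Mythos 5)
The most serious problem is in the integral--equation setup for part (1). You write
\[
\psi(\eta)=-\int_{\eta}^{\infty}e^{-(\eta-s)}\Bigl[\tfrac{1}{3s}+\tfrac{1}{3s}\psi-\tfrac{\alpha}{6^{2/3}s^{2/3}}-\tfrac12\sum_{n\ge 2}(-1)^n(n+1)\psi^n\Bigr]ds
\]
with a contour running to real $+\infty$, ``bending toward $\arg s=0$'' so that $\mathrm{Re}(s-\eta)\ge 0$. But then the kernel $e^{-(\eta-s)}=e^{s-\eta}$ has modulus $\ge 1$ along the entire contour while the forcing terms decay only algebraically like $|s|^{-2/3}$, so the integral diverges. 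The correct choice --- the one the paper makes in defining $\mathcal{L}_1$ (equation (\ref{4.20})) --- is to anchor the integral at $\infty e^{3\pi i/4}$, the far end of the sector where $\mathrm{Re}\,t\to-\infty$, and to connect $\eta$ to that base point by a path along which $\mathrm{Re}\,t$ decreases monotonically; then $|e^{t-\eta}|=e^{\mathrm{Re}(t-\eta)}\le 1$ decays exponentially and the estimate of Lemma \ref{lem:4.6} goes through. This choice of base point is also what makes the matching condition (\ref{1.26}) ``built in'': the free homogeneous solution $Ce^{-\eta}$ is \emph{dominant} along $\arg\eta=3\pi/4$, so demanding decay there forces $C=0$; integrating from real $+\infty$, where $e^{-\eta}$ is already recessive, encodes nothing. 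Your own scratched half-sentence (``monotone-decreasing-$\mathrm{Re}\,s\dots$'') was pointing the right way before you overwrote it with ``nondecreasing toward $+\infty$''. The weighted norm $\sup|\eta|^{2/3}|\psi|$ and the contraction scheme are otherwise identical to the paper's, so this is a repairable but real error: with your stated contour the operator $\mathcal{T}$ is not even well-defined.

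For parts (2) and (3), the paper does something considerably more elementary than the Stokes-line / Borel-plane machinery you sketch. Since the coefficients of (\ref{1.25}) and the matching condition are real for real $\eta>0$ and real $\alpha$, taking the imaginary part of (\ref{1.25}) and factoring $\mathrm{Im}\,\psi_0$ out of each $\mathrm{Im}(\psi_0^n)$ shows that $g(\eta):=\mathrm{Im}\,\psi_0(\eta,\alpha)$ satisfies an \emph{exact} linear homogeneous first-order ODE, $g' + \bigl(1+\tfrac{1}{3\eta}+E(\eta)\bigr)g=0$ with $E=O(\eta^{-2/3})$ (equation (\ref{4.22})). Part (2) is then read off from the asymptotics of this scalar linear equation (the paper cites Olver), and part (3) is immediate because its solution space is \emph{one}-dimensional: $g\equiv Ce^{-\int(1+\frac1{3\eta}+E)}$ vanishes identically iff $C=0$ iff $\tilde S(\alpha)=0$. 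You do eventually hit on the take-imaginary-parts idea, but only in part (3), and there you misstate the solution space as two-dimensional with a dominant $e^{+\eta}$ branch --- there is no such branch for a first-order homogeneous equation, and none is needed. More significantly, for part (2) you invoke optimal truncation and Borel analysis and explicitly worry whether hidden oscillations or logarithms might spoil the single-exponential form; the paper never needs to confront this, because the equation satisfied by $\mathrm{Im}\,\psi_0$ is exact rather than an asymptotic remainder equation. So your parts (2) and (3) are genuinely a different (and, as you yourself note, much harder to make rigorous) route, while the paper's route is short and avoids exponential asymptotics entirely at this stage.
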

We will not compute the Stokes constants $\tilde S(\alpha)$ in this paper,
Chapman and King's
numerical computation \cite{Chapman2} indicates that there exist a discrete set
$\{\alpha_n\}$ so that $\tilde S(\alpha_n)=0,\tilde S'(\alpha_n)\neq
0$.
It is to be noted that
the theory of exponential asymptotics for general nonlinear
ordinary differential equations \cite{Costin} makes it possible to
rigorously confirm these calculations to within small error bounds.
 This analysis will be carried out for this problem in a future work.

The primary result of this paper for the finger problem is the following:
\begin{theorem}\label{thm:1.9}
In a range $\frac{1}{2} \le \lambda \le \lambda_m$,
$\delta$ and $\lambda_m - \frac{1}{2}$ small
(but independent of $\epsilon$)
so that (\ref{2.24}) holds,
the following statement hold for all sufficiently small $\epsilon$:\\
 For each $\beta_0$
for which the Stokes constant $\tilde S(\beta_0)=0$
in Theorem \ref{thm:1.8}, if ${\tilde S}^\prime (\beta_0) ~\ne ~0$,
there exists  an analytic function $\beta (\epsilon^{2/3})$
with $\lim_{\epsilon\to 0}\beta (\epsilon^{2/3})=\beta_0$ so that if
\begin{equation}
\label{1.28}
\frac{2\lambda -1}{1-\lambda }=\epsilon^{4/3} \beta (\epsilon^{2/3}),
\end{equation}
then there exists a solution of the Finger problem
$F\in \mathbf{A}_{0,\delta}$.
Hence for small $\epsilon$,
\begin{equation}
\label{1.29}
\frac{2\lambda -1}{1-\lambda }=\epsilon^{4/3} \left( \beta_0
+\beta_1\epsilon^{2/3}+\beta_2\epsilon^{4/3}+\cdots\right).
\end{equation}

\end{theorem}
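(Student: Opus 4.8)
The plan is to construct the solution $F$ by a matched-asymptotic fixed-point argument, gluing together an outer solution valid on most of $\mathcal{R}$ to an inner solution near the finger tip $\xi=0$, with the Stokes condition of Theorem~\ref{thm:1.8} serving as the single scalar constraint that determines $\beta(\epsilon^{2/3})$. \textbf{Step 1 (outer problem).} First I would set up the integral-equation reformulation of Condition (i): using the analyticity of $F$ in the upper half plane and the boundary relation (\ref{1.14}), write $F = \epsilon^2 \mathcal{K}[F] + (\text{forcing})$ where $\mathcal{K}$ is (essentially) a Cauchy/Hilbert-transform operator reconstructing $\text{Re}\,F$ from its boundary values, estimated in the $\|\cdot\|_0$ norm of Definition~\ref{def:1.3}. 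Because the right-hand side of (\ref{1.14}) carries a prefactor $\epsilon^2$ and $|F'+H|$ is bounded below away from $\xi=0$ (since $H$ has a simple pole there), a contraction-mapping argument in $\mathbf{A}_{0,\delta}$ produces, for each admissible $\lambda$, a unique outer solution $F_{\mathrm{out}}(\xi;\lambda,\epsilon)$ on $\mathcal{R}$ minus a small ball $B_\nu$; this part closely parallels \cite{Xie1}, the novelty being the non-analyticity of $H$ and of the would-be solution at $\xi=0$, which is precisely why the norm in Definition~\ref{def:1.3} tolerates a $\sqrt{\xi}$ singularity in $F'-F'(0)$.

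\textbf{Step 2 (inner problem and reduction to (\ref{1.25})).} Next I would rescale near $\xi=0$ by the inner variable tied to $\epsilon^{4/3}$ and track how the parameter $\alpha = \frac{2\lambda-1}{1-\lambda}\epsilon^{-4/3}$ enters: the change of variables (\ref{1.24}) reduces the inner equation to (\ref{1.25}), whose unique decaying solution $\psi_0(\eta,\alpha)$ and its exponentially small imaginary part $\tilde S(\alpha)e^{-\eta}$ are furnished by Theorem~\ref{thm:1.8}. The matching condition (\ref{1.26}) is exactly the decay of $F$ required for the inner and outer pieces to agree in the overlap region $\rho_0 < |\eta| \ll \epsilon^{-4/3}$. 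The key point is that the \emph{only} obstruction to Condition (i) holding all the way down to and through $\xi=0$ — equivalently to $\text{Im}\,F=0$ on the imaginary axis near $0$, which by Remark~\ref{rem:1.5} is what upgrades a solution on $\mathcal{R}^-$ to one satisfying the symmetry (iii) — is the non-vanishing of this exponentially small Stokes term.

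\textbf{Step 3 (closing the loop with the implicit function theorem).} With the outer solution depending on $\lambda$ through a still-free parameter and the inner solution contributing the residual $\tilde S(\alpha)e^{-\eta}$, the solvability condition for a genuine $\mathbf{C}^1$ solution on the whole upper half plane becomes a single scalar equation of the schematic form $\tilde S(\beta) + (\text{analytically small corrections in }\epsilon^{2/3}) = 0$, where $\beta = \alpha$ and $\frac{2\lambda-1}{1-\lambda} = \epsilon^{4/3}\beta$. Given $\beta_0$ with $\tilde S(\beta_0)=0$ and $\tilde S'(\beta_0)\neq 0$, the analytic implicit function theorem yields an analytic root $\beta(\epsilon^{2/3}) \to \beta_0$, hence (\ref{1.28}); expanding this root in powers of $\epsilon^{2/3}$ gives (\ref{1.29}). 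Substituting this $\lambda$ back into Step~1 produces $F\in\mathbf{A}_{0,\delta}$ satisfying (i) and (iii), which is the Finger problem.

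\textbf{Main obstacle.} The hard part will be Step~2: controlling the transition across the Stokes lines emanating from $\xi=0$ when the coefficients of the governing equation are non-analytic there (the $\xi^{-\tau}$ and $\sqrt{\xi}$ behavior). One must show that the exponentially small imaginary part produced in the inner region is captured \emph{exactly} by $\tilde S(\alpha)e^{-\eta}$ up to errors that are $o(1)$ relative to it and analytic in $\epsilon^{2/3}$, so that the scalar equation of Step~3 is a genuine analytic perturbation of $\tilde S(\beta)=0$; this is the subtlety emphasized by Chapman and King \cite{Chapman2}, and it requires carefully chosen contours (the broken lines $r_{u,3},r_{u,4}$ through the origin at angles $3\pi/4$ and $\pi/4$ in Definition~\ref{def:1.1}) along which the relevant exponentials are monotone and the non-analytic remainder terms integrate to something strictly smaller than $e^{-\eta}$.
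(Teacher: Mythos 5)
Your overall scaffolding — relax the symmetry condition, solve a half problem in $\mathcal{R}^-$ for a range of $\lambda$, extract a scalar Stokes condition from an inner analysis, and close with the implicit function theorem — is the same as the paper's strategy, and your Step~3 matches Theorem~\ref{thm:4.23} and its proof closely. But Step~2 puts the inner region in the wrong place, and this is not a cosmetic slip: the entire selection mechanism depends on it. The scaling that produces (\ref{1.25})--(\ref{1.26}) and the Stokes constant $\tilde S(\alpha)$ is $\xi = -i + i2^{1/3}\epsilon^{4/3}y^2$, $\eta=\tfrac{2}{3}y^3$ (equations (4.2)--(4.3) and (\ref{4.6})), i.e.\ it is centered at the \emph{turning point} $\xi=-i\gamma\approx -i$, not at the finger tip $\xi=0$. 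The parameter $\alpha=(\gamma-1)\epsilon^{-4/3}$ enters precisely because $|\gamma-1|=O(\epsilon^{4/3})$ measures the distance of the turning point from $-i$; the symmetry check (Lemmas~\ref{lem:4.21}--\ref{lem:4.22} and Theorem~\ref{thm:4.23}) is then performed on the imaginary-axis segment from $-i+i\rho_0\epsilon^{4/3}$ outward, and the IFT is applied to the condition $\mathrm{Im}\,\psi(\rho_0,\epsilon,\alpha)=0$ at the edge of that inner region.

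The finger tip $\xi=0$ plays a different role in the paper: there $\tilde Q(\xi)\sim -\gamma^2/\xi$, so the integrating factor behaves like $\xi^{\gamma^2/\epsilon^2}$ and the would-be solution picks up a $\sqrt{\xi}$-type branch singularity. This is handled in Section~3.2 by seeking $p=(\beta+q(\xi))\xi$ with $q\in\mathbf{D}$ (a $\sqrt{\xi}$-weighted space), and it is the reason for the $\sqrt{\xi}$ term in the norm $\|\cdot\|_0$ and for the carefully chosen contours $r_{u,3},r_{u,4}$. That is the ``non-analyticity at the finger tip'' the abstract refers to — an obstacle to the \emph{existence} part (the Half Problem), not the location of the Stokes constant. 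Conflating the two means your matched-asymptotic picture is matching the wrong pieces: there is no $\epsilon^{4/3}$ inner layer at $\xi=0$, and the exponentially small term $\tilde S(\alpha)e^{-\eta}$ is generated across the Stokes line from $\xi=-i\gamma$ to the real axis, not from $\xi=0$. A secondary, lesser difference: the Half Problem existence in Section~3.3 is obtained via an iteration and Montel's theorem (a normal family argument with $\lambda$ free in a compact subset of $(0,1)$), rather than a single contraction on $\mathbf{A}_{0,\delta}$; the contraction-type estimates you mention are used in the uniqueness/equivalence argument (Theorem~\ref{thm:2.41}) and in the inner region, not to produce the outer solution directly.
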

The proof of this theorem is  given at the end of \S4, after many
preliminary results.
Our solution strategy consists of two steps:

(a) Relaxing the symmetry condition $F(\xi)=[F(-\xi^*)]^*$ on the imaginary
axis interval $-b<\text{Im }\xi<0$, ({\it i.e.} Im $F=0$ is relaxed
on that Im $\xi$-axis segment), we prove the existence
of solutions to an appropriate problem in the half strip $\mathcal{R}^-$
 for any $\lambda $ belonging to a compact subset of
$(0,1)$, for all sufficiently small $\epsilon$.
There is no restriction on $\lambda$ otherwise.

(b) The symmetry condition is then invoked to determine
a restriction on $\lambda$
that will guarantee existence of solution to the Finger problem.
In this part, we restrict our analysis to
$\lambda \in [\frac{1}{2} , \lambda_m]$.

In Section 2, we prove equivalence
of the finger problem to a set of two problems ({\it Problem 1 and Problem 2})  in a complex strip domain. Problem 1 is to solve an integro-differential
equation for $F$ in a Banach space $\mathbf{A}_0$. By deforming the contour
of integration for the integral term in Problem 1, we obtain Problem 2.
By relaxing symmetry condition on an Im $\xi$ axis segment,  we derive
the Half Problem in the left half strip $\mathcal{R}^-$.

In Section 3,
by  constructing a normal sequence, we prove the existence
 of  solutions
to the Half
 Problem for
$\lambda$ in a compact subset of (0, 1) for all sufficiently small $\epsilon$.
In Section 4, we carry out step (b) in our solution strategy.
By introducing suitably scaled dependent and independent variables
in a neighborhood of a turning point,
we formulate the inner problem.
For the leading order inner-equation,
the form of exponentially small terms are obtained and
Theorem \ref{thm:1.8} is proved.
For the full problem, using the implicit function theorem,
it is argued that for small $\epsilon$, and $\beta_0$ so that $\tilde S(\beta_0)=0$ and $\tilde S'(\beta_0)\ne 0$,
there exists a analytic functions $\beta (\epsilon^{2/3})$
so that if $\alpha=\beta(\epsilon^{2/3})$,
then Im $F=0$ on $\{ \text{ Re }\xi = 0 \} \cap \mathcal{R}$.
This implies that symmetry condition (\ref{1.19}) is satisfied;
hence Theorem \ref{thm:1.9} follows.

While the main result and strategy of proof are similar to that in \cite{Xie1} for Saffman-Taylor fingers by surface tension, the analysis for the problem with kinetic undercooling exhibits a number of subtleties as pointed out by Chapman and King \cite{Chapman2}. The main subtlety is the behavior of Stokes lines at the finger tip, where the analysis is complicated by non-analyticity of a coefficient in equation (\ref{3.12}) in section 3.

\section{Formulation of Equivalent Problems}
\label{sec:2}

In this particular section,
we are going to formulate Problem 1 and Problem 2
which will be proved to
be equivalent to the Finger Problem defined in Section 1.
We then formulate a half problem in terms of an integro-differential
equation in $\mathcal{R}^-$, but relax the symmetry condition
Im $F = 0$ on the imaginary $\xi$-axis segment $(-b i, 0)$, which
follows from (\ref{1.19}) (see Remark \ref{rem:1.5}).
Problem 2, unlike Problem 1, involves
nonlocal quantities ($I_2 $ for instance)
with integration paths outside domain $\mathcal{R}$.
Hence, when symmetry is relaxed for $F$ in the half problem,
singularities at the origin from
nonlocal contributions can be estimated conveniently in terms of $F$ from
the domain $\mathcal{R}^-$ . In this section, as well as
the next, we will
restrict $\lambda$ to a compact subset of (0, 1) so that all
the constants appearing in all estimates are independent of $\lambda$.

\subsection{Formulation of Problem 1}
\begin{definition}
\label{def:2.1.5}
Define
\begin{equation}
\label{2.1}
\bar{H}(\xi)=[H(\xi ^*)]^*=\frac{\xi-\gamma i}{\xi^2+1},
\end{equation}
\begin{equation}
\label{2.2}
\bar{F}(\xi)=[F(\xi^*)]^*.
\end{equation}
\end{definition}
\begin{remark}
\label{rem:2.2}
If $F$ is analytic in domain $\mathcal{D}$
containing the real axis
with property (\ref{1.17}),
then $\bar{F}$ is analytic in $\mathcal{D^*}$ and
$\bar{F}(\xi)=F^*(\xi)$ for $\xi$ real and
$\bar{F}(\xi)\sim O(\xi^{-\tau})$, as $|\xi|\to
\infty,\xi\in \mathcal{D^*}$;
~
$\bar{F'}(\xi)\sim O(\xi^{-1-\tau})$,
$\bar{F''}(\xi)\sim O(\xi^{-2-\tau})$ as $|\xi|\to
\infty,\xi\in \mathcal{D'^*}$,
where $\mathcal{D'}$ is any angular subset of $\mathcal{D}$ and superscript
${}^*$ denotes conjugate domain obtained by reflecting about the real
axis.
\end{remark}
\begin{definition}\label{def:2.2}
Let $\mathcal{D}$ be a connected set; for any two functions $f,g$ with  derivative existing in $\mathcal{D}$ and small enough
$\norm{f'}_{1,\mathcal{D}}$ and $\norm{g'}_{1,\mathcal{D}}$
so that $f'+H\neq 0,g'+\bar H\neq 0$ in $\mathcal{D}$, we
define operator $G$ so that
\begin{equation}
\label{2.3}
G(f,g)[t]:=\frac{1}{(f'(t)+H(t))^{1/2}(g'(t)+\bar{H}(t))^{1/2}}\times
\left[(f'(t)+H(t))
-(g'(t)+\bar{H}(t))\right].
\end{equation}
\end{definition}

\begin{remark}
\label{rem:2.3}
If $F\in\mathbf{A}$ and $F'+H\neq 0$ in $\mathcal{R}$, then $G(F,\bar F)(t)$ is analytic in $\mathcal{R}\cap\mathcal{R}^*$,
since in that case $\bar F'+\bar H\neq 0$ in $\mathcal{R}^*$.
\end{remark}
\begin{lemma}\label{lem:2.4}
Let $\mathcal{D}$ and
$f,g$ be as in definition \ref{def:2.2}.
If each of dist $(\mathcal{D},-i)$, dist $(\mathcal{D}, i)~$,
dist $(\mathcal{D},-\gamma i)$, and dist $(\mathcal{D}, \gamma i)$ are
greater than 0 and independent of $\epsilon$,
as $\epsilon\to 0$, then
we have for small enough
$\norm{f'}_{1,\mathcal{D}},\norm{g'}_{1,\mathcal{D}}$,
\begin{equation}
\label{2.4}
\norm{G(f,g)}_{0,\mathcal{D}}\leq \frac{C(1+
\norm{g'}_{1,\mathcal{D}}
+\norm{f'}_{1,\mathcal{D}})}
{( K_1-\norm{f'}_{1,\mathcal{D}})(K_1-\norm{g'}_{1,\mathcal{D}})},
\end{equation}
where
\begin{equation}
\label{2.5}
\begin{split}
0<H_m&\leq\underset{\mathcal{D}}{\inf}~\{|(t-2i)H(t)|,|(t-2i)\bar H(t)|\},\\
K&\geq\underset{\mathcal{D}}{\sup} ~|t-2i|^{-\tau}>0,\\
K_1&=\frac{H_m}{K}>0.
\end{split}
\end{equation}
Constants $C$,
$K$ and $K_1$ are independent of $\epsilon$ and $\lambda$, since
$\lambda$ is in a compact subset of $(0, 1)$.
\end{lemma}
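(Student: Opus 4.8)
\emph{Proof proposal.} The plan is to obtain a pointwise estimate of $\abs{(t-2i)^{\tau}G(f,g)[t]}$ on $\mathcal{D}$ by bounding the numerator of (\ref{2.3}) from above and the denominator from below, in each case by an explicit power of $\abs{t-2i}$, and then taking the supremum over $\mathcal{D}$. Throughout, $\mathcal{D}$ is bounded away from $\pm i$, $\pm\gamma i$ and $2i$ with distances independent of $\epsilon$, and $\gamma=\lambda/(1-\lambda)$ is bounded because $\lambda$ lies in a compact subset of $(0,1)$; these are the facts that will make every constant below independent of $\epsilon$ and $\lambda$.

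For the numerator, write $(f'+H)-(g'+\bar H)=(f'-g')+(H-\bar H)$ and note from (\ref{1.15}), (\ref{2.1}) that $H(t)-\bar H(t)=\frac{2\gamma i}{t^{2}+1}$. Since $\frac{\abs{t-2i}^{1+\tau}}{\abs{t^{2}+1}}\to 0$ as $\abs{t}\to\infty$ and $\mathcal{D}$ avoids a fixed neighborhood of $\pm i$, the quantity $M:=\sup_{\mathcal{D}}\frac{\abs{t-2i}^{1+\tau}}{\abs{t^{2}+1}}$ is finite (and involves neither $\epsilon$ nor $\lambda$), so $\abs{H-\bar H}\le 2\gamma M\,\abs{t-2i}^{-1-\tau}$. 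The one slightly subtle point is precisely this: $H-\bar H$ decays like $\abs{t}^{-2}$ and is therefore $O(\abs{t-2i}^{-1-\tau})$ rather than merely $O(\abs{t-2i}^{-1})$, which is exactly what allows the weight $\abs{t-2i}^{\tau}$ in $\norm{\cdot}_{0,\mathcal{D}}$ to be absorbed. Combining this with $\abs{f'(t)}\le\norm{f'}_{1,\mathcal{D}}\abs{t-2i}^{-1-\tau}$ and the analogous bound for $g'$ (Definition \ref{def:1.5}), the numerator of (\ref{2.3}) is at most $(\norm{f'}_{1,\mathcal{D}}+\norm{g'}_{1,\mathcal{D}}+2\gamma M)\abs{t-2i}^{-1-\tau}$.

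For the denominator, using $\abs{f'(t)}\le\norm{f'}_{1,\mathcal{D}}\abs{t-2i}^{-1-\tau}\le K\norm{f'}_{1,\mathcal{D}}\abs{t-2i}^{-1}$ (from $\abs{t-2i}^{-\tau}\le K$) together with $\abs{H(t)}\ge H_m\abs{t-2i}^{-1}$ from (\ref{2.5}), one gets $\abs{f'(t)+H(t)}\ge(H_m-K\norm{f'}_{1,\mathcal{D}})\abs{t-2i}^{-1}=K(K_1-\norm{f'}_{1,\mathcal{D}})\abs{t-2i}^{-1}$, since $H_m=KK_1$; the same bound holds for $g'+\bar H$ with $\norm{g'}_{1,\mathcal{D}}$, provided $\norm{f'}_{1,\mathcal{D}},\norm{g'}_{1,\mathcal{D}}<K_1$. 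This is exactly where the smallness hypothesis is used, and it simultaneously guarantees $f'+H\ne 0$, $g'+\bar H\ne 0$, so that $G(f,g)$ is well defined. Taking square roots ($\abs{z^{1/2}}=\abs{z}^{1/2}$ for any branch) and multiplying, the denominator of (\ref{2.3}) is at least $K(K_1-\norm{f'}_{1,\mathcal{D}})^{1/2}(K_1-\norm{g'}_{1,\mathcal{D}})^{1/2}\abs{t-2i}^{-1}$.

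Dividing the two estimates, the powers of $\abs{t-2i}$ cancel exactly and $\abs{(t-2i)^{\tau}G(f,g)[t]}\le\frac{\norm{f'}_{1,\mathcal{D}}+\norm{g'}_{1,\mathcal{D}}+2\gamma M}{K(K_1-\norm{f'}_{1,\mathcal{D}})^{1/2}(K_1-\norm{g'}_{1,\mathcal{D}})^{1/2}}$ uniformly in $t\in\mathcal{D}$. To reach the form in (\ref{2.4}), bound the numerator by $\max(1,2\gamma M)(1+\norm{f'}_{1,\mathcal{D}}+\norm{g'}_{1,\mathcal{D}})$ and note that, since $K_1-\norm{f'}_{1,\mathcal{D}},\,K_1-\norm{g'}_{1,\mathcal{D}}\in(0,K_1]$, we have $\sqrt{(K_1-\norm{f'}_{1,\mathcal{D}})(K_1-\norm{g'}_{1,\mathcal{D}})}\le K_1$, so its reciprocal is at most $K_1\big/\big[(K_1-\norm{f'}_{1,\mathcal{D}})(K_1-\norm{g'}_{1,\mathcal{D}})\big]$. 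Taking the supremum over $\mathcal{D}$ yields (\ref{2.4}) with $C=\max(1,2\gamma M)K_1/K$; because $M$, $K$, $K_1$ depend only on $\tau$, on the $\epsilon$-independent distances of $\mathcal{D}$ from $\pm i,\pm\gamma i,2i$, and (through $\gamma$) on the compact range of $\lambda$, the constant $C$ is independent of $\epsilon$ and $\lambda$. The argument is essentially bookkeeping; the only step that needs care is the power-counting in the numerator noted above, so that the $\abs{t-2i}^{\tau}$ weight is controlled.
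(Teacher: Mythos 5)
Your proof is correct and follows essentially the same route as the paper: bound the numerator of $G(f,g)$ by $O(|t-2i|^{-1-\tau})$ using (\ref{1.15}), (\ref{2.1}) and the weighted norms, bound the denominator from below by $O(|t-2i|^{-1})$ via the inequality (\ref{2.6}), and divide. The only place you supply detail the paper leaves implicit is the passage from $\bigl[(K_1-\norm{f'}_1)(K_1-\norm{g'}_1)\bigr]^{-1/2}$ to $K_1\bigl[(K_1-\norm{f'}_1)(K_1-\norm{g'}_1)\bigr]^{-1}$, and your handling of that step is correct.
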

\begin{proof}
Without ambiguity, the norms $\norm{\cdot}$ denoted in this proof
refer to $\norm{\cdot}_{.,\mathcal{D}}$,
where $\sup$ is over the domain $\mathcal{D}$.

Using (\ref{1.15}), (\ref{2.1}):
\begin{gather*}
\sup\{|t-2i|^2|\bar{H}(t)-H(t)|\}\leq C,\\
\sup|(t-2i)H|\leq C,
~\sup|(t-2i)\bar H|\leq C,\\
\end{gather*}
where $C$ is made independent of $\epsilon$ and $\lambda$ for
$\lambda$ in a fixed compact subset of (0, 1).
We also have
\begin{equation}
\label{2.6}
\begin{split}
|(f'(t)+H(t))|&\geq \left[ H_{m}|t-2i|^{-1}-\norm{f'}_1|t-2i|^{-1-\tau}\right]\\
&\geq (H_m-K\norm{f'}_1)|t-2i|^{-1}\\
&\geq C(K_1-\norm{f'}_1)|t-2i|^{-1},\\
|(g'(t)+\bar H(t))|&\geq \left[ H_{m}|t-2i|^{-1}-\norm{g'}_1|t-2i|^{-1-\tau}\right]\\
&\geq (H_m-K\norm{g'}_1)|t-2i|^{-1}\\
&\geq C(K_1-\norm{g'}_1)|t-2i|^{-1},
\end{split}
\end{equation}
where $C$ is made independent of $\epsilon$ and $\lambda$.
(\ref{2.4}) follows immediately from (\ref{2.6}) .
\end{proof}
\begin{lemma}\label{lem:2.5}
If $F \in \mathbf{A}$, then $G(F,\bar F)(t)=O(t^{-\tau}),$ as $|t|\to\infty,t$
in any angular subdomain of $\mathcal{R}\cap\mathcal{R}^*$.
\end{lemma}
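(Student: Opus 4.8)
The plan is to reduce the decay estimate for $G(F,\bar F)(t)$ to the already-established bounds on $F'+H$ and $\bar F'+\bar H$ away from the singular points $\pm i, \pm\gamma i$, using the explicit algebraic structure of the operator $G$. The key observation is that the apparent obstruction — the presence of square roots in the denominator of \eqref{2.3} — is harmless at infinity, since $F' \to 0$ and $H,\bar H \to 0$ there, so $F'+H$ and $\bar F'+\bar H$ are both small but the \emph{ratio} of the two remains bounded away from $0$ and $\infty$; the numerator $(F'+H)-(\bar F'+\bar H)$ carries the actual decay.

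First I would fix an angular subdomain $\mathcal{D}'$ of $\mathcal{R}\cap\mathcal{R}^*$ and apply Remark~\ref{rem:2.2}, which gives $F'(t) = O(t^{-1-\tau})$ and $\bar F'(t) = O(t^{-1-\tau})$ on $\mathcal{D}'$, together with the explicit expansions $H(t) = \frac{1}{t} + O(t^{-2})$ and $\bar H(t) = \frac{1}{t} + O(t^{-2})$ from \eqref{1.15} and \eqref{2.1}. This yields $F'(t)+H(t) = \frac{1}{t}\bigl(1 + O(t^{-\tau})\bigr)$ and similarly for the barred quantity, so each factor $(F'+H)^{1/2}$, $(\bar F'+\bar H)^{1/2}$ equals $t^{-1/2}$ times a bounded quantity bounded away from zero (for $|t|$ large, with an appropriate branch of the square root continuous on the angular sector). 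Hence the prefactor $\bigl[(F'+H)^{1/2}(\bar F'+\bar H)^{1/2}\bigr]^{-1} = O(t)$.

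Next I would estimate the bracketed difference in \eqref{2.3}:
\begin{equation*}
(F'(t)+H(t)) - (\bar F'(t)+\bar H(t)) = \bigl(F'(t)-\bar F'(t)\bigr) + \bigl(H(t)-\bar H(t)\bigr).
\end{equation*}
By Remark~\ref{rem:2.2} the first term is $O(t^{-1-\tau})$, and from \eqref{1.15}, \eqref{2.1} a direct computation gives $H(t)-\bar H(t) = \frac{2\gamma i}{t^2+1} = O(t^{-2})$. Since $\tau < 1$, the dominant contribution is $O(t^{-1-\tau})$. Multiplying the $O(t)$ prefactor by the $O(t^{-1-\tau})$ difference gives $G(F,\bar F)(t) = O(t^{-\tau})$, as claimed. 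I would remark that this also follows more cheaply by specializing Lemma~\ref{lem:2.4}: taking $\mathcal{D}$ to be the angular subdomain in question (whose distances to $\pm i,\pm\gamma i$ are bounded below), \eqref{2.4} already gives $\norm{G(F,\bar F)}_{0,\mathcal{D}}<\infty$, i.e.\ $|(t-2i)^{\tau}G(F,\bar F)(t)|$ is bounded, which is exactly the asserted decay rate — so strictly speaking this lemma is an immediate corollary of the previous one together with analyticity from Remark~\ref{rem:2.3}.

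The only point requiring a little care — and the step I would flag as the main obstacle, though it is minor — is the choice and continuity of the branches of the square roots $(F'+H)^{1/2}$ and $(\bar F'+\bar H)^{1/2}$ on an unbounded angular sector, so that their product is single-valued and the cancellation in the numerator is genuine rather than an artifact of a branch jump. Since $F'+H$ winds around $0$ nowhere on a sufficiently large angular subdomain (it is $\frac{1}{t}$ times something near $1$), a consistent branch exists, and the estimate goes through uniformly; this is implicitly the same branch convention used in Definition~\ref{def:2.2} and Lemma~\ref{lem:2.4}, so no new convention is introduced.
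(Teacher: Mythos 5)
Your proposal is correct, and the closing remark is in fact the paper's entire proof: the authors simply cite Remark~\ref{rem:1.2}, Remark~\ref{rem:2.2} and Lemma~\ref{lem:2.4} applied to an angular subdomain $\mathcal{D}$ of $\mathcal{R}\cap\mathcal{R}^*$, which gives $\norm{G(F,\bar F)}_{0,\mathcal{D}}<\infty$ and hence the stated decay. The direct computation that forms the bulk of your argument (the $O(t)$ prefactor against the $O(t^{-1-\tau})$ numerator, using $H,\bar H\sim 1/t$ and $H-\bar H=O(t^{-2})$) is just an explicit unpacking of the same estimate, so the two routes are essentially the same.
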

\begin{proof}
From Remark \ref{rem:1.2},
Remark \ref{rem:2.2} and Lemma \ref{lem:2.4}, with $\mathcal{D}$
being an angular subdomain of $\mathcal{R} \cap \mathcal{R^*}$,
the proof follows.
\end{proof}

\begin{remark}
\label{rem:2.4}
Let $H_m,K,K_1$ be as in (\ref{2.5})
of Lemma \ref{lem:2.4} with $\mathcal{D}$ being an angular subdomain of $\mathcal{R} \cap \mathcal{R^*}$,
then $H_m,K,K_1$ can be chosen
to be independent of $\epsilon$ and $\lambda$
since $\lambda$ is in a fixed compact subset of (0, 1).
If $\norm{F'}_{1,\mathcal{D}}<\frac{K_1}{2}$  then,
\begin{equation}
\label{2.7}
|\xi-2i||F'+H|\geq (H_m-|\xi-2i|^{-\tau}\norm{F'}_1)\geq C(K_1-\norm{F'}_1)>C\frac{K_1}{2}>0 ;
\end{equation}
so $F'+H\ne 0$ holds in $\mathcal{D}$.
\end{remark}

\begin{remark}
\label{rem:2.5}
Let $\norm{F'}_{1,\mathcal{D}}<\frac{K_1}{2}$ , by
Remark \ref{rem:2.4},
we have $\bar F'+\bar H\neq 0$
in $\mathcal{R}^*$, so $G(F,\bar F)$ is
analytic in $\mathcal{D}$ which is  an angular subdomain of $\mathcal{R}\cap\mathcal{R}^*$.
\end{remark}
\begin{definition}\label{def:2.3}
Let $F\in\mathbf{A}$,$\norm{F'}_{1,\mathcal{D}}<\frac{K_1}{2}$, $\mathcal{D}$ is  an angular subdomain of $\mathcal{R}\cap\mathcal{R}^*$ and $(-\infty,\infty)\subset\mathcal{D}$,
define operator $I$ so that
\begin{equation}\label{2.8}
I(F)[\xi]=
-\frac{1}{2\pi }\int_{-\infty}^{\infty}
\frac{G(F,\bar{F})(t)dt}{t-\xi} \quad \text{ for Im }\xi ~<~0.
\end{equation}
\end{definition}
\begin{lemma}\label{lem:2.7}
Let  $F\in\mathbf{A}$ be analytic in  $\mathrm{Im }~\xi>0$ as well. If $F (\xi)$
satisfies equation (1.14) on the real axis,
then $F$ satisfies  for $\xi\in \{~\mathrm{ Im }~\xi<0\}$
\begin{equation}\label{2.9}
F'(\xi)=\frac{1}{2\epsilon^4}\big\{-\big[ G_2(F, I)^2(\bar F'+\bar H)+2\epsilon^4 G_1(\bar F)\big] +(\bar F'+\bar H)G_2(F,I)\sqrt{G_2(F,I)^2-4\epsilon^4}\big\}
\end{equation}
where
\begin{equation}\label{2.10}
G_1(\bar{F}')=-\bar{F}' +H- \bar{H},
\end{equation}
\begin{equation}\label{2.11}
G_2(F,I)=(F+\epsilon^2I(G)),
\end{equation}
 and the principal branch is chosen for the square root in (\ref{2.9}).
\end{lemma}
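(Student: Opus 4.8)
The plan is to transform the kinetic-undercooling boundary condition (\ref{1.14}) into an explicit ODE for $F'$ on the lower half plane by exploiting analyticity. First I would observe that, by Remark \ref{rem:1.5} and the Schwarz reflection-type construction, if $F$ is analytic in the upper half plane and $\mathbf{C}^1$ up to the real axis, then $\bar F(\xi)=[F(\xi^*)]^*$ is analytic in the lower half plane and agrees with $F^*$ on the real axis. The boundary condition (\ref{1.14}) reads $\mathrm{Re}\,F = -\epsilon^2 \,\mathrm{Im}(F'+H)/|F'+H|$ on $\mathbb{R}$; I would rewrite $\mathrm{Re}\,F = \tfrac12(F+\bar F)$ and $\mathrm{Im}(F'+H) = \tfrac{1}{2i}\big((F'+H)-\overline{(F'+H)}\big)$, and on the real axis $\overline{F'+H} = \bar F'+\bar H$ and $|F'+H|^2 = (F'+H)(\bar F'+\bar H)$. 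This turns (\ref{1.14}) into an algebraic relation among $F$, $\bar F$, $F'+H$, $\bar F'+\bar H$ that holds on $\mathbb{R}$ and hence, both sides being analytic, continues off the axis.

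Next I would introduce the Cauchy-type integral $I(F)$ of Definition \ref{def:2.3}. Writing $G(F,\bar F)$ for the quantity in (\ref{2.3}) — which by Lemma \ref{lem:2.5} decays like $t^{-\tau}$, so the integral converges — the Plemelj relations give that $I(F)$, analytic in $\{\mathrm{Im}\,\xi<0\}$, has boundary value on $\mathbb{R}$ equal to $-\tfrac12 G(F,\bar F) + (\text{principal value term})$, while the analytic continuation from above contributes the complementary piece; this is precisely how $G(F,\bar F)$ gets split so that the combination $F + \epsilon^2 I(G)$, denoted $G_2(F,I)$ in (\ref{2.11}), is the natural analytic object. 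The key manipulation is then to solve the resulting algebraic equation. After clearing the square roots in $G$ and using $|F'+H|^2=(F'+H)(\bar F'+\bar H)$, the boundary relation becomes a quadratic in $F'+H$ with coefficients built from $\bar F'+\bar H$, $G_1(\bar F')$ from (\ref{2.10}), and $G_2(F,I)$ from (\ref{2.11}); solving this quadratic by the standard formula, and choosing the principal branch of the square root so that the solution reduces to the Saffman-Taylor branch $F'=0$ when $\epsilon=0$, yields exactly (\ref{2.9}).

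Concretely, I would: (a) substitute the reflection identities into (\ref{1.14}) to get an identity on $\mathbb{R}$ purely in terms of $F,\bar F, F'+H,\bar F'+\bar H$; (b) multiply through to remove the modulus, producing $2\epsilon^2 \,\mathrm{something} = \big((F'+H)-(\bar F'+\bar H)\big)\big/\sqrt{(F'+H)(\bar F'+\bar H)}$-type terms, i.e. recognizing $G(F,\bar F)$ on the right; (c) analytically continue into $\{\mathrm{Im}\,\xi<0\}$, using that $F$, $\bar F$, $H$, $\bar H$ and (via the Cauchy integral) $I(G)$ are all analytic there, with the decay from Remark \ref{rem:2.2} and Lemma \ref{lem:2.5} ensuring everything is well-defined; (d) isolate $F'+H$, obtaining $\epsilon^4 (F'+H)^2 + [\,\ldots\,](F'+H) + [\,\ldots\,]=0$ where the bracketed coefficients are $(\bar F'+\bar H)G_2(F,I)$ and a combination of $(\bar F'+\bar H)$ and $G_1$; (e) apply the quadratic formula, collect $\epsilon^4$ factors, and verify the branch choice against the $\epsilon\to 0$ limit to land on (\ref{2.9}).

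The main obstacle I anticipate is step (c)–(d): keeping careful track of which analytic continuation (from the upper versus lower half plane) each term extends to, and in particular verifying that the modulus $|F'+H|$ on $\mathbb{R}$ really is replaced by the analytic square root $\sqrt{(F'+H)(\bar F'+\bar H)}$ consistently with the principal-branch convention, so that no spurious sign or branch of the square root in (\ref{2.9}) is introduced. A secondary subtlety is confirming that the nonlocal term enters only through $G_2(F,I)=F+\epsilon^2 I(G)$ and not in some more tangled way — this requires the Plemelj jump relations to combine the "$F$ from above" and "$\epsilon^2 I(G)$ from below" pieces into a single analytic function whose boundary values reproduce $\mathrm{Re}\,F$ plus the correction from the modulus term. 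Once the algebra is organized so that these two analytic continuations are correctly matched on $\mathbb{R}$, deriving (\ref{2.9}) is a direct computation.
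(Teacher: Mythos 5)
Your proposal follows essentially the same route as the paper: represent $\mathrm{Re}\,F$ on $\mathbb{R}$ via the boundary condition, cross into $\{\mathrm{Im}\,\xi<0\}$ by the Schwarz--Poisson integral plus the Plemelj jump (which is exactly what produces the combination $G_2(F,I)=F+\epsilon^2 I(G)$ alongside the local term $i\epsilon^2 G(F,\bar F)$), then square and solve the resulting quadratic for $F'$ with the principal branch fixed by the $\epsilon\to 0$ limit. The only caution I would flag is the early phrase ``both sides being analytic, continues off the axis'': the relation on $\mathbb{R}$ mixes $F,F'$ (analytic above) with $\bar F,\bar F'$ (analytic below), so it is not itself a single analytic identity that extends by continuation — the extension is precisely supplied by the Poisson/Plemelj step you invoke in the next paragraph, which is also how the paper argues in obtaining (\ref{2.13}).
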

\begin{proof}
Since $F$
is analytic in
$\{ \text{ Im } \xi ~>~0\}$
and satisfies equation (\ref{1.14}),
by using the Poisson Formula, we have for Im ~$\xi ~>~0$:
\begin{equation}
\label{2.12}
\begin{split}
F(\xi)&=-\frac{\epsilon^2}{\pi i}\int_{-\infty}^{\infty}
\frac{dt}{(t-\xi)}\frac{1}{\abs{F'(t)+H(t)}}\text{ Im }
\left[F'(t)+H(t)\right]\\
&=\frac{\epsilon^2}{2\pi }\int_{-\infty}^{\infty}\frac{G(F,\bar{F})(t)dt}{t-\xi}.
\end{split}
\end{equation}
Using the Plemelj Formula (see for eg.
Carrier, Krook \& Pearson \cite{1}),
we analytically extend above equation to the lower half plane to obtain
\begin{equation}\label{2.13}
F(\xi)=-\epsilon^2 I(\xi)
+i\epsilon^2\frac{(F'(\xi)+H(\xi))-(\bar{F}'(\xi)+\bar{H}(\xi))}{(F'(\xi)+H(\xi))^{1/2}(\bar{F}'(\xi)+\bar{H}(\xi))^{1/2}},
~~\text{for ~Im }\xi<0.
\end{equation}
Squaring the above equation to obtain
\begin{equation}\label{2.14}
\epsilon^4 (F')^2 + G_3(F, \bar F, I) F' + G_4(F, \bar F, I)=0,
\end{equation}
where
\begin{equation}\label{2.15}
G_4(F,\bar{F},I)=H(F+\epsilon^2I(G))^2(\bar{F}' + \bar{H})+\epsilon^4(G_1(\bar{F}))^2 ,
\end{equation}
\begin{equation}\label{2.16}
G_3(F,I,\bar{F})=(\bar{F}' + \bar{H})(F+\epsilon^2I(G))^2+2\epsilon^4G_1(\bar{F}) ,
\end{equation}
Solving $F'$ from above leading to (\ref{2.9}), where the principal branch of the square root is chosen to be consistent with (\ref{2.13}).
\end{proof}

\noindent\begin{definition}\label{def:2.4}

Define two rays:
\[
r^+_0=\{\xi:\xi=\rho e^{i(\varphi_0+\frac{1}{2}\mu)},
0<\rho<\infty\},
\]
\[
r^-_0=\{\xi:\xi=\rho e^{i(\pi -\varphi_0-\frac{1}{2}\mu)},
0<\rho<\infty\}.
\]
Let $r_0$ be the directed contour along $r_0^-\cup r_0^+$ from
left to right (See Fig. 5).
\end{definition}
\begin{definition}\label{def:2.5}
If $F \in \mathbf{A}$, define
$F'_-(\xi)$ for $\xi$ below $r_0$:
\begin{equation}
\label{2.17}
F'_- (\xi)=-\frac{1}{2\pi i}\int_{r_0}
\frac{\bar{F}'(t)}{t-\xi}dt.
\end{equation}
\end{definition}
\begin{remark}
\label{rem:2.6}
Since $r_0\subset\mathcal{R}^*$ and
$\bar{F}$ satisfies (\ref{1.17}) in $\mathcal{R}^*$, the above integral
is well defined.
It is obvious that $F_-(\xi)$ is analytic below $r_0$.
Also, if $F \in \mathbf{A}_0^-$ only and the
relation $\left [F (-t^*) \right ]^* = F(t) $
were invoked to define $F$ in $\mathcal{R}^+$,
then it is possible to use
the symmetry between contours $r_0^+$ and $r_0^-$
to write
\begin{equation}
\label{2.18}
\begin{split}
F'_-(\xi)
&=-\frac{1}{2\pi i}\int_{r_0}
\frac{\bar{F}'(t)-\bar{F}'(0)}{t-\xi}dt +\bar{F}'(0)\\
&=
-\frac{1}{2\pi i} \left [\int_{r_0^-}
\frac{(\bar{F}'(t)-\bar{F}'(0)) dt}{t-\xi} + \int_{r_0^+}
\frac{([\bar{F}'(-t^*)]^*-\bar{F}'(0)) dt }{t-\xi} \right ] +\bar{F}'(0)\\
&=~-\frac{1}{2\pi i} \int_{r_0^-}
\left [\frac{(\bar{F}'(t)-\bar{F}'(0)) dt}{t-\xi} -
\frac{([\bar{F}'(t)]^* -[\bar{F}'(0)]^*)(dt)^* }{t^*+\xi} \right ]+\bar{F}'(0).
\end{split}
\end{equation}
This alternate expression is equivalent to (\ref{2.17})
when symmetry condition
Im $F = 0$ on $\{\text{ Re ~}\xi =0 \} \cap \mathcal{R}$ holds; however,
(\ref{2.18}) defines an analytic function $F_-$ below $r_0$
even when symmetry condition is relaxed, as it is for the Half
Problem  later in \S 2.
We also notice that $F'_-$, as defined by
(\ref{2.18}), satisfies symmetry
condition Re $F'_-=0$ on $ \{ \text{ Re }\xi=0\}\cap\mathcal{R}$ even when $F \in \mathbf{A}^-$ does not satisfy Im $F=0$ on $ \{\text{ Re~}\xi=0\}\cap\mathcal{R}$ .
\end{remark}
\begin{lemma}\label{lem:2.10}
If $F \in \mathbf{A}$ and $F$ is also analytic in $\mathrm{Im} ~\xi ~>~0$,
then $\bar{F}'(\xi)=F_-' (\xi), \text{ for }\xi\in \mathcal{R}$.
\end{lemma}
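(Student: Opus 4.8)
The plan is to read the right-hand side of the claimed identity as a Cauchy-type integral of the analytic function $\bar F'$ and evaluate it by closing the contour $r_0$ through the far field.

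The first ingredient is the analyticity the hypotheses supply. Because $F\in\mathbf A$ is, by assumption, analytic in $\{\mathrm{Im}\,\xi>0\}$ as well, $F$ is analytic on $\{\mathrm{Im}\,\xi>0\}\cup\mathcal R$, and hence, by Definition~\ref{def:2.1.5}, $\bar F(\xi)=[F(\xi^{*})]^{*}$ is analytic on $\{\mathrm{Im}\,\xi<0\}\cup\mathcal R^{*}$ --- a domain that contains $r_0$ by Remark~\ref{rem:2.6}. Remark~\ref{rem:2.2} then gives the decay $\bar F'(t)=O(|t|^{-1-\tau})$ as $|t|\to\infty$ inside every angular subdomain of $\{\mathrm{Im}\,\xi<0\}\cup\mathcal R^{*}$; this is what will make the closing arc negligible.

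Next I would fix $\xi$ lying below $r_0$ (so that $F'_-(\xi)$ is defined) and, for large $\rho$, close $r_0$ with the circular arc $C_\rho=\{|t|=\rho\}$ joining the two endpoints of $r_0\cap\{|t|\le\rho\}$ through the region below $r_0$, enclosing a region $\Sigma_\rho$. One verifies from the geometry in Definition~\ref{def:1.1} that $\overline{\Sigma_\rho}$, and indeed the whole region below $r_0$, lies in $\{\mathrm{Im}\,\xi<0\}\cup\mathcal R^{*}$: the only part above the real axis is the thin wedge caught between the rays $r^{\pm}_0$ and the real axis, and it is absorbed into $\mathcal R^{*}$ because those rays meet the axis at angle $\varphi_0+\tfrac12\mu<\tfrac{\pi}{2}$. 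Cauchy's integral formula for $\bar F'$ on $\Sigma_\rho$, with the pole of the kernel at $t=\xi$, then reads --- once the orientation of the $r_0$-portion of $\partial\Sigma_\rho$ is compared with Definition~\ref{def:2.5} (they are opposite) ---
\[
\bar F'(\xi)=-\frac{1}{2\pi i}\int_{r_0}\frac{\bar F'(t)}{t-\xi}\,dt+\frac{1}{2\pi i}\int_{C_\rho}\frac{\bar F'(t)}{t-\xi}\,dt .
\]
The last integral is $O(\rho^{-1-\tau})$ by the decay estimate together with $|t-\xi|\gtrsim\rho$ on $C_\rho$, so letting $\rho\to\infty$ gives $F'_-(\xi)=-\frac{1}{2\pi i}\int_{r_0}\frac{\bar F'(t)}{t-\xi}\,dt=\bar F'(\xi)$ for every $\xi$ below $r_0$. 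Since $\bar F'$ is moreover analytic on all of $\mathcal R$ (again from the geometry, $\mathcal R\cap\{\mathrm{Im}\,\xi>0\}\subset\mathcal R^{*}$) and $\mathcal R$ together with the region below $r_0$ is connected, $\bar F'$ is the analytic continuation of $F'_-$ from the region below $r_0$ onto $\mathcal R$, which is the content of the lemma.

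The only step demanding real care is the geometric bookkeeping: one must confirm that $\Sigma_\rho$ (and the rest of the region below $r_0$, and all of $\mathcal R$) stays inside the domain of analyticity $\{\mathrm{Im}\,\xi<0\}\cup\mathcal R^{*}$ of $\bar F'$. This is where the placement of $r_0$ relative to $\mathcal R$ and the constraints on $\varphi_0,\mu,b$ from Definition~\ref{def:1.1} enter, and it is also why the hypothesis $F\in\mathbf A$ --- not merely $F\in\mathbf A_0^{-}$ --- is used: it makes $\bar F'$ analytic on the whole of $\mathcal R^{*}$, which contains $r_0^{+}$ even though $r_0^{+}$ itself lies outside $\mathcal R$. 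Everything else is the routine Cauchy-kernel estimate together with Remark~\ref{rem:2.2}.
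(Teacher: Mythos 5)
Your proof is correct and is essentially the paper's own argument: close the contour $r_0$ at infinity, use the $O(|t|^{-1-\tau})$ decay of $\bar F'$ (Remark~\ref{rem:2.2}) to drop the arc, and apply Cauchy's integral formula to the analytic function $\bar F'$ on the region below $r_0$. The final analytic-continuation step is superfluous, since by Remark~\ref{rem:2.8} all of $\mathcal{R}$ already lies below $r_0$, but this does not affect the conclusion.
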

\begin{proof}
Since $F$ is analytic  in $\mathcal{R}~\cup~\{\text{ Im }\xi>0\}$,
then $\bar{F} $
is analytic in $\mathcal{R^*}~\cup~\{\text{ Im }\xi<0\}$.
We use property (\ref{1.17}) and the Cauchy Integral Formula to obtain
\begin{equation*}
F_- '(\xi)=-\frac{1}{2\pi i}\int_{r_0}
\frac{\bar{F}'(t)}{t-\xi}dt=\bar{F}' (\xi),\text{ for }\xi\in \mathcal{R}.
\end{equation*}
\end{proof}
\begin{lemma}\label{lem:2.11}
Let $\Gamma =\{t,t=~\rho e^{i\varphi},0\leq\rho<N\}$ be a ray segment,  $\mathcal{D}$ is some connected set
such that  $\Gamma\cap\mathcal{D}=\{0\}$ and
\begin{equation}
\label{2.19}
|\xi-t|\geq m\vert\rho e^{i\varphi_1}-|\xi|\vert \quad \text{ for }\xi\in\mathcal{D},~ t=\rho e^{i\varphi}\in \Gamma,
\end{equation} 
for some constants $m>0$ and $\varphi_1$ independent of $\epsilon$ .
Assume $g$ to
be a continuous function on $\Gamma$ with
$|g(t)-g(0)|\leq K \sqrt{|t|}$ for $t\in \Gamma\cap B_\nu$, $\nu$ is as in Remark 1.6.
Then
\begin{equation}\label{2.20}
\underset{\mathcal{D}\cap B_\nu}{\sup} \left\lvert\int_{\Gamma}\frac{g(t)-g(0)}{(
t-\xi)}dt-\int_{\Gamma}\frac{g(t)-g(0)}{
t}dt\right\rvert\leq C\left( (\sqrt{|\nu|})^{-1}\underset{\Gamma/B_\nu}{\sup}~ |g|+K\right)\sqrt{|\xi|}
\end{equation}
and
\begin{equation}\label{2.21}
\underset{\mathcal{D}\cap B_\nu}{\sup}\left\lvert\int_{\Gamma}\frac{g(t)-g(0)}{
t}dt\right\rvert\leq C\left( |\log \nu| \underset{\Gamma/B_\nu}{\sup} |g|+K\sqrt{|\nu|}\right),
\end{equation}
where constant $C$ depends on $\varphi_1$, and $ m$  only .
\end{lemma}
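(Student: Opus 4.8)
The plan is to prove the two estimates separately: the first by a partial‑fraction reduction that isolates the factor $\xi$, and the second by a direct splitting of $\Gamma$ at $\partial B_\nu$. Throughout I write $t=\rho e^{i\varphi}$ along $\Gamma$, so $|t|=\rho$ and $|dt|=d\rho$, and set $s=|\xi|$; for $\xi\in\mathcal D\cap B_\nu$ we have $s<\nu$. The algebraic observation behind (\ref{2.20}) is
\[
\frac{1}{t-\xi}-\frac{1}{t}=\frac{\xi}{t(t-\xi)},
\]
so the difference of the two integrals equals $\int_\Gamma \dfrac{(g(t)-g(0))\,\xi}{t(t-\xi)}\,dt$, and it suffices to bound $|\xi|\int_0^N \dfrac{|g(\rho e^{i\varphi})-g(0)|}{\rho\,|t-\xi|}\,d\rho$.

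To control $|t-\xi|$ I would first reduce hypothesis (\ref{2.19}) to a clean form: whenever $e^{i\varphi_1}\neq1$ one has $|\rho e^{i\varphi_1}-s|\ge c(\varphi_1)(\rho+s)$ for all $\rho,s\ge0$, with $c(\varphi_1)>0$. This follows by writing $\rho e^{i\varphi_1}-s=(\rho+s)\bigl(ue^{i\varphi_1}-(1-u)\bigr)$ with $u=\rho/(\rho+s)\in[0,1]$ and noting that $u\mapsto|ue^{i\varphi_1}-(1-u)|$ is continuous and strictly positive on the compact interval $[0,1]$ (it vanishes only if $e^{i\varphi_1}=(1-u)/u$ is a positive real, i.e. $e^{i\varphi_1}=1$), hence bounded below by a positive constant depending only on $\varphi_1$; combined with (\ref{2.19}) this gives $|t-\xi|\ge m\,c(\varphi_1)(\rho+s)$. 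Now split $\Gamma$ at $\partial B_\nu$. On $\Gamma\cap B_\nu$ the hypothesis $|g(t)-g(0)|\le K\sqrt{|t|}$ and the substitution $\rho=w^2$ give
\[
\int_0^\nu\frac{K\sqrt\rho}{\rho(\rho+s)}\,d\rho=2K\int_0^{\sqrt\nu}\frac{dw}{w^2+s}=\frac{2K}{\sqrt s}\arctan\frac{\sqrt\nu}{\sqrt s}\le\frac{\pi K}{\sqrt s},
\]
so this part of the integral contributes at most $\frac{\pi K}{m\,c(\varphi_1)}\sqrt{|\xi|}$. On $\Gamma\setminus B_\nu$ I would use $|g(t)-g(0)|\le 2\sup_{\Gamma/B_\nu}|g|+K\sqrt\nu$ (the last term because continuity of $g$ at $0$ together with the Hölder estimate forces $|g(0)|\le\sup_{\Gamma/B_\nu}|g|+K\sqrt\nu$), together with $\rho+s\ge\rho$ and $\int_\nu^N\rho^{-2}\,d\rho\le\nu^{-1}$, and finally trade $|\xi|/\nu\le\sqrt{|\xi|}/\sqrt\nu$ (valid since $|\xi|<\nu$); this bounds that part by $C\bigl(\nu^{-1/2}\sup_{\Gamma/B_\nu}|g|+K\bigr)\sqrt{|\xi|}$. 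Adding the two pieces yields (\ref{2.20}).

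For (\ref{2.21}) there is no $\xi$ in the integrand, so the estimate is elementary: $\int_\Gamma\frac{g(t)-g(0)}{t}\,dt=\int_0^N\frac{g(\rho e^{i\varphi})-g(0)}{\rho}\,d\rho$. Splitting again at $\partial B_\nu$, the Hölder bound gives $\bigl|\int_0^\nu\frac{g(\rho e^{i\varphi})-g(0)}{\rho}\,d\rho\bigr|\le\int_0^\nu K\rho^{-1/2}\,d\rho=2K\sqrt\nu$, while on $\Gamma\setminus B_\nu$ one uses $|g(t)-g(0)|\le 2\sup_{\Gamma/B_\nu}|g|+K\sqrt\nu$ and $\int_\nu^N\rho^{-1}\,d\rho\le C|\log\nu|$ to bound the remainder by $C\bigl(|\log\nu|\sup_{\Gamma/B_\nu}|g|+K\sqrt\nu\bigr)$ (here $\nu$ and $N$ are fixed, so $\log(N/\nu)$ and $\sqrt\nu|\log\nu|$ are absorbed into $C$). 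This gives (\ref{2.21}).

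The delicate point is the $\sqrt{|\xi|}$ scaling in (\ref{2.20}). It emerges from two regions that must be treated in tandem: near the origin the integrable singularity $\rho^{-1/2}$ paired with the denominator $(\rho+|\xi|)^{-1}$ produces precisely the factor $|\xi|^{-1/2}$ (the $\arctan$ computation above), and away from the origin one must exploit $|\xi|<\nu$ to convert the naturally occurring factor $|\xi|/\nu$ into $\sqrt{|\xi|}/\sqrt\nu$; arranging the powers of $\nu$ so that they match the statement is the part that requires care. The reduction of the geometric hypothesis (\ref{2.19}) to the clean lower bound $|t-\xi|\ge c(\rho+|\xi|)$ is routine but essential — without the $(\rho+|\xi|)$ in the denominator the near‑origin integral would not close.
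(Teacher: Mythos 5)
Your proposal is correct and follows essentially the same route as the paper: write $\frac{1}{t-\xi}-\frac{1}{t}=\frac{\xi}{t(t-\xi)}$, split $\Gamma$ at $\partial B_\nu$, use the $K\sqrt{|t|}$ bound near the origin (obtaining the $|\xi|^{-1/2}$ via a convergent integral) and the sup bound away from it, and trade $|\xi|/\nu \le \sqrt{|\xi|}/\sqrt{\nu}$. The only cosmetic difference is that you first clean up (\ref{2.19}) to the lower bound $|t-\xi|\ge c(\varphi_1)m(\rho+|\xi|)$ before integrating, whereas the paper keeps the Cartesian form $[(s\cos\varphi_1-|\xi|)^2+s^2\sin^2\varphi_1]^{1/2}$ inside the integral and rescales $\rho$ by $|\xi|$; both implicitly use $\varphi_1\neq 0$, a point you flag explicitly.
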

\begin{proof}

On $\Gamma$, $t=\rho e^{i\varphi},|dt|=d\rho$.
Breaking up the integral in (\ref{2.20}) into two parts:
\begin{equation}
\label{2.22}
\begin{split}
\int_{\Gamma}\frac{g(t)-g(0)}{(
t-\xi)}dt-\int_{\Gamma}\frac{g(t)-g(0)}{t}dt&=\xi\int_{\Gamma\cap B_\nu}\frac{g(t)-g(0)}{t(
t-\xi)}dt\\
&+\xi\int_{\Gamma/B_{\nu}}\frac{g(t)-g(0)}{t(
t-\xi)}dt.
\end{split}
\end{equation}
For the first integral in (\ref{2.22}), we use (\ref{2.19})
to obtain (on scaling $\rho$ by $|\xi|$):
\begin{equation*}
\left\vert\xi\int_{\Gamma\cap B_\nu}\frac{g(t)-g(0)}{t(
t-\xi)}dt\right\vert \leq \frac{K}{m}\sqrt{|\xi|}\int_0^1\frac{s^{-1/2}ds}{[(s\cos\varphi_1-1)^2+s^2\sin^2\varphi_1]^{1/2}}.
\end{equation*}
For the second integral we use (\ref{2.19})
to obtain
\begin{multline*}
\left\vert\xi\int_{\Gamma/ B_\nu}\frac{g(t)-g(0)}{t(
t-\xi)}dt\right\vert \leq|\xi|\frac{\underset{\Gamma/B_\nu}{\sup}~|g|}{m}\int_{|\nu|}^N\frac{ ds}{s[(s\cos\varphi_1-|\xi|)^2+s^2\sin^2\varphi_1]^{1/2}}\\
\leq C(\sqrt{|\nu|})^{-1}\left(\underset{\Gamma/B_\nu}{\sup}~ |g|\right)\sqrt{|\xi|}.
\end{multline*}
(\ref{2.21}) can be proved similarly.
\end{proof}

\begin{figure}
\begin{center}
\resizebox{0.75\textwidth}{!}
{\includegraphics{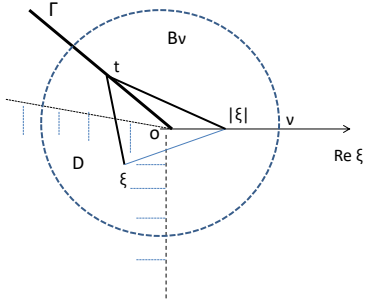}}
\end{center}
\caption{Relevant to (\ref{2.19}) in Lemma 2.15 and Remark 2.16 }
\label{fig:4}
\end{figure}
\begin{remark}
\label{rem:2.7}
Note if $\Gamma$ is in $\mathcal{D}_c'$, an angular subset of $\mathcal{D}_c$ (complement of $\mathcal{D}$), then (\ref{2.19})
hold (see Fig: \ref{fig:4}).
\end{remark}

\begin{definition}\label{def:2.6} Define
\begin{equation}
\label{2.23}
\begin{split}
\Omega_0=\biggl\{\xi:\xi  \text{ is below }
\{\xi=\rho e^{i(\pi-\varphi_0-\frac{1}{3}\mu)}\} \\
\cup \{ \xi=\rho e^{i(\varphi_0+\frac{1}{3}\mu)}\}\biggr\}.
\end{split}
\end{equation}
\end{definition}
\begin{remark}
\label{rem:2.8}
$\mathcal{R}\cup\{\text{ Im }\xi<0\}$ is an
angular subset of $\Omega_0$,
and $\Omega_0$ is itself an
angular subset of the region $\{\xi \text{ below }r_0\}$.
\end{remark}
\begin{lemma}
\label{lem:2.13}
Let $F \in \mathbf{A}_0^-$, then
$$\underset{\xi\in\Omega_0/B_\nu}{\sup}\abs{(\xi-2i)^{k+\tau}F_-^{(k)}(\xi)}
\leq K_2\frac{\underset{\xi\in\mathcal{R}/ B_\nu }{\sup}~|(\xi-2i)^{\tau}F|}{|\nu|^k}, ~k=0,1,2,$$
and
$$\underset{\xi\in\overline{\Omega_0\cap B_\nu }}{\sup}~\left\vert\frac{F'_-(\xi)-F'_-(0)}{\sqrt{\xi}}\right\vert \leq K_2 \left(\underset{\xi\in \overline{\mathcal{R}^- \cap B_\nu}}{\sup}~\left\vert\frac{F'(\xi)-F'(0)}{\sqrt{\xi}}\right\vert+|\nu|^{-3/2}\underset{\xi\in\mathcal{R}/ B_\nu }{\sup}~|F|\right),$$

where constant $K_2$ depends only on $\varphi_0$.
\end{lemma}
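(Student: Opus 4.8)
The plan is to estimate $F_-$ and its derivatives from the integral representation \eqref{2.17} (equivalently \eqref{2.18}), splitting the contour $r_0$ into the portion inside $B_\nu$ and the portion outside, and using the geometric separation of $\Omega_0$ from $r_0$ recorded in Remark \ref{rem:2.8}. First I would treat the away-from-origin estimate: for $\xi\in\Omega_0/B_\nu$ and $t\in r_0$, since $\Omega_0$ is an angular subset of $\{\xi \text{ below } r_0\}$ there is a constant (depending only on $\varphi_0$) so that $|t-\xi|\ge c(|t|+|\xi|)$, hence $|t-\xi|^{-1}\le c^{-1}|\nu|^{-1}$ uniformly once we are at distance $|\nu|$ from the origin; differentiating \eqref{2.17} $k$ times pulls out $k!/(t-\xi)^{k+1}$, producing the factor $|\nu|^{-k}$. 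Combining with the decay bound \eqref{1.17} for $\bar F$ on $r_0\subset\mathcal{R}^*$ and the weight $|\xi-2i|^{-k-\tau}$, and recalling (Remark \ref{rem:1.5}, Remark \ref{rem:2.6}) that even for $F\in\mathbf{A}_0^-$ the symmetrized formula \eqref{2.18} expresses $F_-$ in terms of $\bar F$ on $r_0^-\subset\mathcal{R}^{-*}$ alone, we get the first displayed inequality with a constant $K_2=K_2(\varphi_0)$.

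Next I would prove the near-origin Hölder-$\tfrac12$ bound on $F'_-(\xi)-F'_-(0)$. Starting from the derivative of \eqref{2.18}, I would write
\[
F'_-(\xi)-F'_-(0)=-\frac{1}{2\pi i}\int_{r_0^-}\Big[\big(\bar F'(t)-\bar F'(0)\big)\Big(\frac{1}{t-\xi}-\frac{1}{t}\Big)-\big([\bar F'(t)]^*-[\bar F'(0)]^*\big)\Big(\frac{1}{t^*+\xi}-\frac{1}{t^*}\Big)\Big]\,dt,
\]
after which the problem is exactly of the form covered by Lemma \ref{lem:2.11}: take $\Gamma=r_0^-$ (and its reflection for the second term), $g=\bar F'$, and note that by Remark \ref{rem:2.7} the geometric hypothesis \eqref{2.19} holds because $r_0^-$ lies in an angular subset of the complement of $\Omega_0$. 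The hypothesis $|g(t)-g(0)|\le K\sqrt{|t|}$ on $r_0^-\cap B_\nu$ is precisely the $\mathbf{A}_0^-$ bound on $(F'-F'(0))/\sqrt\xi$ transported to $r_0^-\subset\mathcal{R}^{-*}$ via $\bar F'(t)=[F'(t^*)]^*$; the value of the constant $K$ there is bounded by $\sup_{\mathcal{R}^-\cap B_\nu}|(F'-F'(0))/\sqrt\xi|$. Then \eqref{2.20} and \eqref{2.21} of Lemma \ref{lem:2.11} give directly
\[
\Big|\frac{F'_-(\xi)-F'_-(0)}{\sqrt\xi}\Big|\le C\Big(|\nu|^{-1/2}\sup_{r_0^-/B_\nu}|\bar F'|+\sup_{\mathcal{R}^-\cap B_\nu}\big|\tfrac{F'-F'(0)}{\sqrt\xi}\big|\Big),
\]
and the remaining term $\sup_{r_0^-/B_\nu}|\bar F'|$ is controlled, via the Cauchy estimate for $\bar F'$ on $r_0^-$ in terms of $\sup_{\mathcal{R}/B_{\nu/2}}|\bar F|=\sup_{\mathcal{R}/B_{\nu/2}}|F|$ (up to a redefinition of $\nu$), by $|\nu|^{-3/2}\sup_{\mathcal{R}/B_\nu}|F|$ after inserting the weight; absorbing all $\nu$-dependent constants into $K_2$ gives the second displayed inequality.

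I expect the main obstacle to be bookkeeping rather than conceptual: one must be careful that $\Omega_0$ is genuinely strictly below $r_0$ with an $\epsilon$-independent opening angle (this is why $\varphi_0+\tfrac13\mu<\varphi_0+\tfrac12\mu$ appears in Definitions \ref{def:2.4} and \ref{def:2.6}), so that the separation constant in $|t-\xi|\ge c(|t|+|\xi|)$ and the constant $m$ in \eqref{2.19} depend only on $\varphi_0$; and one must handle the transition of estimates between the four domains $\mathcal{R}^-$, $\mathcal{R}$, $\mathcal{R}^{-*}$, $\Omega_0$ via the reflection identities consistently, keeping track of which estimates require only $F\in\mathbf{A}_0^-$ (using \eqref{2.18}) versus full symmetry. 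The derivative count $k=0,1,2$ is uniform: each extra derivative costs one more factor of $(t-\xi)^{-1}\lesssim|\nu|^{-1}$ on $\Omega_0/B_\nu$, which is exactly the $|\nu|^{-k}$ in the statement. No new ideas beyond Lemma \ref{lem:2.11} and the angular-domain geometry of Remark \ref{rem:2.8} are needed.
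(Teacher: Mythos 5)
Your proposal follows essentially the same route as the paper: both use the symmetrized representation \eqref{2.18}, the angular separation of $\Omega_0$ from $r_0$ (Remarks \ref{rem:2.7}--\ref{rem:2.8}), and Lemma \ref{lem:2.11} with $g=\bar F'$, together with a Cauchy estimate to convert $\sup_{r_0/B_\nu}|\bar F'|$ into $|\nu|^{-1}\sup_{\mathcal{R}/B_\nu}|F|$, yielding the $|\nu|^{-3/2}$ coefficient in the near-origin bound. One small imprecision worth cleaning up: differentiating \eqref{2.17} $k$ times gives $F_-^{(k+1)}$ rather than $F_-^{(k)}$, and the right-hand side of the first inequality involves $\sup|(\xi-2i)^\tau F|$ rather than a bound on $F'$, so you need to integrate by parts first to express $F_-^{(k)}$ as a Cauchy integral of $\bar F$ against $(t-\xi)^{-(k+1)}$; moreover the $|\nu|^{-k}$ factor does not follow from the pointwise bound $|t-\xi|^{-1}\le c^{-1}|\nu|^{-1}$ applied to the kernel (which, combined with integrating $|\bar F|$ over $r_0$, would overshoot or diverge) but from a split of the contour integral at $|t|\sim|\xi|$ using $|t-\xi|\ge c\max(|t|,|\xi|)$ and the decay of $\bar F$ --- a step the paper also leaves implicit.
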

\begin{proof}
From Remarks \ref{rem:2.7}-\ref{rem:2.8}, conditions
(\ref{2.16})-(\ref{2.18})
hold with $\Gamma =r_0$ and $\mathcal{D}$ = $\Omega_0$.
Using $\norm{\bar F}_{0,r_0^-}\leq \norm{F}_0^-$, (\ref{2.18})
and applying Lemma \ref{lem:2.11}, with $g=\bar F'$, we obtain the proof.
\end{proof}
\begin{remark}
\label{rem:2.9}
From now on, we choose $F\in\mathbf{A}_{0,\delta}^-,$
with additional restriction,
\begin{equation}
\label{2.24}
\delta< \frac{K_3}{2}\equiv \frac{H_m}{2K K_2},
\end{equation}
where $H_m,K $ are as in (\ref{2.5}) with $\mathcal{D}=\mathcal{R}$,
while $K_2$ is defined as in
Lemma \ref{lem:2.13}. This ensures
\begin{equation}
\label{2.25}
|\xi-2i||F'_-+\bar H|\geq (H_m-K\norm{F'_-}_{1,\mathcal{D}})\geq (H_m-KK_2\norm{F}_0)
\geq C\frac{K_3}{2}>0;
\end{equation}
so $F'_-+\bar H\neq 0$
in $\mathcal{R}$. On the other hand, from Cauchy Integral Formula and Lemma 2.12 in \cite{Xie1}, we have $\norm{F'}_{1,\mathcal{D}}\leq C\norm{F}_0\leq \frac{K_1}{2}$. Therefore from Remark 2.8, $G(F,F_-)$ is analytic in $\mathcal{D}'$ which is subdomain of $\mathcal{R}$ and contains the real $\xi$ axis .

\end{remark}

\begin{remark}
\label{rem:2.10}
Using Lemmas \ref{lem:2.4} and
\ref{lem:2.13}, $G(F,F_-) (t) ~=~O(t^{-\tau})$
as $|t|\to\infty,t$ in any angular subset of
$\mathcal{R}\cap\Omega_0$ including the real axis.
\end{remark}

\begin{definition}\label{def:2.14}
If $F \in\mathbf{A}_{0,\delta}$,
define operator $I_1(F)$ so that for Im $\xi<0$,
\begin{equation}\label{2.26}
I_1(F) [\xi] =
-\frac{1}{2\pi }\int_{-\infty}^{\infty}\frac{G(F,F_-)(t)}{t-\xi}dt.
\end{equation}
\end{definition}

\begin{lemma}\label{lem:2.15}
If $F \in \mathbf{A}_{0,\delta}$ and $F$
is analytic in $\mathrm{Im} ~\xi ~>~0$  and also satisfies
(\ref{1.14}),
then in the region $\{~ \mathrm{Im} ~\xi<0~\}\cap\mathcal{R}$, $F$ satisfies
\begin{multline}
\label{2.27}
F'(\xi)~=~\frac{1}{2\epsilon^4}\big\{-\big[ G_2(F, I_1)^2(F_-'+\bar H)+2\epsilon^4 G_1(F'_-)\big] \\
+(F_-'
+\bar H)G_2(F,I_1)\sqrt{G_2(F,I_1)^2-4\epsilon^4}\big\}.
\end{multline}
\end{lemma}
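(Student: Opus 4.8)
The plan is to obtain (\ref{2.27}) as a transcription of equation (\ref{2.9}) of Lemma \ref{lem:2.7} into the region $\mathcal{R}\cap\{\mathrm{Im}\,\xi<0\}$, using that on $\mathcal{R}$ the contour-defined derivative $F'_-$ coincides with $\bar F'$ and the nonlocal operator $I_1$ coincides with $I$. First I would observe that $F\in\mathbf{A}_{0,\delta}\subset\mathbf{A}$, that $F$ is analytic in $\{\mathrm{Im}\,\xi>0\}$, and that $F$ satisfies (\ref{1.14}) on the real axis, so the hypotheses of Lemma \ref{lem:2.7} hold and (\ref{2.9}) is valid throughout $\{\mathrm{Im}\,\xi<0\}$. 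It then only remains to rewrite the right-hand side of (\ref{2.9}), restricted to $\mathcal{R}\cap\{\mathrm{Im}\,\xi<0\}$, in terms of $F'_-$ and $I_1$ in place of $\bar F'$ and $I$.

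The key identification is Lemma \ref{lem:2.10}: since $F$ is analytic in $\mathcal{R}\cup\{\mathrm{Im}\,\xi>0\}$, one has $\bar F'(\xi)=F'_-(\xi)$ for all $\xi\in\mathcal{R}$, in particular on the real axis. Substituting this into the definition (\ref{2.3}) of $G$ — where $H$ and $\bar H$ are unchanged and only the argument derivatives enter — shows $G(F,\bar F)(t)=G(F,F_-)(t)$ for every real $t$, with the same choice of branch. Comparing the Cauchy-type integrals (\ref{2.8}) and (\ref{2.26}), whose integrands now agree on the whole real line, yields $I(F)[\xi]=I_1(F)[\xi]$ for all $\xi$ with $\mathrm{Im}\,\xi<0$, hence $G_2(F,I)=G_2(F,I_1)$ there. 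Substituting $\bar F'=F'_-$ (so that $G_1(\bar F')=G_1(F'_-)$ and $\bar F'+\bar H=F'_-+\bar H$) together with $G_2(F,I)=G_2(F,I_1)$ into (\ref{2.9}) turns it into (\ref{2.27}) verbatim, with the principal branch of the square root preserved.

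I do not expect a genuine obstacle: the proof is essentially bookkeeping that the operator $I_1$, built from the contour integral defining $F_-$, reproduces $I$ when $F$ is actually analytic in the upper half-plane. The points needing care are (a) that the rays $r_0^\pm$ carrying the integral (\ref{2.17}) lie in the region where $\bar F$ is analytic and decays like (\ref{1.17}), so that the Cauchy Integral Formula in Lemma \ref{lem:2.10} really gives $F'_-=\bar F'$ on $\mathcal{R}$ (this rests on Remark \ref{rem:2.8}), and (b) that the non-vanishing conditions needed for $G(F,\bar F)$, $G(F,F_-)$ and the operators $I$, $I_1$ to be defined on the real axis — namely $F'+H\neq0$ and $F'_-+\bar H\neq0$ — actually hold; these follow from the smallness restriction (\ref{2.24}) together with the estimates $\norm{F'}_{1,\mathcal{D}}\le C\norm{F}_0\le K_1/2$ and (\ref{2.25}) in Remark \ref{rem:2.9}.
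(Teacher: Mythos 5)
Your proof is correct and follows exactly the same route as the paper's: invoke Lemma \ref{lem:2.7} to get (\ref{2.9}), use Lemma \ref{lem:2.10} to identify $\bar F'=F'_-$ on $\mathcal{R}$, conclude $I(F)=I_1(F)$, and substitute. You supply some additional bookkeeping (why the integrands of $I$ and $I_1$ agree on the real line, and the non-vanishing conditions from Remark \ref{rem:2.9}) that the paper leaves implicit, but the argument is the same.
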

\begin{proof}
Since conditions of Lemma \ref{lem:2.7} are satisfied,
(\ref{2.9}) holds.
Since $F$  is analytic in $\mathcal{R}\cup\{\text{Im }\xi>0\}$
with property (\ref{1.17}),
from Lemma \ref{lem:2.10}, $F_-={\bar F}$;
hence $I_1(F)(\xi)=I(F)(\xi)$. Therefore (\ref{2.9}) implies (\ref{2.27}).
\end{proof}
\begin{lemma}
\label{lem:2.16}
If $F \in \mathbf{A}_{0,\delta}$,
and
$F$ satisfies equation (\ref{2.27})
in $\mathcal{R}\cap
\{~ \mathrm{Im } ~\xi<0~\}$,
then $F$ is analytic in $\mathcal{R}\cup\{~\mathrm{ Im } ~\xi>0 ~\}$ and satisfies
(\ref{1.14})
on the real $\xi$ axis.
\end{lemma}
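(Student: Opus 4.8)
The plan is to run the chain of implications in the proofs of Lemmas~\ref{lem:2.7}, \ref{lem:2.10} and \ref{lem:2.15} in reverse. \emph{First} I would ``un-square'' equation (\ref{2.27}). Since $F\in\mathbf{A}_{0,\delta}$ with the restriction (\ref{2.24}), Remarks~\ref{rem:2.4} and \ref{rem:2.9} give $F'+H\neq 0$ and $F_-'+\bar H\neq 0$ in $\mathcal{R}$, while $G_2(F,I_1)$ (cf.\ (\ref{2.11})) has small norm, so that $G_2(F,I_1)^2-4\epsilon^4$ stays in the cut plane on which the principal branch used in (\ref{2.27}) is single-valued. Reversing the algebra in the proof of Lemma~\ref{lem:2.7}, now with $I\mapsto I_1$, $\bar F'\mapsto F_-'$, and the branch fixed exactly as it was chosen there to be consistent with (\ref{2.13}), one finds that (\ref{2.27}) is equivalent, on $\mathcal{R}\cap\{\mathrm{Im}\,\xi<0\}$, to
\[
F(\xi)=-\epsilon^2 I_1(F)[\xi]+i\epsilon^2\,\frac{(F'+H)-(F_-'+\bar H)}{(F'+H)^{1/2}(F_-'+\bar H)^{1/2}}=-\epsilon^2 I_1(F)[\xi]+i\epsilon^2\,G(F,F_-)(\xi),
\]
the last equality being Definition~\ref{def:2.2}; this is precisely (\ref{2.13}) with $F_-$ in place of $\bar F$.

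\emph{Second} I would propagate analyticity across the real axis. By Remark~\ref{rem:2.10}, $G(F,F_-)(t)=O(t^{-\tau})$ on the real axis, and by Remark~\ref{rem:2.9}, $G(F,F_-)$ is analytic in a subdomain of $\mathcal{R}$ containing the real axis (at least away from $\xi=0$). Put $\Psi(\xi):=-\epsilon^2 I_1(F)[\xi]=\tfrac{\epsilon^2}{2\pi}\int_{-\infty}^{\infty}\tfrac{G(F,F_-)(t)}{t-\xi}\,dt$, sectionally analytic off the real axis; by the Plemelj formula $\Psi^{+}-\Psi^{-}=i\epsilon^2 G(F,F_-)$ on the real axis, an identity that persists in a full complex neighborhood of the punctured real axis. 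Combined with the first step this gives $F=\Psi^{-}+i\epsilon^2 G(F,F_-)=\Psi^{+}$ in a lower neighborhood of the punctured real axis inside $\mathcal{R}$; i.e.\ the continuation of $\Psi$ from below across the punctured real axis coincides with $F$. But $\Psi^{+}$, a Cauchy integral over the real axis with density $O(t^{-\tau})$, is analytic in the \emph{entire} open upper half plane; since it agrees with the given $F$ on the overlap --- and the agreement propagates throughout $\mathcal{R}\cap\{\mathrm{Im}\,\xi>0\}$ by analytic continuation --- $F$ and $\Psi^{+}$ splice to one function analytic on $\mathcal{R}\cup\{\mathrm{Im}\,\xi>0\}$, bounded near $\xi=0$ because $F$ is continuous on $\overline{\mathcal{R}}$ and the integral defining $\Psi$ converges up to $0$ thanks to the $\sqrt{\cdot}$-type bounds in Definition~\ref{def:1.3} and Lemma~\ref{lem:2.13}. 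Hence the origin is a removable singularity and $F$ is analytic there as well.

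\emph{Third} I would recover (\ref{1.14}). Now that $F\in\mathbf{A}$ is analytic in $\mathrm{Im}\,\xi>0$, Lemma~\ref{lem:2.10} gives $F_-'=\bar F'$ in $\mathcal{R}$, so $I_1(F)=I(F)$ and $G(F,F_-)=G(F,\bar F)$. On the real axis $\bar F'(\xi_0)=\overline{F'(\xi_0)}$, $\bar H(\xi_0)=\overline{H(\xi_0)}$, and $F'+H$ stays off the negative real axis, whence
\[
G(F,\bar F)(\xi_0)=\frac{(F'+H)-\overline{(F'+H)}}{(F'+H)^{1/2}\,\overline{(F'+H)}^{\,1/2}}\bigg|_{\xi_0}=\frac{2i\,\mathrm{Im}\bigl(F'(\xi_0)+H(\xi_0)\bigr)}{\bigl|F'(\xi_0)+H(\xi_0)\bigr|},
\]
a purely imaginary quantity. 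Taking the boundary value of $F=\Psi^{+}$ as $\xi\to\xi_0$ on the real axis and using the Plemelj formula,
\[
F(\xi_0)=\frac{\epsilon^2}{2\pi}\,\mathrm{P.V.}\!\int_{-\infty}^{\infty}\frac{G(F,\bar F)(t)}{t-\xi_0}\,dt+\frac{i\epsilon^2}{2}\,G(F,\bar F)(\xi_0);
\]
the principal-value integral is purely imaginary (its density is) and the last term purely real, so taking real parts yields
\[
\mathrm{Re}\,F(\xi_0)=\frac{i\epsilon^2}{2}\,G(F,\bar F)(\xi_0)=-\epsilon^2\,\frac{\mathrm{Im}\bigl(F'(\xi_0)+H(\xi_0)\bigr)}{\bigl|F'(\xi_0)+H(\xi_0)\bigr|},
\]
which is exactly (\ref{1.14}) (and the imaginary part reproduces the expected conjugate relation).

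The hard part will be the two places flagged in the Introduction. One is the branch bookkeeping in the first step: this is precisely the spot where taking the wrong root of the quadratic (\ref{2.14}) would produce an $F$ violating (\ref{1.14}), so one must verify --- using $\norm{F}_0\le\delta$ and $\epsilon$ small --- that the derivative prescribed by (\ref{2.27}) lands on the branch consistent with (\ref{2.13}). The other, more delicate, is the behavior at $\xi=0$ in the second step: there $F_-'$, and hence $G(F,F_-)$ and the coefficients in (\ref{2.27}), fail to be analytic, so the continuation of $\Psi$ across the axis is only immediate away from the origin; one must then exploit the $|\xi|^{-1/2}$-type control built into $\norm{\cdot}_0$ and Lemma~\ref{lem:2.13} to see that $\Psi$ remains continuous up to $0$, so that the punctured-disc singularity at the origin is removable and $F$ genuinely extends through it.
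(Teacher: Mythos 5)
Your proposal is correct and follows essentially the same route as the paper's proof: un-square (\ref{2.27}) to recover the Plemelj-type relation (\ref{2.28}), use the jump formula to continue $F$ across the real axis into $\{\mathrm{Im}\,\xi>0\}$ via (\ref{2.29}), and then invoke Lemma~\ref{lem:2.10} to replace $F_-'$ by $\bar F'$ and read off (\ref{1.14}) from the boundary value. Your explicit attention to the branch bookkeeping and to the $\sqrt{\xi}$-controlled behavior near the origin is extra care the paper leaves implicit, but it does not change the argument.
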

\begin{proof}
Note on using expression from (\ref{2.26}),
(\ref{2.27}) can be rewritten as:
\begin{equation}\label{2.28}
F(\xi)=-\epsilon^2 I_1(F)(\xi)
+i\epsilon^2 G(F,F_-)[\xi] \quad \text{for Im }~\xi<0,
\end{equation}
where operator $G$ is defined by (\ref{def:2.2}).
Analytically
continuing the above equation to the upper half plane, we have:
\begin{equation}
\label{2.29}
F(\xi)=
\frac{\epsilon^2}{2\pi }\int_{-\infty}^{\infty}\frac{G(F,F_-)(t)dt}{t-\xi}\quad
\text{~for Im }\xi>0;
\end{equation}
so $F(\xi)$ is analytic in the upper half plane.
From Lemma \ref{lem:2.10}, $F_- (\xi)={\bar F} (\xi)$; hence on
the real $\xi$-axis, $F_- (\xi) = F^*(\xi)$.
Equation (\ref{2.29}) reduces to:
\begin{equation*}
F(\xi)=-\frac{\epsilon^2}{\pi i}\int^{\infty}_{-\infty}
\frac{dt}{t-\xi}\frac{1}{\abs{F'(t)+H(t)}^{1/2}}\text{
Im }\left[F'(t)+H(t)\right] \qquad~\text{ for Im ~}\xi>0.
\end{equation*}
On taking the limit
Im ~$\xi ~\rightarrow~0^+$,
the above implies (\ref{1.14}).
\end{proof}
\noindent Because of Remark \ref{rem:1.5}
about equivalence of condition (iii)
to Im ~$F =0$ on $\{ \text{ Re ~}\xi =0 \} \cap \mathcal{R}$,
Lemmas \ref{lem:2.15} and \ref{lem:2.16} imply:
\begin{theorem}\label{thm:2.17}
The finger problem is equivalent to\\
\noindent {\bf Problem 1:} Find function $F
\in\mathbf{A}_{0,\delta}$, satisfying $\mathrm{Im} ~F ~=~0$ on
$\left \{~ \mathrm{Re} ~\xi =0 \right \} \cup \mathcal{R} $,
so that (\ref{2.27}) is satisfied
in $\mathcal{R}\cap\{~\mathrm{Im }~\xi <0\}$.
\end{theorem}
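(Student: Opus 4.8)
The plan is to prove the two directions of the equivalence separately, each time reducing to the lemmas just established in this section together with Remark \ref{rem:1.5}; the statement itself carries no new analytic content.

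First I would take the direction from the Finger problem to Problem 1. Given a solution $F$ of the Finger problem, $F \in \mathbf{A}_{0,\delta}$ is analytic in $\{\mathrm{Im}\,\xi > 0\} \cup \mathcal{R}$, hence $\mathbf{C^1}$ up to the real axis, and conditions (i) and (iii) hold there. Condition (i) is exactly equation (\ref{1.14}), so Lemma \ref{lem:2.15} applies verbatim and yields (\ref{2.27}) in $\mathcal{R} \cap \{\mathrm{Im}\,\xi < 0\}$. To convert the symmetry requirement, I would invoke the first part of Remark \ref{rem:1.5}: since $F$ is analytic in the upper half plane, (\ref{1.19}) propagates by successive Taylor expansion along the imaginary axis from $\xi = 0$ to give $\mathrm{Im}\,F = 0$ on $\{\mathrm{Re}\,\xi = 0\} \cap \mathcal{R}$. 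This is exactly what Problem 1 demands.

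For the converse direction, I would start from $F \in \mathbf{A}_{0,\delta}$ satisfying (\ref{2.27}) in $\mathcal{R} \cap \{\mathrm{Im}\,\xi < 0\}$ with $\mathrm{Im}\,F = 0$ on $\{\mathrm{Re}\,\xi = 0\} \cap \mathcal{R}$. Lemma \ref{lem:2.16} then gives at once that $F$ is analytic on $\mathcal{R} \cup \{\mathrm{Im}\,\xi > 0\}$ and satisfies (\ref{1.14}) on the real axis, so the extension is $\mathbf{C^1}$ up to the axis and condition (i) holds; condition (ii), $F \to 0$ at $\pm\infty$, is automatic from membership in $\mathbf{A}_{0,\delta}$ via (\ref{1.17}). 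The ``conversely'' part of Remark \ref{rem:1.5} upgrades $\mathrm{Im}\,F = 0$ on the imaginary segment to $F(\xi) = [F(-\xi^*)]^*$, which is precisely condition (iii). Thus $F$ solves the Finger problem, and the equivalence follows.

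The only place where something genuinely happens is inside Lemma \ref{lem:2.16}, which I am using as a black box here: the passage from a function satisfying the algebraic-looking identity (\ref{2.27}) in the lower half of $\mathcal{R}$ to one analytic across the real axis. What makes this work is rewriting (\ref{2.27}) in the Plemelj form (\ref{2.28}) and observing that the Cauchy integral $I_1(F)$ and the boundary term $G(F,F_-)$ recombine into the single integral (\ref{2.29}) that continues analytically to $\mathrm{Im}\,\xi > 0$, together with the fact that the principal branch of the square root in (\ref{2.27}) is the one forced by (\ref{2.13}) so no spurious cut crosses the real axis. I would also want to keep in mind, throughout both directions, that the identification $I_1(F) = I(F)$ rests on $F_- = \bar F$ (Lemma \ref{lem:2.10}), which is legitimate only because the size restriction (\ref{2.24}) on $\delta$ secures $F_-' + \bar H \neq 0$ in $\mathcal{R}$; this is the sort of bookkeeping that, if mishandled, would break the equivalence.
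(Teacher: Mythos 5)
Your proof is correct and follows essentially the same route as the paper, which simply cites Lemma \ref{lem:2.15}, Lemma \ref{lem:2.16}, and Remark \ref{rem:1.5} for the two directions of the equivalence. The extra commentary on the inner workings of Lemma \ref{lem:2.16} and on the role of the smallness restriction (\ref{2.24}) is sound but supplementary; the paper leaves it implicit.
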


\subsection{Formulation of Problem 2}
\label{sec:2.2}
\noindent Let $\alpha_1>0$ be a fixed constant independent of $\epsilon$ so that $-\alpha_1 i\in \mathcal{R}$.
 
\noindent\begin{definition}\label{def:2.18}
Define two rays
\[
r^+_1=\{\xi:\xi=-\alpha_1 i+\rho e^{-i\varphi_0},
0<\rho<\infty\},
\]
\[
r^-_1=\{\xi:\xi=-\alpha_1 i+\rho e^{i(\pi +\varphi_0)},
0<\rho<\infty\}.
\]
$r_1=r_1^-\cup r_1^+$ is a directed contour from
left to right (see Fig.5).
\end{definition}

\begin{figure}
\begin{center}
\resizebox{0.75\textwidth}{!}
{\includegraphics{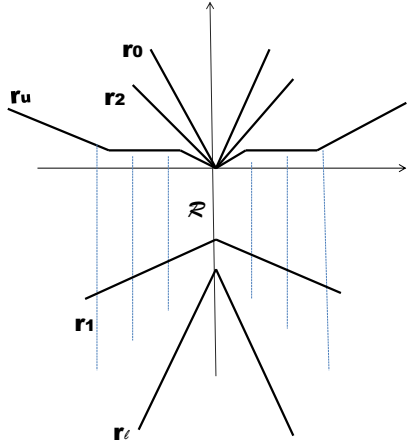}}
\end{center}
\caption{Rays defined in Subsection 2.2.}
\label{fig:5}
\end{figure}

\begin{definition}\label{def:2.19}
Let $F \in \mathbf{A}_{0,\delta}$.
For $\xi$ above $r_1$, define
\begin{equation}
\label{2.30}
F_+(\xi)= \frac{1}{2 \pi i} \int_{r_1} \frac{F(t) dt}{t-\xi}.
\end{equation}
\end{definition}
\begin{remark}
\label{rem:2.11}
$F_+$ as defined above
is analytic for $\xi$ above $r_1$.
Also, it is to be noted that
for
$F \in \mathbf{A}_0^-$ only, when
$F(\xi) = [F(-\xi^*)]^* $ is invoked to define $F$ on $\mathcal{R}^+$,
we can write
\begin{equation}
\label{2.31}
\begin{split}
F_+(\xi)&=
\frac{1}{2 \pi i} \left [ \int_{r_1^-}
\frac{F(t) dt}{t-\xi} + \int_{r_1^+} \frac{[F(-t^*)]^* dt}{t-\xi} \right ]
\\
&=~\frac{1}{2 \pi i} \int_{r_1^-}
\left [ \frac{F(t) dt}{t-\xi} - \frac{[F(t)]^* dt^*}{t^*+\xi} \right ].
\end{split}
\end{equation}
This expression is equivalent to (\ref{2.30}) when the symmetry condition:
Im ~$F ~=~0$ on $\{ \text{ Re ~}\xi =0 \} \cap \mathcal{R}$ is satisfied.
However, even without symmetry, (\ref{2.31})
still defines an analytic function for $\xi$ above $r_1$, with
possible singularity at $\xi = -i \alpha_1$.
It is easy to see that $F_+ (\xi)$ satisfies the symmetry condition on
$\{ \text{ Re ~}\xi = 0 \} \cup \mathcal{R} \cup \{ \text{ Im ~}\xi >-\alpha_1 \} $ even
when $F$ does not.
Further, if $F \in \mathbf{A}_0$ is also analytic in $\{ \text{ Im ~}\xi ~>~0 \} $,
then it is clear from (\ref{2.30}) on closing the contour from the top
that $F_+ (\xi) = F (\xi)$.

\end{remark}
\begin{definition}\label{def:2.20}
Define
\begin{multline}\label{2.32}
\Omega_1=\bigg\{\xi:\xi  \text{ is above }~
\{\xi=-\frac{\alpha_1}{2}i+\rho e^{i(\pi+\frac{\varphi_0}{2})}\}\cup\{\xi=-\frac{\alpha_1}{2}i+\rho e^{-i\frac{\varphi_0}{2}}\}\bigg\}.
\end{multline}
\end{definition}
\begin{remark}
\label{rem:2.12}
$\Omega_1$ is an angular subset of the region $\{\xi:
\xi\text{ above }r_1\}$, $r_1$ as in Definition \ref{def:2.18}.
\end{remark}
\begin{lemma}\label{lem:2.21}
Let $F\in \mathbf{A}_{0,\delta}^-,F'\in\mathbf{A}_{1,\delta_1}^-$, then
$$\underset{\xi\in\Omega_1}{\sup}
\left\lvert(\xi-2i)^{k+\tau}F_+^{(k)}(\xi)\right\rvert\leq
K_2  \norm{F}_0^- \quad \text{ for }k=0,1,2,$$
where $K_2>0$ is independent of $\epsilon$, $\lambda$.
\end{lemma}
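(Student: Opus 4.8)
The plan is to bound the Cauchy-type integral (\ref{2.30}) defining $F_+$ (or its symmetrized version (\ref{2.31})) on the angular domain $\Omega_1$, which by Remark \ref{rem:2.12} stays a fixed positive distance away from the contour $r_1$. First I would treat the case $k=0$. Write $F_+(\xi)=\frac{1}{2\pi i}\int_{r_1}\frac{F(t)}{t-\xi}\,dt$ and split the contour into the portion inside $B_\nu$ (near the origin) and the portion outside. On $r_1\setminus B_\nu$ we have $|(t-2i)^\tau F(t)|\le \norm{F}_0^-$ and $|t-2i|$ bounded below, so $|F(t)|\le C\norm{F}_0^-$ there; combined with the geometric fact that $|t-\xi|\ge m\,|\rho e^{i\varphi_1}-|\xi||$ for $t=\rho e^{i\varphi}\in r_1$, $\xi\in\Omega_1$ (the same inequality (\ref{2.19}) used in Lemma \ref{lem:2.11}, valid by Remark \ref{rem:2.7} since $r_1$ lies in an angular subset of the complement of $\Omega_1$), the integral over $r_1\setminus B_\nu$ contributes $\le C\norm{F}_0^-|\xi-2i|^{-\tau}$ after the usual scaling of the integration variable by $|\xi|$ and estimating $\int s^{-\tau}\,ds/|se^{i\varphi_1}-1|$. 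The piece over $r_1\cap B_\nu$ is handled by the bound $|(t-2i)^\tau F(t)|\le\norm{F}_0^-$, which near the origin gives $|F(t)|\le C\norm{F}_0^-$ (since $|t-2i|$ is bounded away from $0$ there too), and the integral converges because $F$ is only $O(t^{-\tau})$-type growth with $\tau<1$; the distance from $\Omega_1$ to $B_\nu\cap r_1$ is again bounded below, so this contributes $\le C\norm{F}_0^-$.

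Next, for $k=1,2$ I would either differentiate under the integral sign in (\ref{2.30}), picking up kernels $(t-\xi)^{-2}$ and $(t-\xi)^{-3}$, or equivalently apply the Cauchy integral formula for derivatives over a small circle around $\xi$ contained in $\{\xi \text{ above } r_1\}$ whose radius is comparable to $\mathrm{dist}(\xi,r_1)\sim |\xi-2i|$ (up to the fixed geometry constants). In either approach one gains exactly the extra factor $|\xi-2i|^{-k}$, so that $|(\xi-2i)^{k+\tau}F_+^{(k)}(\xi)|\le K_2\norm{F}_0^-$ with $K_2$ depending only on $\varphi_0$ (the opening angle fixing all the geometric constants $m$, $\varphi_1$ and the separation of $\Omega_1$ from $r_1$), and independent of $\epsilon$ and $\lambda$. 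One should note that the hypothesis $F'\in\mathbf{A}_{1,\delta_1}^-$ is what guarantees the differentiated integrals are genuinely bounded in the claimed weighted norm rather than merely finite; alternatively the Cauchy-formula route derives the $k\ge 1$ bounds from the $k=0$ bound directly and the role of $F'\in\mathbf{A}_{1,\delta_1}^-$ is just to ensure $F_+$ and its first two derivatives are well-defined with the stated decay.

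The main obstacle I anticipate is the behavior near $\xi=0$, i.e.\ handling the segment $r_1\cap B_\nu$ while keeping the constant $K_2$ independent of $\epsilon$: one must be careful that the $\sqrt{\xi}$-type non-analyticity of $F$ at the origin (which is allowed in $\mathbf{A}_0^-$, see Remark following Definition \ref{def:1.3}) does not spoil the estimate. But since $-\alpha_1 i\in\mathcal{R}$ with $\alpha_1$ fixed and independent of $\epsilon$, the contour $r_1$ passes through $-\alpha_1 i$ and hence stays a fixed distance $\ge c\alpha_1$ below the real axis near $\mathrm{Re}\,\xi=0$; in particular $r_1\cap B_\nu$ is bounded away from $\xi=0$ as soon as $\nu<\alpha_1$, so the apparent singularity never actually enters this integral and the weighted-$L^1$ estimate over $r_1$ goes through cleanly. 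A secondary point to check is that in the symmetrized form (\ref{2.31}) the potential singularity at $\xi=-i\alpha_1$ noted in Remark \ref{rem:2.11} lies outside $\Omega_1$ (which by construction stays above the broken line through $-\tfrac{\alpha_1}{2}i$), so $F_+$ is genuinely analytic with the claimed bounds throughout $\Omega_1$.
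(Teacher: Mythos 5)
Your argument is correct and is essentially the same as the paper's: the paper also invokes the geometric inequality (\ref{2.19}) via Remarks \ref{rem:2.7}--\ref{rem:2.12} and applies Lemma \ref{lem:2.11} with $g=F$ to the contour $r_1$, deriving the $k\ge 1$ bounds from the same Cauchy-integral estimate. Your write-up simply makes explicit the points the paper leaves implicit — the split into $r_1\setminus B_\nu$ and $r_1\cap B_\nu$, the observation that $r_1\cap B_\nu=\emptyset$ since $r_1$ passes through the fixed point $-i\alpha_1$, and that the potential singularity at $-i\alpha_1$ from (\ref{2.31}) lies below the defining broken line of $\Omega_1$ — all of which are correct.
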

\begin{proof}
The proof is very similar to that of Lemma \ref{lem:2.13}.
From Remarks \ref{rem:2.7}-\ref{rem:2.12}, conditions
(\ref{2.16})-(\ref{2.18})
hold with $\Gamma =r_1$ and $\mathcal{D}$ = $\Omega_1$.
Using $\norm{\bar F}_{0,r_0^-}\leq \norm{F}_0^-$, (\ref{2.31})
and applying Lemma \ref{lem:2.11}, with $g=F$, we obtain the proof.
\end{proof}

\begin{definition}\label{def:2.22}

We define two rays (see Figure \ref{fig:5}):
\[
r^+_2=\{\xi:\xi=\rho e^{i\varphi_0+\frac{1}{4}\mu },
0<\rho<\infty\},
\]
\[
r^-_2=\{\xi:\xi=\rho e^{i(\pi -\varphi_0-\frac{1}{4}\mu )},
0<\rho<\infty\}.
\]
$r_2$ is a directed contour from left to right on the path $r_2^-\cup
r_2^+$.
\end{definition}
\begin{remark}
\label{rem:2.13}
$r_2$ is an angular subset of $\Omega_0\cap\Omega_1$ and
$\mathcal{R}$ is below $r_2$.
\end{remark}
\begin{definition}
\label{def:2.12}
If $F\in\mathbf{A}_{0,\delta}^-$,
let $F'_-$ be given by (\ref{2.18}) and $F_+$ as in (\ref{2.30}).
Define operator $I_2$ so that
\begin{equation}
\label{2.33}
I_2(F)[\xi] =
-\frac{1}{2\pi }
\int_{r_2}\frac{G(F_+, F_-)(t)dt}{
t-\xi} \qquad
\text{for }\xi \text{ below }r_2 .
\end{equation}
\end{definition}
\begin{remark}
\label{rem:2.14}
Because of Lemmas \ref{lem:2.4}, \ref{lem:2.13} and \ref{lem:2.21},
$G(F_+, F_-)(t)\sim O(t^{-\tau})$ as $|t|\to \infty$
and analytic for
$t$ in any angular subset of $\Omega_0\cap\Omega_1$
that includes $r_2$; so $I_2(F)[\xi]$
is analytic below $r_2$. Also, from the
symmetry of each of $F_+$ and $F_-$, it is not difficult to
see that $I_2 (F)[\xi]$ also satisfies the symmetry condition on
$\{ \text{ Re ~}\xi = 0 \} \cap \mathcal{R}$.

\end{remark}

\noindent\begin{lemma}\label{lem:2.24}
Let $F\in\mathbf{A}_{0,\delta}^-$.
Then $I_2(F) \in\mathbf{A}$, and $
\norm{I_2(F)}_0\leq K_5,$
where $K_5>0$ is independent of $\epsilon$ and $\lambda$.
\end{lemma}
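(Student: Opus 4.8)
The plan is to show that $I_2(F)$ lies in $\mathbf{A}$ with a uniformly bounded norm by splitting the defining integral \eqref{2.33} into a piece away from the origin and a piece near the origin, and estimating each separately against $\norm{F}_0^-$. First I would record that, by Lemmas \ref{lem:2.13} and \ref{lem:2.21}, both $F_+$ and $F_-$ (more precisely $F_-'$ and $F_+$ with its derivatives) are controlled in the relevant weighted sup norms on the rays making up $r_2$; since $F\in\mathbf{A}_{0,\delta}^-$ with $\delta$ satisfying \eqref{2.24}, Remark \ref{rem:2.9} guarantees $F_-'+\bar H\neq 0$ on $r_2$, and the analogous smallness of $\norm{F_+'}_{1,r_2}$ (from Lemma \ref{lem:2.21}) gives $F_+'+H\neq 0$ there, so that Lemma \ref{lem:2.4} applies with $\mathcal{D}=r_2$ and yields $\norm{G(F_+,F_-)}_{0,r_2}\leq C(1+\norm{F}_0^-)\leq C'$, using the distance conditions on $\pm i,\pm\gamma i$ which hold for $r_2$ because $r_2$ is an angular subset of $\Omega_0\cap\Omega_1$ (Remark \ref{rem:2.13}) and $0<b<\min(1,\gamma)$.

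Next I would establish the decay estimate $\norm{(\xi-2i)^{\tau}I_2(F)}_{\infty,\overline{\mathcal{R}}}\leq K_5$. Since $G(F_+,F_-)(t)=O(t^{-\tau})$ on $r_2$ (Remark \ref{rem:2.14}) and $\mathcal{R}$ lies below $r_2$ (Remark \ref{rem:2.13}), the Cauchy-type integral over $r_2$ defines an analytic function on $\mathcal{R}$, and a standard estimate of the kernel $(t-\xi)^{-1}$ against the weight $|t-2i|^{-\tau}$ — using that $\mathrm{dist}(\mathcal{R},r_2)$ scales linearly along the rays — produces the bound $|I_2(F)(\xi)|\leq C\,\norm{G(F_+,F_-)}_{0,r_2}\,|\xi-2i|^{-\tau}$, hence $\leq K_5|\xi-2i|^{-\tau}$, which also gives the far-field behavior \eqref{1.17}. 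The remaining ingredient for membership in $\mathbf{A}$ is the near-origin derivative bound $\epsilon^2\sup_{\overline{\mathcal{R}\cap B_\nu}}\left|\frac{I_2(F)'(\xi)-I_2(F)'(0)}{\sqrt{\xi}}\right|<\infty$, together with control of $|I_2(F)'(0)|$; here I would differentiate under the integral sign and apply Lemma \ref{lem:2.11} with $\Gamma=r_2$, $\mathcal{D}=\mathcal{R}$, and $g=G(F_+,F_-)$ — noting that $\Gamma\cap\mathcal{R}=\{0\}$ is not literally true but $r_2$ is an angular subset of $\Omega_0\cap\Omega_1$ disjoint from $\mathcal{R}$ except asymptotically near $0$, so the hypothesis \eqref{2.19} of Lemma \ref{lem:2.11} holds by Remark \ref{rem:2.7}. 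The Hölder-$\tfrac12$ bound on $g$ near $0$ needed for Lemma \ref{lem:2.11} follows from the $\sqrt{\xi}$-bounds on $F_-'-F_-'(0)$ and $F_+-F_+(0)$ supplied by Lemmas \ref{lem:2.13} and \ref{lem:2.21} fed through the smoothness of the algebraic nonlinearity $G$ in its arguments.

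The main obstacle I anticipate is precisely this last step: verifying that $G(F_+,F_-)$ inherits a Hölder-$\tfrac12$ modulus of continuity at the origin from $F$. Unlike in the surface-tension problem of \cite{Xie1}, the coefficient structure here is delicate near the finger tip, and one must check that composing $F_+$ and $F_-'$ — each only $\sqrt{\xi}$-continuous at $0$ — with the rational expression in \eqref{2.3} (which involves square roots $(f'+H)^{1/2}$, $(g'+\bar H)^{1/2}$) does not destroy this regularity; the key point is that $F_-'+\bar H$ and $F_+'+H$ are bounded away from zero on $r_2\cap B_\nu$ by Remark \ref{rem:2.9} and Lemma \ref{lem:2.21}, so the square roots are smooth functions of their arguments there and the composition is Lipschitz in $F_\pm$, reducing the claim to the already-established $\sqrt{\xi}$-bounds. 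Once this is in hand, assembling the three pieces — weighted sup bound, near-origin derivative bound, and $|I_2(F)'(0)|$ — gives $\norm{I_2(F)}_0\leq K_5$ with $K_5$ depending only on the fixed geometric data $\varphi_0,\mu,b,\alpha_1$ and the compact $\lambda$-range, hence independent of $\epsilon$ and $\lambda$.
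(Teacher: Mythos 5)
Your proof follows essentially the same route as the paper's: control $F_\pm$ and their derivatives on $r_2$ via Lemmas~\ref{lem:2.13} and~\ref{lem:2.21}, invoke Lemma~\ref{lem:2.4} with $\mathcal{D}=r_2$ to bound $G(F_+,F_-)$ in the weighted sup norm, and then apply Lemma~\ref{lem:2.11} with $\Gamma=r_2$, $\mathcal{D}=\mathcal{R}$ to estimate the Cauchy-type integral in~\eqref{2.33}. The paper's proof is a three-line assembly of exactly these lemmas, and you have correctly identified and used the same ingredients, while filling in details that the paper leaves tacit (that $F_+'+H$ and $F_-'+\bar H$ stay bounded away from zero on $r_2$ so that Lemma~\ref{lem:2.4} applies, and that the $\sqrt{\xi}$ modulus of continuity needed for Lemma~\ref{lem:2.11} propagates from $F_-'$ through the algebraic expression $G$ because $F_+$ is analytic at the origin and the square roots are smooth where the denominators are bounded below).
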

\begin{proof}
From Lemma \ref{lem:2.13} and Lemma \ref{lem:2.21},
\[
\norm{F_-^{(k)}}_{k,r_2}\leq
K_2\norm{F}_0^-~~,~~ \norm{F_+^{(k)}}_{k,r_2}\leq
K_2 \norm{F}_0^-.
\]
Applying Lemma \ref{lem:2.4} (with
$\mathcal{D}=r_2$) and Lemma \ref{lem:2.11} (with
$\Gamma = r_2$ and $\mathcal{D}=\mathcal{R}$)
to (\ref{2.33}),
we complete the proof.
\end{proof}

\begin{lemma}\label{lem:2.26}
If $F \in \mathbf{A}_{0,\delta},$ and
$F$ satisfies (\ref{2.27})
in $\mathcal{R}\cap\{~\mathrm{Im }~\xi<0\}$, then for $\xi\in\mathcal{R}$,
$F$ satisfies
\begin{multline}
\label{2.34}
F'(\xi)~=~\frac{1}{2\epsilon^4}\big\{-\big[ G_2(F, I_2)^2(F_-'+\bar H)+2\epsilon^4 G_1(F'_-)\big] \\
+(F'_-+\bar H)G_2(F,I_2)\sqrt{G_2(F,I_2)^2-4\epsilon^4}\big\}.
\end{multline}
\end{lemma}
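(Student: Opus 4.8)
The identity (\ref{2.34}) differs from the already established (\ref{2.27}) only in that the operator $I_1$, whose defining integral runs over the real axis, is replaced by $I_2$, whose integral runs over the ray contour $r_2$. So the plan is: (a) use the hypotheses to make the integrands of $I_1$ and $I_2$ literally the same analytic function; (b) deform the integration path from the real axis up to $r_2$, obtaining $I_1(F)(\xi)=I_2(F)(\xi)$ on $\mathcal{R}\cap\{\mathrm{Im}\,\xi<0\}$ and hence (\ref{2.34}) there; and (c) extend (\ref{2.34}) from that open half of $\mathcal{R}$ to all of $\mathcal{R}$ by analytic continuation.

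For (a): since $F\in\mathbf{A}_{0,\delta}$ satisfies (\ref{2.27}) in $\mathcal{R}\cap\{\mathrm{Im}\,\xi<0\}$, Lemma \ref{lem:2.16} shows that $F$ is in fact analytic on $\mathcal{R}\cup\{\mathrm{Im}\,\xi>0\}$ and satisfies (\ref{1.14}) on the real axis. With analyticity in the upper half plane in hand, Lemma \ref{lem:2.10} (and the same Cauchy-integral argument, valid wherever $\bar F$ is analytic below $r_0$) gives $F_-=\bar F$ on $\mathcal{R}$ and along $r_2$, and Remark \ref{rem:2.11} gives $F_+=F$ above $r_1$, in particular along $r_2$. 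Hence $G(F_+,F_-)(t)=G(F,\bar F)(t)=G(F,F_-)(t)$ on and near $r_2$, and by Remarks \ref{rem:2.3}, \ref{rem:2.5} and Lemma \ref{lem:2.5} this common function is analytic throughout an angular subdomain of $\mathcal{R}\cap\mathcal{R}^*$ that contains the real axis, with decay $O(t^{-\tau})$ as $|t|\to\infty$.

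For (b): as the path is raised from the real axis to $r_2$ it sweeps out two thin wedges, pinched at the origin (both paths pass through $0$), which lie inside $\mathcal{R}^*\cap\{\mathrm{Im}\,\xi\ge0\}$ — here one uses that $r_2$ is an angular subset of $\Omega_0\cap\Omega_1$ lying strictly below $r_u$ (Remark \ref{rem:2.13}) together with the reflected geometry of $\mathcal{R}$. On those wedges $F'$ is analytic (they are in the upper half plane), $\bar F'$ is analytic (they are in $\mathcal{R}^*$), and $|\xi-2i||F'+H|$, $|\xi-2i||\bar F'+\bar H|$ are bounded below by the $\norm{F'}_1$-smallness estimates as in Remark \ref{rem:2.4}; thus $G(F,\bar F)(t)/(t-\xi)$ is analytic there for fixed $\xi$ with $\mathrm{Im}\,\xi<0$, is bounded near $t=0$ so that a small-arc indentation of the origin contributes nothing, and decays like $O(t^{-1-\tau})$ so the arcs at infinity vanish. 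Cauchy's theorem then gives $I_1(F)(\xi)=I_2(F)(\xi)$ for $\xi\in\mathcal{R}\cap\{\mathrm{Im}\,\xi<0\}$; substituting into (\ref{2.27}), with the principal branch of the square root carried over unchanged, produces (\ref{2.34}) on $\mathcal{R}\cap\{\mathrm{Im}\,\xi<0\}$. This step (b) is the one I expect to be the main obstacle: one must check that these narrow wedges genuinely lie inside the domain of analyticity of $G(F,\bar F)$ — which is exactly where the constraints on $\varphi_0,\mu$ and the care near $\xi=0$, where some coefficients are only H\"older rather than analytic, enter — and that the pinch at the origin does not spoil the deformation.

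For (c): both sides of (\ref{2.34}) extend analytically to all of $\mathcal{R}$. The left side is $F'$, analytic on $\mathcal{R}$. On the right side, $F_+=F$ (from (a)) and $F_-$, $I_2(F)$ are analytic on $\mathcal{R}$ (Lemmas \ref{lem:2.13}, \ref{lem:2.24}, Remark \ref{rem:2.14}), and $F_-'+\bar H\ne0$ on $\mathcal{R}$ by (\ref{2.25}); the one point needing care — the apparent multivaluedness of the square root — is resolved by the identity $G_2(F,I_2)=i\epsilon^2 G(F,F_-)$ on $\mathcal{R}$ (which follows from $I_1=I_2$ on the lower half of $\mathcal{R}$ plus analytic continuation, both sides being analytic on $\mathcal{R}$), whence $G_2(F,I_2)^2-4\epsilon^4=-\epsilon^4\,[(F'+H)+(F_-'+\bar H)]^2/[(F'+H)(F_-'+\bar H)]$ is a perfect square up to a nonvanishing factor, so that $\sqrt{G_2(F,I_2)^2-4\epsilon^4}$ is single-valued and analytic on $\mathcal{R}$. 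Since the two analytic sides of (\ref{2.34}) agree on the open set $\mathcal{R}\cap\{\mathrm{Im}\,\xi<0\}$, they agree throughout $\mathcal{R}$ by the identity theorem, which is the assertion.
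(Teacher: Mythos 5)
Your proposal is correct and follows essentially the same route as the paper: invoke Lemma \ref{lem:2.16} to get analyticity of $F$ in the upper half plane (so $F_+=F$, $F_-=\bar F$), deform the contour between the real axis and $r_2$ to obtain $I_1(F)=I_2(F)$ for $\mathrm{Im}\,\xi<0$, and then analytically continue (\ref{2.34}) to all of $\mathcal{R}$. The paper states these three steps tersely; you have supplied a careful justification of the contour deformation near the pinch at the origin and of the single-valuedness of the square root via $G_2(F,I_2)=i\epsilon^2 G(F,F_-)$, both of which are left implicit in the original proof but are consistent with it.
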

\begin{proof}
If $\delta $ is small enough,
from Lemmas \ref{lem:2.13} and \ref{lem:2.21},
$|(\xi-2i)^{1+\tau}F_-'|$ and
$|(\xi-2i)^{1+\tau} F^\prime_+ |$ 
are each small in the domain
$\Omega_0 \cap \Omega_1$
which contains the region between $r_2$ and $r_1$; hence
$F_-'+\bar H\neq 0$ and $F_+^\prime + H ~\neq 0$ in that domain.
From Lemma \ref{lem:2.16}, $F$ is analytic in $\{ \text{ Im ~}\xi ~>~0 \} $; hence $F_+ = F$.
By deforming the contour $r_2$ in
(\ref{2.33})
back to the real axis, it follows $I_2(F)(\xi)=I_1(F)(\xi)$, for Im $\xi<0$.  By analytic continuation, $F$ satisfies
(\ref{2.34}) for $\xi\in\mathcal{R}$.
\end{proof}

\noindent {\bf Problem 2}: Find function
$F\in\mathbf{A}_{0,\delta}^-$
so that $F$ satisfies the symmetry
condition Im ~$F =0 $ on $\mathcal{R} \cap \left \{ ~\text{Re ~}\xi = 0 \right \} $
and equation (\ref{2.34})
in $\mathcal{R}^-$.

\begin{theorem}\label{thm:2.41}
Let $F\in\mathbf{A}_{0,\delta}^-$.
If $F(\xi)$
satisfies the symmetry condition
$\mathrm{Im} ~F =0 $ on $\mathcal{R} \cap \left \{~ \mathrm{Re} ~\xi = 0 \right \} $
and the
equation (\ref{2.34}) in $\mathcal{R}^-$,
then for sufficiently small $\epsilon$ and $\delta$ ,
$F$ is a solution to {\bf Problem 1} (and hence a solution to
the original Finger Problem).
\end{theorem}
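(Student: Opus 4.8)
The plan is to reverse the chain of implications built up in Lemmas \ref{lem:2.15}, \ref{lem:2.16} and \ref{lem:2.26}. We are given $F\in\mathbf{A}_{0,\delta}^-$ satisfying the symmetry condition $\mathrm{Im}\,F=0$ on $\mathcal{R}\cap\{\mathrm{Re}\,\xi=0\}$ and equation (\ref{2.34}) in $\mathcal{R}^-$. First I would invoke Remark \ref{rem:1.5}: the symmetry condition on the imaginary axis segment, together with $F\in\mathbf{A}_0^-$, allows us to extend $F$ to all of $\mathcal{R}$ via $F(\xi)=[F(-\xi^*)]^*$, with $\norm{F}_0^-=\norm{F}_0$, so that in fact $F\in\mathbf{A}_{0,\delta}$ (after possibly shrinking $\delta$; here (\ref{2.24}) is used so all the operator estimates stay valid). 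By Remark \ref{rem:2.6} and Remark \ref{rem:2.11}, the extended $F$ has $F'_-$ and $F_+$ satisfying the symmetry condition as well, and equation (\ref{2.34}), originally posited only in $\mathcal{R}^-$, propagates to all of $\mathcal{R}$ by the Schwarz-reflection symmetry of both sides.

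Next I would show that (\ref{2.34}) on $\mathcal{R}$ implies (\ref{2.27}) on $\mathcal{R}\cap\{\mathrm{Im}\,\xi<0\}$, i.e.\ that $I_2(F)$ may be replaced by $I_1(F)$. The argument is the contour-deformation of Lemma \ref{lem:2.26} run in reverse: from (\ref{2.34}) rewritten in the form (\ref{2.28}) (with $I_2$ in place of $I_1$) and analytically continued to $\{\mathrm{Im}\,\xi>0\}$ as in the proof of Lemma \ref{lem:2.16}, one gets that $F$ is analytic in the upper half-plane, whence $F_+=F$ by Remark \ref{rem:2.11}. Then $G(F_+,F_-)=G(F,F_-)$ is analytic in an angular neighborhood of $\mathbb{R}$ inside $\Omega_0\cap\Omega_1$ with the $O(t^{-\tau})$ decay from Remark \ref{rem:2.10}, so the contour $r_2$ in (\ref{2.33}) can be pushed down to the real axis, giving $I_2(F)(\xi)=I_1(F)(\xi)$ for $\mathrm{Im}\,\xi<0$; hence (\ref{2.34}) becomes (\ref{2.27}) there. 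Finally, Lemma \ref{lem:2.16} applied to this (\ref{2.27}) gives that $F$ is analytic in $\mathcal{R}\cup\{\mathrm{Im}\,\xi>0\}$ and satisfies (\ref{1.14}) on the real axis; together with the symmetry condition this is exactly Problem 1, and Theorem \ref{thm:2.17} then identifies it with the Finger Problem.

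The one genuinely delicate point — and the place I would spend the most care — is the regularity needed to justify the contour deformation from $r_2$ down to the real axis, and in particular that $F'_-+\bar H\neq 0$ and $F_+'+H\neq 0$ on the swept region. This is not quite automatic from $F\in\mathbf{A}_{0,\delta}^-$: one needs the bounds of Lemmas \ref{lem:2.13} and \ref{lem:2.21} on $F_-^{(k)}$ and $F_+^{(k)}$ together with the smallness condition (\ref{2.24}) on $\delta$ to conclude, as in Remark \ref{rem:2.9} and the proof of Lemma \ref{lem:2.26}, that $|(\xi-2i)^{1+\tau}F_-'|$ and $|(\xi-2i)^{1+\tau}F_+'|$ are small on $\Omega_0\cap\Omega_1$, which contains the entire region between $r_2$ and the real axis once $F$ is known to be analytic there. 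The circularity — needing $F_+=F$ (hence analyticity across $\mathbb{R}$) before the deformation, but needing the deformation to produce (\ref{2.27}) — is resolved exactly as in Lemma \ref{lem:2.16}: analytic continuation of (\ref{2.28}) establishes analyticity in the upper half-plane first, independently of the value of the integral operator, and only afterwards does one identify $I_2$ with $I_1$. Once that ordering is respected the remaining steps are the routine bookkeeping of the earlier lemmas.
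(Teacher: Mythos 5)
There is a genuine gap, and it is precisely at the point you flagged as a circularity and then claimed to resolve ``exactly as in Lemma \ref{lem:2.16}.'' That resolution does not transfer from (\ref{2.27}) to (\ref{2.34}). In Lemma \ref{lem:2.16}, the rewriting (\ref{2.28}) works because $I_1$ is the Cauchy integral over $\mathbb{R}$ of the \emph{same} density $G(F,F_-)$ that appears in the explicit term: the Plemelj jump of $-\epsilon^2 I_1$ across the real axis is $i\epsilon^2 G(F,F_-)$, which exactly cancels the explicit $i\epsilon^2 G(F,F_-)$, so the lower-half-plane formula (\ref{2.28}) and the upper-half-plane formula (\ref{2.29}) glue to a single analytic function. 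By contrast, $I_2$ in (\ref{2.34}) is a Cauchy integral over $r_2$ of $G(F_+,F_-)$. Pushing $r_2$ to the real axis (as you propose) produces $I_3$ of (\ref{2.41}) plus a residual that involves $G(F_+,F_-)$, so the analogue of (\ref{2.28})–(\ref{2.29}) for the upper half plane is (\ref{2.40}), whose right-hand side still contains $G(F_+,F_-)-G(F,F_-)=-R_1^{-1}U'$ and hence $F'$ itself. Nothing in this identity forces $F$ to continue analytically across $\mathbb{R}$; it merely relates $F$ above the axis to $F$ below the axis \emph{together with} the unknown jump $U=F-F_+$. Your step ``one gets that $F$ is analytic in the upper half-plane'' is therefore unsupported, and once it falls, $F_+=F$ and $I_2=I_1$ do not follow.

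The paper's actual argument is exactly the missing ingredient. Lemma \ref{lem:2.42} subtracts (\ref{2.42}) from (\ref{2.40}) to obtain a closed, homogeneous linear integro-differential equation (\ref{2.35}) for $U=F-F_+$, with operators $\bar M$ and $M$ estimated in Lemma \ref{lem:2.46}; the weighted norm $\| \cdot \|_{\Omega_3}$, the logarithmic behavior of $P(\xi)$ near the tip (where $\tilde Q$ has the $-\gamma^2/\xi$ singularity), and the contour-splitting bounds on $\mathcal{U}$ then yield $\|U\|_{\Omega_3}\le C(\delta_2+\epsilon^2/\nu^2)\|U\|_{\Omega_3}$, whence $U\equiv 0$ for small $\epsilon,\delta$. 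Only after $U\equiv 0$ — i.e., $F=F_+$ is analytic across $\mathbb{R}$ — does the contour deformation you describe become legitimate and (\ref{2.34}) collapse to (\ref{2.27}). The secondary point you worried about (nonvanishing of $F_-'+\bar H$ and $F_+'+H$ on the swept region) is indeed handled by (\ref{2.24}) and Lemmas \ref{lem:2.13}, \ref{lem:2.21}, but it is subsidiary; the real work, which your proposal omits, is proving $U\equiv 0$.
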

The proof of Theorem \ref{thm:2.41} will be given
later after  several lemmas.

\begin{lemma}\label{lem:2.42}
Assume $F\in\mathbf{A}_{0,\delta}^-$
and  $F$ satisfies the
integral equation (\ref{2.34})
in $\mathcal{R}^-$ as well as the symmetry
condition $ \mathrm{Im} ~F (\xi)= 0$ for $\xi \in \{ ~\mathrm{Re} ~\xi =0\} \cap \mathcal{R}$.
Let $U(\xi)=F(\xi)-F_+ (\xi)$, then\\
(1) $U(\xi)$ is analytic in $\mathcal{R}\cap\{~ \mathrm{Im}~ \xi>0\}$.\\
(2) For $\xi\in\mathcal{R}^-\cap \{~ \mathrm{Im} ~\xi ~>~0 \}$, $U$ satisfies:
\begin{equation}
\label{2.35}
\epsilon^2 U'(\xi)+ \tilde Q(\xi) U(\xi)=\bar M(U)+M(U),
\end{equation}
where
\begin{equation}\label{2.36}
\tilde Q(\xi)=\frac{2i
H^{3/2}\bar H^{1/2}}{H+\bar H}=i\frac{(\xi+i\gamma)^{3/2}(\xi-i\gamma)^{1/2}}{\xi (\xi^2+1)},
\end{equation}

\begin{equation}
\label{2.37}
\begin{split}
&\bar M(\xi)=\bar M (U) [\xi]
\equiv [-iR_1(\xi)+\tilde Q(\xi)]U , \\
&M(U)(\xi):
=-\frac{\epsilon^2i}{2\pi}\mathcal{R}_1(\xi)\int_{-\infty}^\infty
\frac{\mathcal{R}_1^{-1}(t) U'(t)}{t-\xi}dt \qquad
\text{for $\mathrm{Im }~\xi ~>~0 $},
\end{split}
\end{equation}
$\mathcal{R}_1 (\xi) = R_1 (F, F_-, F_+) [\xi]$, where operator $R_1$
is defined by:
\begin{multline}
\label{2.38}
R_1(F,F_-,F_+)\\
=\frac{[(F'_++H)^{1/2}+(F'+H)^{1/2}](F_+'+H)^{1/2}(F'+H)^{1/2}(F_-'+\bar H)^{1/2}}{[(F'_-
+\bar H)+(F'+H)^{1/2}(F_+'+H)^{1/2}]}.
\end{multline}
\end{lemma}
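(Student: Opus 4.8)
We sketch the argument.

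\textbf{Part (1).} Since $F\in\mathbf{A}^-_{0,\delta}$ is analytic in $\mathcal{R}^-$ and $\mathrm{Im}\,F=0$ on $\{\mathrm{Re}\,\xi=0\}\cap\mathcal{R}$ by hypothesis, the Schwarz reflection principle (exactly as in Remark \ref{rem:1.5}) extends $F$ analytically across the imaginary-axis segment of $\mathcal{R}$; the branch cut of a possible $\xi^{3/2}$-type non-analyticity of $F$ (and of the $\sqrt{\xi}$ behaviour of $F'_-$) being placed in the upper half plane so as not to meet $\mathcal{R}$, it follows that $F$ is analytic on $\mathcal{R}\cap\{\mathrm{Im}\,\xi>0\}$. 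As $F_+$ — given by (\ref{2.30}), or by the symmetric form (\ref{2.31}) when symmetry is merely relaxed — is analytic above $r_1$ (Remark \ref{rem:2.11}), with its only possible singularity, at $\xi=-i\alpha_1$, lying in the lower half plane, and since $\mathcal{R}\cap\{\mathrm{Im}\,\xi>0\}$ lies above $r_1$, the difference $U=F-F_+$ is analytic on $\mathcal{R}\cap\{\mathrm{Im}\,\xi>0\}$.

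\textbf{Part (2).} The first move is to undo the squaring performed in Lemma \ref{lem:2.7}: with the branch of the square root in (\ref{2.34}) chosen consistently with the Plemelj representation (\ref{2.13}), equation (\ref{2.34}) on $\mathcal{R}^-$ is equivalent to $F=-\epsilon^2 I_2(F)+i\epsilon^2 G(F,F_-)$, and since $F$, $I_2(F)$, and $G(F,F_-)$ are all analytic on $\mathcal{R}$ for $\delta$ small, this holds on all of $\mathcal{R}$, in particular on $r_1$. The second move is the algebraic identity: writing $p=F'+H$, $q=F'_++H$, $s=F'_-+\bar H$ and using $p^{1/2}-q^{1/2}=U'/(p^{1/2}+q^{1/2})$,
\begin{equation*}
G(F,F_-)-G(F_+,F_-)=U'\,\frac{s+p^{1/2}q^{1/2}}{(p^{1/2}+q^{1/2})\,p^{1/2}q^{1/2}\,s^{1/2}}=\mathcal{R}_1^{-1}U',
\end{equation*}
which is precisely how $R_1$ in (\ref{2.38}) arises, $\mathcal{R}_1=R_1(F,F_-,F_+)$ being nonvanishing for $\delta$ small.

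Substituting the unsquared equation together with this identity into $F_+(\xi)=\frac{1}{2\pi i}\int_{r_1}F(t)(t-\xi)^{-1}\,dt$ and splitting into three integrals, one finds: the $I_2(F)$-integral vanishes (close $r_1$ downwards, where $I_2(F)$ is analytic below $r_2$ and $O(t^{-\tau})$ by Lemma \ref{lem:2.24}, while $\xi$ lies above $r_1$); the $G(F_+,F_-)$-integral, after deforming $r_1$ up to $r_2$ — licit since $G(F_+,F_-)$ is analytic above $r_1$, below $r_0$, hence between $r_1$ and $r_2$, and $O(t^{-\tau})$ by Remark \ref{rem:2.14} — and using (\ref{2.33}), combines with the first to give $-\epsilon^2 I_2(F)(\xi)+i\epsilon^2 G(F_+,F_-)(\xi)$; and the $\mathcal{R}_1^{-1}U'$-integral, after deforming $r_1$ down to the real axis — valid because $F$, $F_-$, $F_+$, hence $\mathcal{R}_1^{-1}U'$, are analytic in the region of the lower half plane between $r_1$ and the real axis (the $\xi^{3/2}$-cut sitting in the upper half plane), with only mild, integrable singular behaviour at $0$ and $-i\alpha_1$ circumvented by vanishing loops, and since $U'=O(t^{-1-\tau})$ — becomes $\frac{1}{2\pi i}\int_{-\infty}^{\infty}\mathcal{R}_1^{-1}(t)U'(t)(t-\xi)^{-1}\,dt$. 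Therefore
\begin{equation*}
F_+(\xi)=-\epsilon^2 I_2(F)(\xi)+i\epsilon^2 G(F_+,F_-)(\xi)+\frac{\epsilon^2}{2\pi}\int_{-\infty}^{\infty}\frac{\mathcal{R}_1^{-1}(t)U'(t)}{t-\xi}\,dt,
\end{equation*}
and subtracting this from $F(\xi)=-\epsilon^2 I_2(F)(\xi)+i\epsilon^2 G(F_+,F_-)(\xi)+i\epsilon^2\mathcal{R}_1^{-1}(\xi)U'(\xi)$ gives
\begin{equation*}
U(\xi)=i\epsilon^2\mathcal{R}_1^{-1}(\xi)U'(\xi)-\frac{\epsilon^2}{2\pi}\int_{-\infty}^{\infty}\frac{\mathcal{R}_1^{-1}(t)U'(t)}{t-\xi}\,dt.
\end{equation*}
Multiplying through by $-i\mathcal{R}_1(\xi)$ and invoking the definition (\ref{2.37}) of $M$ turns this into $\epsilon^2U'+i\mathcal{R}_1U=M(U)$; adding $(\tilde Q-i\mathcal{R}_1)U=\bar M(U)$ to both sides produces (\ref{2.35}), the explicit $\tilde Q$ of (\ref{2.36}) being $i\,R_1$ evaluated at $F=F_+=F_-=0$.

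\textbf{The main obstacle} is not this bookkeeping but the justification of the two contour deformations near $\xi=0$ and $\xi=-i\alpha_1$. One must know that the branch cut of the possible $\xi^{3/2}$ behaviour of $F$ and the $\sqrt{\xi}$ behaviour of $F'_-$ really lie outside $\mathcal{R}$, so that $\mathcal{R}_1^{-1}U'$ is genuinely analytic in the region between $r_1$ and the real axis and the small loops around $0$ contribute nothing in the limit; and one must control the corner behaviour of $F_+$ at $-i\alpha_1$ so that the split integrals remain integrable there. Establishing the analyticity and the $O(t^{-1-\tau})$ decay of $\mathcal{R}_1^{-1}U'$ in the relevant region — which is what collapses the nonlocal term to a clean Cauchy integral over the real axis — is the step that is routine for Saffman--Taylor fingers with surface tension but requires genuine care here, in keeping with the non-analyticity of the coefficients emphasized by Chapman and King.
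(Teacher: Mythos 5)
Your proposal is correct and follows essentially the same route as the paper's proof. Both rely on (a) using symmetry to extend (\ref{2.34}) from $\mathcal{R}^-$ to $\mathcal{R}$, (b) the algebraic identity $G(F,F_-)-G(F_+,F_-)=\mathcal{R}_1^{-1}U'$, and (c) substituting the unsquared equation into the Cauchy integral defining $F_+$ over $r_1$ and deforming contours. The only cosmetic difference is bookkeeping: the paper first absorbs the $G(F_+,F_-)$ piece into a single auxiliary function $I_3=I_2-iG(F_+,F_-)$ analytic above $r_1$ and closes upward, whereas you keep $I_2$ (closing downward, giving zero) and $G(F_+,F_-)$ (deforming $r_1$ up to $r_2$ and collecting the residue at $\xi$) as separate pieces; the two organizations give the same result. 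One small slip: you say the $\mathcal{R}_1^{-1}U'$-integral is obtained "after deforming $r_1$ down to the real axis," but $r_1$ lies below the real axis, so this deformation is upward (no pole is crossed since $\xi$ lies strictly above $\mathbb{R}$); this does not affect the conclusion. Your closing remarks identifying the branch-cut placement at the finger tip as the genuine subtlety are accurate and in fact make explicit a point the paper leaves largely implicit.
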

\begin{proof}
Since
each of $F$ and $F_+$ are analytic in $\mathcal{R}\cap\{~ \text{ Im ~}\xi>0\}$, it follows that
$U = F-F_+$ is also analytic in $\mathcal{R}\cap\{ \text{ Im ~}\xi>0\}$; hence statement (1)
follows.
Since $F$ satisfies (\ref{2.34}) in $\mathcal{R}^-$, the symmetry condition and the Schwarz reflection principle
that relates $F$ and its derivatives in $\mathcal{R}^+$ to
their values in $\mathcal{R}^-$ guarantees that
(\ref{2.34})
is satisfied in $\mathcal{R}$. But equation (\ref{2.34}) can be rewritten as:
\begin{equation}
\label{2.39}
F(\xi)=-\epsilon^2 I_2(\xi)
+i\epsilon^2 G(F, F_-)(\xi).
\end{equation}
Then, on deforming the contour
for $I_2$ from $r_2$ to $(-\infty,\infty)
$,
\begin{equation}
\label{2.40}
F(\xi) = -\epsilon^2 I_3 (\xi)
-i \epsilon^2 \left [ G(F_+, F_-)(\xi) - G (F, F_-)(\xi) \right ]
~\text{ for $\xi$ above $(-\infty,\infty)$},
\end{equation}
where
\begin{equation}
\label{2.41}
I_3(\xi)=-
\frac{1}{2\pi}\int_{-\infty}^\infty
\frac{G(F_+, F_-)(t)}{t-\xi} dt \qquad
\text{for $\xi$ above $r_1$ }.
\end{equation}
It is clear that $I_3 (\xi)$ is analytic above $r_1$; indeed from
contour deformation of (\ref{2.30}) and
(\ref{2.41}) and analyticity and decay properties of
$G(F_+, F_-)$ on $\mathcal{R}$ itself,
it is clear that $I_3 \in \mathbf{A}_0$ and analytic in
$\mathcal{R} \cup \{~\text{ Im ~}\xi ~>~0 \}$.
Substituting for $F$ from (\ref{2.41}) into
(\ref{2.34}), it follows that for Im ~$\xi~>~0$,
\begin{equation}
\label{2.42}
\begin{split}
&F_+ (\xi) = -\epsilon^2 I_3 (\xi) - \frac{\epsilon^2}{2 \pi}
\int_{-\infty}^\infty \frac{[G(F_+, F_-)[t] - G(F, F_-)[t]]}{t-\xi} dt \\
& =\frac{\epsilon^2}{2\pi}\int_{-\infty}^\infty
\frac{G(F, F_-)(t)}{t-\xi} dt
~~\qquad \text{for $ \text{ Im ~}\xi >~0$}.
\end{split}
\end{equation}
Subtracting (\ref{2.42}) from (\ref{2.40}),
we obtain for Im ~$\xi ~>~0$:
\begin{equation}
\begin{split}
\label{2.43}
U (\xi) ~=~&-i \epsilon^2
\left [G(F_+, F_-)(\xi)- G(F, F_-) (\xi)
\right ] \\
&~+ \frac{\epsilon^2}{2\pi} \int_{-\infty}^\infty
\frac{G(F_+, F_-)(t)- G(F, F_-) (t)}{t-\xi} dt.
\end{split}
\end{equation}

Using (\ref{2.3}), and the definition of $G_1$ and $G_4$ in (\ref{2.41}) and
(\ref{2.38}),
\begin{equation}
\label{2.44}
G(F, F_-)-G(F_+, F_-)=R_1^{-1} (F, F_-)U'.
\end{equation}
From (\ref{2.44}), (\ref{2.43}),
$U(\xi)$ satisfies the equation (2.35) for
Im ~$\xi ~>~0$.

\end{proof}

\begin{definition}
We define two rays $r_3^{-}$ and $r_3^{+}$:
$$ r_3^- = \{\xi: \xi=r e^{i\left(\pi-\frac{\phi_0}{2}\right)},r>0\}, $$
$$ r_3^{+} = \{\xi: \xi=r e^{\frac{i\phi_0}{2}},r>0\}; $$
and $\Omega_3$ is defined to be the region between $r_3=r_3^-\cup r_3^+$ and $r_u$.
\end{definition}

\begin{definition}\label{L2.4}
We define
\begin{gather}\label{2.45}
g_1(\xi)=\exp \left(-
\frac{P(\xi)}{\epsilon^2}\right),\\
g_2(\xi)=\exp \left(
\frac{P(\xi)}{\epsilon^2} \right),
\end{gather}
\end{definition}
where
\begin{equation}\label{2.47}
P(\xi)~=~\int^\xi \tilde Q(t)dt
~=~\int i\frac{(\xi+i\gamma)^{3/2}(\xi-i\gamma)^{1/2}}{\xi (\xi^2+1)}d\xi.
\end{equation}

\begin{lemma}
Let $U$ be as in Lemma 2.39  , then $U$ satisfies
\begin{equation}\label{2.48}
U=\frac{1}{\epsilon^2}g_1(\xi)\int_{\infty}^\xi
(\bar M(t)+M(t))g_2(t)dt.
\end{equation}
\end{lemma}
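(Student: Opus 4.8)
The plan is to read (\ref{2.35}) as a linear first-order inhomogeneous ODE for $U$ on the simply connected region $\mathcal{R}^-\cap\{\mathrm{Im}\,\xi>0\}$ and to solve it by an integrating factor. First I would record the elementary properties of $g_1,g_2$: since $P'=\tilde Q$ by (\ref{2.47}), and since the poles of $\tilde Q$ at $0,\pm i$ and its branch points at $\pm i\gamma$ all lie on or outside the closure of $\mathcal{R}^-\cap\{\mathrm{Im}\,\xi>0\}$ (using $0<b<\min(1,\gamma)$), the antiderivative $P$ is well defined and analytic there, so $g_1g_2\equiv 1$, $g_2'=\epsilon^{-2}\tilde Q\,g_2$ and $g_1'=-\epsilon^{-2}\tilde Q\,g_1$. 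Then I would multiply (\ref{2.35}) by $g_2$ and recognize an exact derivative:
\[
\epsilon^2\frac{d}{d\xi}\bigl(U g_2\bigr)=\epsilon^2 U'g_2+\tilde Q\,U g_2=\bigl(\bar M(U)+M(U)\bigr)g_2 .
\]

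By part (1) of the preceding lemma $U$ is analytic in $\mathcal{R}^-\cap\{\mathrm{Im}\,\xi>0\}$, hence so is the right-hand side; integrating the identity along a path inside the region that runs from infinity in the upper-left direction (along $r_{u,1}$, where $\arg\xi\to\pi-\varphi_0$) down to $\xi$, with path-independence supplied by Cauchy's theorem, gives
\[
U(\xi)g_2(\xi)-\lim_{t\to\infty}U(t)g_2(t)=\frac{1}{\epsilon^2}\int_\infty^\xi\bigl(\bar M(U)+M(U)\bigr)(t)\,g_2(t)\,dt .
\]
Dividing by $g_2(\xi)$ and using $g_1=1/g_2$ produces (\ref{2.48}), provided the boundary term at infinity vanishes and the integral converges.

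The substantive point is therefore the behaviour at infinity. For the boundary term, $U=F-F_+$ with $F\in\mathbf{A}_{0,\delta}^-$ decaying like $|\xi|^{-\tau}$ and $F_+=O(|\xi|^{-\tau})$ by Lemma \ref{lem:2.21}, so $U\to 0$; and from $\tilde Q(\xi)=i/\xi+O(\xi^{-2})$ one gets $P(\xi)=i\log\xi+O(1/\xi)+\mathrm{const}$, whence $|g_2(\xi)|=\exp\bigl(\epsilon^{-2}\mathrm{Re}\,P(\xi)\bigr)=\exp\bigl(-\epsilon^{-2}\arg\xi+O(|\xi|^{-1})\bigr)$ stays bounded (in fact exponentially small in $\epsilon^{-2}$) along the chosen path since $\arg\xi\to\pi-\varphi_0>0$, so $Ug_2\to 0$. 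For convergence of the integral I would use the cancellation $-iR_1+\tilde Q\to 0$ at infinity, valid because $R_1\to\frac{1}{i}\tilde Q=\frac{2H^{3/2}\bar H^{1/2}}{H+\bar H}$ once $F',F_\pm'\to 0$; this makes $\bar M(U)=(-iR_1+\tilde Q)U$ decay like $|\xi|^{-1-\tau}$ (or faster), and likewise $M(U)=O(|\xi|^{-1-\tau})$ since $\mathcal{R}_1\sim\xi^{-1}$ and $\mathcal{R}_1^{-1}U'\sim\xi^{-\tau}$, so that together with the boundedness of $g_2$ the integral $\int_\infty^\xi$ is absolutely convergent and path-independent. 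I expect the main obstacle to be precisely this last paragraph: pinning down the asymptotics of $P$ at infinity — hence the branches of $(\xi\pm i\gamma)^{1/2,3/2}$ and the $\epsilon$-independent integration constant implicit in $g_1,g_2$ — so that $|g_2|$ is genuinely controlled, together with verifying the $-iR_1+\tilde Q=o(1/\xi)$ cancellation needed for integrability of the inhomogeneity; the integrating-factor manipulation itself is routine.
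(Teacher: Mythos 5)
Your proposal is correct and follows essentially the same route as the paper: write the general solution of the linear first-order ODE (\ref{2.35}) by an integrating factor as $U=Cg_1+\frac{1}{\epsilon^2}g_1\int_\infty^\xi(\bar M+M)g_2\,dt$, and eliminate the homogeneous term $Cg_1$ using the boundary condition at infinity. The paper compresses your final paragraph into one line — $C=0$ because $U(\infty)=0$ and $g_1(\infty)\ne 0$ (the latter from Lemma 6.1, which records that $\mathrm{Re}\,P$ tends to a finite nonzero constant at infinity) — whereas you rederive the same fact from the $\tilde Q\sim i/\xi$ asymptotics and additionally verify the $-iR_1+\tilde Q\to 0$ cancellation ensuring the inhomogeneity is integrable; that extra care is correct but the paper treats convergence as implicit given the bounds established in Lemmas 2.50–2.53.
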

\begin{proof}
Since $U$ satisfies (\ref{2.35}), we have
$$U(\xi)=Cg_1(\xi)+\frac{1}{\epsilon^2}g_1(\xi)\int_{\infty}^\xi
(\bar M(t)+M(t))g_2(t)dt.$$
$C$ has to be zero since $U(\infty)=0$ and $g_1(\infty)\neq 0$ from Lemma 6.1.
\end{proof}

\begin{remark}
\label{2.17.9}
We will show that $U = 0$. Since $F$, $F^+$ and hence $U$ is known
to be analytic in $\mathcal{R}$ and continuous upto its boundary,
it is enough to show that $U = 0$ on $\Omega_3^-$.
We will do so by showing that equation (\ref{2.48})  forms a contraction map in the space of
functions $U$ on $\Omega_3^-$, with norm
$$ \| U \|_{\Omega_3} = \sup_{\xi \in \Omega_3} ~|\xi-2i|^\tau |U (\xi)|+\epsilon^2 \underset{\overline{B_\nu\cap \Omega_3}}{\sup}~\left\vert\frac{U'(t)-U'(0)}{\sqrt{t}}\right\vert
 +|U'(0)|$$
It is to be noted that the integration in (\ref{2.48})
can be performed on the path $\mathcal{P}(\xi,\infty)$ contained in $\Omega_3^-$, so that on
this path on any $\mathbf{C}^1$ segment,
$\frac{d}{ds} ~\text{Re} ~P(t(s)) ~<~0$ for arc length $s$ increasing
in the direction of $\infty$, Lemmas 2.50-2.53
are valid and hence the integral operator in (\ref{2.48})  is bounded when
restricted to functions on $\Omega_3^-$.  Further note that since
$U$ are analytic in $\mathcal{R} \cup \{ ~\text{Im ~}\xi ~<~0 \}$ and
satisfies the symmetry condition,
it follows that $U$ on $\Omega_3^{-}$ completely determines
$U$ and its derivatives for $\xi$ real. This is crucial in
controlling $\mathcal{M} $ on $r_3^-$, as necessary.
\end{remark}

\begin{lemma}\label{lem:2.43}
Let $F\in\mathbf{A}_{0,\delta}$,
then for $\xi\in\mathcal{R}$
\begin{equation}\label{2.49}
 |R_1 (F, F_-)(\xi)|\leq C /|\xi-2i|, ~|1 / R_1 (F, F_-)(\xi)|\leq C |\xi-2i|,
\end{equation}
where $C$ is independent of $\epsilon$ and $\lambda$.
\end{lemma}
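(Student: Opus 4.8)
The plan is to estimate $R_1(F,F_-)(\xi)$ directly from its defining formula \eqref{2.38}, using the fact that $F \in \mathbf{A}_{0,\delta}$ forces $F'$, $F_-'$, $F_+'$ all to be small in the $\norm{\cdot}_{1,\mathcal{R}}$ norm, so that each factor of the form $(F'+H)^{1/2}$, $(F_-'+\bar H)^{1/2}$, $(F_+'+H)^{1/2}$ is controlled above and below by the corresponding power of $H$ (or $\bar H$), which in turn behaves like $|\xi-2i|^{-1}$ on $\mathcal{R}$. Concretely, first I would record the two-sided bound
\begin{equation}
\label{eq:Hbound}
C_1 |\xi-2i|^{-1} \le |H(\xi)| \le C_2 |\xi-2i|^{-1}, \qquad C_1 |\xi-2i|^{-1} \le |\bar H(\xi)| \le C_2 |\xi-2i|^{-1}
\end{equation}
valid for $\xi \in \mathcal{R}$ with $C_1, C_2$ independent of $\epsilon$ and $\lambda$ (this is exactly the content of the $H_m$, $K$ estimates in \eqref{2.5} together with $\mathrm{dist}(\mathcal{R},\pm i) > 0$ and $\mathrm{dist}(\mathcal{R},\pm\gamma i)>0$, as in Lemma \ref{lem:2.4}).

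Next I would use Remarks \ref{rem:2.4}, \ref{rem:2.9} and Lemmas \ref{lem:2.13}, \ref{lem:2.21}: since $\delta$ satisfies \eqref{2.24}, we have $\norm{F'}_{1,\mathcal{R}} \le C\norm{F}_0 \le K_1/2$ and likewise $\norm{F_-'}_{1,\mathcal{R}}$, $\norm{F_+'}_{1,\mathcal{R}}$ are bounded by a fixed multiple of $\delta$. Hence, by the triangle inequality exactly as in \eqref{2.6}--\eqref{2.7}, each of $F'+H$, $F_+'+H$, $F_-'+\bar H$ satisfies a two-sided bound of the same form \eqref{eq:Hbound}, with $F'+H \ne 0$ etc. in $\mathcal{R}$, so the square roots in \eqref{2.38} are well defined (choosing the principal branch, which is legitimate since these quantities stay in a fixed sector away from the negative real axis for $\delta$ small) and satisfy
\begin{equation}
\label{eq:sqrtbound}
C_3 |\xi-2i|^{-1/2} \le |(F'+H)^{1/2}| \le C_4 |\xi-2i|^{-1/2},
\end{equation}
and similarly for the other two. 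Then the numerator of \eqref{2.38} is a product of four such half-power factors times the bracket $[(F_+'+H)^{1/2}+(F'+H)^{1/2}]$, which is $O(|\xi-2i|^{-1/2})$; so the numerator is $O(|\xi-2i|^{-5/2})$ and bounded below by $c|\xi-2i|^{-5/2}$. The denominator $[(F_-'+\bar H)+(F'+H)^{1/2}(F_+'+H)^{1/2}]$ is a sum of two quantities each of size comparable to $|\xi-2i|^{-1}$; here I must check they do not cancel — this follows because, for $\delta$ small, both $(F_-'+\bar H)$ and $(F'+H)^{1/2}(F_+'+H)^{1/2}$ are close (in the normalized variable $(\xi-2i)\,\cdot\,$) to $\bar H \cdot (\xi-2i)$ and $H(\xi-2i)$ respectively, whose sum $\bar H + H = \frac{2\xi}{\xi^2+1}$ times $(\xi-2i)$ is bounded away from zero on $\mathcal{R}$ (note $\xi=0 \notin \mathcal{R}$ has positive distance, and the zero is simple). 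Thus the denominator is two-sided comparable to $|\xi-2i|^{-1}$, giving $|R_1| \le C|\xi-2i|^{-5/2}/(c|\xi-2i|^{-1}) $ — wait, that gives $|\xi-2i|^{-3/2}$, not $|\xi-2i|^{-1}$.

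Let me recount: actually the numerator has three half-powers from $(F_+'+H)^{1/2}(F'+H)^{1/2}(F_-'+\bar H)^{1/2}$ plus one more from the bracketed sum, total $O(|\xi-2i|^{-2})$; divided by the denominator $\sim |\xi-2i|^{-1}$ this yields $|R_1| \le C|\xi-2i|^{-1}$ and $|1/R_1| \le C|\xi-2i|$, which is precisely \eqref{2.49}. So the final step is just this bookkeeping of powers: $-\tfrac12-\tfrac12-\tfrac12-\tfrac12 - (-1) = -1$. I expect the genuine obstacle to be verifying the non-cancellation in the denominator uniformly in $\epsilon$ and $\lambda$ near $\lambda = \tfrac12$ (where $\gamma$ stays close to $1$); this is handled by the positive-distance hypotheses $\mathrm{dist}(\mathcal{R}, \{0, \pm i, \pm\gamma i\}) > 0$ built into Definition \ref{def:1.1}, together with choosing $\delta$ small enough that the $F$-dependent perturbations are negligible compared to the gap. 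All constants remain independent of $\epsilon$ and of $\lambda$ in a compact subset of $(0,1)$ because every bound above traces back to \eqref{2.5} and Lemmas \ref{lem:2.4}, \ref{lem:2.13}, \ref{lem:2.21}, in which this independence is already established.
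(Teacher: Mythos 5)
Your power-counting bookkeeping is correct: the numerator of $R_1$ in \eqref{2.38} is four half-power factors, hence $O(|\xi-2i|^{-2})$, and dividing by a denominator that behaves like $|\xi-2i|^{-1}$ gives $|R_1|\leq C|\xi-2i|^{-1}$, with the reciprocal bound following by the same reasoning. That much matches what the paper intends. However, the crucial step — ruling out cancellation in the denominator $(F_-'+\bar H)+(F'+H)^{1/2}(F_+'+H)^{1/2}$ — is wrong as you have written it. You argue the leading part is $H+\bar H=\frac{2\xi}{\xi^2+1}$ (after normalizing by $(\xi-2i)$) and that this is bounded away from zero on $\mathcal{R}$ because "$\xi=0\notin\mathcal{R}$ has positive distance." But by Definition \ref{def:1.1}, the origin is the vertex of the wedge $r_{u,3}\cup r_{u,4}$ and hence lies on $\partial\mathcal{R}$: the region accumulates at $\xi=0$, so $\mathrm{dist}(0,\mathcal{R})=0$, and $H(0)+\bar H(0)=\gamma i-\gamma i=0$. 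The denominator therefore degenerates as $\xi\to 0$, and the individual non-vanishing of each summand (Remarks \ref{rem:2.4}, \ref{rem:2.9}) does not help because near the origin the two summands are approximately $\bar H\approx-\gamma i$ and $H\approx\gamma i$, nearly opposite. Indeed, to leading order $R_1\approx -i\tilde Q$ (compare \eqref{2.36} with \eqref{2.38} with $F,F_\pm$ small), and $\tilde Q\sim -\gamma^2/\xi$ near the origin; so $|R_1|$ actually blows up like $1/|\xi|$ near the finger tip. This is precisely the "non-analyticity at the finger tip" subtlety the paper emphasizes in the introduction and handles downstream by keeping the singular and regular parts separate (cf.\ the combination $-iR_1+\tilde Q$ in \eqref{2.37} and the $1/|\nu|$ and $1/|\nu|^2$ factors in Lemma \ref{lem:2.46}). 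So your reduction to a uniform non-cancellation statement is not available; the first inequality in \eqref{2.49} can only hold with a constant that degrades like the distance to the origin, and the lemma is in fact only invoked later on sets bounded away from $0$ (e.g.\ $(-\infty,-\nu)$ or $\Omega/B_\nu$). The second inequality $|1/R_1|\leq C|\xi-2i|$ is unproblematic near the origin since $1/R_1\to 0$ there, but that is not where the difficulty lies. In short: the bookkeeping is fine, but the non-cancellation claim is the real content of the lemma and your justification of it is false.
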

\begin{proof}
The lemma follows from (2.38) and Lemma 2.39.
\end{proof}

\begin{lemma}\label{lem:2.45}
Let $U(\xi)$ be as in Lemma \ref{lem:2.42}, then
\begin{equation}\label{2.50}
\underset{\xi\in \mathcal{D}}{\sup} ~|\xi-2i|^{1+\tau}|U'|(\xi)|
\leq \frac{C}{|\nu|}\norm{U}_{0, r_3^-} ~,~~
\end{equation}
where $\mathcal{D} =(-\infty,-\nu) $, $C$ is independent of $\epsilon$ and $\lambda$.
\end{lemma}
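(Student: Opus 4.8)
The plan is to represent $U'$ on $\mathcal{D}=(-\infty,-\nu)$ by a Cauchy integral along the ``roof'' $r_3=r_3^-\cup r_3^+$ and then estimate it directly. First I would collect the facts about $U=F-F_+$ that make this legitimate. By Lemma \ref{lem:2.42} and Remark \ref{2.17.9}, $U$ is analytic in $\mathcal{R}\cup\{\mathrm{Im}\,\xi<0\}$ and $U(\xi)=O(|\xi|^{-\tau})$ there; in particular $U$ is analytic on the whole region lying below $r_3$. The one point that needs checking is that nothing is hidden at $\xi=-i\alpha_1$: by the symmetry condition $F_+$ is given by (\ref{2.30}), and since $F$ is analytic throughout $\mathcal{R}$ one may deform the contour $r_1$ in (\ref{2.30}) upward through $-i\alpha_1$, which removes the ``possible singularity at $\xi=-i\alpha_1$'' flagged in Remark \ref{rem:2.11}; combined with the extension of $U$ to the lower half-plane coming from (\ref{2.43}), $U$ is analytic and $O(|\xi|^{-\tau})$ on a neighbourhood of the closed region below $r_3$. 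Finally, $U$ inherits the symmetry $U(\xi)=[U(-\xi^{*})]^{*}$ from $F$ and $F_+$; since $\zeta\mapsto-\zeta^{*}$ is an isometric involution of $\mathbb{C}$ carrying $r_3^+$ onto $r_3^-$ and preserving both $|U(\cdot)|$ and $|\,\cdot-2i|$, one obtains $\norm{U}_{0,r_3}=\norm{U}_{0,r_3^-}$, so it suffices to bound $\sup_{\xi\in\mathcal{D}}|\xi-2i|^{1+\tau}|U'(\xi)|$ by a constant times $\nu^{-1}\norm{U}_{0,r_3}$.

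Next, for $\xi\in\mathcal{D}$, which lies strictly below $r_3$, I would close the contour: the boundary of the region below $r_3$ is $r_3$ together with an arc at infinity, on which the integrand $U(t)/(t-\xi)^2$ is $O(|t|^{-2-\tau})$ against an arc length $O(|t|)$, so the arc drops out and Cauchy's formula for the derivative gives, up to an overall sign,
\[
U'(\xi)=\frac{1}{2\pi i}\int_{r_3}\frac{U(t)}{(t-\xi)^{2}}\,dt,\qquad \xi\in\mathcal{D}.
\]
The geometry is favourable. Writing $\xi=-a$ with $a=|\xi|>\nu$, and $t=\rho e^{i\theta}\in r_3$ with $\theta=\phi_0/2$ or $\theta=\pi-\phi_0/2$, the negative real axis makes an angle at least $\phi_0/2$ with each ray $r_3^{\pm}$, so $|t-\xi|\ge c(\phi_0)(\rho+a)$; moreover $|t-2i|^{2}=\rho^{2}-4\rho\sin(\phi_0/2)+4\ge c(\phi_0)(1+\rho)^{2}$. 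Using $|U(t)|\le\norm{U}_{0,r_3}\,|t-2i|^{-\tau}$ on $r_3$, the estimate reduces to
\[
|U'(\xi)|\le C(\phi_0,\tau)\,\norm{U}_{0,r_3}\int_{0}^{\infty}\frac{d\rho}{(\rho+a)^{2}(1+\rho)^{\tau}}.
\]

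It then remains to control the scalar integral $J(a)=\int_{0}^{\infty}(\rho+a)^{-2}(1+\rho)^{-\tau}\,d\rho$. Splitting at $\rho=1$ (or rescaling $\rho=a\sigma$) gives $J(a)\le C_\tau a^{-1-\tau}$ when $a\ge1$, and $J(a)\le a^{-1}+C_\tau\le C_\tau\nu^{-1}$ when $\nu<a<1$. Since $|\xi-2i|^{1+\tau}=(a^{2}+4)^{(1+\tau)/2}$ is $\le C a^{1+\tau}$ for $a\ge1$ and $\le C$ for $a<1$, in both ranges $|\xi-2i|^{1+\tau}|U'(\xi)|\le C\nu^{-1}\norm{U}_{0,r_3}=C\nu^{-1}\norm{U}_{0,r_3^-}$; taking the supremum over $\xi\in\mathcal{D}$ yields (\ref{2.50}), with $C$ depending only on $\phi_0$ and $\tau$ and hence independent of $\epsilon$ and $\lambda$. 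I expect the only genuine obstacle to lie in the first paragraph — verifying that $U$ really is analytic and $O(|\xi|^{-\tau})$ everywhere below $r_3$, so that the contour may legitimately be collapsed onto $r_3$ alone; the integral estimate itself is routine.
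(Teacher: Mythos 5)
Your proof is correct and follows the paper's basic strategy: represent $U'$ on $(-\infty,-\nu)$ by the Cauchy derivative formula along the contour $r_3$, then estimate. Where the paper handles the bulk by invoking Lemma~2.11 of~\cite{Xie1} (with $\Gamma = r_u$) for $\xi \in (-\infty,-3)$ and then treats $\xi \in (-3,-\nu)$ separately by splitting the contour at $|t|=3$, you give a single self-contained estimate: the lower bound $|t-\xi|\ge c(\phi_0)(\rho+a)$ on the rays $r_3^{\pm}$ together with $|t-2i|\ge c(\phi_0)(1+\rho)$ reduces everything to the scalar integral $J(a)=\int_0^\infty(\rho+a)^{-2}(1+\rho)^{-\tau}d\rho$, whose behavior $J(a)\le C a^{-1-\tau}$ for $a\ge 1$ and $J(a)\le C a^{-1}$ for $\nu<a<1$ yields (\ref{2.50}) cleanly and makes the $\nu^{-1}$ factor transparent. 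You also make explicit two points the paper glosses over: the reduction $\norm{U}_{0,r_3}=\norm{U}_{0,r_3^-}$ via the involution $\zeta\mapsto-\zeta^{*}$ (which is indeed legitimate since the symmetry hypothesis of Lemma~\ref{lem:2.42} passes to $U=F-F_+$ and the map preserves $|\cdot-2i|$), and the removability under that same hypothesis of the apparent singularity of $F_+$ at $\xi=-i\alpha_1$, which is needed to justify that $U$ is analytic throughout the region below $r_3$ and that the arc at infinity contributes nothing. The net effect is a proof that is equivalent in substance but more self-contained than the paper's.
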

\begin{proof}
Since $U$ is analytic in the region under $r_3$ and continuous upto
its boundary,
by the Cauchy Integral Formula:
\begin{equation*}
U'(\xi)=-\frac{1}{2\pi i}\int_{r_3} \frac{U(t)}{(t-\xi)^{2}}dt \qquad
\text{for $\xi$ in $\mathcal{D}$ }.
\end{equation*}
Applying Lemma 2.11 in \cite{Xie1}, with $\Gamma$ chosen to be $r_u$, we have $ |\xi-2i|^{1+\tau}|U'|(\xi)|
\leq C \norm{U}_{0, r_3^-}$ for $\xi\in (-\infty,-3)$ .\\
For $\xi\in (-\infty,-3)$, we split the integral into
\begin{equation*}
\begin{split}
U'(\xi)&=-\frac{1}{2\pi i}\left(\int_{r_3\cap B_3}+ \int_{r_3/B_3}\right)\frac{U(t)}{(t-\xi)^{2}}dt\\
&=:U_1+U_2.
\end{split}
\end{equation*}
Applying Lemma 2.11 in \cite{Xie1}, with $\Gamma$ chosen to be $r_u/B_3$, we have $ |U_2|(\xi)|
\leq C \norm{U}_{0, r_3^-}$ for $\xi\in (-3,-\nu)$ .\\
For  $t=re^{\theta}\in r_3\cap B_3$ and $\xi\in (-3,-\nu)$, using $|t-\xi|^2=(r\cos\theta-\xi)^2+r^2\sin^2\theta$, we have
\begin{equation*}
\begin{split}
|U_1(\xi)|&=\left\vert-\frac{1}{2\pi i}\int_{r_3\cap B_3}\frac{U(t)}{(t-\xi)^{2}}dt\right\vert\\
&\leq C\left\vert\int_0^3\frac{\underset{r_3}{\sup}|U(t)|}{(r\cos\theta-\xi)^2+r^2\sin^2\theta}dr\right\vert\leq C\frac{\underset{r_3}{\sup}~|U(t)|}{|\xi|};
\end{split}
\end{equation*}
hence the lemma follows.

\end{proof}
\begin{lemma}
Let $U(\xi)$ be as in Lemma \ref{lem:2.42}, then
\begin{equation}\label{2.51}
\underset{\xi\in (-\nu,0)}{\sup} |U'(\xi)-U'(0)|
\leq \frac{C\sqrt{|\xi|}}{|\nu|^{3/2}}~\norm{U}_{\Omega_3} ,
\end{equation}
$C>0$ is independent of $\epsilon$ and $\lambda$.
\end{lemma}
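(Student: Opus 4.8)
The plan is to run the same Cauchy integral argument used in the proof of Lemma~\ref{lem:2.45}, but to gain the extra factor $|\xi|^{1/2}$ by passing to the \emph{difference} $U'(\xi)-U'(0)$ and splitting the contour $r_3$ into a piece near the finger tip, controlled by the $\tfrac12$-H\"older seminorm of $U'$ at $\xi=0$ that is built into $\norm{U}_{\Omega_3}$, and a piece away from the tip, controlled by the sup part of $\norm{U}_{\Omega_3}$.

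First I would record the representation. Since $U=F-F_+$ is analytic in $\mathcal R\cup\{\operatorname{Im}\xi<0\}$, continuous up to the boundary, and $O(|t|^{-\tau})$ at infinity, closing the contour below $r_3$ gives, exactly as in Lemma~\ref{lem:2.45}, for every $\xi$ strictly below $r_3$ (in particular every $\xi,\xi'\in(-\nu,0)$),
\[
U'(\xi)=-\frac1{2\pi i}\int_{r_3}\frac{U(t)}{(t-\xi)^2}\,dt .
\]
Subtracting this identity at $\xi'$ and letting $\xi'\to0$ (using that $U'$ is continuous up to $\xi=0$, which holds because $U\in\mathbf A_0$) one gets
\[
U'(\xi)-U'(0)=-\frac{\xi}{2\pi i}\int_{r_3}\frac{U(t)\,(2t-\xi)}{t^2(t-\xi)^2}\,dt ,
\]
the integral read as the limit just described; the individually non-integrable contributions of the two halves $r_3^-,r_3^+$ at the tip cancel because $r_3$ runs into the origin and out of it along two different rays. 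A key elementary fact, used throughout, is that since $\xi$ is real while every $t\in r_3$ has $|\operatorname{Im}t|=|t|\sin(\varphi_0/2)$, one has $|t-\xi|\ge c_0\max(|t|,|\xi|)$ on $r_3$ with $c_0=c_0(\varphi_0)>0$.

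Next I would split $r_3=(r_3\setminus B_\nu)\cup(r_3\cap B_\nu)$. On $r_3\setminus B_\nu$ we have $|t|\ge\nu>|\xi|$, so the kernel is $\le C|\xi|/|t|^{3}$, and with $|U(t)|\le|t-2i|^{-\tau}\norm{U}_{\Omega_3}$ (valid since $r_3\subset\overline{\Omega_3}$) this piece is $\le C|\xi|\nu^{-2}\norm{U}_{\Omega_3}\le C|\xi|^{1/2}\nu^{-3/2}\norm{U}_{\Omega_3}$ because $|\xi|<\nu$. On $r_3\cap B_\nu\subset\overline{\Omega_3\cap B_\nu}$ I would write $U(t)=\bigl(U(t)-U(0)-U'(0)t\bigr)+\bigl(U(0)+U'(0)t\bigr)$. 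For the polynomial part the integrals $\int_{r_3\cap B_\nu}\bigl(U(0)+U'(0)t\bigr)\bigl[(t-\xi)^{-2}-(t-\xi')^{-2}\bigr]dt$ are evaluated by antiderivatives; the $1/\xi,1/\xi'$ endpoint terms cancel, the boundary terms at radius $\nu$ are handled using $|t-\xi|\ge\nu\sin(\varphi_0/2)$ there, and in the limit $\xi'\to0$ one gets a bound $\le C\bigl(|\xi|\nu^{-2}|U(0)|+\min(|\xi|\nu^{-1},1)|U'(0)|\bigr)\le C|\xi|^{1/2}\nu^{-3/2}\norm{U}_{\Omega_3}$, since $|U(0)|,|U'(0)|\le C\norm{U}_{\Omega_3}$ and $\nu$ is small. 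For the remainder, Taylor's formula and the H\"older bound give $|U(t)-U(0)-U'(0)t|\le\tfrac23|t|^{3/2}\sup_{\overline{\Omega_3\cap B_\nu}}\bigl|(U'(s)-U'(0))/\sqrt s\bigr|$; combining this with $|t-\xi|\ge c_0\max(|t|,|\xi|)$ and a split at $|t|=|\xi|$ yields a contribution $\le C|\xi|^{1/2}\sup_{\overline{\Omega_3\cap B_\nu}}\bigl|(U'(s)-U'(0))/\sqrt s\bigr|$, which is absorbed into $C|\xi|^{1/2}\nu^{-3/2}\norm{U}_{\Omega_3}$ precisely because of the $\epsilon^2$ weight carried by that H\"older seminorm in the definition of $\norm{U}_{\Omega_3}$ (this is where the precise choice of $\nu$ in terms of $\epsilon$, comparable to the tip-region scale, enters). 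Collecting the four contributions proves \eqref{2.51}.

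I expect the main obstacle to be exactly the behaviour at the tip $\xi=0$ highlighted in the introduction: the contour $r_3$ passes through $0$, where the governing coefficients are non-analytic and $U$ is only $C^1$ with a $\tfrac12$-H\"older derivative rather than analytic, so one must (i) justify that the non-integrable endpoint terms cancel when forming $U'(\xi)-U'(0)$, and (ii) keep strict track of the $\nu$- (hence $\epsilon$-) dependence so that it is precisely $\norm{U}_{\Omega_3}$, not the individually larger $\tfrac12$-H\"older seminorm, that appears on the right of \eqref{2.51}. Everything else is routine contour estimation and parallels the proof of Lemma~\ref{lem:2.45}.
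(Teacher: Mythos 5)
Your proof is correct, but it takes a genuinely different route from the paper's. The paper represents $U'(\xi)$ by the Cauchy formula applied to the \emph{function} $U'$ over a contour $r_4$ lying strictly between $r_3$ and $r_u$, forms $U'(\xi)-U'(0)$ as a difference of two such integrals, and invokes the abstract comparison Lemma~\ref{lem:2.11} with $g=U'$, $\Gamma=r_4$ to gain the factor $\sqrt{|\xi|}$; the quantity $\sup_{r_4\setminus B_\nu}|U'|$ appearing there is then bounded by a separate Cauchy estimate using $U$ on $\partial\Omega_3$, at cost $\nu^{-1}$, while the H\"older constant $K$ from Lemma~\ref{lem:2.11} is absorbed into $\nu^{-3/2}\norm{U}_{\Omega_3}$ via the $\epsilon^2$ weight together with $\nu=O(\epsilon^{4/3})$ (the factor the paper writes as $\norm{U}_{0,r_4}$ in its intermediate inequality should in fact be this $K$). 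You instead start from the derivative-kernel representation of Lemma~\ref{lem:2.45} on $r_3$ itself (a piece of $\partial\Omega_3$), form the difference by a limit $\xi'\to 0$, and handle the tip by the second-order split $U=(U-U(0)-U'(0)t)+(U(0)+U'(0)t)$: the polynomial piece is evaluated by antiderivatives, with the endpoint singularities at $t=0$ cancelling because $r_3$ runs into and out of the origin along two different rays, and the remainder is bounded against the weighted $\tfrac12$-H\"older seminorm. Both routes rest on the same geometric fact $|t-\xi|\ge c_0\max(|t|,|\xi|)$ for real $\xi$ and $t$ on the relevant ray, and both need $\nu\sim\epsilon^{4/3}$ for the same reason. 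The paper's route buys a two-line proof given its toolbox; yours avoids the intermediate contour $r_4$, is self-contained, and makes the role of each of the three summands of $\norm{U}_{\Omega_3}$ (the weighted sup, the $\epsilon^2$-weighted H\"older seminorm, and $|U'(0)|$) completely explicit. You also correctly flag the one delicate step shared by either proof, namely the non-analyticity of $U$ at the finger tip $\xi=0$.
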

\begin{proof}
\begin{equation*}
U'(\xi)=-\frac{1}{2\pi i}\int_{r_4} \frac{U'(t)}{(t-\xi)}dt
\text{ for $\xi$ in $(-\nu,0)$ },
\end{equation*}
where $r_4$ is a line between $r_3$ and $r_u$ defined by\\
$$ r_4^- = \{\xi: \xi=r e^{i(\pi-2\phi_0/3)},r>0\}, $$
$$ r_4^{+} = \{\xi: \xi=r e^{i(2\phi_0/3)},r>0\}. $$
Applying Lemma 2.15, we obtain
\begin{equation*}
|U'(\xi)-U'(0)|\leq C\sqrt{|\xi|}\left( \frac{\underset{r_4/B_\nu}{\sup}|U'(t)|}{\sqrt{|\nu|}}+\norm{U}_{0, r_4}\right).
\end{equation*}
From Cauchy integral formula, for $t\in r_4/ B_\nu$  we have
\begin{equation*}
U'(t)=-\frac{1}{2\pi i}\int_{\partial\Omega_3} \frac{U(s)}{(s-t)^2}ds;
\end{equation*}

since $|t-s|\geq C\vert|s|e^{i\theta}-|t|\vert$, for $t\in r_4/B_\nu$ and $s\in \partial\Omega_3$ we have
\begin{equation*}
\underset{r_4/B_\nu}{\sup}|U'(t)|\leq C\frac{\underset{\partial\Omega}{\sup}~|U(s)|}{|\nu|};
\end{equation*}
hence the lemma follows.
\end{proof}
\begin{lemma}
Let $U(\xi)$ be as in Lemma \ref{lem:2.42}, then $\underset{(-\nu,0)}{\sup}~|U'(\xi) |\leq \frac{C}{|\nu|}~\norm{U}_{\Omega_3} $,
where $C>0$ is a constant independent of $\epsilon$ and $\lambda$.
\end{lemma}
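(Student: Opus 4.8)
The plan is to read this bound off immediately from the preceding lemma (estimate (\ref{2.51})) together with the definition of the norm $\norm{\cdot}_{\Omega_3}$ recorded in Remark \ref{2.17.9}. For $\xi\in(-\nu,0)$ I would start from the triangle inequality
\begin{equation*}
|U'(\xi)|\le |U'(\xi)-U'(0)|+|U'(0)|
\end{equation*}
and treat the two terms separately, since each is already controlled by $\norm{U}_{\Omega_3}$.

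For the first term, (\ref{2.51}) gives $|U'(\xi)-U'(0)|\le \frac{C\sqrt{|\xi|}}{|\nu|^{3/2}}\norm{U}_{\Omega_3}$; since $\xi\in(-\nu,0)$ forces $|\xi|<|\nu|$, one has $\sqrt{|\xi|}\le\sqrt{|\nu|}$, so the factor $|\nu|^{-3/2}\sqrt{|\xi|}$ collapses to at most $|\nu|^{-1}$ and this term is bounded by $\frac{C}{|\nu|}\norm{U}_{\Omega_3}$. For the second term, $|U'(0)|$ is literally one of the three summands defining $\norm{U}_{\Omega_3}$, so $|U'(0)|\le\norm{U}_{\Omega_3}$; using that $\nu$ is small (in particular $|\nu|\le 1$) this is in turn $\le\frac{1}{|\nu|}\norm{U}_{\Omega_3}$. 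Adding the two bounds and renaming the constant yields $\sup_{(-\nu,0)}|U'(\xi)|\le\frac{C}{|\nu|}\norm{U}_{\Omega_3}$, and $C$ inherits independence of $\epsilon$ and $\lambda$ from the corresponding property of the constant in (\ref{2.51}).

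There is no real obstacle here; this is a bookkeeping lemma whose only role is to package the already-established difference estimate (\ref{2.51}) together with the trivial bound on $|U'(0)|$ into a clean pointwise estimate for $U'$ near the finger tip, of exactly the form needed when $\bar M(U)$ and $M(U)$ are bounded on $r_3^-$ in the contraction-mapping argument outlined in Remark \ref{2.17.9}. The only point to keep in mind is the elementary cancellation $|\nu|^{-3/2}\sqrt{|\nu|}=|\nu|^{-1}$, which is what produces the claimed power of $|\nu|$.
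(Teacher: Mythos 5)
Your proof is correct. It differs mildly in mechanism from what the paper indicates: the paper's one-line proof invokes the Cauchy Integral Formula together with the preceding difference estimate (\ref{2.51}), presumably using a contour representation of $U'$ (as in the proof of Lemma~\ref{lem:2.45}) to control the derivative on $(-\nu,0)$, whereas you bypass any contour integral entirely by splitting $|U'(\xi)|\le|U'(\xi)-U'(0)|+|U'(0)|$, absorbing the first term via (\ref{2.51}) and the observation $\sqrt{|\xi|}/|\nu|^{3/2}\le 1/|\nu|$ on $(-\nu,0)$, and absorbing the second directly from the fact that $|U'(0)|$ is one of the summands defining $\norm{U}_{\Omega_3}$ in Remark~\ref{2.17.9}. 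Your route is more elementary and makes explicit where each piece of the norm is used; the paper's is more uniform with the surrounding Cauchy-formula machinery but is not spelled out. Both arrive at the same $O(|\nu|^{-1})$ bound with a constant that inherits independence of $\epsilon$ and $\lambda$ from (\ref{2.51}).
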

\begin{proof}
The lemma follows from the Cauchy Integral Formula and the above lemma.
\end{proof}

\begin{lemma}\label{lem:2.46}
Let $F\in\mathbf{A}_{0,\delta}$.
Let ${\bar M}(U)$ and $M(U)$ be as
defined in (\ref{2.37}), then
\begin{equation}
\label{2.52}
 |{\bar M}(U)(\xi)|\leq
 C\delta_2 (1+|\xi|)^{-1-2\tau} \norm{U}_{\Omega_3},
\end{equation}
\begin{equation}
\label{2.53}
 |M(U)(\xi)|\leq
 C\frac{\epsilon^2}{|\nu|^2} (1+|\xi|)^{-1-\tau} \norm{U}_{\Omega_3} \qquad \text{ for }\xi\in\Omega/B_\nu,
\end{equation}

\begin{equation}
\label{2.54}
 \left\vert \frac{d}{d\xi}M(U)(\xi)\right\vert\leq
 C\frac{\epsilon^2}{|\nu|^2} (1+|\xi|)^{-2-\tau} \norm{U}_{\Omega_3} \qquad \text{ for }\xi\in\Omega/B_\nu,
\end{equation}
and 

\begin{equation}\label{2.55}
\underset{\xi\in (-\nu,0)}{\sup}~ |M'(\xi)-M'(0)|
\leq \frac{C\sqrt{|\xi|}}{|\nu|^{2}}\norm{U}_{\Omega_3}\qquad \text{ for }\xi\in\Omega_3\cap B_\nu,
\end{equation}
where $C$ is some constant independent of $\epsilon$ and
\begin{equation}
\label{2.56}
\delta_2=\bigg\{
\left(\norm{F}_0+1\right)^{1/2}-1\bigg\}.
\end{equation}
\end{lemma}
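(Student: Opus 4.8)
The plan is to establish the four bounds in turn: estimate (\ref{2.52}) for $\bar M$ rests on an algebraic identity, while (\ref{2.53})--(\ref{2.55}) for $M$ and its derivative reduce to Cauchy-type singular integral estimates; in both cases the behaviour near the finger tip $\xi=0$ is the delicate point.

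For (\ref{2.52}) I would use that $\bar M(U)=[-iR_1+\tilde Q]U$, so it suffices to show $|{-iR_1+\tilde Q}|\le C\delta_2|\xi-2i|^{-1-\tau}$ for $\xi$ bounded away from $0$ and $|{-iR_1+\tilde Q}|\le C\delta_2$ near $0$; the factor $|U(\xi)|\le\norm{U}_{\Omega_3}|\xi-2i|^{-\tau}$ then supplies the remaining decay. The crucial observation is that $R_1(F,F_-,F_+)$, as given by (\ref{2.38}), collapses exactly to $\tilde Q/i$ (cf.\ (\ref{2.36})) once $F'=F_+'=F_-'=0$. Writing $(F'+H)^{1/2}$, $(F_\pm'+H)^{1/2}$, $H^{1/2}$, $\bar H^{1/2}$ for the roots involved, I would expand $-iR_1+\tilde Q$ as a finite sum each of whose terms carries at least one difference of the form $(F'+H)^{1/2}-H^{1/2}=H^{1/2}[(1+F'/H)^{1/2}-1]$. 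By Lemma \ref{lem:2.4} (used to control $|F'/H|$ through $\norm{F'}_{1}\le C\norm{F}_0$, valid for $\delta$ small) and an elementary square-root estimate, each such difference is $\le C\delta_2|\xi-2i|^{-1/2}$ --- this is exactly where the exponent in (\ref{2.56}) comes from --- while the remaining factors, including the reciprocal denominator, are bounded uniformly by Lemma \ref{lem:2.43}. This gives the claimed bound away from $\xi=0$. Near the tip both $\tilde Q$ and $R_1$ blow up like $\xi^{-1}$, but their simple-pole parts cancel, again because $-iR_1-\tilde Q$ vanishes identically at $F'=F_\pm'=0$ and the residual discrepancy is controlled by $\delta_2$ together with the symmetry relations $\mathrm{Re}\,F'(0)=\mathrm{Re}\,F_\pm'(0)=0$, so $-iR_1+\tilde Q$ stays bounded by $C\delta_2$ there and (\ref{2.52}) follows.

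For (\ref{2.53}) and (\ref{2.54}) I would work from the representation $M(U)(\xi)=-\frac{\epsilon^2 i}{2\pi}\mathcal{R}_1(\xi)\int_{-\infty}^\infty\mathcal{R}_1^{-1}(t)U'(t)(t-\xi)^{-1}\,dt$. Lemma \ref{lem:2.43} gives $|\mathcal{R}_1(\xi)|\le C|\xi-2i|^{-1}$ and $|\mathcal{R}_1^{-1}(t)|\le C|t-2i|$, and $U'$ is controlled on the whole real line: on $(-\infty,-\nu)$ by Lemma \ref{lem:2.45}, on $(\nu,\infty)$ by the symmetry of $U=F-F_+$, and on $(-\nu,\nu)$ by (\ref{2.51}) and the two lemmas immediately preceding the statement (which give $|U'|\le C|\nu|^{-1}\norm{U}_{\Omega_3}$ and $|U'(0)|\le\norm{U}_{\Omega_3}$). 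Hence $|\mathcal{R}_1^{-1}(t)U'(t)|\le C|\nu|^{-1}\norm{U}_{\Omega_3}|t-2i|^{-\tau}$ for $|t|>\nu$ and $\le C|\nu|^{-1}\norm{U}_{\Omega_3}$ for $|t|\le\nu$. Inserting this into the Cauchy kernel and invoking the integral estimates of Lemma~2.11 of \cite{Xie1} for $\int|t-2i|^{-\tau}|t-\xi|^{-1}dt$ and $\int_{|t|<\nu}|t-\xi|^{-1}dt$, then multiplying by $|\mathcal{R}_1(\xi)|$ and $\epsilon^2$, yields (\ref{2.53}) on $\Omega/B_\nu$, the extra power of $|\nu|$ coming from the near-origin piece. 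For (\ref{2.54}) I differentiate the representation, placing the derivative onto $\mathcal{R}_1$ --- with $|\mathcal{R}_1'(\xi)|\le C|\xi-2i|^{-2}$ from the Cauchy formula --- and onto the kernel, where the extra $(t-\xi)^{-1}$ produces the additional factor $(1+|\xi|)^{-1}$; repeating the estimate gives (\ref{2.54}).

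Estimate (\ref{2.55}) is the near-tip bound and the hardest. I would write $M'(\xi)-M'(0)$ from the differentiated representation, split $U'(t)=(U'(t)-U'(0))+U'(0)$ in the integrand, treat the $U'(0)$-piece by the explicit antiderivative of the kernel, and for the remaining piece apply the subtraction device of Lemma \ref{lem:2.11}: on $B_\nu$ the bound $|U'(t)-U'(0)|\le C|\nu|^{-3/2}\sqrt{|t|}\,\norm{U}_{\Omega_3}$ from (\ref{2.51}) converts the difference of Cauchy integrals into something $O(\sqrt{|\xi|})$, while the contribution of $t\notin B_\nu$ is handled as in (\ref{2.53})--(\ref{2.54}); combined with the behaviour of $\mathcal{R}_1$ and $\mathcal{R}_1'$ near $0$ this produces (\ref{2.55}) with the stated power $|\nu|^{-2}$. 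The main obstacle throughout --- precisely the subtlety flagged by Chapman and King --- is this neighbourhood of $\xi=0$: there $\tilde Q$ and $R_1$ (hence $\mathcal{R}_1$) are singular and $R_1$ is assembled from non-analytic square roots of coefficients, so the pole cancellations that keep $\bar M$ bounded and the singular-integral estimates that control $M'$ must be carried out tracking the $|\xi|^{-1/2}$ and logarithmic behaviour, rather than by the routine algebraic-decay arguments that suffice away from the tip.
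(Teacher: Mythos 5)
Your proposal follows essentially the same strategy as the paper's (quite terse) proof: (\ref{2.52}) reduces to the pointwise bound $\lvert -iR_1 + \tilde Q\rvert \le C\delta_2\lvert \xi-2i\rvert^{-1-\tau}$ of (\ref{2.57}), which you obtain by expanding $-iR_1+\tilde Q$ into square-root differences, and (\ref{2.53})--(\ref{2.55}) follow by combining Lemma~\ref{lem:2.43}, Lemma~\ref{lem:2.45}, the two preceding lemmas controlling $U'$ near $0$, and the Cauchy-kernel estimates of Lemma~\ref{lem:2.11}, exactly as the paper indicates. One slip, though: your claim that each square-root difference is $\le C\delta_2\lvert \xi-2i\rvert^{-1/2}$ while ``the remaining factors \ldots are bounded uniformly'' would, taken literally, yield only $\lvert -iR_1+\tilde Q\rvert \le C\delta_2\lvert \xi-2i\rvert^{-1/2}$, which is weaker than (\ref{2.57}) and would not feed the weighted norm $\norm{\cdot}_1$ used in Lemma 2.50; the correct count is that the difference decays like $\delta_2\lvert \xi-2i\rvert^{-1/2-\tau}$ (because $\lvert F'/H\rvert \le C\norm{F'}_{1}\lvert \xi-2i\rvert^{-\tau}$) and the remaining factors themselves supply a further $\lvert \xi-2i\rvert^{-1/2}$, so the product matches (\ref{2.57}), and the exponent $1/2$ in $\delta_2 = (\norm{F}_0+1)^{1/2}-1$ comes from the square roots in the algebra, not from the spatial decay rate.
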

\begin{proof}
The (\ref{2.52}) follows from
\begin{equation}\label{2.57}
|(-iR_1+\tilde Q)|\leq C|t-2i|^{-1-\tau}\delta_2~\qquad
\text{for $t \in \Omega_3$}.
\end{equation}

Using Lemma \ref{lem:2.43} and Lemma \ref{lem:2.45}:
\begin{equation}\label{2.58}
|R_1^{-1}U'(t)|\leq \frac{C}{|\nu|}|t-2i|^{-\tau}\norm{U}_0~ \qquad 
\text{for $t \in (-\infty, -\nu)$} .
\end{equation}
Applying Lemmas \ref{lem:2.11},
and Lemmas \ref{lem:2.43} - \ref{lem:2.45} to (\ref{2.37}), we obtain (\ref{2.53}) and (\ref{2.54}). (\ref{2.55}) follows  from Lemmas 2.45 - 2.47.
\end{proof}

\begin{definition}\label{L2.7}
Let $\mathcal{Q}$ be any connected  set in the complex $\xi$-plane, we introduce norms:
$\norm{F(\xi)}_{j,\mathcal{Q}}
:=\underset{\xi\in\mathcal{Q}}{\sup}~\abs{(\xi-2i)^{j+\tau}F(\xi)},j=0,1,2$;
and $\mathbf{C}(\mathcal{Q})$ is the function space of all continuous functions on $\mathcal{Q}$.
\end{definition}

In order to prove Theorem 2.38 using the strategy described in Remark 2.43, we need to control the integral operator in equation (\ref{2.48}). Due to the properties of $P(\xi)$ (see the appendix), we need to break the integral from $0$ to $\infty$ into several integrals along special paths, each of these integrals then can be estimated accordingly (see Lemmas 2.50 - 2.54 below).

\begin{lemma}
If $N \in\mathbf{C}(\mathcal{Q}), N' \in\mathbf{C}(\mathcal{Q})$ where $\mathcal{Q}$ is a path $\mathcal{P}(\xi, \infty)$ on which $\mathrm{Re} ~(P(t)-P(\xi))$ decreases monotonically  as $\mathrm{Re}~ t$ increases. Let
\begin{equation}
f_1(\xi):=\frac{1}{\epsilon^2}g_1(\xi)\int^{\xi}_{\infty}
N(t)g_2(t)dt.
\end{equation}
Then $f_1(\xi)\in\mathbf{C}(\mathcal{Q})$ and 
\begin{equation}\label{2.60}
\norm{f_1}_0
\leq \frac{C}{\epsilon^2}~\norm{N}_1,
\end{equation} 
and $\norm{f_1}_0
\leq C~(\norm{N}_1+\norm{N'}_2),$
where $C$ is independent of $\epsilon$ and $\lambda$.
\end{lemma}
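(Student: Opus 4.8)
The plan is to estimate $f_1$ directly from its integral representation, exploiting the monotone decay of $\mathrm{Re}\,(P(t)-P(\xi))$ along $\mathcal{Q}$. Writing $g_1(\xi)g_2(t) = \exp\!\big((P(\xi)-P(t))/\epsilon^2\big)^{-1}\cdot$ — more precisely $g_1(\xi)g_2(t) = \exp\!\big(-(P(\xi)-P(t))/\epsilon^2\big)$ — the hypothesis that $\mathrm{Re}\,(P(t)-P(\xi))$ decreases monotonically as $\mathrm{Re}\,t$ increases (hence $\mathrm{Re}\,(P(t)-P(\xi)) \ge 0$ for $t$ between $\xi$ and $\infty$ along $\mathcal{Q}$, vanishing at $t=\xi$) gives $|g_1(\xi)g_2(t)| \le 1$ on the integration path. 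First I would substitute the bound $|N(t)| \le \norm{N}_1\,|t-2i|^{-1-\tau}$ to get
\begin{equation*}
|f_1(\xi)| \le \frac{\norm{N}_1}{\epsilon^2}\int_{\mathcal{P}(\xi,\infty)} e^{-\mathrm{Re}(P(t)-P(\xi))/\epsilon^2}\,|t-2i|^{-1-\tau}\,|dt|.
\end{equation*}
Bounding $|t-2i|^{-1-\tau} \le |\xi-2i|^{-1-\tau}$ (valid since $|t-2i|$ is nondecreasing along the path away from $\xi$, which follows from the geometry of $\Omega_3$) and estimating the remaining integral of $e^{-\mathrm{Re}(P(t)-P(\xi))/\epsilon^2}$ by a constant times $\epsilon^2$ via the monotonicity of $\mathrm{Re}\,P$ (parametrize by $s = \mathrm{Re}\,(P(t)-P(\xi)) \ge 0$ and use $\int_0^\infty e^{-s/\epsilon^2}\,ds = \epsilon^2$, up to a bounded Jacobian factor coming from $|dt/ds|$) yields $|(\xi-2i)^\tau f_1(\xi)| \le C\epsilon^{-2}\norm{N}_1$, which is exactly $(\ref{2.60})$. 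Continuity of $f_1$ on $\mathcal{Q}$ follows from dominated convergence applied to the same integral.

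For the sharper estimate $\norm{f_1}_0 \le C(\norm{N}_1 + \norm{N'}_2)$ I would integrate by parts once in the integral, writing $g_2(t) = \epsilon^2 (P'(t))^{-1}\frac{d}{dt}g_2(t) = \epsilon^2\,\tilde Q(t)^{-1}\frac{d}{dt}g_2(t)$, so that
\begin{equation*}
\frac{1}{\epsilon^2}\int_\infty^\xi N(t)g_2(t)\,dt = \Big[\tilde Q(t)^{-1}N(t)g_2(t)\Big]_\infty^\xi - \int_\infty^\xi \frac{d}{dt}\!\big(\tilde Q(t)^{-1}N(t)\big)g_2(t)\,dt.
\end{equation*}
The boundary term at $t=\xi$ contributes $\tilde Q(\xi)^{-1}N(\xi)$ after multiplying by $g_1(\xi)$, and the term at $\infty$ vanishes because $g_2(t)g_1(\xi)$ is bounded while $\tilde Q(t)^{-1}N(t) \to 0$ (here $\tilde Q(t)^{-1} = O(|t|)$ and $N(t) = O(|t|^{-1-\tau})$, so the product decays like $|t|^{-\tau}$). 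The remaining integral is of the same form as before but now with integrand $\frac{d}{dt}(\tilde Q^{-1}N)$, which is $O(|t-2i|^{-1-\tau})$ provided $N' = O(|t-2i|^{-2-\tau})$ — that is, controlled by $\norm{N'}_2$ — together with the contribution from $(\tilde Q^{-1})' N$, which is also $O(|t-2i|^{-1-\tau})$ since $\tilde Q^{-1}$ has bounded logarithmic derivative away from $t=0$. Applying the first estimate to this new integral (or just the crude $|g_1 g_2| \le 1$ bound, which now suffices because we no longer divide by $\epsilon^2$) gives the $\epsilon$-independent bound.

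The main obstacle I anticipate is controlling the behavior near $t=0$ (the finger tip), where $\tilde Q(t) \sim i\,\gamma/t$ has a simple pole and $\tilde Q^{-1}$ vanishes, so the integration-by-parts identity for the sharp estimate must be handled carefully: the factor $(\tilde Q^{-1})'$ is bounded near $0$ but one must make sure the path $\mathcal{P}(\xi,\infty)$ stays away from $t=0$ or accounts for the contribution of the segment near $0$ separately — this is presumably why the norm $\norm{\cdot}_{\Omega_3}$ carries the extra $\epsilon^2\sup_{\overline{B_\nu\cap\Omega_3}}|(U'-U'(0))/\sqrt t|$ term and why the lemma is stated only for paths along which $\mathrm{Re}\,(P(t)-P(\xi))$ is genuinely monotone. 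I would therefore split $\int_\infty^\xi = \int_\infty^{\xi_*} + \int_{\xi_*}^\xi$ at a point $\xi_*$ on the path at a fixed distance from the origin, estimate the far piece as above, and handle the near-tip piece using the $\sqrt{\cdot}$-weighted bound on $N$ near $0$ together with the crude exponential bound; the monotonicity hypothesis guarantees no cancellation is lost in either piece. A secondary technical point is verifying that $|dt/ds|$ (the reciprocal of $\frac{d}{ds}\mathrm{Re}\,P$ along the path) is uniformly bounded on each $\mathbf{C}^1$ segment of $\mathcal{P}(\xi,\infty)$, which should follow from the explicit form of $P$ and the appendix lemmas on the Stokes geometry, but needs to be invoked explicitly.
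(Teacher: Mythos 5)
Your integration-by-parts derivation of the sharp $\epsilon$-independent bound $\norm{f_1}_0 \le C(\norm{N}_1 + \norm{N'}_2)$ is essentially the paper's argument: write $g_2 = \epsilon^2\,\tilde Q^{-1}\frac{d}{dt}g_2$, integrate by parts, and observe that the boundary term contributes $\tilde Q(\xi)^{-1}N(\xi) = O(|\xi-2i|^{-\tau})$ while the remaining integrand, involving $(\tilde Q^{-1})'N$ and $\tilde Q^{-1}N'$, is again $O(|t-2i|^{-1-\tau})$ thanks to $|P'| \ge C|t-2i|^{-1}$ and $|P''|\le C|t-2i|^{-2}$. That part is sound.

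However, your derivation of the crude bound (\ref{2.60}) has a genuine gap. You propose to bound $|t-2i|^{-1-\tau}\le|\xi-2i|^{-1-\tau}$ and then estimate the leftover exponential integral $\int_{\mathcal{P}(\xi,\infty)}e^{-\mathrm{Re}(P(t)-P(\xi))/\epsilon^2}\,|dt|$ by $O(\epsilon^2)$. This fails for three reasons. First, once the decaying kernel is pulled out, the remaining integral \emph{diverges}: by Lemma 6.1, $\mathrm{Re}\,P(t)$ approaches a finite nonzero constant as $t\to\infty$ along the rays, so the integrand tends to a positive constant $e^{-\mathrm{Re}(P(\xi)-P(\infty))/\epsilon^2}$ rather than decaying, and $\int|dt|$ is infinite. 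Second, even on a truncated path the Jacobian $|dt/ds|$ for $s=\mathrm{Re}(P(t)-P(\xi))$ is \emph{not} uniformly bounded — by Lemma 6.3, $|d\,\mathrm{Re}\,P/ds| \ge C_1|t-2i|^{-2}$, so $|dt/ds|$ grows like $|t-2i|^2$ along the path — so the Laplace-type estimate $\int_0^\infty e^{-s/\epsilon^2}\,ds=\epsilon^2$ cannot be applied as you do. Third, your chain of steps, if it worked, would cancel the prefactor $\epsilon^{-2}$ and yield $\norm{f_1}_0\le C\norm{N}_1$ directly, which contradicts your stated conclusion $\norm{f_1}_0\le C\epsilon^{-2}\norm{N}_1$ and would render the second (IBP) estimate superfluous; the inconsistency signals that the $\epsilon^2$ saving cannot actually be extracted this way. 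The correct derivation of (\ref{2.60}) is simpler than what you attempt and is what the paper does: since $|g_1(\xi)g_2(t)| \le 1$ on the path, keep $|t-2i|^{-1-\tau}$ \emph{inside} the integral and bound $\frac{1}{\epsilon^2}\norm{N}_1\int_{|\xi|}^\infty |t-2i|^{-1-\tau}\,|dt| \le \frac{C}{\epsilon^2}\norm{N}_1|\xi-2i|^{-\tau}$; the $\epsilon^{-2}$ is simply the prefactor and no cancellation is needed. (Your discussion of the near-tip behavior is reasonable intuition but is not the mechanism at work in this particular lemma; the tip region is handled separately, e.g.\ in Lemma 2.53.)
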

\begin{proof}
\begin{multline*}
|f_1(\xi)|=\left\lvert\frac{1}{\epsilon^2}
\int_{\mathcal{P}(\xi, \infty)} N(t)
\exp\{-\frac{1}{\epsilon^2}(P(\xi)-P(t))\}dt
\right\rvert\\
\leq \frac{1}{\epsilon^2}\norm{N}_1
\int^{\infty}_{|\xi|}|(t-2i)^{-1-\tau}|dt
\leq \frac{C}{\epsilon^2}\norm{N}_1 |\xi-2i|^{-\tau}.
\end{multline*}
By integration by parts, we have
\begin{multline*}
|f_1(\xi)|=\left\lvert\frac{1}{\epsilon^2}
\int_{\mathcal{P}(\xi, \infty)} N(t)
\exp\{-\frac{1}{\epsilon^2}(P(\xi)-P(t))\}dt
\right\rvert\\
=\left\lvert\frac{1}{\epsilon^2}
\int_{\mathcal{P}(\xi, \infty)} \frac{N(t)}{-
\frac{d}{ds}  P(t)}
d\left(\exp\{-\frac{1}{\epsilon^2}(P(\xi)-P(t))\}\right)
\right\rvert\\
\leq \norm{N}_1
\times\int^{\infty}_{0}\frac{|(t-2i)^{-1-\tau}||
\frac{d^2}{dt^2} P(t)|}{|
\frac{d}{dt} P(t)|^2}
\left[\exp~\{-\frac{1}{\epsilon^2}\text{ Re } (P(\xi)- P(t))\}\right]dt\\
+\norm{N'}_2
\times\int^{\infty}_{0}\frac{|(t-2i)^{-2-\tau}|}{
|\frac{d}{dt} P(t)|}
\left[\exp~\{-\frac{1}{\epsilon^2} \text{ Re }(P(\xi)- P(t))\}\right]dt.
\end{multline*}
Since
$$\left\lvert\frac{d}{dt} ~~P(t)\right\rvert\geq C |t(s)-2i|^{-1},$$
$$\left\lvert\frac{d^2}{dt^2} ~~P(t)\right\rvert\leq C |t(s)-2i|^{-2},$$
$$
\frac{|(t-2i)^{-2-\tau}|}{\frac{d}{dt}
 P(t)} \leq C |\xi-2i|^{1-\tau},$$
we obtain
$\norm{f_1}_0 \leq C (\norm{N}_1+\norm{N'}_2)$ and the lemma follows.
\end{proof}

If the estimate of the derivative of $N$ is not available and   due to the factor $\frac{1}{\epsilon^2}$ in (\ref{2.60}), Lemma 2.50 does not provide the estimate needed for the integral operator to be a contraction. By breaking up the integral and using properties of $P(\xi)$ ( Lemma 6.1, Lemma 6.3 and Lemma 6.6 in the appendix), we are able to obtain better estimates for the integral operator in the following two lemmas.

\begin{lemma}
Let $R_\epsilon >R$ be a number which will be chosen later and be dependent on $\epsilon$. If $N(t)\leq C |t|^{-\sigma}$ for $t\in \mathcal{P}(\xi,R^l_\epsilon)$, where $R^l_\epsilon=r_l^-\cap \{|\xi|=R_\epsilon\}$ and $\mathcal{P}(\xi,R^l_\epsilon )\subset \mathcal{R}^-\cap \{R\leq ~|\xi|\leq R_\epsilon\}$ is a path on which $\mathrm{Re}~ (P(t)-P(\xi))$ decreases monotonically  as $\mathrm{Re} ~t$ increases, then
\begin{equation*}
\begin{split}
&f_2(\xi):=\frac{1}{\epsilon^2}g_1(\xi)\int^{\xi}_{R^l_\epsilon}
N(t)g_2(t)dt\in\mathbf{C}\left(\mathcal{P}(\xi,R^l_\epsilon)\right),\\
&\text{ and }\norm{f_2}_0
\leq C~R_\epsilon^{(2-\sigma+\tau)}\underset{\mathcal{P}(\xi,R^l_\epsilon)}{\sup}(|t|^\sigma |N|),
\end{split}
\end{equation*}
\end{lemma}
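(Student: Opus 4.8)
The plan is to estimate $f_2(\xi)$ by exploiting the monotone decrease of $\mathrm{Re}\,(P(t)-P(\xi))$ along the path $\mathcal{P}(\xi, R^l_\epsilon)$, but this time \emph{without} integrating by parts (since no estimate on $N'$ is available). Instead, the key idea is that on $\mathcal{P}(\xi, R^l_\epsilon)$ we have $\tilde Q(t) = \frac{d}{dt}P(t)$ bounded both above and below: from the asymptotics of $P$ recorded in the appendix (Lemma~6.1 and the bounds used in Lemma~2.50), $|\frac{d}{dt}P(t)| \geq C|t-2i|^{-1}$, so $\frac{1}{|\frac{d}{dt}P(t)|} \leq C|t-2i|$. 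We write
\begin{equation*}
f_2(\xi) = \frac{1}{\epsilon^2}\int_{\mathcal{P}(\xi, R^l_\epsilon)} N(t)\,\exp\Big\{-\frac{1}{\epsilon^2}\big(P(\xi)-P(t)\big)\Big\}\,dt,
\end{equation*}
and bound $|N(t)| \leq \big(\sup_{\mathcal{P}(\xi,R^l_\epsilon)}(|t|^\sigma|N|)\big)|t|^{-\sigma}$. The exponential factor has modulus $\exp\{-\frac{1}{\epsilon^2}\mathrm{Re}(P(\xi)-P(t))\} \leq 1$ by the monotonicity hypothesis, but using this crude bound alone loses the factor $\epsilon^{-2}$; the point is that on the segment of length $O(R_\epsilon)$ the integrand's other factors are at worst $|t|^{-\sigma}$ times a quantity of size $O(R_\epsilon^{1+\tau})$ after accounting for the $|\xi-2i|^\tau$ weight built into $\|\cdot\|_0$, and one extra power of $R_\epsilon$ comes from the length of integration, giving the exponent $2-\sigma+\tau$ in $R_\epsilon^{2-\sigma+\tau}$.

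More carefully, the plan is: first, pull out $\sup_{\mathcal{P}(\xi,R^l_\epsilon)}(|t|^\sigma|N|)$; second, change variables so that the exponent $\frac{1}{\epsilon^2}(P(\xi)-P(t))$ becomes the integration variable — equivalently write $dt = \frac{1}{\tilde Q(t)}\,dP$, using $|1/\tilde Q(t)| \leq C|t-2i|$ — which converts the $\epsilon^{-2}$ prefactor into a harmless $\int_0^\infty e^{-u/\epsilon^2}\,d(u) \cdot \epsilon^{-2} = O(1)$-type bound but now multiplied by the slowly varying factor $|t-2i|\cdot|t|^{-\sigma}$ evaluated along the path; third, estimate that slowly varying factor on $R \leq |\xi| \leq |t| \leq R_\epsilon$ by its largest value, which on $\mathcal{R}^-$ is $O(R_\epsilon^{1-\sigma})$ (when $\sigma < 1$) or controlled at the endpoint otherwise; and fourth, multiply by the weight $|\xi-2i|^\tau \leq C R_\epsilon^\tau$ implicit in taking $\|f_2\|_0$, together with one more factor $R_\epsilon$ absorbing the worst case where the path has full length $O(R_\epsilon)$ and the monotone decrease of $\mathrm{Re}\,P$ is slow. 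Collecting the powers $R_\epsilon^\tau \cdot R_\epsilon \cdot R_\epsilon^{1-\sigma} = R_\epsilon^{2-\sigma+\tau}$ gives the claimed bound. Continuity of $f_2$ on $\mathcal{P}(\xi, R^l_\epsilon)$ follows as in Lemma~2.50 from continuity of $N$, $g_1$, $g_2$ and dominated convergence.

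The main obstacle I expect is making the change of variables $t \mapsto \mathrm{Re}\,P(t)$ rigorous along a general $\mathbf{C}^1$ path $\mathcal{P}(\xi, R^l_\epsilon)$: the path need not be a single monotone arc in the $t$-plane even though $\mathrm{Re}\,(P(t)-P(\xi))$ decreases with $\mathrm{Re}\,t$, so one must either invoke the path-construction results of the appendix (the path is assembled from finitely many $\mathbf{C}^1$ segments on each of which the steepest-descent structure of $P$ is controlled by Lemmas~6.3 and 6.6) or bound the integral segment-by-segment and sum. A secondary subtlety is the behavior near the turning points and near $\xi^2 + 1 = 0$ where $\tilde Q$ has the branch-point structure $(\xi+i\gamma)^{3/2}(\xi-i\gamma)^{1/2}/(\xi(\xi^2+1))$; but since $\mathcal{P}(\xi, R^l_\epsilon) \subset \mathcal{R}^- \cap \{R \leq |\xi| \leq R_\epsilon\}$ stays a fixed distance from $\pm i$, $\pm i\gamma$ and from $\xi = 0$ (all of which lie inside or on the boundary of the excluded region), the bounds $|\frac{d}{dt}P(t)| \geq C|t-2i|^{-1}$ and $|1/\tilde Q(t)| \leq C|t-2i|$ hold uniformly, so the estimate goes through with constants independent of $\epsilon$ and $\lambda$.
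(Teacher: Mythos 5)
The overall strategy — pulling out $\sup(|t|^\sigma|N|)$, changing variables so that the exponential becomes the integration measure, and reading off powers of $R_\epsilon$ — is the same one the paper uses, but the key quantitative bound in your argument is wrong, and the attempted repair is not a valid estimate.

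You invoke the lower bound $|\tfrac{d}{dt}P(t)| = |\tilde Q(t)| \geq C|t-2i|^{-1}$ (the bound used in Lemma~2.50) and conclude $1/|\tilde Q| \leq C|t-2i|$. But what controls the change of variables is not $1/|P'(t)|$; it is $1/(-\tfrac{d}{ds}\,\mathrm{Re}\,P(t(s)))$, the reciprocal of the directional derivative of $\mathrm{Re}\,P$ along the path, and this is a genuinely weaker bound. In $\mathcal{R}_2 = \mathcal{R}^- \cap \{R \le |\xi| \le R_\epsilon\}$ one has $\tilde Q(t) \sim i/t$, so $P(t) \sim i\log t$ and $\mathrm{Re}\,P(t) \sim -\arg t$, which is \emph{constant} along rays to leading order; the path there is essentially radial, so $-\tfrac{d}{ds}\,\mathrm{Re}\,P$ is governed by the subleading term and satisfies only $-\tfrac{d}{ds}\,\mathrm{Re}\,P(t(s)) \geq C|t(s)|^{-2}$ (and no better — see Lemma~6.3, which gives a matching upper bound). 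That is the estimate the paper actually uses, and it is exactly what produces $|t|^{-\sigma}/(-\tfrac{d}{ds}\,\mathrm{Re}\,P) \le C|t|^{2-\sigma} \le CR_\epsilon^{2-\sigma}$; multiplying by the $|\xi-2i|^\tau$ weight gives $R_\epsilon^{2-\sigma+\tau}$ with no further factors needed.

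Your extra factor of $R_\epsilon$ "from the path length" is not a rigorous step: once you have changed variables to $u = \exp\{-\tfrac{1}{\epsilon^2}\mathrm{Re}(P(\xi)-P(t(s)))\}$, the residual integral is $\int_0^1 du$, which is $O(1)$; the path length has already been consumed by the change of variables and cannot reappear as a separate factor. (It would also make the bound trivially fail to be uniform in the choice of $R_\epsilon$, which the lemma requires.) Likewise the complex change of variable $dt = dP/\tilde Q(t)$ does not by itself turn the prefactor $1/\epsilon^2$ into $O(1)$: after taking absolute values one again needs a lower bound on $d(\mathrm{Re}\,P)/ds$, not on $|dP/ds| = |\tilde Q|$, because the path is far from steepest descent. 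Replace the $|\tilde Q|^{-1} \leq C|t-2i|$ bound with $-\tfrac{d}{ds}\,\mathrm{Re}\,P(t(s)) \geq C|t|^{-2}$, drop the ad hoc $R_\epsilon$ from the path length, and the estimate closes cleanly.
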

$C$ is independent of $\epsilon$ and $\lambda$.
\begin{proof}

\begin{multline*}\label{2.59}
|f_2(\xi)|=\left\lvert\frac{1}{\epsilon^2}
\int_{\mathcal{P}(\xi, R^l_\epsilon)} N(t)
\exp~\{-\frac{1}{\epsilon^2}(P(\xi)-P(t))\}dt
\right\rvert\\
\leq \sup{(|t|^\sigma |N|)}
\times\int^{1}_{0}\frac{|(t(s)|^{-\sigma}}{-
\frac{d}{ds} \text{ Re }P(t(s))}
d\left[\exp~\{-\frac{1}{\epsilon^2}\text{Re } (P(\xi)-P(t(s)))\}\right].
\end{multline*}
Since
$$-\frac{d}{ds} ~\text{Re} ~P(t(s))= - \text{ Re }\left( P'(t)t'(s)\right)\geq C |t(s)|^{-2},$$
$$
\frac{|t(s)^{-\sigma}|}{-\frac{d}{ds}
\text{ Re }P(t(s))} \leq C |t|^{2-\sigma},$$
we have
$\norm{f_2}_0 \leq C R_\epsilon^{2-\sigma+\tau} \underset{\mathcal{P}(\xi,R^l_\epsilon)}{\sup}{|t^\sigma N(t)|}$ .
\end{proof}

\begin{lemma}\label{L2.9}
Let $N \in\mathbf{C}(\mathcal{P}(\xi,R^l))$, where $R^l=r^-_l\cap \{\xi: |\xi|=R\}$ and $\mathcal{P}(\xi,R^l)\subset \mathcal{R}^-\cap \{ |\xi|\leq R\}$ is a path on which $\mathrm{ Re} ~(P(t)-P(\xi))$ decreases monotonically  as $\mathrm{Re} ~t$ increases, then
\begin{equation*}
f_3(\xi):=\frac{1}{\epsilon^2}g_1(\xi)\int^{\xi}_{R^l}
N(t)g_2(t)dt\in\mathbf{C}(\mathcal{P}(\xi,R^l)),\text{ and }\norm{f_3}_0
\leq C~\norm{N}_1,
\end{equation*}
where $C$ is a constant independent of $\epsilon$.
\end{lemma}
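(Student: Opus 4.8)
The plan is to mimic the two estimates already obtained for $f_1$ in the first part of Lemma~2.50, but now working on the bounded path $\mathcal{P}(\xi,R^l)$ which lies entirely inside $\mathcal{R}^-\cap\{|\xi|\le R\}$, so that no growth in $|\xi|$ can spoil the bound. First I would write, exactly as in the proof of Lemma~2.50,
\[
f_3(\xi)=\frac{1}{\epsilon^2}\int_{\mathcal{P}(\xi,R^l)}N(t)\,
\exp\!\Big\{-\tfrac{1}{\epsilon^2}\big(P(\xi)-P(t)\big)\Big\}\,dt ,
\]
and integrate by parts against the exponential, using $d\!\left(\exp\{-\tfrac{1}{\epsilon^2}(P(\xi)-P(t))\}\right) = -\tfrac{1}{\epsilon^2}P'(t)\,\exp\{\cdots\}\,dt$ so that the explicit factor $\epsilon^{-2}$ is absorbed. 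This produces a boundary term at $t=R^l$ and at $t=\xi$, plus two integrals carrying $\frac{1}{P'(t)}$ and $\frac{(P')'(t)}{(P'(t))^2}$ respectively, just as in Lemma~2.50.

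Next I would bound each piece. Since $\mathcal{P}(\xi,R^l)$ is a path on which $\mathrm{Re}\,(P(t)-P(\xi))$ decreases monotonically as $\mathrm{Re}\,t$ increases, the exponential factor $\exp\{-\tfrac{1}{\epsilon^2}\mathrm{Re}(P(\xi)-P(t))\}$ is bounded by $1$ along the whole path; this is the only place the monotonicity hypothesis is used, and it is what makes the construction work without any $\epsilon$-dependence. The appendix estimates (Lemma~6.1, Lemma~6.3 and Lemma~6.6, which control $P$, give $|P'(t)|\ge C|t-2i|^{-1}$ and $|P''(t)|\le C|t-2i|^{-2}$ on $\mathcal{R}^-$, and ensure $P'$ does not vanish there) then give, for the boundary term, $|g_1(\xi)/g_1(R^l)|\le 1$ together with $\|N\|_1$ bounding $|N(R^l)|$ and $|N(\xi)|$ up to the weight $|\cdot-2i|^{-\tau}$; and for the two integrals,
\[
\int_{\mathcal{P}(\xi,R^l)}\frac{|(t-2i)^{-1-\tau}|\,|P''(t)|}{|P'(t)|^2}\,dt
\;\le\; C\int_{\mathcal{P}(\xi,R^l)}|(t-2i)^{-1-\tau}|\,dt \;\le\; C\,|\xi-2i|^{-\tau},
\]
since the path has bounded length and $|t-2i|$ is bounded above and below there. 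The term with $\frac{1}{P'(t)}$ is handled the same way, with $\frac{1}{|P'(t)|}\le C|t-2i|$ converting $|(t-2i)^{-1-\tau}|$ into an integrable weight on the bounded path. Assembling, $\|f_3\|_0\le C\|N\|_1$ with $C$ independent of $\epsilon$; continuity of $f_3$ on $\mathcal{P}(\xi,R^l)$ follows from dominated convergence since the integrand is continuous and the path is compact.

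The only point that needs a little care — and the main obstacle — is that, unlike in Lemma~2.50, I do not want to assume $N'\in\mathbf{C}$; here only $N\in\mathbf{C}(\mathcal{P}(\xi,R^l))$ is given. So the integration-by-parts must be arranged so that no derivative of $N$ ever appears: one keeps $N$ itself in the boundary term and in the two remaining integrals (both of which carry derivatives of $P$, not of $N$). This is exactly the structure of the first displayed integration-by-parts in Lemma~2.50 before the $\|N'\|_2$ term is split off, so the argument does go through; one simply stops at that stage. The boundedness of the path (forced by $\mathcal{P}(\xi,R^l)\subset\mathcal{R}^-\cap\{|\xi|\le R\}$ with $R$ fixed independent of $\epsilon$) is what keeps every resulting integral finite and $\epsilon$-independent, which is precisely why this lemma, unlike Lemma~2.52, needs no $R_\epsilon$-dependent factor.
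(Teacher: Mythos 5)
Your plan is to integrate by parts, and you correctly observe that this produces a boundary term plus two integrals coming from $d\bigl(N(t)/P'(t)\bigr)$, one carrying $\tfrac{P''}{(P')^2}$ and one carrying $\tfrac{1}{P'}$. But the integral carrying $\tfrac{1}{P'(t)}$ necessarily has $N'(t)$ as its remaining factor — differentiating the product $N/P'$ forces this, and there is no way to ``arrange'' the integration by parts so that $N'$ never appears. Your remark that ``one keeps $N$ itself \ldots{} in the two remaining integrals (both of which carry derivatives of $P$, not of $N$)'' is false: that is exactly where the $\|N'\|_2$ term in Lemma 2.50 comes from, and you cannot simply ``stop at that stage.'' Since the present lemma assumes only $N\in\mathbf{C}(\mathcal{P}(\xi,R^l))$, the $N'$ term is uncontrolled and your argument does not close. (There is also a sign slip in $d\bigl(\exp\{-\tfrac{1}{\epsilon^2}(P(\xi)-P(t))\}\bigr)=+\tfrac{1}{\epsilon^2}P'(t)\exp\{\cdots\}\,dt$, but that is minor.)

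The paper's actual proof avoids integration by parts altogether. It parameterizes the path by arclength $s$, and uses the crucial appendix estimate (valid precisely because the path lies in the compact set $\{-R\le\mathrm{Re}\,t\le 0\}$) that $-\tfrac{d}{ds}\,\mathrm{Re}\,P(t(s))\ge C>0$ uniformly in $\epsilon$. This lets one replace $\tfrac{1}{\epsilon^2}\,ds$ by $\tfrac{1}{C}\bigl(-\tfrac{1}{\epsilon^2}\tfrac{d}{ds}\mathrm{Re}\,P\bigr)\,ds$, which is exactly $\tfrac{1}{C}\,d\bigl[\exp\{-\tfrac{1}{\epsilon^2}\mathrm{Re}(P(\xi)-P(t(s)))\}\bigr]$; the total variation of the exponential along the monotone path is at most $1$, so $|f_3(\xi)|\le \tfrac{1}{C}\sup_{\mathcal P}|N|\le C'\|N\|_1$ with no $N'$ and no $\epsilon$-dependence. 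That absorption of the $\epsilon^{-2}$ into the differential of the exponential, rather than into an integration by parts, is the missing idea in your write-up.
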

\begin{proof}

\begin{multline*}
|f_3(\xi)|=\left\lvert\frac{1}{\epsilon^2}
\int_{\mathcal{P}(\xi, R^l)} N(t)
\exp~\{-\frac{1}{\epsilon^2}(P(\xi)-P(t))\}dt
\right\rvert\\
\leq \underset{\mathcal{P}(\xi,R^l)}{\sup}{N}
\times\int^{1}_{0}\frac{1}{-
\frac{d}{ds} \text{ Re }P(t(s))}
d\left[\exp\{-\frac{1}{\epsilon^2}\text{ Re }(P(\xi)- P(t(s)))\}\right]
\end{multline*}
Since for $-R\leq \text{ Re~ }t(s)\leq 0$, we have
$$-\frac{d}{ds} ~\text{Re ~}P(t(s))= -\text{Re }\left( P'(t)t'(s)\right)\geq C ,$$
and
$$
\frac{1}{-\frac{d}{ds}
\text{ Re }P(t(s))} \leq C .$$
So
$\norm{f_3}_0 \leq K_1  \underset{\mathcal{P}(\xi,R^l)}{\sup}{ |N(t)|}$ .
\end{proof}

In order to estimate the integral operator in a neighborhood of the origin, we use the special property $P(\xi)\sim -\gamma^2\log~\xi$ for $\xi$ near the origin in the following lemma.

\begin{lemma}
Let $N$ be a continuous function on $\Omega^-_3$ and $\norm{N}_{\Omega_3}$ be defined as in  Remark 2.44. Let
\begin{equation*}
f_1(\xi):=\frac{1}{\epsilon^2}g_1(\xi)\int^{\xi}_{\infty}
N(t)g_2(t)dt,
\end{equation*}
then
\begin{equation*}
\epsilon^2 \underset{\Omega^-_3\cap B_\nu}{\sup}\left\vert \frac{f'_1(\xi)-f'_1(0)}{\sqrt{\xi}}\right\vert\leq C \norm{N}_{\Omega_3},
\end{equation*}
where $C$ is a constant independent of $\epsilon$.
\end{lemma}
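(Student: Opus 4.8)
The plan is to use that $f_1$ solves the linear ODE attached to the integral operator, and to do all the work near the finger tip $\xi=0$, which is the only place the norm $\norm{\cdot}_{\Omega_3}$ is delicate. Differentiating the integral representation and using $g_1'=-\epsilon^{-2}\tilde Q\,g_1$ together with $g_1g_2\equiv 1$ shows that on $\Omega_3^-$
\[
\epsilon^2 f_1'(\xi)+\tilde Q(\xi)\,f_1(\xi)=N(\xi).
\]
Near $\xi=0$ I would record the structure coming from $P(\xi)\sim-\gamma^2\log\xi$ in the sharp form $P(\xi)=-\gamma^2\log\xi+p(\xi)$ with $p$ holomorphic and, fixing the constant of integration, $p(0)=0$; then $\tilde Q(\xi)=a(\xi)/\xi$ with $a$ holomorphic and $a(0)=-\gamma^2$, while $g_1(\xi)=\xi^{\gamma^2/\epsilon^2}e^{-p(\xi)/\epsilon^2}$, $g_2(\xi)=\xi^{-\gamma^2/\epsilon^2}e^{p(\xi)/\epsilon^2}$, and $|\mathrm{Re}\,p(\xi)|\le M|\xi|$ on a fixed ball $B_{2\nu}$. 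Evaluating the ODE at $\xi=0$ — which first requires that $f_1$ extends continuously to $0$ with $f_1(0)=0$ and is differentiable there, something I would obtain from the integral representation together with the exponential smallness of $g_1$ at the tip — forces $(\epsilon^2-\gamma^2)f_1'(0)=N(0)$, so $f_1'(0)$ is well defined and $|f_1'(0)|\le C\norm{N}_{\Omega_3}$.

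Set $w:=f_1-\xi f_1'(0)$, so that $f_1'-f_1'(0)=w'$ and the lemma is equivalent to $\epsilon^2|w'(\xi)|\le C\sqrt{|\xi|}\,\norm{N}_{\Omega_3}$ on $\Omega_3^-\cap B_\nu$. Then $w$ satisfies $\epsilon^2 w'+\tilde Q w=\tilde N$, with $\tilde N:=N-N(0)-\xi\tilde a(\xi)f_1'(0)$, $\tilde a:=(a+\gamma^2)/\xi$ holomorphic, $\tilde N(0)=0$; since $\norm{N}_{\Omega_3}$ controls $|N'(0)|$ and $|N'(\xi)-N'(0)|\le\epsilon^{-2}\sqrt{|\xi|}\,\norm{N}_{\Omega_3}$ on $B_\nu$, one obtains $|\tilde N(\xi)|\le C(|\xi|+\epsilon^{-2}|\xi|^{3/2})\norm{N}_{\Omega_3}$ and $|\tilde N'(\xi)|\le C(1+\epsilon^{-2}\sqrt{|\xi|})\norm{N}_{\Omega_3}$ on $B_\nu$. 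For $|\xi|$ bounded below by an $\epsilon$-independent constant the desired estimate is immediate from the ODE and the $\norm{\cdot}_0$-bound on $f_1$ coming from the preceding lemma, since $\sqrt{|\xi|}$ is then bounded below; so I may assume $|\xi|$ small, fix $\xi_0$ on the ray through $\xi$ with $|\xi_0|$ a fixed small constant and $|\xi|/|\xi_0|\le\frac12$, and integrate along that ray.

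The heart of the argument is an integration by parts in the variation-of-parameters formula for $w$, in the same spirit as the proof of the lemma for $f_1$ on $\mathcal{P}(\xi,\infty)$: writing $g_2=\epsilon^2 g_2'/\tilde Q$ gives
\[
w(\xi)=\frac{\tilde N(\xi)}{\tilde Q(\xi)}-\frac{\tilde N(\xi_0)}{\tilde Q(\xi_0)}\,\frac{g_1(\xi)}{g_1(\xi_0)}-g_1(\xi)\int_{\xi_0}^{\xi}\Bigl(\frac{\tilde N}{\tilde Q}\Bigr)'(t)\,g_2(t)\,dt+C\,g_1(\xi),
\]
with $C=(f_1(\xi_0)-\xi_0 f_1'(0))/g_1(\xi_0)$. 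Differentiating with $g_1'=-\epsilon^{-2}\tilde Q g_1$ and estimating $\epsilon^2|w'(\xi)|/\sqrt{|\xi|}$ termwise: for $\tilde N/\tilde Q=\xi\tilde N/a$ one gets $\epsilon^2|(\xi\tilde N/a)'|/\sqrt{|\xi|}\le C\norm{N}_{\Omega_3}$, because the $\epsilon^2$ cancels the $\epsilon^{-2}$ hidden in $\tilde N'$; the two terms proportional to $g_1$ are exponentially small in $1/\epsilon^2$ (using $g_1(\xi)=\xi^{\gamma^2/\epsilon^2}e^{-p(\xi)/\epsilon^2}$ with $\gamma^2/\epsilon^2$ large, $|\xi|/|\xi_0|\le\frac12$, and $\nu$ so small that $3M\nu<\gamma^2\log 2$); and for the integral term one uses that along the ray $|g_1(\xi)g_2(t)|=(|\xi|/|t|)^{\gamma^2/\epsilon^2}e^{-\mathrm{Re}(p(\xi)-p(t))/\epsilon^2}\le 1$ — which needs $p(0)=0$ and $\nu<\gamma^2/M$, so that $\mathrm{Re}\,P$ is monotone along rays into the tip, precisely the subtlety emphasized by Chapman and King — giving
\[
\Bigl|g_1(\xi)\int_{\xi_0}^{\xi}(\tilde N/\tilde Q)'\,g_2\,dt\Bigr|\le |\xi|^{\gamma^2/\epsilon^2}\int_{|\xi|}^{|\xi_0|}\bigl(Cs+C\epsilon^{-2}s^{3/2}\bigr)s^{-\gamma^2/\epsilon^2}\,ds,
\]
where the constants $\frac{1}{\gamma^2/\epsilon^2-2}=\frac{\epsilon^2}{\gamma^2-2\epsilon^2}$ and $\frac{1}{\gamma^2/\epsilon^2-5/2}=\frac{\epsilon^2}{\gamma^2-\frac52\epsilon^2}$ produced by integrating these powers supply the $\epsilon^2$ absorbing the remaining $\epsilon^{-2}$, and the derivative of this term is handled in the same way using $|\tilde Q(\xi)|\le C/|\xi|$ near the tip. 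Collecting the contributions yields $\epsilon^2|w'(\xi)|\le C(|\xi|+\epsilon^2\sqrt{|\xi|})\norm{N}_{\Omega_3}$ plus exponentially small terms, hence $\le C\sqrt{|\xi|}\,\norm{N}_{\Omega_3}$ on $B_\nu$.

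The step I expect to be the main obstacle is the bookkeeping of the competing powers of $\epsilon$: the $\epsilon^{-2}$ built into $\norm{N}_{\Omega_3}$ through the modulus of continuity of $N'$, the $\epsilon^{-2}$ in front of the integral operator, and the compensating $\epsilon^2$'s produced both by the integration by parts and by integrating $t^{k-\gamma^2/\epsilon^2}$ near the tip, must all be tracked so that the final constant is independent of $\epsilon$; and interwoven with this is the need to choose the integration rays and the constants $\nu,\xi_0$ independently of $\epsilon$ so that $|g_1(\xi)g_2(t)|\le 1$ along the path — which is exactly where the logarithmic behavior $P(\xi)\sim-\gamma^2\log\xi$ and the monotonicity of $\mathrm{Re}\,P$ at the non-analytic finger tip are essential.
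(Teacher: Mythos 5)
Your argument is correct and arrives at the same estimate, but the organization differs genuinely from the paper's. The paper differentiates the integral representation $f_1'(\xi)=\epsilon^{-2}N(\xi)-\epsilon^{-4}P'(\xi)g_1(\xi)\int_\infty^\xi N g_2$, replaces $N$ by $N-N(0)$ under the integral, splits the integral at $-\nu_1$, disposes of the tail by Lemma 2.50 times the exponentially small factor $g_1(\xi)g_2(-\nu_1)$, and estimates the inner piece directly through $P\sim-\gamma^2\log\xi$ and the elementary integration $\int_{|\xi|}s^{k-\gamma^2/\epsilon^2}\,ds\sim(\epsilon^2/\gamma^2)|\xi|^{k+1-\gamma^2/\epsilon^2}$, which supplies the compensating $\epsilon^2$. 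You instead isolate $w=f_1-\xi f_1'(0)$, derive its ODE $\epsilon^2w'+\tilde Q w=\tilde N$ with $\tilde N=N-N(0)-\xi\tilde a(\xi)f_1'(0)$, and perform one integration by parts on the variation-of-parameters formula before estimating. What your version buys: (i) the correction $\xi\tilde a(\xi)f_1'(0)$ in $\tilde N$ accounts exactly for the fact that $N\mapsto N-N(0)$ does not commute with the integral representation of $f_1'$ --- the piece $\epsilon^{-2}N(0)g_1\int_\infty^\xi g_2$ of $f_1'$ is not constant in $\xi$ --- a point the paper's second displayed identity passes over; and (ii) the integration by parts gains an extra factor $\epsilon^2$, yielding $\epsilon^2|w'|/\sqrt{|\xi|}\lesssim(\epsilon^2\sqrt{|\xi|}+|\xi|)\norm{N}_{\Omega_3}$ rather than the $(\sqrt{|\xi|}+\epsilon^{-2}|\xi|)\norm{N}_{\Omega_3}$ of a direct estimate, so the conclusion holds on $B_\nu$ for $\nu$ of size $\epsilon$ or $\epsilon^{4/3}$, whereas the paper's asserted bound $|N(t)-N(0)|\le\norm{N}_{\Omega_3}\sqrt{|t|}$ tacitly presupposes $\nu\lesssim\epsilon^2$ once the $\epsilon^{-2}$ in the modulus-of-continuity part of $\norm{\cdot}_{\Omega_3}$ is tracked. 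The cost is the heavier $\epsilon$-bookkeeping, which you correctly identify as the crux.
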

\begin{proof}
\begin{equation*}
f'_1(\xi)=\frac{1}{\epsilon^2}N(\xi)-\frac{P'(\xi)}{\epsilon^4}g_1(\xi)\int^{\xi}_{\infty}
N(t)g_2(t)dt,
\end{equation*}
\begin{equation*}
f'_1(\xi)-f_1'(0)=\frac{1}{\epsilon^2}[N(\xi)-N(0)]-\frac{P'(\xi)}{\epsilon^4}g_1(\xi)\int^{\xi}_{\infty}
[N(t)-N(0)]g_2(t)dt,
\end{equation*}
we can break up the integral on the right hand side into
\begin{equation*}
\begin{split}
&\frac{P'(\xi)}{\epsilon^4}g_1(\xi)\int^{\xi}_{\infty}[N(t)-N(0)]g_2(t)dt\\
&=\frac{P'(\xi)}{\epsilon^4}g_1(\xi)\int^{\nu_1}_{\infty}[N(t)-N(0)]g_2(t)dt+\frac{P'(\xi)}{\epsilon^4}g_1(\xi)\int^{\xi}_{-\nu_1}[N(t)-N(0)]g_2(t)dt\\
&=\frac{P'(\xi)}{\epsilon^4}g_1(\xi)g_2(-\nu_1)\left(g_1(-\nu_1)\int^{-\nu_1}_{\infty}[N(t)-N(0)]g_2(t)dt\right)\\
&+\frac{P'(\xi)}{\epsilon^4}g_1(\xi)\int^{\xi}_{-\nu_1}[N(t)-N(0)]g_2(t)dt\\
&:=w_1(\xi)+w_2(\xi).
\end{split}
\end{equation*}
Using Lemma 2.50, we have
$$\left\vert g_1(-\nu_1)\int^{-\nu_1}_{\infty}[N(t)-N(0)]g_2(t)dt\right\vert\leq C\epsilon^2 \norm{N}_{\Omega_3},$$
and using the fact that $P(t)\sim -\gamma^2\log t$ for $|t|\leq \nu_1$, we obtain for $\xi\in B_\nu, \nu=O(\epsilon^{4/3})$,
\begin{equation*}
\left\vert\frac{P'(\xi)}{\epsilon^4}g_1(\xi)g_2(-\nu_1)\right\vert\leq C|\xi|^{\frac{\gamma^2}{\epsilon^2}-1}|\nu_1|^{\frac{\gamma^2}{\epsilon^2}}\leq C\sqrt{|\xi|}\epsilon^{\frac{\gamma^2}{2\epsilon^2}-2};
\end{equation*}
hence $|w_1(\xi)|\leq C\sqrt{|\xi|}\norm{N}_{\Omega_3}\epsilon^{\frac{\gamma^2}{2\epsilon^2}}$.\\
Using the fact that $P(t)\sim -\gamma^2\log t$ and $g_1(\xi)\sim \xi^{\frac{\gamma^2}{\epsilon^2}}$ for $|\xi|\leq \nu_1$ and $|N(t)-N(0)|\leq \norm{N}_{\Omega_3} |t|^{1/2}$  we obtain
\begin{equation*}
\begin{split}
|w_2(\xi)|&=\left\vert\frac{P'(\xi)}{\epsilon^4}g_1(\xi)\int^{\xi}_{-\nu_1}N(t)g_2(t)dt\right\vert\\
&=\left\vert\frac{C}{\epsilon^4}\xi^{\frac{\gamma^2}{\epsilon^2}-1}\int^{\xi}_{-\nu_1}N(t)t^{\frac{\gamma^2}{\epsilon^2}}dt\right\vert \\
&\leq \frac{C}{\epsilon^2}\sqrt{|\xi|}\norm{N}_{\Omega_3}.
\end{split}
\end{equation*}
Therefore, the lemma follows.
\end{proof}

\begin{lemma}
Let $~\mathcal{U}[N](\xi)=\frac{1}{\epsilon^2}g_1(\xi)\int_{-\infty}^\xi
N(t)g_2(t)dt$, then $\norm{\mathcal{U}[\bar{M}]}_{\Omega_3}\leq C\delta_2 \norm{U}_{\Omega_3}$ and $\norm{\mathcal{U}[M]}_{\Omega_3}\leq C\frac{\epsilon^2}{|\nu|^2}\norm{U}_{\Omega_3}$.
\end{lemma}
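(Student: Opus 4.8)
\medskip
\noindent\emph{Plan of proof.} The operator $\mathcal{U}$ is exactly the integral operator appearing on the right of \eqref{2.48}; for $\xi\in\Omega_3^-$ the integration is carried along the path $\mathcal{P}(\xi,\infty)\subset\Omega_3^-$ of Remark~2.43, and the asserted estimates are the quantitative form of the statement that, after inserting the source bounds of Lemma~\ref{lem:2.46}, the map $U\mapsto\mathcal{U}[\bar M(U)+M(U)]$ is a contraction on $\Omega_3^-$. The plan is to bound separately the three constituents of $\norm{\cdot}_{\Omega_3}$: the weighted sup $\sup_{\Omega_3}|\xi-2i|^\tau|\cdot|$, the near-origin quantity $\epsilon^2\sup_{\overline{B_\nu\cap\Omega_3}}|(\,\cdot\,)'(\xi)-(\,\cdot\,)'(0)|\,|\xi|^{-1/2}$, and the value $|(\,\cdot\,)'(0)|$. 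Throughout, $\mathcal{P}(\xi,\infty)$ is chosen, uniformly for $\xi\in\Omega_3$, as a concatenation of $\mathbf{C}^1$ segments on each of which $\mathrm{Re}(P(t)-P(\xi))$ decreases monotonically; the existence of such a path is supplied by Lemmas~6.1, 6.3 and 6.6 of the appendix.

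For the weighted-sup piece I would split $\mathcal{P}(\xi,\infty)$ at the points $R^l_\epsilon$ and $R^l$ into a far segment ($|t|\ge R_\epsilon$), an intermediate segment ($R\le|t|\le R_\epsilon$), and a remaining segment ($|t|\le R$), and decompose $\mathcal{U}[N]$ accordingly into a Lemma~2.50-type integral based at $\infty$, plus $\bigl(g_1(\xi)/g_1(R^l_\epsilon)\bigr)$ times a Lemma~2.51-type integral, plus $\bigl(g_1(\xi)/g_1(R^l)\bigr)$ times a Lemma~2.52-type integral. By \eqref{2.52} one has $\norm{\bar M(U)}_1\le C\delta_2\norm{U}_{\Omega_3}$, and the extra $\mathbf{C}^1$ control $\norm{(\bar M(U))'}_2\le C\delta_2\norm{U}_{\Omega_3}$ required by Lemma~2.50 follows by differentiating $\bar M(U)=(-iR_1+\tilde Q)U$ and using \eqref{2.57} together with the Cauchy estimates for $U'$ of Lemma~\ref{lem:2.45} and of the lemma following it; likewise \eqref{2.53}--\eqref{2.54} give $\norm{M(U)}_1,\norm{(M(U))'}_2\le C\,\epsilon^2|\nu|^{-2}\norm{U}_{\Omega_3}$. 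Applying Lemma~2.51 with $\sigma=1+2\tau$ for the $\bar M$ term and $\sigma=1+\tau$ for the $M$ term, Lemma~2.52 to the remaining segment, and fixing $R_\epsilon$ (as in the discussion preceding Lemma~2.51) so that the far contribution is absorbed, one obtains $\sup_{\Omega_3}|\xi-2i|^\tau|\mathcal{U}[\bar M]|\le C\delta_2\norm{U}_{\Omega_3}$ and the analogue with $\epsilon^2|\nu|^{-2}$ for $\mathcal{U}[M]$.

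For the near-origin derivative-difference piece I would apply Lemma~2.53 with $N=\bar M(U)$, respectively $N=M(U)$, which requires $\norm{N}_{\Omega_3}$: the sup component is \eqref{2.52}/\eqref{2.53}, and the derivative-difference component is \eqref{2.55} for $M(U)$ and, for $\bar M(U)$, the same kind of bound deduced from $|-iR_1+\tilde Q|\le C\delta_2|t-2i|^{-1-\tau}$ (\eqref{2.57}) and $|U'(\xi)-U'(0)|\le C|\xi|^{1/2}|\nu|^{-3/2}\norm{U}_{\Omega_3}$ near $0$. For the value $|(\mathcal{U}[N])'(0)|$, differentiating as in the proof of Lemma~2.53 gives $(\mathcal{U}[N])'(\xi)=\epsilon^{-2}N(\xi)-\epsilon^{-4}P'(\xi)g_1(\xi)\int_{-\infty}^\xi N(t)g_2(t)\,dt$, and the relations $P(\xi)\sim-\gamma^2\log\xi$ and $g_1(\xi)\sim\xi^{\gamma^2/\epsilon^2}$ near the origin force a cancellation of the two $O(\epsilon^{-2})$ contributions, leaving $(\mathcal{U}[N])'(0)$ of size $C|N(0)|$ plus lower-order terms, hence $\le C\delta_2\norm{U}_{\Omega_3}$, respectively $\le C\,\epsilon^2|\nu|^{-2}\norm{U}_{\Omega_3}$. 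Adding the three contributions gives the two asserted inequalities.

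I expect the main obstacle to be not any single estimate but the very first ingredient: establishing, uniformly for $\xi\in\Omega_3$, the integration path $\mathcal{P}(\xi,\infty)$ built from monotone-$\mathrm{Re}\,P$ segments. This is exactly the Stokes-line subtlety at the finger tip emphasized by Chapman and King: the branch points of $\tilde Q$ at $\xi=\pm i\gamma$, and above all the simple pole of $\tilde Q$ at $\xi=0$ (equivalently the $-\gamma^2\log\xi$ behaviour of $P$), bend the Stokes lines into the origin, and it is precisely to guarantee the required path geometry and the accompanying monotonicity of $\mathrm{Re}\,P$ that $\nu$, $\varphi_0$ and $b$ are constrained in Definition~1.1 and in Lemmas~6.1--6.8. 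A secondary technical point, already visible above, is that the naive bound of Lemma~2.50 carries a factor $\epsilon^{-2}$, so obtaining an $o(1)$ contraction constant forces both the path-splitting and the $\mathbf{C}^1$ control of $\bar M(U)$ and $M(U)$.
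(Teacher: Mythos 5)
Your proposal follows essentially the same route as the paper's proof: you split the integration path $\mathcal{P}(\xi,\infty)$ at $R^l_\epsilon$ and $R^l$ into three segments, apply Lemma~2.50 on the far piece, Lemma~2.51 on the intermediate piece, Lemma~2.52 (and, for the near-origin derivative-difference part of the norm, Lemma~2.53) on the inner piece, feed in the source bounds for $\bar M(U)$ and $M(U)$ from the preceding lemma, and choose $R_\epsilon = O(\epsilon^{-2})$ so the $\epsilon^{-2}$ weight from Lemma~2.50 is absorbed. If anything you are more explicit than the paper — the paper's proof writes out only the weighted-sup estimate for $\mathcal{U}[\bar M]$ and dismisses the remaining components of $\norm{\cdot}_{\Omega_3}$ and the $\mathcal{U}[M]$ case with ``can be proved similarly,'' whereas you address the derivative-difference piece and $|(\cdot)'(0)|$ head-on — but this is an elaboration of the same argument, not a different one.
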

\begin{proof}
(1) For $\xi\in r_3\cap \{ |\xi|\geq R_\epsilon\}$, applying Lemma 2.50 and Lemma 2.48 with $N=\bar{M}$ and $\mathcal{P}(-\infty,\xi)=r_3\cap \{~|\xi|\geq R_\epsilon\}$, we have
\begin{equation}\label{2.61}
\left\vert \mathcal{U}[\bar{M}]\right\vert \leq C\frac{\delta_2}{\epsilon^2} R_\epsilon^{-\tau}|\xi-2i|^{-\tau}\norm{U}_{\Omega_3}.
\end{equation}
(2) For $\xi\in r_3\cap \{ R\leq |\xi|\leq R_\epsilon\}$, applying Lemma 2.51 and Lemma  2.48 with $N=\bar{M}$ and $\mathcal{P}(R^l_\epsilon,\xi)=r_3\cap \{ R_\epsilon \ge ~|\xi| \geq R\}$, we have
\begin{equation}\label{2.62}
\left\vert \mathcal{U}[\bar M]\right\vert \leq C\left[\frac{\delta_2}{\epsilon^2} R_\epsilon^{-\tau}+R_\epsilon^{1-\tau}\delta_2\right]|\xi-2i|^{-\tau}\norm{U}_{\Omega_3}.
\end{equation}
(3) For $\xi\in r_3\cap \{ ~|\xi|\leq R\}$, applying Lemma 2.52, Lemma 2.53 and Lemma 2.48 with $N=\bar{M}$ and $\mathcal{P}(R^l,\xi)=r_3\cap \{ R\ge ~|\xi|\geq 0\}$, we have
\begin{equation}\label{2.63}
\left\vert \mathcal{U}[\bar M]\right\vert \leq C\left[\frac{\delta_2}{\epsilon^2} R_\epsilon^{-\tau}+R_\epsilon^{1-\tau}\delta_2+\delta_2\right]|\xi-2i|^{-\tau}\norm{U}\leq C\delta_2\norm{U}_{\Omega_3}, \text { for } R_\epsilon=O(\epsilon^{-2}).
\end{equation}
The first part of the lemma follows from (\ref{2.61})-(\ref{2.63}), the second part can be proved similarly.

\end{proof}

{\em Proof of Theorem \ref{thm:2.41}:}
From (\ref{2.48}),
Lemmas  2.47-2.54, we obtain
\begin{equation}
\label{2.64}
\norm{U}^-_{\Omega_3}\leq  (\norm{\mathcal{U}[\bar{M}]}_{\Omega_3}+\norm{\mathcal{U}[M]}_{\Omega_3} )
~\le~ C (\delta_2+\frac{\epsilon^2}{|\nu|^2})\norm{U}^-_{\Omega_3} ;
\end{equation}
where $C$ is some constant independent of $\epsilon$, $\nu$ and $\delta_2$.
From (\ref{2.56}), when $\norm{F}_0$  are small enough,
$C(\delta_2+\frac{\epsilon^2}{|\nu|^2}) <1$ in (\ref{2.64}).
This implies $U(\xi)\equiv 0$ on $\Omega_3$ and hence everywhere by
analytic continuation.
Hence $F(\xi)=F_+(\xi)=\hat F (\xi)$ and
$F(\xi)$ is analytic in the upper half plane. Thus
for
$\xi \in \left \{ ~\text{Im ~}\xi < 0 \right \} \cap \mathcal{R}^-$,
$I_2 (\xi) =I_1 (\xi)$,
 and equation (\ref{2.34})
reduces to (\ref{2.27}) in that region.

\subsection{Formulation of the Half Problem}
\label{sec:2.3}

If $F \in \mathbf{A}_0^-$ and satisfies symmetry condition
Im ~$F = 0$ on
$\left \{ ~\text{Re ~}\xi ~=0 \right \} \cap \mathcal{R}$, then
the Schwartz reflection principle applies and
\begin{equation}
\label{2.65}
F(\xi)=[F(-\xi^*)]^*~ \quad \text{for }\xi\in\mathcal{R};
\end{equation}
defines $F$ in $\mathcal{R}^+$; consequently $F \in \mathbf{A}_0$
with $ \norm{F}_0 = \norm{F}_0^-$
The reflection principle also implies
\begin{equation}
\label{2.66}
F'(\xi)=-[F'(-\xi^*)]^* ~\quad \text{ for }\xi\in\mathcal{R}.
\end{equation}

For $F\in\mathbf{A}^-$,
if we relax the symmetry condition
Im ~$F = 0$ on
$\left \{~\text{Re ~}\xi ~=0 \right \} \cap \mathcal{R}$, then
it is still possible to define $F$ and its derivative
in $\mathcal{R}^+$, based on $F$ in $\mathcal{R}^-$ using
(\ref{2.65})-(\ref{2.66}). However, this $F$ in
$\mathcal{R}^+$ is not the analytic continuation of
$F$ in $\mathcal{R}^-$ since violation of the symmetry condition
implies that extension of $F$ in $\mathcal{R}^+$ is
discontinuous across $\{ ~\text{Re ~}\xi = 0 \} \cup \mathcal{R}$.
Nonetheless, this still allows us to define
analytic functions $F_-$ in $\Omega_0$ through (\ref{2.18}), and
$F_+$
in $\Omega_1$ through (\ref{2.30}),
each of which have vanishing imaginary parts
on the Im ~$\xi$ axis segment that are part of their domains of
analyticity.
Thus, $I_2$ is still defined as in (\ref{2.33}) as an
analytic function everywhere in $\mathcal{R}$.
Also, the norms of these
functions $F_-$, $F_+$,  and
$I_2$ in their
respective domains are completely
controlled by $\|F\|_0^-$. \\

\noindent{\bf Half Problem:}
{\em Find function
$F\in\mathbf{A}^-_{0,\delta}$ that is analytic in
$\mathcal{R}^-$ and continuous in its closure,
and satisfies equation (\ref{2.34}) in
$\mathcal{R}^-$.}

\begin{remark}\label{thm:2.49}
If $F\in\mathbf{A}^-_\delta$
is a solution of the Half Problem,
and $F$ satisfies
\begin{equation}
\label{2.67}
\text{Im~ }F=0 ,\text{ on }\{~\text{Re }\xi=0\}\cap\mathcal{R};
\end{equation}
then $F$ is a solution to {\bf Problem 2} and therefore the
original Finger Problem.
Conversely, any solution $F$ to
{\bf Problem 2} (and therefore the original Finger Problem) is also a solution
to the Half Problem.
\end{remark}

\section{Solution to the Half Problem  in $\mathcal{R}^-$}

In this section, by changes of variables, we first analyze equation (\ref{2.34}) and identify the possible singularity at $\xi=0$ which corresponds to the finger tip.
We then formulate an integral equation that is equivalent to (\ref{2.34}). Near the singular point $\xi =0$, we seek the particular solution satisfying (\ref{3.33}) and derive the differential and integral equations that govern the particular solution. By constructing a normal approximation sequence, we obtain the existence of solution to the Half Problem.

\subsection{ Analysis of Equation (\ref{2.34}) }

Let $F=\epsilon^2 W$, then (\ref{2.34}) becomes
\begin{equation}\label{3.1}
\epsilon^2 W'(\xi)=\frac{1}{2}\big\{-\big[ (W+ I_2)^2((F^-)'+\bar H)+2 G_1( F^-)\big] +((F^-)'+\bar H)\sqrt{(W+I_2)^2-4}\big\}
\end{equation}
Let
\begin{equation}\label{3.2}
Q(\xi)=-\frac{1}{2\pi }\int_{-\infty}^{\infty}
\frac{(H-\bar H)(t)dt}{H^{1/2}(t){\bar H}^{1/2}(t)(t-\xi)} ~\qquad \text{ for Im ~}\xi ~<~0,
\end{equation}
then
\begin{equation}\label{3.3}
\begin{split}
{\tilde I}(F) &=I_2(G)-Q(\xi)\\
&=-\frac{1}{2\pi }\int_{r_2}
\frac{(F_+'-F_-') dt}{G_5(F_+,F_-)(t-\xi)}\\
& -\frac{1}{2\pi }\int_{r_2}\frac{(H-\bar H)}{H^{1/2}\bar H^{1/2}}
\frac{(-\epsilon^2F_+'F_-'-F_+'\bar H-F_-'H) dt}{G_5(F_+,F_-)[H^{1/2}\bar H^{1/2}+G_5(F_+,F_-)](t-\xi)},
\end{split}
\end{equation}
\begin{equation}\label{3.4}
G_5(F_+,F_-)=(F_+'+H)^{1/2}(F_-'+\bar H)^{1/2}.
\end{equation}
Now let
\begin{equation}\label{3.5}
V=W+Q,
\end{equation}
then (\ref{3.1}) becomes
\begin{equation}\label{3.6}
\epsilon^2 V'=\frac{-[V^2\bar H +2(H-\bar H)]+\bar H V\sqrt{V^2-4}}{2}+\epsilon^2Q'+ G_6(V,F_-,\tilde I),
+G_7(V,\tilde I)
\end{equation}
where
\begin{equation}\label{3.7}
G_6(V,F^-,\tilde I)=F_-'\big[\frac{-(V+\tilde I)^2+(V+\tilde I)\sqrt{(V+\tilde I)^2-4}+2}{2}\big],
\end{equation}
\begin{equation}\label{3.8}
G_7(V,\tilde I)=-\frac{[2V\tilde I+(\tilde I)^2]\bar H}{2}+\frac{1}{2}\bar H\frac{[2V\tilde I+(\tilde I)^2][(V^2-4)+(V+\tilde I)^2]}{(V+\tilde I)\sqrt{(V+\tilde I)^2-4}+V\sqrt{V^2-4}}.
\end{equation}
Let
\begin{equation}\label{3.9}
V_0=-\frac{2\gamma}{\sqrt{\xi^2+\gamma^2}},
\end{equation}
then
\begin{equation*}\label{3.10}
\sqrt{V_0^2-4}=-\frac{2i\xi}{\sqrt{\xi^2+\gamma^2}},
\end{equation*}
and
\begin{equation*}\label{3.11}
-[V_0^2\bar H +2(H-\bar H)]+\bar H V_0\sqrt{V_0^2-4}=0.
\end{equation*}
Let $p=V-V_0$, then $p$ satisfies
\begin{equation}\label{3.12}
\epsilon^2 p'+\tilde Q p=\epsilon^2(Q'-V_0')+ G_6(p,\tilde I)
+G_7(p,\tilde I)+G_8(p),
\end{equation}

where
\begin{equation}\label{3.13}
\tilde Q(\xi)=\frac{2iH^{3/2}\bar H^{1/2}}{H+\bar H}=i\frac{(\xi+i\gamma)^{3/2}(\xi-i\gamma)^{1/2}}{\xi (\xi^2+1)},
\end{equation}
\begin{multline}\label{3.14}
G_8(p)=\frac{\frac{1}{2} \bar H p^2[ (2V_0+p)^2+(2V_0^2-4)]}{(p+V_0)\sqrt{V_0^2-4+(2V_0p+p^2)}+V_0\sqrt{V_0^2-4}}-\frac{\bar H p^2}{2}\\
-\frac{p^2(V_0^2-2)(2V_0+p)[p^2+2V_0p+2V_0^2-4]}{\left((p+V_0)\sqrt{V_0^2-4+(2V_0p+p^2)}+V_0\sqrt{V_0^2-4}\right)^2\sqrt{V_0^2-4}},
\end{multline}
where $G_6(p,\tilde I)$ and $G_7(p,\tilde I)$ are given by (3.7) and (3.8) with $V$ being replaced by $p+V_0$.
\begin{remark}
It is to be noted that $\tilde Q$ is singular at $\xi=0$ which corresponds to the finger tip.
\end{remark}

\begin{lemma}
$F$ is a solution to the half problem  in $\mathcal{R}^-$ if and only if $p=\frac{1}{\epsilon^2}F+Q-V_0$ is a solution to (\ref{3.12}) in $\mathcal{R}^-$.
\end{lemma}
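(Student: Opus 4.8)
The plan is to unwind the three successive substitutions of this subsection. Composing $F=\epsilon^2 W$, then $V=W+Q$ with $Q$ the fixed function (\ref{3.2}), and then $p=V-V_0$ with $V_0$ the explicit function (\ref{3.9}) gives $p=\epsilon^{-2}F+Q-V_0$, which is the change of unknown in the statement. It therefore suffices to check two things: first, that each of the substitutions is an invertible change of the unknown respecting the function classes involved; and second, that the substitutions carry the equation at each stage into the one at the next, the chain being $(\ref{2.34})\leftrightarrow(\ref{3.1})\leftrightarrow(\ref{3.6})\leftrightarrow(\ref{3.12})$.

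For the first point, $Q$ is analytic in $\mathcal{R}^-$ with $Q=O(|\xi|^{-1})$ at infinity and no singularity at $\xi=0$ (the density in (\ref{3.2}) is analytic there, since $H$ and $\bar H$ are nonzero at the origin), and $V_0=-2\gamma(\xi^2+\gamma^2)^{-1/2}$ is likewise analytic in $\mathcal{R}^-$ with $V_0=O(|\xi|^{-1})$ at infinity; in particular $Q,V_0\in\mathbf{A}_0^-$. Hence $F\mapsto p=\epsilon^{-2}F+Q-V_0$, with inverse $F=\epsilon^2(p-Q+V_0)$, is an affine bijection preserving analyticity in $\mathcal{R}^-$, continuity up to its closure, and the decay property (\ref{1.17}), so it matches the solution class of the Half Problem with the class in which (\ref{3.12}) is studied in the remainder of Section 3; the auxiliary quantities $F_-$, $F_+$, $I_2$ and $\tilde I$ are in either case determined by $F$ (equivalently by $p$) exactly as in \S2.3.

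For the second point, the verification is algebraic. Substituting $F=\epsilon^2 W$ into (\ref{2.34}) and using $G_2(F,I_2)=\epsilon^2(W+I_2)$, so that $\sqrt{G_2(F,I_2)^2-4\epsilon^4}=\epsilon^2\sqrt{(W+I_2)^2-4}$, yields (\ref{3.1}). Writing $W=V-Q$ and splitting $I_2=\tilde I+Q$ according to (\ref{3.3})--(\ref{3.4}), so that $W+I_2=V+\tilde I$, one separates from the right-hand side of (\ref{3.1}) the terms proportional to $F_-'$, which assemble into $G_6$ of (\ref{3.7}), and the terms linear or quadratic in $\tilde I$, which assemble into $G_7$ of (\ref{3.8}) after rationalising the square-root difference by $a\sqrt{A}-b\sqrt{B}=(a^2A-b^2B)/(a\sqrt{A}+b\sqrt{B})$; the remaining $\bar H$-terms form $\frac{1}{2}\{-[V^2\bar H+2(H-\bar H)]+\bar H V\sqrt{V^2-4}\}$ and the $\epsilon^2Q'$ forcing, giving (\ref{3.6}). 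Finally putting $V=p+V_0$: the $\epsilon^2 V_0'$ coming from $V'=p'+V_0'$ is absorbed into the forcing; the identity $-[V_0^2\bar H+2(H-\bar H)]+\bar H V_0\sqrt{V_0^2-4}=0$, which is exactly what the choice (\ref{3.9}) of $V_0$ achieves, annihilates the $O(p^0)$ part of the $\bar H$-term; its $O(p)$ part equals $\tilde Q p$ with $\tilde Q$ as in (\ref{3.13}), by a short computation using $\sqrt{V_0^2-4}=-2i\xi(\xi^2+\gamma^2)^{-1/2}$; and the $O(p^2)$ remainder, once the difference $\sqrt{V_0^2-4+(2V_0p+p^2)}-\sqrt{V_0^2-4}$ is rationalised, is precisely $G_8(p)$ of (\ref{3.14}), while $G_6,G_7$ turn into $G_6(p,\tilde I),G_7(p,\tilde I)$. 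This is (\ref{3.12}); reading the chain backwards yields the converse.

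The step I expect to be the main (and essentially the only) obstacle is keeping branch choices consistent: the principal branch fixed in Lemma \ref{lem:2.7} and equation (\ref{2.13}) must be carried through all three substitutions, and in particular through the repeated rationalisations producing $G_7$ and $G_8$, because near $\xi=0$ the value $V_0=-2$ coincides with a branch point of $\sqrt{V^2-4}$ (the finger tip). This is why the functions $G_7$ and $G_8$ are recorded in the rationalised forms (\ref{3.8}) and (\ref{3.14}), in which the expansions $\sqrt{(V+\tilde I)^2-4}$ and $\sqrt{V_0^2-4+(2V_0p+p^2)}$ are meaningful and the signs unambiguous, rather than as raw differences of square roots; once this is settled, the remaining work is routine algebra requiring no estimates.
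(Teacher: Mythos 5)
Your proposal is correct and follows essentially the same route as the paper: the paper's proof of this lemma is a one-line appeal to the chain of substitutions $F=\epsilon^2 W$, $V=W+Q$, $p=V-V_0$ laid out in (3.1)--(3.15), and you explicitly retrace that chain, verify the composed change of unknown is the stated one, and check that each substitution carries the equation forward. Your additional remarks on the function classes and on keeping the branch of the square root consistent near the tip are reasonable clarifications of what the paper leaves implicit, but they do not constitute a different method.
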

\begin{proof}
The lemma follows from that (\ref{2.34}) is equivalent to (\ref{3.12}).
\end{proof}

Let
\begin{equation}\label{3.15}
N(p)=\epsilon^2(Q'-V_0')+ G_6(p)+G_7(p)+G_8(p).
\end{equation}

\begin{lemma}
$p\in \mathbf{A}_0$ satisfies (\ref{3.12}) if and only if  $p$ satisfies the following integral solution
\begin{equation}\label{3.16}
p(\xi)=\mathcal{U}(N)[\xi]\equiv \frac{1}{\epsilon^2}g_1(\xi)\int_{-\infty}^\xi g_2(t) N(p)(t) dt.
\end{equation}
\end{lemma}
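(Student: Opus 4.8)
The plan is to treat (\ref{3.12}) as a first–order linear ODE for $p$ with forcing $N(p)$ and integrating factor built from $g_1,g_2$, and to pin down the constant of integration using the decay encoded in $p\in\mathbf{A}_0$ together with the behaviour of $P(\xi)$ at infinity supplied by the appendix. Recall from (\ref{2.47}) and Definition \ref{L2.4} that $P'=\tilde Q$, $g_1'=-\tfrac{\tilde Q}{\epsilon^2}g_1$, $g_2'=\tfrac{\tilde Q}{\epsilon^2}g_2$ and $g_1g_2\equiv 1$, and that all three functions are analytic in $\mathcal{R}^-$ (a single–valued branch exists there even though $P$ has a logarithm at the boundary point $\xi=0$).

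For the implication (\ref{3.12}) $\Rightarrow$ (\ref{3.16}): multiplying (\ref{3.12}) by $g_2/\epsilon^2$ and using $g_2'=\tfrac{\tilde Q}{\epsilon^2}g_2$ identifies the left side as $(p g_2)'=\tfrac1{\epsilon^2}N(p)g_2$. Integrating along a path in $\mathcal{R}^-$ from $-\infty$ to $\xi$ — path–independently, since $p g_2$ is analytic there — gives $p(\xi)g_2(\xi)=\lim_{t\to-\infty}p(t)g_2(t)+\tfrac1{\epsilon^2}\int_{-\infty}^{\xi}N(p)(t)g_2(t)\,dt$, and multiplying by $g_1=1/g_2$ yields (\ref{3.16}) provided two points are settled, and these are the crux of the argument: (i) the improper integral converges, because $g_2$ stays bounded as $\xi\to-\infty$ in $\mathcal{R}^-$ (from Lemma 6.1 and the expansion $P(\xi)\sim i\log\xi+O(1/\xi)$, so $\mathrm{Re}\,P$ is bounded and $|g_2|\le C e^{-\pi/\epsilon^2}$ there) while $N(p)(t)=O(|t|^{-1-\tau})$ by the explicit forms of $Q'-V_0'$ and the decay of $G_6,G_7,G_8$, the latter using $p\in\mathbf{A}_0$ and the smallness of $\tilde I$; and (ii) the boundary term vanishes, i.e. $p(t)g_2(t)\to0$ as $t\to-\infty$, since $p\in\mathbf{A}_0$ forces $p(t)=O(|t|^{-\tau})\to0$ while $g_2$ is bounded. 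Equivalently, writing the general solution as $p=Cg_1+\tfrac1{\epsilon^2}g_1\int_{-\infty}^{\xi}g_2 N(p)\,dt$, evaluating at $-\infty$ gives $C=p(-\infty)/g_1(-\infty)=0$, which is legitimate precisely because $g_1(-\infty)\neq0$ ($|g_1|\sim e^{\pi/\epsilon^2}$).

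For the converse, assume $p=\mathcal{U}(N)[\xi]$; then $N(p)$ is a fixed analytic function of $\xi$ and, differentiating under the integral and using $g_1'=-\tfrac{\tilde Q}{\epsilon^2}g_1$, $g_1g_2\equiv1$,
\[
p'(\xi)=\tfrac1{\epsilon^2}g_1'(\xi)\!\int_{-\infty}^{\xi}\! g_2 N(p)\,dt+\tfrac1{\epsilon^2}g_1(\xi)g_2(\xi)N(p)(\xi)=-\tfrac{\tilde Q}{\epsilon^2}p(\xi)+\tfrac1{\epsilon^2}N(p)(\xi),
\]
which is exactly $\epsilon^2 p'+\tilde Q p=N(p)$, i.e. (\ref{3.12}). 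That the candidate $p$ produced from (\ref{3.16}) in fact lies in $\mathbf{A}_0$ follows from applying the $\mathcal{U}$–estimates of the preceding lemmas (Lemmas 2.50--2.55) to $N(p)$, but this is not needed for the equivalence asserted here.

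The only genuine obstacle is items (i)--(ii): one must know that $g_2$ does not grow, and $g_1$ does not vanish, at the far–left end of $\mathcal{R}^-$, which is the content of the appendix analysis of $P(\xi)$ near $\infty$ (Lemma 6.1). The subtlety is that the relevant asymptotics $\mathrm{Re}\,P=O(1)$ must hold uniformly as $\xi\to-\infty$ throughout the strip $\mathcal{R}^-$, not merely along the real axis, so that the boundary term really is killed independently of the approach direction; everything else is a routine integrating–factor computation.
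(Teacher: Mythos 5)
Your proof is correct and takes essentially the same approach as the paper: solve the linear ODE $\epsilon^2 p'+\tilde Q\, p = N(p)$ by the integrating factor $g_2$, so that $p = Cg_1 + \frac{1}{\epsilon^2}g_1\int_{-\infty}^\xi g_2 N(p)\,dt$, and eliminate $C$ using $p(-\infty)=0$ and $g_1(-\infty)\ne 0$ (the boundedness of $\mathrm{Re}\,P$ at $-\infty$ from Lemma 6.1). You fill in some details the paper leaves implicit — convergence of the improper integral and the converse direction — and your precise numerics for $|g_1|,|g_2|$ at $-\infty$ (the factor $e^{\pm\pi/\epsilon^2}$) need not be exact, but only the finiteness and nonvanishing of the limit of $\mathrm{Re}\,P$ is used, which is what Lemma 6.1 supplies.
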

\begin{proof}
Since $p$ satisfies (\ref{3.12}), then
\begin{equation*}\label{3.17}
p(\xi)=C g_1(\xi)+\frac{1}{\epsilon^2}g_1(\xi)\int_{-\infty}^\xi g_2(t) N(p)(t) dt
\end{equation*}
for some constant $C$. Since $p(-\infty)=0, g_1(-\infty)\ne 0$, this implies that $C=0$.
\end{proof}

In this section, we choose $\tau\geq 6/7, R_\epsilon=O(\epsilon^{-2})>R$.\\

In order to estimate the integral operator in (\ref{3.16}), we need to divide $\mathcal{R}^-$ into subregions so that the integral on each subregion can be controlled using Lemmas 2.50 - 2.53.

Let $\mathcal{R}_1=\mathcal{R}^-\cap \{\xi: |\xi| \leq R_\epsilon\}.$

\begin{lemma}
Assume that $p, f$ and $g\in\mathbf{A}^-_{0,\tilde\delta}$ .
Then for $\xi\in\mathcal{R}_1$
\begin{equation}\label{3.18}
|{G}_k(p) [\xi]|
\leq C\epsilon^2 (1+\norm{p}_0)\left(1+ |\xi-2i|^{-\tau}\right)|\xi-2i|^{-1-\tau}, k=6,7,
\end{equation}
\begin{equation}\label{3.19}
|{G}_k(f)-G_k(g) [\xi]|
\leq C|\xi-2i|^{-1-\tau} (\epsilon^2+\tilde\delta \epsilon^2|\xi-2i|^{-\tau})\norm{f-g}, k=6,7,
\end{equation}
\begin{equation}\label{3.20}
|{G}_8(p) [\xi]|
\leq C |\xi-2i|^{-1-2\tau} \norm{p}^2,
\end{equation}
\begin{equation}\label{3.21}
|{G}_8(f)-G_8(g) [\xi]|
\leq C\tilde\delta |\xi-2i|^{-1-2\tau}\norm{f-g},
\end{equation}
where $C$ is a constant independent of $\epsilon $.
\end{lemma}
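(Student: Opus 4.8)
The plan is to prove the four estimates \eqref{3.18}--\eqref{3.21} directly from the explicit algebraic formulas \eqref{3.7}, \eqref{3.8}, \eqref{3.14} for $G_6, G_7, G_8$, combined with the already-established control on the nonlocal term $\tilde I$. The key structural observations are: (i) $G_6$ and $G_7$ each carry an explicit prefactor that is ``small'' on $\mathcal{R}_1$ --- namely $F_-'$ in $G_6$ (which obeys $\norm{F_-'}_{1} \le C \norm{F}_0^- \le C\epsilon^2$ after the scaling $F = \epsilon^2 W$, via Lemma~\ref{lem:2.13}) and $\tilde I$ in $G_7$ (which similarly is $O(\epsilon^2)$ by the decay and boundedness of $Q$, $I_2$ from Lemmas~\ref{lem:2.24} and the analysis of \eqref{3.3}); (ii) the bracketed rational expressions multiplying these prefactors are \emph{bounded} on $\mathcal{R}_1$ because $V_0 = -2\gamma/\sqrt{\xi^2+\gamma^2}$ satisfies $|V_0| < 2$ in the relevant region so that $\sqrt{V_0^2-4}$ stays away from $0$, and because $\norm{p}_0$ is small keeps $V = p + V_0$ in the domain where the square roots are analytic and their denominators nonvanishing; (iii) $G_8$ is genuinely \emph{quadratic} in $p$ --- one reads off from \eqref{3.14} that every term has a factor $p^2$, and the remaining rational functions of $p, V_0$ are bounded (again using $|V_0|<2$ bounded away from $2$ on $\mathcal{R}_1$ and $\norm{p}_0$ small), which yields the $\norm{p}^2$ bound with the $|\xi-2i|^{-1-2\tau}$ weight coming from $\bar H = O(|\xi-2i|^{-1})$ times $p^2 = O(|\xi-2i|^{-2\tau})$.

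First I would record the pointwise bounds on the building blocks on $\mathcal{R}_1$: $|\bar H(\xi)| \le C|\xi-2i|^{-1}$ and $|H - \bar H| \le C|\xi-2i|^{-2}$ from \eqref{1.15}--\eqref{2.1}; $|V_0(\xi)| \le 2 - c_0$ for some $c_0 > 0$ depending only on $\gamma$ (hence on the compact $\lambda$-range), so $|\sqrt{V_0^2-4}| \ge c_1 > 0$; $|p(\xi)| \le \norm{p}_0 |\xi-2i|^{-\tau} \le \tilde\delta |\xi-2i|^{-\tau}$, so for $\tilde\delta$ small $V = p+V_0$ and $V + \tilde I$ stay in a fixed compact set on which $z \mapsto \sqrt{z^2-4}$ is analytic and bounded and the denominators in \eqref{3.7}, \eqref{3.8}, \eqref{3.14} are bounded below; and $|F_-'(\xi)| \le C\epsilon^2 |\xi-2i|^{-1-\tau}$, $|\tilde I(\xi)| \le C\epsilon^2 |\xi-2i|^{-\tau}$. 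Then \eqref{3.18} follows by multiplying the prefactor bound ($C\epsilon^2|\xi-2i|^{-1-\tau}$ for $G_6$, coming from $F_-'$; for $G_7$ the factor $2V\tilde I + \tilde I^2$ gives $C\epsilon^2(1+\norm{p}_0)|\xi-2i|^{-\tau}$ and $\bar H$ contributes the extra $|\xi-2i|^{-1}$) by the bounded rational factor, collecting the $\left(1 + |\xi-2i|^{-\tau}\right)$ from the two contributions of differing weights. Estimate \eqref{3.20} follows the same way: factor out $p^2$, bound the rest, and attach the $|\xi-2i|^{-1}$ from $\bar H$.

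For the Lipschitz estimates \eqref{3.19} and \eqref{3.21}, I would write $G_k(f) - G_k(g)$ as a telescoping difference and apply the mean value inequality to the analytic maps involved --- concretely, each of $G_6, G_7, G_8$ is of the form $\Phi(p, F_-', \tilde I)$ with $\Phi$ analytic in its arguments on the relevant polydisc, so $|G_k(f) - G_k(g)| \le \sup |\partial_p \Phi| \, |f - g| + \sup|\partial_{F_-'}\Phi|\,|F_-'(f) - F_-'(g)| + \sup|\partial_{\tilde I}\Phi|\,|\tilde I(f) - \tilde I(g)|$, and the latter two differences are themselves controlled by $C\epsilon^2$ times the appropriate norm of $f - g$ via the linearity (in $F$) of the operators defining $F_-$ and the near-linearity of $\tilde I$. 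The extra factor of $\tilde\delta$ in \eqref{3.21} and in the second term of \eqref{3.19} is exactly the quadratic nature: $\partial_p$ of a quantity that vanishes to second order at $p = 0$ is $O(\norm{p}_0) = O(\tilde\delta)$.

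The main obstacle I anticipate is purely bookkeeping rather than conceptual: keeping the weights $|\xi-2i|^{-k-\tau}$ straight through the many rational combinations in \eqref{3.14}, and in particular verifying that no term in $G_8$ degrades the weight below $|\xi-2i|^{-1-2\tau}$ --- one must check that the \emph{denominators} in \eqref{3.14}, e.g. $(p+V_0)\sqrt{V_0^2-4+(2V_0 p + p^2)} + V_0\sqrt{V_0^2-4}$, are bounded \emph{below} by a positive constant uniformly on $\mathcal{R}_1$, which is where $|V_0|$ bounded away from $2$ (equivalently $\xi$ bounded away from the turning points $\pm i$, guaranteed since $0 < b < \min(1,\gamma)$ in Definition~\ref{def:1.1}) and $\tilde\delta$ small are both essential. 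A secondary subtlety is the behavior near $\xi = 0$: $V_0$ is bounded there ($V_0(0) = -2$ would be a problem, but in fact $V_0(0) = -2\gamma/\gamma$... wait, $V_0(0) = -2\gamma/\sqrt{\gamma^2} = -2$), so one must be careful --- however $\sqrt{V_0^2 - 4}$ times the $\xi$ in the numerator of $G_8$'s terms or the structure $p^2$ compensates, and since $\mathcal{R}_1$ is bounded away from the actual singularities of $H$, the claimed weighted bounds hold; I would treat a small neighborhood of $\xi = 0$ separately using $V_0^2 - 4 = -4\xi^2/(\xi^2+\gamma^2) = O(\xi^2)$ to confirm no blow-up, absorbing the vanishing of $\sqrt{V_0^2-4}$ into the numerator factors.
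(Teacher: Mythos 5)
Your proof is in the right spirit and follows essentially the only possible route — direct bookkeeping from the explicit formulas (3.7), (3.8), (3.14) — which is also what the paper does (its own proof is a one-line "follows from (3.7), (3.8), (3.14)"). But there is an internal inconsistency in your argument, and it lands exactly on the point you yourself flag as the subtle one.

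You first claim, as part of point (ii), that ``$|V_0| < 2$ in the relevant region so that $\sqrt{V_0^2-4}$ stays away from $0$'' on $\mathcal{R}_1$, and use this to assert the rational factors in $G_6, G_7, G_8$ have denominators uniformly bounded below. Then in the final paragraph you correctly compute $V_0(0) = -2\gamma/\sqrt{\gamma^2} = -2$ and $V_0^2-4 = -4\xi^2/(\xi^2+\gamma^2) = O(\xi^2)$. These two statements contradict each other: $\xi=0$ lies on $\overline{\mathcal{R}^-}$ (the cut region $\mathcal{R}$ of Definition 1.1 has its vertex at the origin, and $\mathcal{R}_1 = \mathcal{R}^-\cap\{|\xi|\le R_\epsilon\}$ includes a full neighborhood of $0$ in $\mathcal{R}^-$), so $\sqrt{V_0^2-4}$ does vanish there. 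Your remark that ``$\sqrt{V_0^2-4}$ times the $\xi$ in the numerator of $G_8$'s terms or the structure $p^2$ compensates'' is not borne out by (3.14): the third term of $G_8$ has numerator $p^2(V_0^2-2)(2V_0+p)[p^2+2V_0p+2V_0^2-4]$, which at $\xi=0$ evaluates to $2p(0)^2(p(0)-4)(p(0)-2)^2 \neq 0$ for generic $p\in\mathbf{A}^-_{0,\tilde\delta}$ with $p(0)\neq 0$ (the norm $\norm{\cdot}_0^-$ does not force $p(0)=0$), while the denominator contains the bare factor $\sqrt{V_0^2-4}\sim -2i\xi/\gamma$ which vanishes linearly. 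There is no $\xi$ in that numerator to absorb it, so $G_8(p)$ blows up like $1/\xi$ as $\xi\to 0$ for such $p$, and the claimed bound (3.20), which is $O(1)$ near $\xi=0$ since $|\xi-2i|\approx 2$ there, cannot hold uniformly on $\mathcal{R}_1$.

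What actually saves the argument in the paper is that the estimate is only ever invoked for $\xi$ bounded away from the origin: the region near the tip is handled separately in \S3.2--3.3 by the ansatz $p=(\beta+q(\xi))\xi$ in the neighborhood $\mathcal{T}$ of $\xi=0$, and the iteration (3.61)--(3.64) uses $\mathcal{U}_4$ (built on $P_1(\xi)\sim -\gamma^2\log\xi$) rather than the present lemma there. So your instinct to ``treat a small neighborhood of $\xi=0$ separately'' is the correct fix, but the resolution is not that the formulas (3.7), (3.8), (3.14) themselves are regular there --- they are not --- but that the lemma should be understood as applying only on $\mathcal{R}_1\setminus\mathcal{T}$, with the tip region governed by the rescaled equation (3.53). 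Since the paper's own proof does not spell this out either, you have inherited the paper's gap, but you have also added a claim (``no blow-up'') that is demonstrably false for the third term of (3.14). Away from the origin your weight bookkeeping and the mean-value argument for the Lipschitz estimates (3.19), (3.21) are sound.
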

\begin{proof}
(\ref{3.18}) and (\ref{3.19}) follow from (\ref{3.7}) and (\ref{3.8}); (\ref{3.20}) and (\ref{3.21}) follow from (\ref{3.14}).
\end{proof}
\begin{lemma}
Assume that $f,g\in\mathbf{A}^-_{0,\tilde\delta}$ .
Then for $\xi\in\mathcal{R}_1$
\begin{equation}\label{3.22}
|\mathcal{U}(f) [\xi]|
\leq C (R_\epsilon^{-1+\tau}+\frac{ \tilde\delta^2}{R_\epsilon^{\tau}\epsilon^2})|\xi-2i|^{-\tau},
\end{equation}
\begin{equation}\label{3.23}
|\mathcal{U}(f) [\xi]-\mathcal{U}(f) [\xi]|
\leq C(\frac{1}{R_\epsilon^{1-\tau}}+\frac{ \tilde\delta}{R_\epsilon^{\tau}\epsilon^2})\norm{f-g}_0|\xi-2i|^{-\tau},
\end{equation}
where $C$ is a constant independent of $\epsilon $.
\end{lemma}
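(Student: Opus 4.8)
The approach mirrors the proof of the $\Omega_3$-estimate $\norm{\mathcal{U}[\bar M]}_{\Omega_3}\le C\delta_2\norm{U}_{\Omega_3}$ proved above: write $\mathcal{U}(f)[\xi]=\frac1{\epsilon^2}g_1(\xi)\int_{-\infty}^{\xi}g_2(t)\,N(f)(t)\,dt$, then split both the integrand and the contour. First I would decompose $N(f)=N_1(f)+G_8(f)$ with $N_1(f)=\epsilon^2(Q'-V_0')+G_6(f)+G_7(f)$; by (\ref{3.18}) together with the direct bound $|\epsilon^2(Q'-V_0')(t)|\le C\epsilon^2|t-2i|^{-1-\tau}$ read off from (\ref{3.2}) and (\ref{3.9}) one has $|N_1(f)(t)|\le C\epsilon^2|t-2i|^{-1-\tau}$ on $\mathcal R_1$, while (\ref{3.20}) gives $|G_8(f)(t)|\le C\tilde\delta^2|t-2i|^{-1-2\tau}$ there. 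Thus $N_1(f)$ is the $O(\epsilon^2)$ forcing-and-linear part and $G_8(f)$ the genuinely quadratic part, and the two will produce the two terms on the right of (\ref{3.22}).

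Then I would break the contour joining $-\infty$ to $\xi\in\mathcal R_1$ into the far tail $\{|t|\ge R_\epsilon\}$, the intermediate annulus $\{R\le|t|\le R_\epsilon\}$, and a neighbourhood of the tip $\{|t|\le R\}$, choosing the subpaths — as furnished by the level-curve structure of $P$ from Lemma~6.1, Lemma~6.3 and Lemma~6.6 of the appendix — so that on each $\mathrm{Re}\,(P(t)-P(\xi))$ is monotone in $\mathrm{Re}\,t$ as required by Lemmas~2.50--2.52, and writing $\mathcal{U}(f)$ as the corresponding sum via $g_1(\xi)g_2(t)=g_1(\xi)g_2(s)\,g_1(s)g_2(t)$, so that all exponential weights are $\le1$. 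On the outer two stretches $N(f)$ is $C^1$, so applying Lemma~2.50 (in its integration-by-parts form) on the tail and Lemma~2.51 on the annulus — with $\sigma=1+\tau$ for $N_1(f)$ and $\sigma=1+2\tau$ for $G_8(f)$, so $R_\epsilon^{2-\sigma+\tau}$ becomes $R_\epsilon^{1-\tau}$ in the quadratic case — produces contributions bounded by $C\epsilon^2$ and $CR_\epsilon^{1-\tau}\tilde\delta^2$. On $\{|t|\le R\}$, where $N(f)$ is only continuous and $\tilde Q$ is singular, I would split once more: on $\{c\le|t|\le R\}$ Lemma~2.52 applies once a monotone-$\mathrm{Re}\,P$ path is threaded through, giving $\le C(\epsilon^2+\tilde\delta^2)$, while on $\{|t|\le c\}$ I would abandon the monotonicity requirement and estimate directly from $P(\xi)\sim-\gamma^2\log\xi$, $g_{1,2}(\xi)\sim\xi^{\pm\gamma^2/\epsilon^2}$ and the explicit integrals $\int r^{1/2-\gamma^2/\epsilon^2}\,dr$, exactly as in the proof of Lemma~2.53, contributing a further $\le C(\epsilon^2+\tilde\delta^2)$. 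Collecting the pieces and using $R_\epsilon=O(\epsilon^{-2})$ together with $\tau\in[6/7,1)$ — which give $\epsilon^2\le CR_\epsilon^{-1+\tau}$, $\tilde\delta^2\le C\tilde\delta^2/(R_\epsilon^{\tau}\epsilon^2)$ and $R_\epsilon^{1-\tau}\tilde\delta^2\le C\tilde\delta^2/(R_\epsilon^{\tau}\epsilon^2)$ — yields (\ref{3.22}). For (\ref{3.23}), $\mathcal{U}$ is linear in its argument, so $\mathcal{U}(f)[\xi]-\mathcal{U}(g)[\xi]=\frac1{\epsilon^2}g_1(\xi)\int_{-\infty}^{\xi}g_2(t)\,(N(f)-N(g))(t)\,dt$; the forcing $\epsilon^2(Q'-V_0')$ cancels, and by (\ref{3.19}) and (\ref{3.21}) the difference $N(f)-N(g)$ obeys the same two bounds as $N(f)$ with $\epsilon^2$ replaced by $\epsilon^2\norm{f-g}_0$ and $\tilde\delta^2$ replaced by $\tilde\delta\norm{f-g}_0$ — one power of $\tilde\delta$ being lost because differentiating a quadratic in $f$ leaves it linear — so the identical contour splitting gives (\ref{3.23}).

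The step I expect to be the main obstacle is the innermost stretch, near the finger tip $\xi=0$: there $\tilde Q$, hence $P$, is singular and $N(f)$ fails to be analytic, so $N(f)'$ need not even be continuous and the integration-by-parts refinement of Lemma~2.50 is unavailable; one must instead work with the logarithmic growth $P\sim-\gamma^2\log\xi$ and reconcile the competing factors $\xi^{\gamma^2/\epsilon^2}$ coming from $g_1$ with the non-integrable $t^{-\gamma^2/\epsilon^2}$ coming from $g_2$ (non-integrable near $0$ since $\gamma^2/\epsilon^2>1$), recovering the $\sqrt{|\xi|}$-type gain from $|N(f)(t)-N(f)(0)|\le C\sqrt{|t|}$, just as in Lemma~2.53. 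Closely related, and equally delicate, is the geometric input already used in the decomposition: that a contour on which $\mathrm{Re}\,P$ stays monotone can in fact be threaded up to (but not through) a neighbourhood of the tip — this is exactly the subtlety concerning the behaviour of the Stokes lines at the finger tip flagged in the introduction, and is why the appendix estimates on $P$ must be invoked here rather than merely cited.
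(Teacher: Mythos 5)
Your proof is considerably more elaborate than the paper's, and the difference is one of scope. The paper's entire proof of this lemma is the one-liner ``(3.22) and (3.23) follow from Lemma 2.50, Lemma 3.3 and Lemma 3.4.'' This only makes sense under a corrected reading of the definition of $\mathcal{R}_1$: it is almost certainly a typo for $\mathcal{R}_1 = \mathcal{R}^-\cap\{|\xi|\geq R_\epsilon\}$, so that $\mathcal{R}_1,\mathcal{R}_2,\mathcal{R}_3$ form the announced partition of $\mathcal{R}^-$ (``we need to divide $\mathcal{R}^-$ into subregions'') and $\mathcal{R}_1$ is the \emph{far} region. For $\xi\in\mathcal{R}_1$ the path $\mathcal{P}(\xi,-\infty)$ can then be taken entirely inside $\{|t|\geq R_\epsilon\}$, so only Lemma 2.50 is needed: the direct bound $\|f_1\|_0 \leq C\epsilon^{-2}\|N\|_1$ applied to $G_8$ (together with the loss of a factor $|t-2i|^{-\tau}\leq CR_\epsilon^{-\tau}$ in that region, from (3.20)) gives the $\tilde\delta^2/(R_\epsilon^\tau\epsilon^2)$ term, while the $O(\epsilon^2)$ parts of $N(f)$ — treated via the integration-by-parts bound of Lemma 2.50 or via the extra decay $|t|^{-2}$ of $Q'-V_0'$ in the far zone — give $O(\epsilon^2)=O(R_\epsilon^{-1})\leq CR_\epsilon^{-1+\tau}$. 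There is no contour near the tip, no annulus estimate, and no appeal to Lemma 2.51, Lemma 2.52, or the logarithmic behaviour of $P$.

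What you have written, by contrast, treats $\mathcal{R}_1$ literally as $\{|\xi|\leq R_\epsilon\}\cap\mathcal{R}^-$, so that the contour from $-\infty$ to $\xi$ traverses all three zones, and you then re-derive in one proof essentially the content of Lemma 3.5, Lemma 3.6, Lemma 3.7, the identities (3.61)--(3.64), and the tip analysis from Section 3.2 with the $P\sim-\gamma^2\log\xi$ behaviour. Your scaling arithmetic at the end (that $\epsilon^2\leq CR_\epsilon^{-1+\tau}$, $\tilde\delta^2\leq C\tilde\delta^2/(R_\epsilon^\tau\epsilon^2)$, $R_\epsilon^{1-\tau}\tilde\delta^2\leq C\tilde\delta^2/(R_\epsilon^\tau\epsilon^2)$ under $R_\epsilon=O(\epsilon^{-2})$, $\tau\in[6/7,1)$) is correct, your decomposition $N=N_1+G_8$ tracks the two terms in (3.22) correctly, and your observation for (3.23) — that the forcing cancels and (3.19), (3.21) control the difference, losing one power of $\tilde\delta$ on the quadratic part — matches what is needed. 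So the method is sound, but it proves more than this lemma claims and duplicates work the paper carefully separates into the subsequent two lemmas and the combination formulas (3.61)--(3.62). The issue you flag as the main obstacle — the tip, where Stokes-line monotonicity breaks down — is genuinely subtle, but it is not part of this lemma at all; the paper defers it to the $\mathcal{U}_4$/$q$-variable machinery of Section 3.2. You should also note that (3.23) as printed in the paper has $\mathcal{U}(f)[\xi]-\mathcal{U}(f)[\xi]$, clearly a typo for $\mathcal{U}(f)[\xi]-\mathcal{U}(g)[\xi]$, which you implicitly corrected.
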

\begin{proof}
(\ref{3.22}) and (\ref{3.23}) follow from Lemma 2.50, Lemma 3.3 and Lemma 3.4.
\end{proof}

Let $\mathcal{R}_2=\mathcal{R}^-\cap \{\xi: R\le| \xi|\le R_\epsilon\}
$ and $R^l_\epsilon=r^-_l\cap \{\xi: |\xi|=R_\epsilon \}$. From Lemma 6.3 and Lemma 6.9 in the appendix, Re $P(\xi)$ attain minimum at $R^l_\epsilon$.

Define operator $\mathcal{U}_2$ as
\begin{equation}\label{3.24}
\mathcal{U}_2[p]=\frac{1}{\epsilon^2}g_1(\xi)\int_{R^l_\epsilon}^\xi g_2(t) N(p)(t) dt.
\end{equation}

\begin{lemma}
Assume that $f,g\in\mathbf{A}^-_{0,\tilde\delta}$ . then for $\xi\in \mathcal{R}_2$
\begin{equation}\label{3.25}
\begin{split}
&|N(f)(\xi)|\leq C\epsilon^2\left(|\xi|^{-2}+ |\xi|^{-(1+\tau)}\norm{f}_0\right)+C\norm{f}^2_0 |\xi|^{-1-2\tau},\\
&|N(f)(t)-N(g)(t)|
\leq C\epsilon^2 |\xi|^{-1-\tau} \norm{f-g}_0+\left(\norm{g}+\norm{f}_0\right) |\xi|^{-1-2\tau} \norm{f-g}_0,
\end{split}
\end{equation}
\begin{equation}\label{3.26}
|\xi-2i|^\tau |\mathcal{U}_2(f) [\xi]|
\leq C\epsilon^2\left(R_\epsilon^{\tau}+ R_\epsilon^{1}\norm{f}_0\right)+C\norm{f}^2_0 R_\epsilon^{
1-\tau},
\end{equation}
\begin{equation}\label{3.27}
|\xi-2i|^\tau |\mathcal{U}_2(f) [\xi]-\mathcal{U}_2(f) [\xi]|
\leq C\epsilon^2 R_\epsilon^{(1-\tau)} \norm{f-g}_0+C(\norm{f}_0+\norm{g}_0)R_\epsilon^{
1-\tau} \norm{f-g}_0,
\end{equation}
where $C$ is a constant independent of $\epsilon $.
\end{lemma}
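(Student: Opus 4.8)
\emph{Proof proposal.} The statement has two halves: the pointwise bounds (\ref{3.25}) on $N(f)$ and on $N(f)-N(g)$, and the bounds (\ref{3.26})--(\ref{3.27}) on $\mathcal{U}_2(f)$ and on the difference. The plan is to prove the first half by assembling, term by term, the estimates already available for the four pieces of $N$, and then to deduce the second half by feeding each monomial majorant of $|N(f)|$ into the $f_2$-estimate (Lemma 2.51) along an appropriate integration path lying in $\mathcal{R}_2$.

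For (\ref{3.25}), recall from (\ref{3.15}) that $N(p)=\epsilon^2(Q'-V_0')+G_6(p)+G_7(p)+G_8(p)$. The inhomogeneous term is handled directly: differentiating (\ref{3.9}) gives $V_0'=2\gamma\xi(\xi^2+\gamma^2)^{-3/2}=O(|\xi|^{-2})$, and $Q$ in (\ref{3.2}) is a Cauchy integral whose density is bounded and decays like $|t|^{-1}$ at infinity, so $Q'=O(|\xi|^{-2})$ on $\mathcal{R}_2$; hence $|\epsilon^2(Q'-V_0')|\le C\epsilon^2|\xi|^{-2}$. Since $\mathcal{R}_2\subset\mathcal{R}_1$ and on $\mathcal{R}_2$ one has $|\xi-2i|\asymp|\xi|$ with $|\xi-2i|^{-\tau}\le C$, the estimates (\ref{3.18}) and (\ref{3.20}) specialize to $|G_6(f)+G_7(f)|\le C\epsilon^2(1+\norm{f}_0)|\xi|^{-1-\tau}$ and $|G_8(f)|\le C\norm{f}_0^2|\xi|^{-1-2\tau}$; collecting the three contributions gives the first line of (\ref{3.25}). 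For the differences one argues identically with (\ref{3.19}) replacing (\ref{3.18}), and uses that $G_8$ is quadratic in its argument (cf. (\ref{3.14})), so $G_8(f)-G_8(g)$ is bounded by $C(\norm{f}_0+\norm{g}_0)|\xi|^{-1-2\tau}\norm{f-g}_0$, giving the second line.

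For (\ref{3.26})--(\ref{3.27}), recall $\mathcal{U}_2$ from (\ref{3.24}); it is linear in $N$, so it suffices to apply Lemma 2.51 separately to each monomial occurring in the bound for $|N(f)|$. By Lemma 6.3 and Lemma 6.9, $\mathrm{Re}\,P$ attains its minimum over $\mathcal{R}_2$ at $R^l_\epsilon$, and one can join $R^l_\epsilon$ to an arbitrary $\xi\in\mathcal{R}_2$ by a path $\mathcal{P}(\xi,R^l_\epsilon)\subset\mathcal{R}^-\cap\{R\le|\xi|\le R_\epsilon\}$ along which $\mathrm{Re}\,(P(t)-P(\xi))$ decreases monotonically as $\mathrm{Re}\,t$ increases, which is exactly the hypothesis of Lemma 2.51. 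A monomial of size $\epsilon^2|t|^{-2}$ has $\sigma=2$ and contributes $C\epsilon^2 R_\epsilon^{2-\sigma+\tau}=C\epsilon^2 R_\epsilon^{\tau}$; one of size $\epsilon^2\norm{f}_0|t|^{-1-\tau}$ has $\sigma=1+\tau$ and contributes $C\epsilon^2\norm{f}_0 R_\epsilon$; and one of size $\norm{f}_0^2|t|^{-1-2\tau}$ has $\sigma=1+2\tau$ and contributes $C\norm{f}_0^2 R_\epsilon^{1-\tau}$. Summing yields (\ref{3.26}), and running the same computation on the bound for $N(f)-N(g)$ yields (\ref{3.27}); the constant $C$ is $\epsilon$-independent precisely because the $R_\epsilon$-dependence is kept explicit.

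I expect the main obstacle to be the verification of the hypotheses of Lemma 2.51 on $\mathcal{R}_2$, not the bookkeeping above: one must know that $\mathrm{Re}\,P$ is minimized at the outer corner $R^l_\epsilon$ and that a monotone-descent path reaches every $\xi\in\mathcal{R}_2$, which rests on the detailed structure of $P(\xi)=\int\tilde Q\,d\xi$ established in the appendix (Lemmas 6.1, 6.3, 6.9), in particular the geometry of the level curves of $\mathrm{Re}\,P$ issuing from the turning points $\xi=\pm i\gamma$ and the behavior $\tilde Q=P'\sim i/\xi$ at infinity. A secondary point is the uniform-in-$\epsilon$ decay estimate for $Q'$, and keeping track — when $R_\epsilon=O(\epsilon^{-2})$ — of the precise powers of $R_\epsilon$, since these are exactly what must later combine with the $\epsilon^2$ prefactors to produce a small Lipschitz constant in the contraction argument for the Half Problem.
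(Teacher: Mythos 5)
Your overall plan — bound the four pieces of $N$ from (\ref{3.15}) term by term, then feed each monomial majorant into Lemma~2.51 along the monotone-descent path guaranteed by the appendix — is exactly what the paper's one-line proof is pointing at, and your bookkeeping for (\ref{3.26})--(\ref{3.27}) (the exponents $2-\sigma+\tau$ for $\sigma=2,\,1+\tau,\,1+2\tau$) is correct.

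However, there is a real gap in the way you obtain the first line of (\ref{3.25}). You route through Lemma~3.4's estimate (\ref{3.18}), which on $\mathcal{R}_2$ reads $|G_6(f)+G_7(f)|\le C\epsilon^2(1+\norm{f}_0)|\xi|^{-1-\tau}$. Adding the $\epsilon^2(Q'-V_0')=O(\epsilon^2|\xi|^{-2})$ and $G_8$ contributions then yields
$|N(f)|\le C\epsilon^2|\xi|^{-2}+C\epsilon^2(1+\norm{f}_0)|\xi|^{-1-\tau}+C\norm{f}_0^2|\xi|^{-1-2\tau}$,
which is strictly weaker than the claimed (\ref{3.25}): it carries an extra $C\epsilon^2|\xi|^{-1-\tau}$ term with no $\norm{f}_0$ factor, and this term dominates $\epsilon^2|\xi|^{-2}$ on $\mathcal{R}_2$ since $\tau<1$. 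This is not merely cosmetic: fed into Lemma~2.51 with $\sigma=1+\tau$, it would replace the $\epsilon^2 R_\epsilon^{\tau}$ in (\ref{3.26}) by $\epsilon^2 R_\epsilon$, which with $R_\epsilon=O(\epsilon^{-2})$ is $O(1)$ rather than $o(1)$, spoiling the smallness the later contraction/normal-family argument depends on. The paper's proof deliberately cites (\ref{3.7}), (\ref{3.8}), (\ref{3.14}), (\ref{3.15}) rather than Lemma~3.4 because (\ref{3.18}) is too coarse here. To recover (\ref{3.25}) exactly you should observe that $G_6$ in (\ref{3.7}) is $F_-'$ times a factor that is bounded (indeed $\to 1$) as $V+\tilde I\to 0$, and that $F_- = F_{0,-}+\epsilon^2 p_-$ by (\ref{3.35}), so $F_-'$ splits into an $O(\epsilon^2|\xi|^{-2})$ piece ($F_{0,-}'$, built from $V_0'-Q'$) plus an $O(\epsilon^2\norm{p}_0|\xi|^{-1-\tau})$ piece ($\epsilon^2 p_-'$); the $G_7$ term carries a prefactor $\tilde I$ with an analogous decomposition. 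This structural observation, not the generic bound (\ref{3.18}), is what produces the $\epsilon^2|\xi|^{-2}$ without a companion $\epsilon^2|\xi|^{-1-\tau}$.

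The difference estimate (second line of (\ref{3.25})) and your verification of Lemma~2.51's hypotheses via the appendix lemmas are fine.
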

\begin{proof}
(\ref{3.25}) follows from (\ref{3.7}),(\ref{3.8} ),(\ref{3.14}) and (\ref{3.15}); (\ref{3.26}) and (\ref{3.27}) follow from Lemma 2.51 and (\ref{3.25}).
\end{proof}

Let $\mathcal{R}_3=\{
|\xi|\le R\}\cap \mathcal{R}^-$ and $R^l=r_l^-\cap \{\xi : |\xi|=R\}$.\\

Define operator $\mathcal{U}_3$ as
\begin{equation}\label{3.28}
\mathcal{U}_3[f]=\frac{1}{\epsilon^2}g_1(\xi)\int_{R^l}^\xi g_2(t) N(f)(t) dt.
\end{equation}

\begin{lemma}
Assume that $f\in\mathbf{A}^-_{0,\tilde\delta}$ .
Then for $\xi\in\mathcal{R}_3$
\begin{equation}\label{3.29}
\begin{split}
&|N(f)(\xi)|\leq C\epsilon^2+C\norm{f}^2_0,\\
&|N(f)(\xi)-N(g)(\xi)|
\leq C\epsilon^2  \norm{f-g}_0+\left(\norm{g}+\norm{f}_0\right)  \norm{f-g}_0,
\end{split}
\end{equation}
\begin{equation}\label{3.30}
|\mathcal{U}_3(f) [\xi]|
\leq C\epsilon^2+C\norm{f}^2_0 ,
\end{equation}
\begin{equation}\label{3.31}
|\mathcal{U}_3(f) [\xi]-\mathcal{U}_3(f) [\xi]|
\leq C\left(\epsilon^2 +C(\norm{f}_0+\norm{g}_0)\right) \norm{f-g}_0,
\end{equation}
where $C$ is a constant independent of $\epsilon $.
\end{lemma}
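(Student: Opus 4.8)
This is the $\mathcal{R}_3$-analogue of the $\mathcal{R}_1$- and $\mathcal{R}_2$-estimates proved just above (cf.\ (\ref{3.22})--(\ref{3.23}) and (\ref{3.25})--(\ref{3.27})), and the plan is to argue in the same two steps: first bound the forcing $N(f)$ (and the difference $N(f)-N(g)$) pointwise on $\mathcal{R}_3$, then feed that bound into the integral operator $\mathcal{U}_3$ by invoking the near-tip integral estimate of Lemma 2.52 (the one whose path starts at $R^l$). The feature that makes the first step short is that $\mathcal{R}_3=\{|\xi|\le R\}\cap\mathcal{R}^-$ is bounded and keeps a fixed, $\epsilon$-independent positive distance from $2i$ and from the branch/pole points $\pm i,\pm i\gamma$ of $H$ and $\bar H$; hence on $\mathcal{R}_3$ every weight $|\xi-2i|^{-k}$ occurring in (\ref{3.18})--(\ref{3.21}) lies between two positive constants and may simply be absorbed into $C$.

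For (\ref{3.29}) I would use the splitting $N(f)=\epsilon^2(Q'-V_0')+G_6(f)+G_7(f)+G_8(f)$ of (\ref{3.15}). On $\mathcal{R}_3$ the terms $G_6,G_7$ are $\le C\epsilon^2(1+\norm{f}_0)$ by (\ref{3.18}), the quadratic term $G_8$ is $\le C\norm{f}_0^2$ by (\ref{3.20}), and the fixed forcing obeys $\epsilon^2|Q'-V_0'|\le C\epsilon^2$: here $V_0=-2\gamma(\xi^2+\gamma^2)^{-1/2}$ is analytic near $\xi=0$, and $Q$ is analytic in $\{\mathrm{Im}\,\xi<\gamma\}$ since the contour in (\ref{3.2}) can be lifted to any height below $\gamma$, the integrand coefficient $\frac{H-\bar H}{H^{1/2}\bar H^{1/2}}=-iV_0$ being analytic in $|\mathrm{Im}\,\xi|<\gamma$; in particular $Q$ and $V_0$ are bounded on $\mathcal{R}_3$. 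Summing these and absorbing $\epsilon^2\norm{f}_0\le C(\epsilon^2+\norm{f}_0^2)$ gives the first line of (\ref{3.29}); the second line follows identically, the $G_6,G_7$-difference being $\le C\epsilon^2\norm{f-g}_0$ by (\ref{3.19}), the $G_8$-difference $\le C(\norm{f}_0+\norm{g}_0)\norm{f-g}_0$ (the difference of a quadratic term; cf.\ (\ref{3.21})), and the fixed term $\epsilon^2(Q'-V_0')$ dropping out.

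For (\ref{3.30}) and (\ref{3.31}) I would apply Lemma 2.52 with $N$ replaced by $N(f)$, respectively $N(f)-N(g)$, along a path $\mathcal{P}(\xi,R^l)\subset\mathcal{R}_3$: it yields $\norm{\mathcal{U}_3(f)}_0\le C\norm{N(f)}_1$, and since $|\xi-2i|^{1+\tau}$ is bounded on $\mathcal{R}_3$ we get $\norm{N(f)}_1\le C\sup_{\mathcal{R}_3}|N(f)|\le C(\epsilon^2+\norm{f}_0^2)$ from (\ref{3.29}) --- this is (\ref{3.30}), and (\ref{3.31}) is identical using the second line of (\ref{3.29}). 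The nominal factor $\epsilon^{-2}$ in $\mathcal{U}_3$ is not lost: writing $g_2(t)\,dt=\epsilon^2(P'(t))^{-1}\,dg_2(t)$ and changing variables, the $\epsilon^{-2}$ is absorbed against the factor $(P'(t))^{-1}$, which is bounded on $\mathcal{R}_3$ (of order $|\xi|$ near the tip, where $|P'(t)|\sim\gamma^2/|t|$, and bounded away from it); this is exactly the mechanism encoded in Lemma 2.52 through its use of $\bigl(-\tfrac{d}{ds}\mathrm{Re}\,P(t(s))\bigr)^{-1}\le C$.

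The only step needing genuine care --- the concrete form here of the subtlety stressed in \S1 --- is verifying that the path hypothesis of Lemma 2.52 is met for every $\xi\in\mathcal{R}_3$, including points arbitrarily close to the finger tip $\xi=0$, where $\tilde Q$ is singular and $g_1(\xi)\sim\xi^{\gamma^2/\epsilon^2}$ is non-analytic. I would obtain the required monotone-descent path from the behaviour $P(\xi)\sim-\gamma^2\log\xi$ at the tip (so that $\mathrm{Re}\,P\to+\infty$ there), together with the appendix facts (Lemmas 6.3 and 6.9) locating the minimum of $\mathrm{Re}\,P$ on the relevant boundary at $R^l$; granted this path the lemma is pure bookkeeping with (\ref{3.15}), (\ref{3.18})--(\ref{3.21}) and Lemma 2.52. (The genuinely delicate near-tip bounds --- the $\epsilon^2\sup|(U'(\xi)-U'(0))/\sqrt{\xi}|$ and $|U'(0)|$ components of $\norm{\cdot}_{\Omega_3}$ --- are not part of this lemma; they are handled separately via Lemma 2.53.)
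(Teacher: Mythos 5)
Your proposal is correct and follows essentially the same route as the paper: bound $N(f)$ on $\mathcal{R}_3$ termwise via the decomposition (\ref{3.15}) together with the $G_6,G_7,G_8$ estimates (equivalently citing (\ref{3.7}), (\ref{3.14}) directly, as the paper does, or their consequences (\ref{3.18})--(\ref{3.21}), as you do), then feed the result into Lemma 2.52 for $\mathcal{U}_3$, using boundedness of the weights on the $\epsilon$-independent compact region $\mathcal{R}_3$. The extra detail you supply on the analyticity of $Q$ near the tip and on the monotone-descent path for Lemma 2.52 is not spelled out in the paper's terse proof but is consistent with it (these facts rest on the appendix lemmas the paper points to), and your closing remark correctly delimits what this lemma does and does not control.
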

\begin{proof}
(\ref{3.29}) follows from (\ref{3.7}), (\ref{3.14}) and (\ref{3.15}); (\ref{3.30}) and (\ref{3.31}) follow from Lemma 2.52, Lemma 3.3 and (\ref{3.29}).
\end{proof}

\subsection{ Analysis of (\ref{3.12}) in a neighborhood of the origin}

Let $\mathcal{T}$ be  the region bounded by
$r_{u,3}$, negative imaginary axis, line segment
$\{\xi: \xi=-\nu_1+se^{-\pi i/6}, 0\le s\le
2\sqrt{3}/3 \nu_1 \}$ and line segment $\{\xi: \xi=-\nu_1+se^{\pi i/6}, 0\le s\le
2\sqrt{3}/3 \nu_1 \}$ where $\nu_1$ is as in Definition 1.1. Then $\mathcal{T}$ is a neighborhood of $\xi=0$ in $\mathcal{R}^-$.

 In  $ \mathcal{T}$, we have from (\ref{3.13})
\begin{equation}\label{3.32}
\tilde Q(\xi)=-\frac{\gamma^2}{\xi}+Q_1(\xi),
\end{equation}
where $Q_1(\xi)$ is analytic at $\xi=0$ .
We seek solution of the form
\begin{equation}\label{3.33}
p=(\beta + q(\xi))\xi,
\end{equation}
where $\beta$ is a nonzero constant that will be given later and  $q(\xi)$ satisfies
\begin{equation}\label{3.34}
|q(\xi)|=O(\sqrt{|\xi|}) \text{ as } \xi\to 0.
\end{equation}

We first want to derive the governing differential and integral equations for $q(\xi)$ in the region $\mathcal{T}$.\\

For $\xi\in  \mathcal{T}$, using $F= \epsilon^2 (p+V_0-Q)$ we can write
\begin{equation}\label{3.35}
F'_-(\xi)=F_{0,-}(\xi)+\epsilon^2p'_-(\xi),
\end{equation}
where
\begin{equation}\label{3.36}
\begin{split}
p'_-(\xi)&=-\frac{1}{2\pi i }\int_{r_0^-}\frac{\bar p'(t)-\bar p'(0)}{t-\xi}dt
-\frac{1}{2\pi i }\int_{r_0^-}\frac{[\bar p'(t)]^*-[\bar p'(0)]^*}{t^*+\xi}dt,
\end{split}
\end{equation}
\begin{equation}\label{3.37}
F_{0,-}(\xi)=-\frac{\epsilon^2}{2\pi i }\int_{r_0}\frac{\bar V_0'(t)-\bar Q'(t)}{t-\xi}dt.
\end{equation}
\begin{equation}\label{3.38}
F'_+(\xi)=F_{0,+}(\xi)+\epsilon^2 p'_+(\xi),
\end{equation}
where
\begin{equation}\label{3.39}
p'_+(\xi)=\frac{1}{2\pi i }\int_{r_1^-}\frac{p'(t)- p'(0)}{t-\xi}dt
+\frac{1}{2\pi i }\int_{r_1^-}\frac{[ p'(t)]^*-[ p'(0)]^*}{t^*+\xi}dt,
\end{equation}
\begin{equation}\label{3.40}
F_{0,+}(\xi)=\frac{\epsilon^2}{2\pi i }\int_{r_1}\frac{ V_0'(t)- Q'(t)}{t-\xi}dt.
\end{equation}
Let
\begin{equation}\label{3.41}
G_{5,0}(\xi)=(F_{+,0}'+H)^{1/2}(F_{-,0}'+\bar H)^{1/2},
\end{equation}
\begin{equation}\label{3.42}
G_{5,1}(p_-',p_+')=G_{5}(F'_-,F_+')-G_{5,0}(\xi).
\end{equation}
Let
\begin{equation}\label{3.43}
\begin{split}
\tilde G(F_-',F_+')&=
\frac{(F_+'-F_-') }{G_5(F_+,F_-)}\\
& -\frac{(H-\bar H)}{H^{1/2}\bar H^{1/2}}
\frac{(-\epsilon^2F_+'F_-'-F_+'\bar H-F_-'H) }{G_5(F_+,F_-)[H^{1/2}\bar H^{1/2}+G_5(F_+,F_-)]},
\end{split}
\end{equation}

\begin{equation}\label{3.44}
\tilde G_1(p_-',p_+')=\tilde G(F_-',F_+')-\tilde G(F_{-,0}',F_{+,0}'),
\end{equation}
and
\begin{equation}\label{3.45}
{\tilde I}(F)
=-\frac{1}{2\pi }\int_{r_2}
\frac{\left(\tilde G_1(p_-',p_+')[t]-\tilde G_1(p_-',p_+')[0]\right) dt}{(t-\xi)}-\frac{1}{2\pi }\int_{r_2}\frac{ \tilde G(F_{-,0}',F_{+,0}')dt}{(t-\xi)}.
\end{equation}
Let
\begin{equation}\label{3.46}
{\tilde I}_1(p)(\xi) ={\tilde I}(F)(\xi)-{\tilde I}(F)(0).
\end{equation}

Plugging (\ref{3.33}) into (\ref{3.7}), (\ref{3.8}) and (\ref{3.14}), we have for $\xi\in B_\nu \cap \mathcal{R}^-$,
\begin{equation}\label{3.47}
G_6(q,\xi)=\beta_{6,0}(p)+ [F_-'(\xi)-F_-'(0)][G_{6,0}(\xi)+G_{6,1}(\xi,{\tilde I}_1,q)],
\end{equation}
where
\begin{equation*}\label{3.48}
\beta_{6,0}(p)=[F_{-0}'(0)+\epsilon^2 p_-'(0)]\Biggl\{1+\frac{1}{2}\biggl[-(-2+\tilde I(0))^2+(-2+\tilde I(0))\sqrt{-4\tilde I(0)+(\tilde I(0))^2}\biggr]\Biggr\}.
\end{equation*}

\begin{equation}\label{3.49}
G_7(q,\xi)=\beta_{7,0}(p)+\tilde I_1 \big\{G_{7,0}(\xi)+G_{7,1}(\xi,{\tilde I}_1,q\big\}) ,
\end{equation}
where
\begin{equation}\label{3.50}
\beta_{7,0}(p)=-\frac{i\gamma}{2}[-4\tilde I(0)+(\tilde I(0))^2]-\frac{i\gamma}{2}(-2+\tilde I(0))\sqrt{-4\tilde I(0)+(\tilde I(0))^2}.
\end{equation}
\begin{equation}\label{3.51}
G_8(q)=-i\gamma \beta -i\gamma q+\sqrt{\xi} G_{8,1}(\sqrt{\xi},q)+q^2 G_{8,2}(\sqrt{\xi},q),
\end{equation}
 where $G_{6,1}(\xi,{\tilde I}_1, q), G_{7,1}(\xi,{\tilde I}_1, q)$ are analytic in $\xi,{\tilde I}_1$ and $q$. $G_{8,1}(\xi,q),G_{8,2}(\xi,q)$ are analytic at $\xi=0,q=0$.\\
Let
\begin{equation}\label{3.52}
\beta(p)=\frac{\epsilon^2(Q'(0)-V_0'(0))+ \beta_{6,0}(p)+\beta_{7,0}(p)}{\epsilon^2-\gamma^2+i\gamma}.
\end{equation}
Equation (\ref{3.12}) becomes
\begin{equation}\label{3.53}
\epsilon^2 q'-\frac{\gamma^2-\epsilon^2-i\gamma}{\xi} q=N_1(\xi,q),
\end{equation}
where
\begin{multline}\label{3.54}
N_1(\xi,p)=\epsilon^2 Q_2(\xi)-Q_1(\xi) \beta-Q_1(\xi) q+
\frac{F_-'(\xi)-F_-'(0)}{\xi}\left\{G_{6,0}(\xi)+G_{6,1}(\xi, \tilde I_1,q)\right\}\\
+\frac{\tilde{I}_1}{\xi}\left\{G_{7,0}(\xi)+G_{7,1}(\xi, \tilde I_1,q)\right\}
+\frac{1}{\sqrt{\xi}}G_{8,1}(\sqrt{\xi},q)+\frac{q^2}{\xi} G_{8,2}(\sqrt{\xi},q),
\end{multline}
where $Q_2(\xi)$ is given by
\begin{equation*}\label{3.55}
Q_2(\xi)= \frac{(Q'(\xi)-V'_0(\xi))-(Q'(0)-V'_0(0))}{\xi}.
\end{equation*}
Let
\begin{equation}\label{3.56}
h_1(\xi)=e^{-\frac{P_1(\xi)}{\epsilon^2}}, ~~h_2(\xi)=e^{\frac{P_1(\xi)}{\epsilon^2}}.
\end{equation}
where $P_1(\xi)$ is
\begin{equation}\label{3.57}
P_1(\xi)=(\gamma^2-\epsilon^2-i\gamma)\log \xi
\end{equation}
and
\begin{equation*}
\log~ \xi=\ln |\xi|+\arg \xi, ~ \pi\leq\arg \xi\leq \frac{3\pi}{2}.
\end{equation*}
\begin{remark}
Re $P_1(\xi)$ attains maximum in $\mathcal{T}$ at $\xi=-i\frac{\sqrt{3}}{3}\nu_1$. For any $\xi\in \mathcal{T}$, there is a path $\mathcal{P}(\xi,-i\frac{\sqrt{3}}{3}\nu_1)$ from $\xi$ to $-i\frac{\sqrt{3}}{3}\nu_1$ such that Re $P_1(\xi)$ decreases from $\xi$ to $-i\frac{\sqrt{3}}{3}\nu_1$ and $\vert\frac{d Re ~P_1(\xi)}{d\xi}\vert\geq \frac{C}{|\xi|}$.
\end{remark}

The integral form of (\ref{3.53}) is
\begin{equation}\label{3.58}
q=\tilde{q}(-i\frac{\sqrt{3}}{3}\nu_1)h_1(-i\frac{\sqrt{3}}{3}\nu_1)h_2(\xi)+h_2(\xi)\int_{-i\frac{\sqrt{3}}{3}\nu_1}^\xi h_1(t)\frac{N_1(t,q(t))}{\epsilon^2}dt,
\end{equation}
where
\begin{equation*}\label{3.59}
\begin{split}
\tilde{q}(-i\frac{\sqrt{3}}{3}\nu_1)&=\frac{p(-i\frac{\sqrt{3}}{3}\nu_1)}{-i\frac{\sqrt{3}}{3}\nu_1}-\beta,\\
p(-i\frac{\sqrt{3}}{3}\nu_1)&=\mathcal{U}(N)[-i\frac{\sqrt{3}}{3}\nu_1]\equiv \frac{1}{\epsilon^2}g_1(-i\frac{\sqrt{3}}{3}\nu_1)\int_{-\infty}^{-i\frac{\sqrt{3}}{3}\nu_1} g_2(t) N(p)(t) dt.
\end{split}
\end{equation*}

We seek solution $q(\xi)$ to (\ref{3.58}) in the function  space $\mathbf{D}$, which is defined as
\begin{equation*}\label{3.60}
\begin{split}
\mathbf{D}=\{ q(\xi):~ &q \text{ is analytic in $B_\nu\cap\mathcal{R}^- $ and bounded in its closure  with}\\
&\underset{\xi\in\overline{B_\nu\cap\mathcal{R}^-}}{\sup}~\abs{\xi^{-1/2} q(\xi)}<\infty~\}.
\end{split}
\end{equation*}
with norm
\begin{equation*}
\norm{q}_{\mathbf{D}}=\underset{\xi\in\overline{B_\nu\cap\mathcal{R}^-}}{\sup}~\abs{\xi^{-1/2} q(\xi)}.
\end{equation*}

The following lemmas give estimates of $N_1$ and the integral operator in (\ref{3.58}) in the norm of $\mathbf{D}$.

\begin{lemma}\label{L4.2}
If $ \xi^{1/2} n(\xi)\in C(\overline{B_\nu\cap\mathcal{R}^-})$, let $ q_1 (\xi)=
\frac{1}{\epsilon^2}h_2(\xi)\int_{-i\frac{\sqrt{3}}{3}\nu_1}^{\xi}
n(t)g_1(t)dt$, then $ q_1\in \mathbf{D}$ and
$\norm{ q_1}\leq C \max |\xi^{1/2}n(\xi)|$ for constant $C>0$ independent of
$\epsilon$ and $\nu$.
\end{lemma}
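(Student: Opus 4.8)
\emph{Proof proposal.} The plan is to realize $q_1$ as a WKB-type contour integral along the descent path attached to the finger tip and then to squeeze out of it the $\abs{\xi}^{1/2}$ decay required for membership in $\mathbf{D}$. Put $M:=\max\abs{\xi^{1/2}n(\xi)}$ over $\overline{B_\nu\cap\mathcal{R}^-}$; since $\norm{q_1}_{\mathbf{D}}=\sup\abs{\xi^{-1/2}q_1(\xi)}$, the whole task reduces to the pointwise bound $\abs{q_1(\xi)}\le C\,M\,\abs{\xi}^{1/2}$ on $B_\nu\cap\mathcal{R}^-$, with $C$ independent of $\epsilon$ and $\nu$. (Analyticity of $q_1$ in $B_\nu\cap\mathcal{R}^-$ and continuity up to the closure are then immediate: $h_2$ is analytic there, the relevant branch of $\log$ being single-valued on $\mathcal{R}^-$ near the origin, while the primitive of $n(t)h_1(t)$ is $C^1$ with derivative $n(\xi)h_1(\xi)$, and is analytic whenever, as in the applications, $n$ is, the contour then being freely deformable in $\mathcal{T}$.) Using (\ref{3.56})--(\ref{3.57}) — so that $h_2(\xi)h_1(t)=\exp\{\epsilon^{-2}(P_1(\xi)-P_1(t))\}$ with $P_1(\xi)=(\gamma^2-\epsilon^2-i\gamma)\log\xi$, consistently with (\ref{3.58}) — we have
\begin{equation*}
q_1(\xi)=\frac{1}{\epsilon^2}\int_{\mathcal{P}(\xi)}n(t)\,\exp\!\Big\{\tfrac{1}{\epsilon^2}\bigl(P_1(\xi)-P_1(t)\bigr)\Big\}\,dt,
\end{equation*}
where $\mathcal{P}(\xi)=\mathcal{P}(-i\tfrac{\sqrt3}{3}\nu_1,\xi)\subset\mathcal{T}$ is the descent path of the remark following (\ref{3.57}): along it $\mathrm{Re}\,P_1$ decreases monotonically from the base point to $\xi$, so $\mathrm{Re}\,P_1(t)\ge\mathrm{Re}\,P_1(\xi)$ and $\abs{h_2(\xi)h_1(t)}\le 1$ on $\mathcal{P}(\xi)$, and $\bigl|\tfrac{d}{ds}\mathrm{Re}\,P_1(t(s))\bigr|\ge C_1\abs{t(s)}^{-1}$ for arclength $s$, with $C_1>0$ uniform in $\epsilon$ because $\abs{P_1'(\xi)}=\abs{\gamma^2-\epsilon^2-i\gamma}\,\abs{\xi}^{-1}$ and $\gamma=\lambda/(1-\lambda)$ lies in a compact subset of $(0,\infty)$.

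The estimate I would then run is this. On $\mathcal{P}(\xi)$ one has $\abs{n(t)}\le M\abs{t}^{-1/2}$. Changing the integration variable from $s$ to $v:=\mathrm{Re}\,P_1(t(s))$ (legitimate by the monotonicity) and using $\abs{dt}=ds\le C_1^{-1}\abs{t}\,\abs{dv}$ converts $\abs{t}^{-1/2}$ into $\abs{t}^{1/2}$ and gives, with $v_\xi:=\mathrm{Re}\,P_1(\xi)$ and $v_0$ the value at the base point,
\begin{equation*}
\abs{q_1(\xi)}\le\frac{M}{C_1\epsilon^2}\int_{v_\xi}^{v_0}\abs{t}^{1/2}\,e^{(v_\xi-v)/\epsilon^2}\,dv.
\end{equation*}
Next I feed in the explicit shape $\mathrm{Re}\,P_1(t)=(\gamma^2-\epsilon^2)\ln\abs{t}+\gamma\arg t$ from (\ref{3.57}); since $\arg t$ varies over a bounded interval in $\mathcal{T}$, this yields, with a constant $C_3$ uniform in $\epsilon$ and $\nu$,
\begin{equation*}
\abs{t}^{1/2}\le C_3\,\abs{\xi}^{1/2}\exp\!\Big\{\tfrac{v-v_\xi}{2(\gamma^2-\epsilon^2)}\Big\}\qquad\text{on }\mathcal{P}(\xi).
\end{equation*}
Substituting and putting $w=v-v_\xi\ge 0$,
\begin{equation*}
\abs{q_1(\xi)}\le\frac{C_3 M\abs{\xi}^{1/2}}{C_1\epsilon^2}\int_0^{\infty}\exp\bigl\{-w\bigl(\epsilon^{-2}-\tfrac12(\gamma^2-\epsilon^2)^{-1}\bigr)\bigr\}\,dw.
\end{equation*}
For $\epsilon$ small, $\epsilon^{-2}-\tfrac12(\gamma^2-\epsilon^2)^{-1}\ge\tfrac12\epsilon^{-2}$ (as $\gamma^2-\epsilon^2$ is bounded below away from $0$), so the last integral is $\le 2\epsilon^2$, the prefactor $\epsilon^{-2}$ cancels, and $\abs{q_1(\xi)}\le(2C_3/C_1)\,M\,\abs{\xi}^{1/2}$. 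Hence $q_1\in\mathbf{D}$ and $\norm{q_1}\le CM$ with $C=2C_3/C_1$ depending only on the compact range of $\gamma$ and on the path constant $C_1$, hence independent of $\epsilon$ and $\nu$.

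The genuine obstacle is exactly this last extraction of the $\abs{\xi}^{1/2}$ decay \emph{uniformly up to the singular point $\xi=0$}, together with the cancellation of the $\epsilon^{-2}$: a crude argument using only ``monotone descent plus $\abs{P_1'}\gtrsim\abs{t}^{-1}$'' produces at best an $O(\nu^{1/2})$ or $O(\abs{\xi}^{-1/2})$ bound, which is useless for the contraction argument near the tip. What rescues the estimate is the exact logarithmic form of $P_1$ there — so that $\abs{t}^{1/2}$ is comparable, up to an $\epsilon$-independent factor, to the growing exponential $e^{(v-v_\xi)/(2(\gamma^2-\epsilon^2))}$ — together with $\epsilon^{-2}$ dominating $(2(\gamma^2-\epsilon^2))^{-1}$. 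Tracking every constant so that it stays independent of both $\epsilon$ and $\nu$ is the delicate bookkeeping, and it is the analytic counterpart of the non-analyticity of $\tilde Q$ at the finger tip flagged in the Introduction.
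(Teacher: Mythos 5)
Your proof is correct and takes essentially the same route as the paper's, which simply states the inequality $\int_{\mathcal{P}}\lvert t^{-1/2}h_2(\xi)h_1(t)\rvert\,\lvert dt\rvert\le C\epsilon^2\lvert\xi\rvert^{1/2}$ without spelling out the underlying computation. Your change of variable to $v=\mathrm{Re}\,P_1$, the comparison $\lvert t\rvert^{1/2}\le C_3\lvert\xi\rvert^{1/2}e^{(v-v_\xi)/(2(\gamma^2-\epsilon^2))}$ from the explicit logarithmic form of $P_1$, and the observation that $\epsilon^{-2}$ dominates $(2(\gamma^2-\epsilon^2))^{-1}$ is exactly the bookkeeping needed to justify that one line, and you correctly note that the integrand should carry $h_1(t)$ (matching (\ref{3.58})) rather than the $g_1(t)$ in the lemma's statement.
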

\begin{proof}
We choose path $\mathcal{P}(\xi,-i\frac{\sqrt{3}}{3}\nu_1)$ as in Remark 3.8,
\begin{equation*}
\begin{split}
|q_1(\xi)|&=\left\lvert\frac{1}{\epsilon^2}\int_{-i\frac{\sqrt{3}}{3}\nu_1}^\xi n(t)\exp \{-\frac{\gamma^2-\epsilon^2-i\gamma}{\epsilon^2}(\log t-\log \xi)\}dt\right\rvert\\
&\leq C \max|t^{1/2}n(t)|\int_{-i\frac{\sqrt{3}}{3}\nu_1}^\xi\left\lvert t^{-1/2}\exp \{-\frac{\gamma^2-\epsilon^2-i\gamma}{\epsilon^2}(\log t-\log \xi)\}\right\rvert|dt| \\
&\leq C |\xi|^{1/2}\max|t^{1/2}n(t)|.
\end{split}
\end{equation*}
\end{proof}

\begin{lemma}
Let $q\in \mathbf{D}$, then $\max|\xi|^{1/2}| N_1(\xi,q(\xi))|\leq C_1+C_2(\delta_1+|\nu|)\norm{q}_{\mathbf{D}}$,
where constants $C_1$ and $ C_2$ are independent of
$\epsilon$ and $\nu$.
\end{lemma}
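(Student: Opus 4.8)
The plan is to decompose $N_1(\xi,q(\xi))$ according to its explicit expression (\ref{3.54}) into its several additive groups, multiply each by $|\xi|^{1/2}$, and bound $\sup_{\overline{B_\nu\cap\mathcal{R}^-}}$ of each separately. The easy groups are $\epsilon^2Q_2(\xi)$ and $-Q_1(\xi)\beta$: since $Q_1,Q_2$ are analytic at $\xi=0$ (by (\ref{3.32}) and the definition of $Q_2$) they are bounded near the origin, and $\beta=\beta(p)$ is bounded because by (\ref{3.52}) it is a fixed combination of $\epsilon^2(Q'(0)-V_0'(0))$, $\beta_{6,0}(p)$, $\beta_{7,0}(p)$, the last two of which involve only the small quantity $\tilde I(0)$ and the bounded boundary value $F_{-,0}'(0)+\epsilon^2p_-'(0)$; after the extra factor $|\xi|^{1/2}\le|\nu|^{1/2}$ these go into $C_1$. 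The term $-Q_1(\xi)q$ is $\le C|q(\xi)|\le C\norm{q}_{\mathbf D}|\xi|^{1/2}$, so after $|\xi|^{1/2}$ it is $\le C|\nu|\norm{q}_{\mathbf D}$; similarly $\frac{q^2}{\xi}G_{8,2}(\sqrt\xi,q)\le C\norm{q}_{\mathbf D}^2$ (as $G_{8,2}$ is analytic at $\xi=0,q=0$) and $\frac1{\sqrt\xi}G_{8,1}(\sqrt\xi,q)$ splits into a $q$-independent bounded piece (into $C_1$) plus a piece of size $O(|q(\xi)|/\sqrt{|\xi|})=O(\norm{q}_{\mathbf D})$, all fitting the claimed bound on $B_\nu$.

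The two genuinely delicate groups are $\frac{F_-'(\xi)-F_-'(0)}{\xi}\{G_{6,0}+G_{6,1}(\xi,\tilde I_1,q)\}$ and $\frac{\tilde I_1(\xi)}{\xi}\{G_{7,0}+G_{7,1}(\xi,\tilde I_1,q)\}$; for these I would first prove the two H\"older-$\tfrac12$ bounds, valid on $B_\nu\cap\mathcal R^-$,
\[
|F_-'(\xi)-F_-'(0)|\le C\sqrt{|\xi|}\,\big(\delta_1+\epsilon^2\norm{q}_{\mathbf D}\big),\qquad |\tilde I(F)(\xi)-\tilde I(F)(0)|\le C\sqrt{|\xi|}\,\big(\delta_1+\epsilon^2\norm{q}_{\mathbf D}\big).
\]
For the first, use (\ref{3.35})--(\ref{3.37}): $F_{0,-}$ is $\epsilon^2$ times a Cauchy integral of a function analytic at the origin (since $V_0$ and $Q$ are analytic there in $\mathcal R^-$), so $|F_{0,-}'(\xi)-F_{0,-}'(0)|\le C\epsilon^2|\xi|$; for $\epsilon^2(p_-'(\xi)-p_-'(0))$ apply Lemma \ref{lem:2.11} to (\ref{3.36}) with $g=\bar p'$, using that on $r_0^-\cap B_\nu$ the ansatz (\ref{3.33}) gives $p'=\beta+q+\xi q'$ with $|p'(t)-p'(0)|\le C\norm{q}_{\mathbf D}\sqrt{|t|}$ --- the bound $|\xi q'(\xi)|\le C\norm{q}_{\mathbf D}\sqrt{|\xi|}$ coming from a Cauchy estimate on a disc of radius $\sim|\xi|$ about the relevant interior point of $r_0^-$, where $\mathrm{dist}(\,\cdot\,,\partial\mathcal R^-)\gtrsim|\xi|$ --- while off a neighbourhood of the origin $|\bar p'|$ is controlled by the a priori background smallness $\delta_1$; the argument for $F_+'$ via (\ref{3.38})--(\ref{3.39}) is identical. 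For the second, use the splitting (\ref{3.43})--(\ref{3.44}): $\tilde I(F)(\xi)-\tilde I(F)(0)$ equals $-\frac1{2\pi}\int_{r_2}\big(\tilde G_1(p_-',p_+')[t]-\tilde G_1[0]\big)\big(\frac1{t-\xi}-\frac1t\big)\,dt-\frac1{2\pi}\int_{r_2}\tilde G(F_{0,-}',F_{0,+}')\big(\frac1{t-\xi}-\frac1t\big)\,dt$, where the first integrand is H\"older-$\tfrac12$ at the origin and $O(\epsilon^2(\delta_1+\norm{q}_{\mathbf D}))$-small (being linear in $\epsilon^2p_\pm'$, cf.\ (\ref{3.44})) and the second is $C^1$ at the origin and $O(\epsilon^2)$-small, so Lemma \ref{lem:2.11} (the $\frac1{t-\xi}-\frac1t$ difference estimate) supplies the $\sqrt{|\xi|}$. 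Granting these, each delicate group is $\le\frac{C\sqrt{|\xi|}(\delta_1+\epsilon^2\norm{q}_{\mathbf D})}{|\xi|}\cdot C$ --- the curly brackets being bounded, with $G_{6,1},G_{7,1}$ analytic in their (small) arguments, their $q$-linear parts producing an extra factor $|q|\le\norm{q}_{\mathbf D}\sqrt{|\xi|}$ --- so after the outer $|\xi|^{1/2}$ they contribute $\le C\delta_1+C(\delta_1+|\nu|)\norm{q}_{\mathbf D}$ (using $\epsilon^2\le C|\nu|$ since $\nu$ is comparable to $\epsilon^{4/3}$). Summing all groups gives $\max|\xi|^{1/2}|N_1(\xi,q(\xi))|\le C_1+C_2(\delta_1+|\nu|)\norm{q}_{\mathbf D}$.

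I expect the main obstacle to be the bound on $\tilde I_1$, i.e.\ controlling $\tilde I(F)(\xi)-\tilde I(F)(0)$ near the finger tip $\xi=0$. This is exactly the subtlety highlighted in \cite{Chapman2} and in the introduction: $\tilde I(F)$ is \emph{not} analytic at $\xi=0$, so Taylor expansion is unavailable and one must instead exploit the structural decomposition (\ref{3.43})--(\ref{3.44}) of $\tilde G$ into a small part that is only $C^{1/2}$ at the origin (whose value at $0$ is explicitly subtracted in the operator) and a part that is smooth but still $\epsilon^2$-small, together with the fact --- to be read off from the contour representations (\ref{3.36}), (\ref{3.39}) --- that $p_-'$ and $p_+'$ inherit $C^{1/2}$-regularity at the origin with H\"older constant controlled by $\norm{q}_{\mathbf D}$. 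A secondary point is that $q$ lives only on the one-sided region $\mathcal R^-$, so the Cauchy estimate for $\xi q'(\xi)$ must be taken at interior points of the integration contours (where the distance to $\partial\mathcal R^-$ is comparable to $|\xi|$) rather than at an arbitrary boundary point of $B_\nu\cap\mathcal R^-$; since $N_1$ depends on $q'$ only through the contour integrals defining $F_\pm'$ and $\tilde I_1$, this is not an essential difficulty.
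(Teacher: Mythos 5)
Your decomposition of $N_1$ according to (\ref{3.54}) and the term-by-term bounding is precisely the route the paper intends --- its own proof is the single sentence ``The lemma follows from (\ref{3.47})--(\ref{3.51}) and (\ref{3.54})'', which only points at the algebraic splitting and leaves the analytic work entirely implicit. You have reconstructed that work correctly: in particular, the two H\"older-$\tfrac12$ estimates you isolate as the ``genuinely delicate'' ingredients,
$$
|F_-'(\xi)-F_-'(0)|\lesssim\sqrt{|\xi|}\,\bigl(\delta_1+\epsilon^2\norm{q}_{\mathbf D}\bigr),\qquad |\tilde I(F)(\xi)-\tilde I(F)(0)|\lesssim\sqrt{|\xi|}\,\bigl(\delta_1+\epsilon^2\norm{q}_{\mathbf D}\bigr),
$$
are exactly what the paper establishes, just slightly \emph{after} this lemma, in the two lemmas on $\tilde G_1$ and $\tilde I$ early in \S3.3 (bounding $|\tilde G_1[\xi]-\tilde G_1[0]|/\sqrt{|\xi|}$ and $|\tilde I(p)[\xi]-\tilde I(p)[0]|/\sqrt{|\xi|}$ in terms of $\epsilon^2\sup|p'(\xi)-p'(0)|/\sqrt{|\xi|}$ plus background terms), and those in turn rest on Lemma \ref{lem:2.11} applied to the contour representations (\ref{3.36}), (\ref{3.39}) and (\ref{3.45}) with the $\frac{1}{t-\xi}-\frac{1}{t}$ kernel --- which is literally what you propose. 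Your observation that the finger-tip non-analyticity of $\tilde Q$ and hence of $\tilde I$ forces this ``subtract the value at $0$ and use the half-power kernel estimate'' mechanism in place of Taylor expansion is the correct reading of the subtlety the authors flag in the introduction, and your remark about taking Cauchy estimates for $\xi q'$ only at interior points of the contours (where $\mathrm{dist}(\cdot,\partial\mathcal R^-)\gtrsim|\xi|$) addresses a real point the paper glosses over.

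One small caveat worth recording: the $\frac{q^2}{\xi}G_{8,2}$ term, after the outer $|\xi|^{1/2}$, gives $O\bigl(|\nu|^{1/2}\norm{q}_{\mathbf D}^2\bigr)$ (your proposal records this as $O(\norm{q}_{\mathbf D}^2)$), and similarly the $q$-linear part of $\frac{1}{\sqrt\xi}G_{8,1}$ gives $O\bigl(|\nu|^{1/2}\norm{q}_{\mathbf D}\bigr)$. Absorbing these into the claimed $C_1+C_2(\delta_1+|\nu|)\norm{q}_{\mathbf D}$ requires that $\norm{q}_{\mathbf D}$ be a priori small (of order $\delta_1$ or $\delta_2$), which is true in the iteration of \S3.3 where the lemma is actually applied (Lemma on the normal sequence gives $\norm{q_n}_{\mathbf D}\le 2\delta_2$), but is not stated as a hypothesis of the lemma. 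This is an imprecision in the paper's statement rather than a gap in your argument, and is consistent with the paper's offering no proof detail to check against; just be aware that the lemma as written is tacitly restricted to a small ball in $\mathbf D$.
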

\begin{proof}
The lemma follows from (\ref{3.47})-(\ref{3.51}) and (\ref{3.54}).
\end{proof}
\begin{lemma}
Let $q_1,q_2\in \mathbf{D}$, then $\max|\xi|^{1/2}| N_1(\xi,q_1(\xi))-N_1(\xi,q_2(\xi))|\leq C(\delta_1+|\nu|)\norm{q_1-q_2}_{\mathbf{D}}$,
constant $C$ is independent of
$\epsilon$ and $\nu$.
\end{lemma}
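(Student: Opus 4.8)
The plan is to estimate the difference $N_1(\xi,q_1)-N_1(\xi,q_2)$ term by term from the decomposition \eqref{3.54}, in parallel with the proof of the preceding lemma but now tracking how each summand depends on $q$. The only summand of \eqref{3.54} that does not involve $q$ at all, namely $\epsilon^2Q_2(\xi)$, drops out of the difference; this is exactly why the additive constant present in the preceding lemma disappears and the estimate becomes a genuine contraction bound. For the explicitly linear term $-Q_1(\xi)\,q$ I would use that $Q_1$ is analytic (hence bounded) near $\xi=0$ by \eqref{3.32}, and that $|q_1(\xi)-q_2(\xi)|\le|\xi|^{1/2}\norm{q_1-q_2}_{\mathbf D}$ by the definition of $\mathbf D$, so that after multiplying by $|\xi|^{1/2}$ and using $|\xi|\le|\nu|$ on $B_\nu\cap\mathcal R^-$ this term contributes at most $C|\nu|\,\norm{q_1-q_2}_{\mathbf D}$.

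Next I would handle the non-analytic pieces $\frac{1}{\sqrt\xi}G_{8,1}(\sqrt\xi,q)+\frac{q^2}{\xi}G_{8,2}(\sqrt\xi,q)$ from \eqref{3.51}. Since $G_{8,1}$ and $G_{8,2}$ are analytic at $(\sqrt\xi,q)=(0,0)$, on a fixed ball in $\mathbf D$ they are uniformly Lipschitz in their second argument; writing $q=\sqrt\xi\,\tilde q$ with $|\tilde q|\le\norm q_{\mathbf D}$, the increment fed into $G_{8,j}$ between $q_1$ and $q_2$ is $O(\sqrt\xi\,\norm{q_1-q_2}_{\mathbf D})$, and after dividing by $\sqrt\xi$ (respectively by $\xi$, using $q_1^2-q_2^2=(q_1+q_2)(q_1-q_2)=O(\sqrt\xi)(q_1-q_2)$) and multiplying by $|\xi|^{1/2}$ one is left with a bound of order $|\xi|^{1/2}\norm{q_1-q_2}_{\mathbf D}$, which is $\le C(\delta_1+|\nu|)\norm{q_1-q_2}_{\mathbf D}$ once $|\nu|$ is small enough that $|\nu|^{1/2}\le\delta_1$.

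The remaining terms $\frac{F_-'(\xi)-F_-'(0)}{\xi}\{G_{6,0}+G_{6,1}(\xi,\tilde I_1,q)\}$ and $\frac{\tilde I_1}{\xi}\{G_{7,0}+G_{7,1}(\xi,\tilde I_1,q)\}$ depend on $q$ both explicitly and implicitly — through $F_-'$ via \eqref{3.35}--\eqref{3.36}, through $\tilde I_1=\tilde I_1(p)$ via \eqref{3.45}--\eqref{3.46}, and through $\beta=\beta(p)$ sitting inside $\beta_{6,0},\beta_{7,0}$ via \eqref{3.52}. The preliminary step here is to record Lipschitz bounds for these auxiliary quantities with respect to $\norm{q_1-q_2}_{\mathbf D}$, each one carrying a genuinely small prefactor — a power of $\epsilon$ (coming from $F=\epsilon^2(p+V_0-Q)$ and from the $\epsilon^2$ in the undercooling kernel) and/or a positive power of $|\nu|$ (coming from integrating over $B_\nu$) — obtained from \eqref{3.36}, \eqref{3.45}--\eqref{3.46}, \eqref{3.52} combined with Lemmas \ref{lem:2.11}, \ref{lem:2.13} and \ref{lem:2.21}. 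Since $G_{6,0},G_{6,1},G_{7,0},G_{7,1}$ are analytic in all their arguments throughout the relevant range, the product and chain rules for Lipschitz maps then bound these terms by $C(\delta_1+|\nu|)\norm{q_1-q_2}_{\mathbf D}$, the prefactors $F_-'(\xi)-F_-'(0)=O(\sqrt\xi)$ and $\tilde I_1=\tilde I(F)(\xi)-\tilde I(F)(0)=O(\sqrt\xi)$ supplying the power of $\xi$ needed to compensate the explicit $1/\xi$. Summing over all the summands of \eqref{3.54} then gives the claimed estimate.

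I expect the main obstacle to be precisely this last step, the bookkeeping of the implicit $q$-dependence. The quantities $\beta(p)$, $\tilde I_1(p)$ and $F_\pm'$ are themselves nonlocal integral functionals of $p=(\beta+q)\xi$, so one has to verify in each case that a genuine factor of a small power of $\epsilon$, $|\nu|$ or $\delta_1$ is extracted, and that these combine correctly with the non-analytic factors $\sqrt\xi$ and $1/\sqrt\xi$ that are intrinsic to $N_1$, so that the final constant is independent of both $\epsilon$ and $\nu$. The analyticity and decay estimates established in \S2, in particular Lemmas \ref{lem:2.11}, \ref{lem:2.13} and \ref{lem:2.21}, which control $F_-$, $F_+$ and the $I$-type operators in terms of $\norm{q}_{\mathbf D}$, are what make this tractable.
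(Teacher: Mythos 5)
Your term-by-term decomposition of $N_1$ using (3.54), exploiting the analyticity of $Q_1, Q_2, G_{6,i}, G_{7,i}, G_{8,i}$ from (3.47)--(3.51), is exactly what the paper's one-line proof (``The lemma follows from (3.47)--(3.51) and (3.54)'') appeals to, so the approach matches. Your remark that the non-analytic pieces $\xi^{-1/2}G_{8,1}$ and $\xi^{-1}q^2 G_{8,2}$ only yield an $O(|\nu|^{1/2})$ prefactor, which must then be absorbed into $\delta_1$, is a fair observation the paper leaves implicit; note, though, that in the iteration (3.71)--(3.73) the nonlocal quantities $\beta$, $F_-'$, $\tilde I_1$ are evaluated at the previous iterate $p_{n-1}$ and so are held fixed when $q$ varies, which means the ``implicit $q$-dependence'' you flag as the main obstacle is arguably not part of what this lemma needs to control.
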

\begin{proof}
The lemma follows from (\ref{3.47})-(\ref{3.51}) and (\ref{3.54}).
\end{proof}

\subsection{Existence of solution to (\ref{3.12})}

In this section, by constructing a normal sequence, we are going to prove the existence of solution $p(\xi)$ to (\ref{3.12}) which satisfies (\ref{3.33}) in $\mathcal{T}$.\\

From Definitions ( \ref{3.16}), ( \ref{3.24}) and (\ref{3.58} ), we can write
\begin{equation}\label{3.61}
\mathcal{U}[p](\xi)=g_1(\xi)\left[g_2(R^l_\epsilon)\mathcal{U}(R^l_\epsilon)+g_2(\xi)\mathcal{U}_2[p](\xi)\right] \quad \text{ for } \xi\in \mathcal{R}_2,
\end{equation}
\begin{equation}\label{3.62}
\mathcal{U}[p](\xi)=g_1(\xi)\left[g_2(R^l_\epsilon)\mathcal{U}(R^l_\epsilon)+g_2(R^l)\mathcal{U}_2[p](R^l)+g_2(\xi)\mathcal{U}_3[p](\xi)\right] \quad \text{ for } \xi\in \mathcal{R}_3/\mathcal{T},
\end{equation}
and
\begin{equation}\label{3.63}
\mathcal{U}[p](\xi)=\xi (\beta [p]+\mathcal{U}_4[q](\xi))
\text{ for } \xi\in \mathcal{T},
\end{equation}
where
\begin{multline}\label{3.64}
\mathcal{U}_4[q](\xi)=\left(-\beta[p]+\frac{\mathcal{U}(-i\frac{\sqrt{3}}{3}\nu_1)}{-i\frac{\sqrt{3}}{3}\nu_1}\right)h_1(-i\frac{\sqrt{3}}{3}\nu_1)h_2(\xi)\\
+h_2(\xi)\int_{-i\frac{\sqrt{3}}{3}\nu_1}^\xi h_1(t)\frac{N_1(t,q(t))}{\epsilon^2}dt,
\end{multline}
and $\beta$ is given by (\ref{3.52}).\\

In the following, we choose $\nu=O(\epsilon)<\nu_1.$\\

We first obtain some estimates of $\beta$ and $\tilde{I}(p)$ in the small neighborhood of $\xi=0$.

\begin{lemma}
Let $p\in \mathbf{A}^-_0$, then for $\xi\in B_\nu\cap\mathcal{R}^-$
\begin{equation*}\label{3.65}
\frac{\vert\tilde G_1(p'_-,p_+')[\xi]-\tilde G_1(p'_-,p_+')[0]\vert}{\sqrt{|\xi|}}\leq \epsilon^2 \left( K \underset{\xi\in \overline{B_\nu\cap\mathcal{R}^-}}{\sup}~\left\vert\frac{p'(\xi)-p'(0)}{\sqrt{|\xi|}}\right\vert+C\right),
\end{equation*}
and
\begin{equation*}\label{3.66}
\underset{\xi\in r_2\cap B_\nu}{\sup}~|\tilde G_1(p'_-,p_+')|\leq \epsilon^2 \left(\frac{\norm{p}_0}{\nu}+C\right).
\end{equation*}
$C>0$ is a constant independent of $\epsilon$ and $\nu$.
\end{lemma}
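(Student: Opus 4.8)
\emph{Proof proposal.} The plan is to view $\tilde G_1$ as a first-order increment of the map $(a,b)\mapsto\tilde G(a,b)$ of (\ref{3.43}). First I would note that on $r_2\cap B_\nu$ and on $\overline{B_\nu\cap\mathcal{R}^-}$ the coefficients $H,\bar H$ stay bounded above and below by constants depending only on $\lambda$ --- here it is essential that $\gamma>0$, since $H(0)=i\gamma$, $\bar H(0)=-i\gamma$ are nonzero --- while $F_\pm'$ is uniformly small there (the base parts $F_{0,\pm}$ are $O(\epsilon^2)$, and the perturbations $\epsilon^2 p_\pm'$ are controlled by Lemmas \ref{lem:2.13} and \ref{lem:2.21}). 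Hence $G_5$ and $H^{1/2}\bar H^{1/2}+G_5$ are bounded away from $0$, so $\tilde G$ is analytic in $(a,b)$ on a fixed bidisc about the origin, with all partial derivatives bounded by $\lambda$-constants uniformly in $\epsilon,\nu$. Since by (\ref{3.35}) and (\ref{3.38}) the pair $(F_-',F_+')$ differs from the base pair $(F_{0,-},F_{0,+})$ by $(\epsilon^2 p_-',\epsilon^2 p_+')$, the fundamental theorem of calculus along the joining segment gives a representation
\[
\tilde G_1(p_-',p_+')\;=\;\epsilon^2\bigl[\,\mathcal{A}(\xi)\,p_-'(\xi)+\mathcal{B}(\xi)\,p_+'(\xi)\,\bigr],
\]
where $\mathcal{A},\mathcal{B}$ are the averages of $\partial_a\tilde G,\partial_b\tilde G$ over that segment, bounded on the relevant sets by $\lambda$-constants; moreover $\mathcal{A}=\mathcal{A}_0+O(\epsilon^2(|p_-'|+|p_+'|))$ with $\mathcal{A}_0$ analytic at $\xi=0$, and similarly for $\mathcal{B}$.

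For the second estimate I would bound $|\tilde G_1|\le C\epsilon^2(|p_-'|+|p_+'|)$ on $r_2\cap B_\nu$ and apply Lemma \ref{lem:2.13} to $p_-'$ and Lemma \ref{lem:2.21} to $p_+'$ (in the forms adapted to the operators (\ref{3.36}), (\ref{3.39}); their hypotheses hold since $p\in\mathbf{A}_0^-$ and $\norm{p'}_{1,\mathcal{R}^-}\le C\norm{p}_0^-$ by a Cauchy estimate). These give $|p_-'(\xi)|\le C\norm{p}_0^-/|\nu|$ near the tip --- the $1/|\nu|$ reflecting that $r_0$ runs through the origin --- and $|p_+'(\xi)|\le C\norm{p}_0^-$, whence $\sup_{r_2\cap B_\nu}|\tilde G_1|\le\epsilon^2(\norm{p}_0^-/\nu+C)$.

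For the first estimate I would split $\tilde G_1(\xi)-\tilde G_1(0)$ into its base part $\tilde G(F_{0,-}(\xi),F_{0,+}(\xi))-\tilde G(F_{0,-}(0),F_{0,+}(0))$ and the rest. Because $V_0$ and $Q$, hence $F_{0,\pm}$, are analytic at $\xi=0$, the base part is an analytic function of $\xi$ of size $O(\epsilon^2)$, so its increment over $B_\nu$ is $O(\epsilon^2|\xi|)\le C\epsilon^2\sqrt{|\xi|}$. For the rest, write $\mathcal{A}(\xi)p_-'(\xi)-\mathcal{A}(0)p_-'(0)=\mathcal{A}(\xi)\bigl(p_-'(\xi)-p_-'(0)\bigr)+\bigl(\mathcal{A}(\xi)-\mathcal{A}(0)\bigr)p_-'(0)$, and similarly for $\mathcal{B}$: the first term is handled by the boundedness of $\mathcal{A}$ and the H\"older-$\tfrac12$ estimate of Lemma \ref{lem:2.13} (and its $p_+'$ analogue), which supplies the factor $\sqrt{|\xi|}$ and the term $K\,\underset{\overline{B_\nu\cap\mathcal{R}^-}}{\sup}|(p'(\xi)-p'(0))/\sqrt{|\xi|}|$; the second because $\mathcal{A}(\xi)-\mathcal{A}(0)$ is itself a sum of an $O(|\xi|)$ analytic piece and an $O(|p_\pm'(\xi)-p_\pm'(0)|)$ Lipschitz piece, times $|p_-'(0)|\le C\norm{p}_0^-/|\nu|$. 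Collecting everything and using the standing scaling $\nu=O(\epsilon)$ together with the norm-built-in inequality $\epsilon^2\,\underset{\overline{\mathcal{R}^-\cap B_\nu}}{\sup}|(p'(\xi)-p'(0))/\sqrt{|\xi|}|\le\norm{p}_0^-$, one checks that the nuisance negative powers of $\nu$ and of the H\"older quotient collapse into the constant $C$, giving the stated bound.

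I expect the principal difficulty to be exactly the non-analyticity of $\mathcal{A},\mathcal{B}$ at the finger tip $\xi=0$: they inherit only the $\sqrt{|\xi|}$-H\"older regularity of $p_\pm'$ (ultimately from the $\xi^{3/2}$-type behaviour permitted in $\mathbf{A}_0$), so one cannot Taylor-expand in $\xi$ and must, at every step, separate the genuinely analytic base contributions --- which are $O(|\xi|)$ --- from the $\sqrt{|\xi|}$-H\"older perturbative ones, while keeping careful track of which factors still carry the saving $\epsilon^2$, so that the negative powers of $\nu$ produced by the Cauchy-type estimates of Lemma \ref{lem:2.13} near the tip are neutralised. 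This is the local manifestation of the ``non-analyticity of a coefficient in (\ref{3.12})'' flagged in the introduction, and it is what distinguishes this step from its counterpart in \cite{Xie1}.
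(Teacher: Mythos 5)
The paper's own proof of this lemma is the single sentence ``The lemma follows from (3.43) and (3.44),'' so your write-up is a fully worked-out version of exactly the intended computation: treating $\tilde G_1$ as the increment $\tilde G(F_{0,\pm}'+\epsilon^2 p_\pm')-\tilde G(F_{0,\pm}')$, pulling out the uniform $\epsilon^2$ factor by the fundamental theorem of calculus, and then controlling the $\xi$-dependence by separating the analytic $O(|\xi|)$ contribution (from $H,\bar H$ and the base parts $F_{0,\pm}$) from the $\sqrt{|\xi|}$-H\"older contribution of $p_\pm'$ supplied by Lemmas~\ref{lem:2.13} and~\ref{lem:2.21}. This is the same approach the paper intends, just spelled out; the argument is sound, and your closing remark correctly identifies the non-analyticity at $\xi=0$ as the point where this differs from the surface-tension case in \cite{Xie1}.
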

\begin{proof}
The lemma follows from (\ref{3.43}) and (\ref{3.44}).
\end{proof}
\begin{lemma}
Let $p\in \mathbf{A}^-_0$, then for $\xi\in B_\nu\cap\mathcal{R}^-$
\begin{equation*}\label{3.67}
\frac{\vert\tilde I(p)[\xi]-\tilde I(p)[0]\vert}{\sqrt{|\xi|}}\leq \epsilon^2 C\left( \underset{\xi\in \overline{B_\nu\cap\mathcal{R}^-}}{\sup}~\left\vert\frac{p'(\xi)-p'(0)}{\sqrt{|\xi|}}\right\vert+\nu^{-1/2}+\nu^{-3/2}\underset{\xi\in \mathcal{R}_-/B_\nu}{\sup}|p|\right)
\end{equation*}
and
\begin{equation*}\label{3.68}
C\epsilon^2\leq\vert \tilde I(p)[0]\vert \leq \epsilon^2 C\left( \sqrt{\nu}\underset{\xi\in \overline{B_\nu\cap\mathcal{R}^-}}{\sup}~\left\vert\frac{p'(\xi)-p'(0)}{\sqrt{|\xi|}}\right\vert+1+\nu^{-1/2}|\log\nu|\underset{\xi\in \mathcal{R}_-/B_\nu}{\sup}|p|\right).
\end{equation*}
$C>0$ is a constant independent of $\epsilon$ and $\nu$.
\end{lemma}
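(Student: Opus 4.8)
The strategy is to estimate separately the two contour integrals making up the defining expression (\ref{3.45}) for $\tilde I$. Write
$\mathcal{J}(\xi):=-\frac{1}{2\pi}\int_{r_2}\frac{\tilde G_1(p_-',p_+')[t]-\tilde G_1(p_-',p_+')[0]}{t-\xi}\,dt$
for the term carrying the full dependence on $p$, and
$\mathcal{J}_0(\xi):=-\frac{1}{2\pi}\int_{r_2}\frac{\tilde G(F_{-,0}',F_{+,0}')(t)}{t-\xi}\,dt$
for the ``leading order'' term, which depends only on $V_0$ and $Q$ through $F_{0,\pm}$ in (\ref{3.37})--(\ref{3.40}). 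By (\ref{3.46}), $\tilde I(p)(\xi)-\tilde I(p)(0)=\big(\mathcal{J}(\xi)-\mathcal{J}(0)\big)+\big(\mathcal{J}_0(\xi)-\mathcal{J}_0(0)\big)$ while $\tilde I(p)(0)=\mathcal{J}(0)+\mathcal{J}_0(0)$, so it suffices to bound each of the four quantities $\mathcal{J}(\xi)-\mathcal{J}(0)$, $\mathcal{J}_0(\xi)-\mathcal{J}_0(0)$, $\mathcal{J}(0)$, $\mathcal{J}_0(0)$.

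For $\mathcal{J}$ I would invoke Lemma \ref{lem:2.11}. Since $r_2$ is an angular subset of $\Omega_0\cap\Omega_1$ with $\mathcal{R}$ lying below it (Remarks \ref{rem:2.7} and \ref{rem:2.13}), condition (\ref{2.19}) holds with $\Gamma=r_2$ and $\mathcal{D}=B_\nu\cap\mathcal{R}^-$. Taking $g(t)=\tilde G_1(p_-',p_+')[t]$, the preceding lemma supplies exactly the two hypotheses needed: the half-power bound $|g(t)-g(0)|\le\epsilon^2\big(K\sup_{\overline{B_\nu\cap\mathcal{R}^-}}\left|\frac{p'(\xi)-p'(0)}{\sqrt{\xi}}\right|+C\big)\sqrt{|t|}$ on $r_2\cap B_\nu$, and $\sup_{r_2/B_\nu}|g|\le\epsilon^2(\norm{p}_0^-/\nu+C)$, the latter controlled through Lemmas \ref{lem:2.13}--\ref{lem:2.21} by $\sup_{\mathcal{R}^-/B_\nu}|p|$. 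Then (\ref{2.20}) gives $|\mathcal{J}(\xi)-\mathcal{J}(0)|\le C\epsilon^2\big(\sup\left|\frac{p'(\xi)-p'(0)}{\sqrt{\xi}}\right|+\nu^{-3/2}\sup_{\mathcal{R}^-/B_\nu}|p|+1\big)\sqrt{|\xi|}$, the $\nu^{-3/2}$ arising from the $\nu^{-1/2}$ in (\ref{2.20}) together with the $\nu^{-1}$ inside the bound for $\sup_{r_2/B_\nu}|g|$; this is the $\mathcal{J}$-contribution to the first display. For $\mathcal{J}(0)$ I would instead use (\ref{2.21}) of the same lemma, which produces $|\mathcal{J}(0)|\le C\epsilon^2\big(\sqrt{\nu}\sup\left|\frac{p'(\xi)-p'(0)}{\sqrt{\xi}}\right|+|\log\nu|\,\nu^{-1/2}\sup_{\mathcal{R}^-/B_\nu}|p|+1\big)$, which matches the correction terms in the second display.

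The term $\mathcal{J}_0$ is handled by size and regularity alone. Because the integrals (\ref{3.37})--(\ref{3.40}) defining $F_{0,\pm}$ carry an explicit factor $\epsilon^2$, and $H,\bar H$ are bounded away from $0$ and $\infty$ near the origin, $\tilde G(F_{-,0}',F_{+,0}')$ is $O(\epsilon^2)$; since it is built only from the regular zeroth-order data, a direct estimate (or, if one prefers, a third application of Lemma \ref{lem:2.11} with $g=\tilde G(F_{-,0}',F_{+,0}')$) gives $|\mathcal{J}_0(\xi)-\mathcal{J}_0(0)|\le C\epsilon^2\sqrt{|\xi|}$ and $|\mathcal{J}_0(0)|\le C\epsilon^2$, completing the first display and the upper bound in the second. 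It remains to prove the lower bound $C\epsilon^2\le|\tilde I(p)(0)|$. The decisive quantity is $\mathcal{J}_0(0)=-\frac{1}{2\pi}\int_{r_2}\frac{\tilde G(F_{-,0}',F_{+,0}')(t)}{t}\,dt$: using the explicit forms (\ref{3.9}) of $V_0$, (\ref{3.2}) of $Q$, and (\ref{3.37})--(\ref{3.40}), one verifies that $\mathcal{J}_0(0)$ equals a nonzero constant times $\epsilon^2$. Since $\nu=O(\epsilon)$ was fixed just before the lemma, the $p$-dependent correction $\mathcal{J}(0)$ bounded above is, in the regime where the lemma is applied, strictly smaller than $\epsilon^2$ in modulus, so $|\tilde I(p)(0)|=|\mathcal{J}_0(0)+\mathcal{J}(0)|\ge C\epsilon^2$ by the triangle inequality.

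Everything except this last point is routine bookkeeping of $\epsilon$- and $\nu$-powers feeding the two uses of Lemma \ref{lem:2.11}. The step I expect to demand the most care is the lower bound $C\epsilon^2\le|\tilde I(p)(0)|$: it asserts that the forcing generated at the finger tip is genuinely of order $\epsilon^2$ and non-degenerate, which is exactly what rules out the trivial solution $p\equiv0$ of (\ref{3.12}) and, downstream, forces the quantization of the finger width $\lambda$. Its verification rests on the non-vanishing of the explicit leading integral and is precisely where the non-analyticity of $\tilde Q$ (equivalently of the coefficient in (\ref{3.12})) at $\xi=0$ --- the subtlety flagged in the introduction --- must be confronted directly.
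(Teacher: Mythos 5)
Your proposal follows the same route as the paper's one-line proof, which simply cites the contour-integral estimate (Lemma~\ref{lem:2.11}), the $F_\pm$ bound lemmas (Lemmas~\ref{lem:2.13} and~\ref{lem:2.21}), and the definition~(\ref{3.45}); your decomposition of $\tilde I$ into the $\tilde G_1$-piece $\mathcal{J}$ and the zeroth-order piece $\mathcal{J}_0$, followed by applications of (\ref{2.20}) and (\ref{2.21}) fed by the half-power Lipschitz bound and the $\sup_{r_2/B_\nu}$ bound from the preceding lemma, is exactly the intended unpacking. The $\nu$-powers in the two upper bounds come out as in the statement; note only that your intermediate bound on $\mathcal{J}(\xi)-\mathcal{J}(0)$ carries a $+1$ where the constant term in the bound on $\sup_{r_2/B_\nu}|\tilde G_1|$, pushed through the $\nu^{-1/2}$ in (\ref{2.20}), actually produces $+\nu^{-1/2}$ (which is what the lemma states); since $\nu^{-1/2}>1$ your sub-claim is too optimistic, but the final inequality is unaffected.

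The genuine gap is the lower bound $C\epsilon^2\le|\tilde I(p)[0]|$, which you correctly identify as the crux but do not close. Two points are missing. First, the nonvanishing and $O(\epsilon^2)$ size of $\mathcal{J}_0(0)$ is asserted by fiat; the $F_{0,\pm}'$ are themselves contour integrals of $V_0'-Q'$, so $\mathcal{J}_0(0)$ is a nested object, and one needs an explicit evaluation or a structural argument (e.g.\ that its real and imaginary parts cannot simultaneously cancel) to certify $|\mathcal{J}_0(0)|\ge c\epsilon^2>0$. Second, the triangle inequality requires $|\mathcal{J}(0)|<|\mathcal{J}_0(0)|$, yet the only bound available on $|\mathcal{J}(0)|$, from (\ref{2.21}), carries the factor $|\log\nu|\,\sup_{r_2/B_\nu}|\tilde G_1|$; with the constant term in that sup present, this is $O(\epsilon^2|\log\nu|)$, which for $\nu=O(\epsilon)$ dominates $\epsilon^2$, so the comparison does not follow from the displayed bounds alone. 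To make it work one must invoke the a priori smallness $\sup_{\mathcal{R}^-/B_\nu}|p|=O(\nu^{1/2})$ and smallness of $\norm{p}_0^-$ that the normal-sequence construction of the next subsection supplies, so that the $p$-dependent contribution is genuinely $o(\epsilon^2)$; you do not invoke this. In fairness the paper's citation of Lemma~\ref{lem:2.11} and Lemma~\ref{lem:2.21} (both upper-bound lemmas) and (\ref{3.45}) (a definition) does not supply the lower-bound argument either, so the omission is shared; but as written your proof does not establish the left-hand inequality in the second display.
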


\begin{proof}
The lemma follows from Lemma 2.15, Lemma 2.31 and (\ref{3.45}).
\end{proof}
\begin{lemma}
Let $p\in \mathbf{A}^-_0$, then
\begin{equation*}\label{3.69}
C_1\epsilon\leq |\beta(p)|\leq C\epsilon \left( 1+\left(\sqrt{\nu}\underset{\xi\in\overline{ B_\nu\cap\mathcal{R}^-}}{\sup}~\left\vert\frac{p'(\xi)-p'(0)}{\sqrt{|\xi|}}\right\vert+ \frac{|\log \nu|}{\nu^{1/2}}\underset{\xi\in \mathcal{R}_-/B_\nu}{\sup}|p|\right)^{1/2}\right),
\end{equation*}
where $C_1, C>0$ are constants independent of $\epsilon$ and $\nu$.
\end{lemma}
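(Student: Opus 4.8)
The plan is to reduce the estimate of $|\beta(p)|$ entirely to the two-sided bound on $|\tilde I(p)[0]|$ established in the lemma immediately above, by first removing the (bounded, bounded-below) denominator in (\ref{3.52}) and then Taylor-expanding $\beta_{6,0}$ and $\beta_{7,0}$ in the small quantity $\tilde I(0)$. For the denominator: since $\lambda$ lies in a compact subset of $(0,1)$, the parameter $\gamma=\lambda/(1-\lambda)$ lies in a compact subset of $(0,\infty)$, so $|\gamma^2-i\gamma|=\gamma\sqrt{\gamma^2+1}$ is bounded above and below by positive constants independent of $\epsilon$ and $\lambda$; hence for all sufficiently small $\epsilon$ the denominator $\epsilon^2-\gamma^2+i\gamma$ has modulus comparable to $1$. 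Therefore it suffices to prove the two claimed bounds with $\beta(p)$ replaced by the numerator $\mathcal N(p):=\epsilon^2\bigl(Q'(0)-V_0'(0)\bigr)+\beta_{6,0}(p)+\beta_{7,0}(p)$.

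Next I would show that every contribution to $\mathcal N(p)$ except the square-root term of $\beta_{7,0}$ is $O(\epsilon^2)$. From (\ref{3.9}) $V_0$ is even, so $V_0'(0)=0$, while $Q$ is analytic near $\xi=0$ with $Q'(0)$ bounded independently of $\epsilon$ (it depends only on $\gamma$), so $|\epsilon^2(Q'(0)-V_0'(0))|\le C\epsilon^2$. For $\beta_{6,0}(p)$, the prefactor $F_{0,-}'(0)+\epsilon^2 p_-'(0)$ is $O(\epsilon^2)$: $F_{0,-}'(0)=O(\epsilon^2)$ directly from (\ref{3.37}), and $\epsilon^2|p_-'(0)|\le C\epsilon^2\norm{p}_0^-$ by the Cauchy-integral estimates of Lemma \ref{lem:2.13} applied to $p_-$; moreover, since the preceding lemma gives $|\tilde I(0)|\le C\epsilon^2\to 0$, the braced factor in $\beta_{6,0}$ tends to $-1$ and in particular is $O(1)$. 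Hence $|\beta_{6,0}(p)|\le C\epsilon^2$.

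The main point is the size of $\beta_{7,0}(p)$. Write $z=\tilde I(0)$, so that $c\epsilon^2\le|z|\le C\epsilon^2$ by the preceding lemma. With the branch of the square root fixed consistently with (\ref{3.6})--(\ref{3.8}) one has $\sqrt{-4z+z^2}=2\sqrt{-z}\,(1+O(z))$, and since $|z|=|\sqrt{-z}|^2\le C\epsilon\,|\sqrt{-z}|$ the polynomial term $-\tfrac{i\gamma}{2}(-4z+z^2)$ is negligible compared with $\sqrt{-z}$; therefore
\begin{equation*}
\beta_{7,0}(p)=-\tfrac{i\gamma}{2}(-4z+z^2)-\tfrac{i\gamma}{2}(-2+z)\,\sqrt{-4z+z^2}=2i\gamma\sqrt{-z}\,\bigl(1+O(\epsilon)\bigr).
\end{equation*}
Because $|\sqrt{-z}|=|z|^{1/2}\ge c^{1/2}\epsilon$, this dominates the $O(\epsilon^2)$ contributions of the other two terms of $\mathcal N(p)$, so $|\mathcal N(p)|=2\gamma|\tilde I(0)|^{1/2}\bigl(1+O(\epsilon)\bigr)$, hence $|\beta(p)|$ is comparable to $|\tilde I(0)|^{1/2}$ for small $\epsilon$. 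The lower bound $|\tilde I(0)|\ge C\epsilon^2$ then yields $|\beta(p)|\ge C_1\epsilon$, and the upper bound $|\tilde I(0)|\le\epsilon^2 C\bigl(\sqrt\nu\sup|\tfrac{p'-p'(0)}{\sqrt\xi}|+1+\nu^{-1/2}|\log\nu|\sup|p|\bigr)$, combined with $\sqrt{a+b}\le\sqrt a+\sqrt b$ (with $b=1$) to split off the constant, yields the stated upper bound $C\epsilon\bigl(1+(\sqrt\nu\sup|\cdot|+\tfrac{|\log\nu|}{\nu^{1/2}}\sup|p|)^{1/2}\bigr)$.

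I expect the only genuinely delicate step to be the branch tracking of $\sqrt{-4\tilde I(0)+(\tilde I(0))^2}$: one must verify that $\tilde I(0)$ stays away from the branch point and cut so that this square root truly contributes a term of exact order $\epsilon$ with modulus bounded below, and that the $O(\epsilon^2)$ remainder cannot cancel it — which holds for all small $\epsilon$ precisely because the lower bound $|\tilde I(0)|\ge C\epsilon^2$ from the preceding lemma is available (that lower bound being the nontrivial input borrowed from earlier). The rest is routine Taylor expansion and bookkeeping.
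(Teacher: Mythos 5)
Your proposal is correct and fills in exactly what the paper's one-line proof ("follows from (\ref{3.52}) and the above lemma") leaves implicit: the denominator of (\ref{3.52}) is of order one, the $\epsilon^2(Q'(0)-V_0'(0))$ and $\beta_{6,0}(p)$ contributions are subdominant, and the square-root term of $\beta_{7,0}(p)$ — namely $2i\gamma\sqrt{-\tilde I(0)}\,(1+O(\epsilon))$ — is the dominant piece, so that $|\beta(p)|\asymp|\tilde I(0)|^{1/2}$, after which the two-sided bound on $|\tilde I(0)|$ from the preceding lemma immediately gives both inequalities. That is the paper's intended route, and your identification of the $\sqrt{-z}$ leading behavior of $\beta_{7,0}$ as the crux is right.

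One small imprecision worth noting for your own records: the bound $\epsilon^2|p_-'(0)|\le C\epsilon^2\norm{p}_0^-$ is optimistic as stated. From (\ref{3.36}) and (\ref{2.21}), $|p_-'(0)|$ picks up factors like $\nu^{-1}|\log\nu|\sup|p|$ and $\sqrt{\nu}\sup\bigl|\tfrac{p'-p'(0)}{\sqrt\xi}\bigr|$, so the honest bound on $|\beta_{6,0}|$ is $C\epsilon^2\bigl(1+\nu^{-1}|\log\nu|\sup|p|+\sqrt\nu\sup|\cdot|\bigr)$. This is still absorbed by the stated upper bound $C\epsilon\bigl(1+(\cdot)^{1/2}\bigr)$ for the range of $\nu=O(\epsilon)$ and the magnitudes of $\sup|p|$ actually occurring in the iteration of Lemma 3.16, so the conclusion is unaffected; but the dominance of $\beta_{7,0}$ over $\beta_{6,0}$ is not quite as clean as "one is $O(\epsilon)$ and the other is $O(\epsilon^2)$" without those auxiliary bounds on $p$. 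The paper glosses over this too, so you are not being less careful than the source — just flag it so you don't over-claim a uniform $O(\epsilon^2)$ bound for $\beta_{6,0}$.
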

\begin{proof}
The lemma follows from (\ref{3.52}) and the above lemma.
\end{proof}

Let
\begin{equation}\label{3.70}
\beta_0=\beta(0),~\qquad p_0(\xi)=\frac{\beta_0\xi}{\xi^2+1},~\qquad q_0(\xi)=\frac{\beta_0\xi^2}{\xi^2+1}.
\end{equation}
We define sequences $\{\beta_n\},\{q_n(\xi)\}$ and $\{p_n(\xi)\}$ as follows:
\begin{equation}\label{3.71}
\beta_n=\beta(p_{n-1}),
\end{equation}
\begin{equation}\label{3.72}
\begin{split}
q_n(\xi)&=\mathcal{U}_4[q_{n-1}]\\
&=\left(-\beta_{n-1}+\frac{\mathcal{U}[p_{n-1}](-i\frac{\sqrt{3}}{3}\nu_1)}{-i\frac{\sqrt{3}}{3}\nu_1}\right)h_1(-i\frac{\sqrt{3}}{3}\nu_1)h_2(\xi)\\
& ~~~~~~~+h_2(\xi)\int_{-i\frac{\sqrt{3}}{3}\nu_1}^\xi h_1(t)\frac{N_1(t,q_{n-1}(t))}{\epsilon^2}dt,
\end{split}
\end{equation}

\begin{equation}\label{3.73}
\begin{split}
p_n(\xi)&=\xi (\beta_n+q_n(\xi)) \text{ for } \xi\in \mathcal{T},\\
p_n(\xi)&=\mathcal{U}[p_{n-1}](\xi) \text{ for } \xi\in \mathcal{R}^-/\mathcal{T}.
\end{split}
\end{equation}

\begin{lemma}
Let $h_3(\xi)=\mathcal{U}_4[0]$, then $h_3(\xi)\in \mathbf{D}$.
\end{lemma}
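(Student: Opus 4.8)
The plan is to split $h_3=\mathcal U_4[0]$ into the two summands appearing in the definition (\ref{3.64}) of $\mathcal U_4$, taken at $q\equiv 0$ (so that the auxiliary quantities $\beta[p]$, $\mathcal U[p](\cdot)$, $F_\pm'$ and $\tilde I_1$ on which $\mathcal U_4$ and $N_1$ depend are all evaluated at $p\equiv 0$). Thus
\begin{equation*}
h_3(\xi)=c_0\,h_2(\xi)+h_2(\xi)\int_{-i\frac{\sqrt3}{3}\nu_1}^{\xi}h_1(t)\,\frac{N_1(t,0)}{\epsilon^2}\,dt ,
\end{equation*}
where $c_0$ is the constant (independent of $\xi$) obtained by multiplying the bracketed factor $-\beta_0+\mathcal U[0](-i\frac{\sqrt3}{3}\nu_1)/(-i\frac{\sqrt3}{3}\nu_1)$ in (\ref{3.64}) by $h_1(-i\frac{\sqrt3}{3}\nu_1)$. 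Since $\mathbf D$ is a normed linear space, hence closed under addition, it suffices to prove that each of the two terms lies in $\mathbf D$.

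For the first term, I would first note that $c_0$ is finite: $|\beta_0|=|\beta(0)|\le C\epsilon<\infty$ by the two-sided estimate for $\beta(p)$ proved above (taken at $p\equiv0$, where the $\sup$-terms involving $p$ vanish); $\mathcal U[0](-i\frac{\sqrt3}{3}\nu_1)$ is finite because $-i\frac{\sqrt3}{3}\nu_1$ is an interior point of $\mathcal R^-$ and the integrand $g_2\,N(0)$ is integrable there by the decay estimates for $N$ (e.g. (\ref{3.25}), (\ref{3.29}) at $f\equiv0$); and $h_1(-i\frac{\sqrt3}{3}\nu_1)$ is finite since $-i\frac{\sqrt3}{3}\nu_1\neq0$. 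It then remains to show $h_2\in\mathbf D$. By (\ref{3.56})--(\ref{3.57}), with the branch of $\log\xi$ fixed by $\pi\le\arg\xi\le\frac{3\pi}{2}$, we have $h_2(\xi)=\xi^{(\gamma^2-\epsilon^2-i\gamma)/\epsilon^2}$, which is analytic on the slit neighborhood $B_\nu\cap\mathcal R^-$ and continuous up to $\xi=0$, and $|\xi^{-1/2}h_2(\xi)|=|\xi|^{\gamma^2/\epsilon^2-3/2}\,e^{\gamma(\arg\xi)/\epsilon^2}$. Since $\lambda$ lies in a fixed compact subset of $(0,1)$, $\gamma=\lambda/(1-\lambda)$ is bounded below by a positive constant independent of $\epsilon$, so for all sufficiently small $\epsilon$ the exponent $\gamma^2/\epsilon^2-\tfrac32$ is positive; as $\arg\xi$ is bounded on $\mathcal R^-$, it follows that $\xi^{-1/2}h_2(\xi)\to0$ as $\xi\to0$ and $\sup_{\overline{B_\nu\cap\mathcal R^-}}|\xi^{-1/2}h_2|<\infty$. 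Hence $h_2\in\mathbf D$ and $c_0h_2\in\mathbf D$.

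For the second term I would invoke Lemma \ref{L4.2} with $n(t):=N_1(t,0)$, whose hypothesis is that $\xi^{1/2}N_1(\xi,0)\in C(\overline{B_\nu\cap\mathcal R^-})$. Boundedness is exactly the estimate $\max|\xi|^{1/2}|N_1(\xi,q)|\le C_1+C_2(\delta_1+|\nu|)\norm{q}_{\mathbf D}$ proved above, taken at $q\equiv0$. For continuity (and for the analyticity of $n$ on $B_\nu\cap\mathcal R^-$ needed so that the resulting term is analytic there) I would read off the structure of $N_1$ from (\ref{3.47})--(\ref{3.54}): the only a priori singular contributions at $\xi=0$ are $\frac1{\sqrt\xi}G_{8,1}(\sqrt\xi,0)$, $\frac{F_-'(\xi)-F_-'(0)}{\xi}\{G_{6,0}+G_{6,1}\}$ and $\frac{\tilde I_1}{\xi}\{G_{7,0}+G_{7,1}\}$, and after multiplication by $\xi^{1/2}$ they become $G_{8,1}(\sqrt\xi,0)$, $\frac{F_-'(\xi)-F_-'(0)}{\sqrt\xi}\{G_{6,0}+G_{6,1}\}$ and $\frac{\tilde I_1}{\sqrt\xi}\{G_{7,0}+G_{7,1}\}$, each continuous on $\overline{B_\nu\cap\mathcal R^-}$ (at $p\equiv0$, $F_-'=F_{0,-}$ is the Cauchy-type integral (\ref{3.37}) of functions analytic near $\xi=0$, so $\frac{F_-'(\xi)-F_-'(0)}{\sqrt\xi}=O(\sqrt{|\xi|})$, while $\frac{\tilde I_1}{\sqrt\xi}$ is controlled by the estimate for $\tilde I(p)[\xi]-\tilde I(p)[0]$ proved above); all remaining terms are products of functions analytic at $\xi=0$. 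With the hypothesis verified, Lemma \ref{L4.2} gives $h_2(\xi)\int_{-i\frac{\sqrt3}{3}\nu_1}^{\xi}h_1(t)\frac{N_1(t,0)}{\epsilon^2}\,dt\in\mathbf D$.

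Adding the two pieces and using closure of $\mathbf D$ under addition yields $h_3\in\mathbf D$. The main obstacle in this argument is the verification that $h_2\in\mathbf D$: this is precisely where the smallness of $\epsilon$, via the lower bound on $\gamma$, is essential, since one needs the power of $\xi$ produced by $h_2$ at the finger tip to dominate $\xi^{1/2}$; a secondary, more technical point is establishing continuity of $\xi^{1/2}N_1(\cdot,0)$ all the way up to the slit $\{\xi=0\}$, which rests on correctly identifying the individually singular pieces of $N_1$ in (\ref{3.54}).
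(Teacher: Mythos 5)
Your proof is correct and takes essentially the same route as the paper's one-line proof, which simply cites (\ref{3.64}), Lemma 3.7, Lemma \ref{L4.2} and Lemma 3.13; you have fleshed out exactly those citations (splitting $\mathcal{U}_4[0]$ via (\ref{3.64}), bounding the bracket factor via Lemma 3.7 at $p\equiv 0$, applying Lemma \ref{L4.2} to the integral piece, and controlling $\tilde I_1$ via Lemma 3.13) together with the explicit check that $h_2\in\mathbf{D}$, which the paper leaves implicit.
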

\begin{proof}
The lemma follows from (\ref{3.64}), Lemma 3.7, Lemma 3.9 and Lemma 3.13.
\end{proof}
Let
\begin{equation}\label{3.74}
\delta_2=\norm{\mathcal{U}_4[0]},~ \qquad \delta_3=\underset{\overline{B_\nu\cap\mathcal{R}^-}}{\sup}~ |\xi^{1/2}N_1[0]|.
\end{equation}

\begin{lemma}
For sufficient small $\epsilon$ and $\nu$,  the following holds for all nonnegative integer $n$:
\begin{equation*}\label{3.75}
|\beta_n|\leq C|\nu|^{1/4},
\end{equation*}
\begin{equation*}\label{3.76}
\norm{q_n}_{\mathbf{D}}\leq 2\delta_2,~~\underset{\overline{B_\nu\cap\mathcal{R}^-}}{\sup}~|\xi|^{1/2}|q'_n(\xi)|\leq 2\delta_3 ,
\end{equation*}
\begin{equation*}\label{3.77}
\epsilon^2\underset{\overline{B_\nu\cap\mathcal{R}^-}}{\sup}~\left\vert\frac{p'_n(\xi)-p'_n(0)}{|\xi|^{1/2}}\right\vert\leq 2(\delta_2+\delta_3) ,
\end{equation*}
\begin{equation*}\label{3.78}
\underset{\mathcal{R}^-/B_\nu}{\sup} |(\xi+2i)^\tau p_n|\leq C|\nu|^{1/2},
\end{equation*}
 where $C>0$ is independent of $\epsilon$ and $\nu$ .
\end{lemma}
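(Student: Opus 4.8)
The plan is to prove all four estimates simultaneously by induction on $n$, the base case $n=0$ being immediate from the explicit choices in (\ref{3.70}): with $\beta_0=\beta(0)$, the two–sided bound on $\beta(\cdot)$ established just above gives $C_1\epsilon\le|\beta_0|\le C\epsilon$, hence $|\beta_0|\le C|\nu|^{1/4}$ because $\nu=O(\epsilon)$; and since $(\xi+2i)^{\tau}\xi/(\xi^2+1)$ and $\xi^{-1/2}\xi^2/(\xi^2+1)$ are bounded on $\mathcal{R}^-$ (the latter being $O(|\nu|^{3/2})$ on $B_\nu$), the remaining three bounds for $p_0$ and $q_0$ hold with room to spare once $\epsilon$ and $\nu$ are small.

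For the inductive step I would assume all four bounds at level $n-1$ and proceed as follows. First, by (\ref{3.71}) and the estimate on $\beta(p)$ proved above, $|\beta_n|=|\beta(p_{n-1})|\le C\epsilon\bigl(1+(\sqrt{\nu}\,A_{n-1}+\nu^{-1/2}|\log\nu|\,B_{n-1})^{1/2}\bigr)$, where $A_{n-1}=\sup|(p_{n-1}'-p_{n-1}'(0))/\sqrt{\xi}|$ and $B_{n-1}=\sup_{\mathcal{R}^-/B_\nu}|p_{n-1}|$; the inductive hypothesis gives $\epsilon^2A_{n-1}\le 2(\delta_2+\delta_3)$ and, since $(\xi+2i)^{-\tau}$ is bounded on $\mathcal{R}^-$, $B_{n-1}\le C|\nu|^{1/2}$, so for $\nu=O(\epsilon)$ small the parenthesis stays bounded and $C_1\epsilon\le|\beta_n|\le C\epsilon\le C|\nu|^{1/4}$. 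Next, $q_n=\mathcal{U}_4[q_{n-1}]$ by (\ref{3.72}); applying Lemma~\ref{L4.2} to the integral term of (\ref{3.64}), controlling the boundary term $\bigl(-\beta_n+\mathcal{U}[p_{n-1}](-i\tfrac{\sqrt3}{3}\nu_1)/(-i\tfrac{\sqrt3}{3}\nu_1)\bigr)h_1(-i\tfrac{\sqrt3}{3}\nu_1)h_2(\xi)$ with the bounds on $\beta_n$ and on $\mathcal{U}$ near the origin, and invoking the Lipschitz estimate on $N_1(\xi,\cdot)$ with its small factor $\delta_1+|\nu|$, I get $\|q_n\|_{\mathbf D}\le\|\mathcal{U}_4[0]\|+C(\delta_1+|\nu|)\|q_{n-1}\|_{\mathbf D}=\delta_2+C(\delta_1+|\nu|)\|q_{n-1}\|_{\mathbf D}\le 2\delta_2$ once $C(\delta_1+|\nu|)\le\tfrac12$; differentiating (\ref{3.64}) and using the same inputs yields the companion bound $\sup|\xi|^{1/2}|q_n'|\le 2\delta_3$. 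Since $q_n(0)=0$, writing $p_n=\xi(\beta_n+q_n)$ on $\mathcal{T}$ gives $p_n'(0)=\beta_n$ and $(p_n'-p_n'(0))/\sqrt{\xi}=q_n/\sqrt{\xi}+\sqrt{\xi}\,(\xi/|\xi|)q_n'$, whence $\epsilon^2\sup|(p_n'-p_n'(0))/\sqrt{\xi}|\le\epsilon^2(\|q_n\|_{\mathbf D}+2\delta_3)\le 2(\delta_2+\delta_3)$.

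Finally, for $\sup_{\mathcal{R}^-/B_\nu}|(\xi+2i)^{\tau}p_n|$ I would split $\mathcal{R}^-/B_\nu$ into $\mathcal{T}\setminus B_\nu$, where $p_n=\xi(\beta_n+q_n)$ and the bounds on $\beta_n$ and $\|q_n\|_{\mathbf D}$ just obtained apply, and $\mathcal{R}^-\setminus\mathcal{T}$, where $p_n=\mathcal{U}[p_{n-1}]$ and one uses the region decomposition (\ref{3.61})--(\ref{3.63}) together with the estimates (\ref{3.22})--(\ref{3.23}), (\ref{3.26})--(\ref{3.27}) and (\ref{3.30})--(\ref{3.31}) for $\mathcal{U}$, $\mathcal{U}_2$, $\mathcal{U}_3$ on $\mathcal{R}_1$, $\mathcal{R}_2$, $\mathcal{R}_3$; the two pieces are consistent across $\partial\mathcal{T}$ since both reduce there to $\mathcal{U}[p_{n-1}]$. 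Collecting the prefactors appearing in (\ref{3.22}), (\ref{3.26}) and (\ref{3.30}), the choices $\tau\ge 6/7$, $R_\epsilon=O(\epsilon^{-2})$, $\nu=O(\epsilon)$ (together with $\|p_{n-1}\|_0\le C|\nu|^{1/2}$ from the inductive hypothesis) make every one of them $O(|\nu|^{1/2})$, so $\sup_{\mathcal{R}^-/B_\nu}|(\xi+2i)^{\tau}p_n|\le C|\nu|^{1/2}$, closing the induction.

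The hard part, as flagged in the introduction and at the start of Section~3, is the singularity $\tilde Q\sim-\gamma^2/\xi$ at the finger tip $\xi=0$ and the attendant $1/\sqrt{\xi}$ contributions in $N_1$: these are exactly why the ansatz (\ref{3.33}), the weighted space $\mathbf D$, and the sharp two-sided estimates on $\tilde I(p)$ and $\beta(p)$ used above are needed, and why the operator $\mathcal{U}_4$ built from $h_1,h_2=e^{\mp P_1/\epsilon^2}$ with $P_1=(\gamma^2-\epsilon^2-i\gamma)\log\xi$ (see (\ref{3.57})) must be shown bounded on $\mathbf D$ with constants independent of $\epsilon$ and $\nu$ --- a boundedness that rests on Remark~3.8, i.e. on $\mathrm{Re}\,P_1$ attaining its maximum on $\mathcal{T}$ at $-i\frac{\sqrt{3}}{3}\nu_1$ and decreasing monotonically along the chosen integration paths, exactly as exploited in Lemma~\ref{L4.2}. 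The second delicate point is the gluing in the previous paragraph: verifying region by region that the estimates of \S3.1 and those of \S3.2 are mutually consistent on the overlaps, so that a single bound $C|\nu|^{1/2}$ survives on all of $\mathcal{R}^-/B_\nu$.
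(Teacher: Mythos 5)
Your inductive strategy matches the paper's, and the bulk of the steps are sound, but the inductive step for $|\beta_n|$ contains a genuine error. You write $|\beta_n|\leq C\epsilon\bigl(1+(\sqrt{\nu}A_{n-1}+\nu^{-1/2}|\log\nu|\,B_{n-1})^{1/2}\bigr)$ from Lemma 3.14, record $\epsilon^2 A_{n-1}\leq 2(\delta_2+\delta_3)$ and $B_{n-1}\leq C|\nu|^{1/2}$ from the inductive hypothesis, and then assert that ``the parenthesis stays bounded,'' concluding $|\beta_n|\leq C\epsilon$. This does not hold: the hypothesis controls only $\epsilon^2 A_{n-1}$, so $A_{n-1}$ may be as large as $O(\epsilon^{-2})$, and with $\nu=O(\epsilon)$ the term $\sqrt{\nu}A_{n-1}$ can be of order $\epsilon^{-3/2}$ while $\nu^{-1/2}|\log\nu|\,B_{n-1}$ is of order $|\log\nu|$ --- neither bounded as $\epsilon\to 0$. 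Consequently the intermediate claim $|\beta_n|\leq C\epsilon$ is unjustified for $n\geq 1$; what the estimate actually yields, after writing $A_{n-1}\leq 2(\delta_2+\delta_3)/\epsilon^2$, is $\epsilon\cdot(\sqrt{\nu}A_{n-1})^{1/2}\leq C\nu^{1/4}$, i.e.\ $|\beta_n|\leq C|\nu|^{1/4}$, which is exactly the lemma's claim but is genuinely weaker than $O(\epsilon)$ since $\nu^{1/4}\gg\epsilon$ when $\nu\sim\epsilon$.

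The paper sidesteps this by keeping the $\epsilon^2$ inside $\tilde I$: from (3.52) it uses $|\beta_{k+1}|\leq C\epsilon^2+C|\tilde I[p_k](0)|^{1/2}$ and then bounds $|\tilde I[p_k](0)|\leq C\bigl(\epsilon^2\sqrt{\nu}\,A_k+\epsilon^2+\epsilon^2\nu^{-1/2}|\log\nu|\,B_k\bigr)$ via Lemma 3.13 \emph{before} extracting the square root, so that $\epsilon^2$ cancels against $A_k$ to give $\epsilon^2\sqrt{\nu}\,A_k\leq 2\sqrt{\nu}(\delta_2+\delta_3)$ and $\epsilon^2\nu^{-1/2}|\log\nu|\,B_k\leq C\epsilon^2|\log\nu|$, whence $|\tilde I[p_k](0)|\leq C\sqrt{\nu}$ and $|\beta_{k+1}|\leq C\nu^{1/4}$. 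You should replace your sentence with this power counting and drop the assertion $|\beta_n|\leq C\epsilon$. The rest of your step --- the $\mathbf D$-norm bound on $q_n$, the derivative bound, and the estimate of $\sup_{\mathcal{R}^-/B_\nu}|(\xi+2i)^\tau p_n|$ --- follows the paper's scheme, with the minor remark that for the last bound the paper estimates $G_6(p_k)+G_7(p_k)+G_8(p_k)$ directly and substitutes the inductive hypotheses via Lemma 3.13, rather than reassembling the $\mathcal{U}$, $\mathcal{U}_2$, $\mathcal{U}_3$ prefactors region by region as you propose; both routes are workable, but the prefactor arithmetic in your variant should be verified explicitly against $\tau\geq 6/7$ and $R_\epsilon=O(\epsilon^{-2})$.
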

\begin{proof}
We use induction to prove the lemma. The lemma holds for $n=0$ from (\ref{3.70}). Assume that the lemma holds for all $n\leq k$, then from (\ref{3.71}) and Lemma 3.13, we obtain

\begin{multline*}
|\beta_{k+1}|=|\beta(p_k)|\leq C\epsilon^2+C|\tilde I[p_k](0)|^{1/2}\\
\leq C\epsilon^2+C\left( \epsilon^2\nu^{-1/2}|\log \nu|\underset{\mathcal{R}^-/B_\nu}{\sup}~ |(\xi+2i)^\tau p_k| + \sqrt{\nu}\epsilon^2\underset{\overline{B_\nu\cap\mathcal{R}^-}}{\sup}~\left\vert\frac{p'_k(\xi)-p'_k(0)}{|\xi|^{1/2}}\right\vert\right)^{1/2}\\
\leq C\epsilon^2+\left(C\epsilon^2|\log\nu |+C\sqrt{\nu}(\delta_2+\delta_3)\right)^{1/2}\leq C|\nu|^{1/4}.
\end{multline*}
From (\ref{3.72}), (\ref{3.74}), Lemma 3.9 and Lemma 3.11, we obtain
\begin{multline*}
|\xi|^{-1/2}|q_{k+1}|\leq \left\vert |\xi|^{-1/2}\left(-\beta_{k}+\frac{\mathcal{U}_3[p_{k}](-i\frac{\sqrt{3}}{3}\nu_1)}{-i\frac{\sqrt{3}}{3}\nu_1}\right)h_1(-i\frac{\sqrt{3}}{3}\nu_1)h_2(\xi)\right\vert\\
~~+\left\vert |\xi|^{-1/2}h_2(\xi)\int_{-i\frac{\sqrt{3}}{3}\nu_1}^\xi h_1(t)\frac{N_1(t,q_{k}(t))-N_1(t,0)}{\epsilon^2}dt \right\vert + \norm{\mathcal{U}_4[0]}\\
\leq C|\xi|^{\frac{\gamma^2}{\epsilon^2}}+\norm{\mathcal{U}_4[0]}\leq 2 \delta_2,
\end{multline*}

\begin{multline*}
|\xi|^{1/2}|q'_{k+1}|\leq \left\vert \frac{|\xi|^{-1/2}}{\epsilon^2}\left(-\beta_{k}+\frac{\mathcal{U}_3[p_{k}](-i\frac{\sqrt{3}}{3}\nu_1)}{-i\frac{\sqrt{3}}{3}\nu_1}\right)h_1(-i\frac{\sqrt{3}}{3}\nu_1)h_2(\xi)\right\vert\\
~~~~+\left\vert \frac{|\xi|^{-1/2}}{\epsilon^2}h_2(\xi)\int_{-i\frac{\sqrt{3}}{3}\nu_1}^\xi h_1(t)\frac{N_1(t,q_{k}(t))}{\epsilon^2}dt\right\vert +\left\vert \frac{|\xi|^{1/2}N_1(t,q_{k}(t))}{\epsilon^2}\right\vert \\
\leq \frac{C}{\epsilon^2}|\xi|^{\frac{\gamma^2}{\epsilon^2}}+\frac{\underset{\overline{B_\nu\cap\mathcal{R}^-}}{\sup}~ |\xi^{1/2}N_1[0]|+\norm{\mathcal{U}_4[0]}}{\epsilon^2}\leq \frac{2 \delta_2+2 \delta_3}{\epsilon^2}.
\end{multline*}
From (\ref{3.73}), Lemma 3.9 and Lemma 3.10, we have
\begin{multline*}
\epsilon^2\underset{\overline{B_\nu\cap\mathcal{R}^-}}{\sup}~\left\vert\frac{p'_{k+1}(\xi)-p'_{k+1}(0)}{|\xi|^{1/2}}\right\vert\leq \epsilon^2|\xi|^{-1/2}|q_{k+1}|+\epsilon^2|\xi|^{1/2}|q'_{k+1}|\leq 2(\delta_2+\delta_3).
\end{multline*}
From (\ref{3.73}), (\ref{3.15}), (\ref{3.16}), Lemma 3.5 and Lemma 3.13 we have
\begin{multline*}
\underset{\mathcal{R}^-/B_\nu}{\sup} |(\xi+2i)^\tau p_{k+1}|\leq C\epsilon^2\underset{\mathcal{R}^-/B_\nu}{\sup} |(\xi+2i)^\tau (Q'-V_0')|+\\
C\underset{\mathcal{R}^-/B_\nu}{\sup} |(\xi+2i)^\tau(|G_6(p_k)|+|G_7(p_k)|+|G_8(p_k)|)\\
\leq C\epsilon^2 +C\underset{\mathcal{R}^-/B_\nu}{\sup} |(\xi+2i)^\tau(|F_-'(p_k)|+|\tilde I(p_k)|+|p_k^2|)\\
\leq C\epsilon^2+C \sqrt{\nu}\epsilon^2\underset{\overline{B_\nu\cap\mathcal{R}^-}}{\sup}~\left\vert\frac{p'_k(\xi)-p'_k(0)}{|\xi|^{1/2}}\right\vert\\
+C\epsilon^2|\log|\nu||\underset{\mathcal{R}^-/B_\nu}{\sup} |(\xi+2i)^\tau p_k|+C\nu \leq C|\nu|^{1/2}.
\end{multline*}

\end{proof}

\begin{theorem}
For sufficient small $\epsilon$ and $\nu=O(\epsilon)$, there exist subsequences $\{\beta_{n_k}\}, \{p_{n_k}(\xi)\}$ such that $\lim_{k\to\infty} \beta_{n_k}=\beta, \lim_{k\to\infty} p_{n_k}(\xi)=p(\xi) \text{ in } \mathbf{A}_0^-$, and $p$ is a solution of (\ref{3.12}). Hence $F=\epsilon^2 (p-Q+V_0)$ is a solution of the Half Problem .
\end{theorem}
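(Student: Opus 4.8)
The plan is to extract a convergent subsequence from the approximation sequence $\{(\beta_n,q_n,p_n)\}$ defined in (3.70)--(3.73) and to pass to the limit in each of its defining relations. The uniform bounds established in the lemma immediately preceding this theorem show that $\{p_n\}$ is bounded in $\mathbf{A}_0^-$; moreover the derivative bounds there, together with the Cauchy integral representations used throughout \S2--\S3, give uniform control of $p_n'$ and $p_n''$ on compact subsets of $\mathcal{R}^-$ and of the weighted difference quotient $\xi^{-1/2}(p_n'(\xi)-p_n'(0))$ on $B_\nu\cap\mathcal{R}^-$. Hence $\{p_n\}$ is a normal family: by Montel's theorem there is a subsequence $p_{n_k}$ converging uniformly on compact subsets of $\mathcal{R}^-$ to an analytic limit $p$, and since $\{\beta_n\}$ is a bounded sequence of complex numbers (Lemma 3.13 and the uniform bounds lemma), one may pass to a further subsequence with $\beta_{n_k}\to\beta$ and $q_{n_k}\to q$ in $\mathbf{D}$.

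The next step is to upgrade compact-subset convergence to convergence in the $\mathbf{A}_0^-$ norm. The $(\xi-2i)^\tau$-weighted supremum at infinity is controlled by the uniform tail bound $\sup_{\mathcal{R}^-/B_\nu}\abs{(\xi+2i)^\tau p_n}\le C\abs{\nu}^{1/2}$, which allows truncation to a large compact set; on $B_\nu\cap\mathcal{R}^-$ one writes $p_n=\xi(\beta_n+q_n)$ with $\{q_n\}$ uniformly bounded and equicontinuous in $\mathbf{D}$, so that $q_{n_k}\to q$ in $\mathbf{D}$ and the weighted derivative term $\epsilon^2\sup\abs{\xi^{-1/2}(p_n'-p_n'(0))}$ converges as well. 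Combining these, $p_{n_k}\to p$ in $\mathbf{A}_0^-$ with $p(\xi)=\xi(\beta+q(\xi))$ on $\mathcal{T}$; in particular $q=O(\sqrt{\xi})$ near the origin, so the tip conditions (3.33)--(3.34) hold for $p$.

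Then I would pass to the limit in the recursions. Each of the operators $\mathcal{U},\mathcal{U}_2,\mathcal{U}_3,\mathcal{U}_4$ and each of the maps $p\mapsto\beta(p)$, $p\mapsto N(p)$, $q\mapsto N_1(\cdot,q)$ is Lipschitz continuous on the relevant balls --- this is exactly the content of the difference estimates (3.19), (3.21), (3.23), (3.27), (3.31) and of Lemmas 3.12, 3.13, 3.15, 3.16 --- so letting $k\to\infty$ in $\beta_n=\beta(p_{n-1})$, $q_n=\mathcal{U}_4[q_{n-1}]$ and (3.73) yields $\beta=\beta(p)$, $q=\mathcal{U}_4[q]$, $p=\xi(\beta+q)$ on $\mathcal{T}$, and $p=\mathcal{U}[p]$ on $\mathcal{R}^-\setminus\mathcal{T}$. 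By the algebraic decompositions (3.61)--(3.64), which relate $\mathcal{U}$ to $\mathcal{U}_2,\mathcal{U}_3,\mathcal{U}_4$ through the factors $g_1,g_2,h_1,h_2$, the two pieces agree on the overlap and $p$ satisfies the single integral equation (3.16), $p=\mathcal{U}(N(p))$, on all of $\mathcal{R}^-$. The equivalence lemma relating (3.16) to (3.12) then gives that $p$ solves the differential equation (3.12) in $\mathcal{R}^-$, and the equivalence lemma stating that $F$ solves the Half Problem iff $p=\tfrac1{\epsilon^2}F+Q-V_0$ solves (3.12) gives that $F=\epsilon^2(p-Q+V_0)$ is a solution of the Half Problem.

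The main obstacle is the behavior at the finger tip $\xi=0$, where $\tilde Q$ has the simple pole $-\gamma^2/\xi$ and the nonlinear terms force the non-analytic $\sqrt{\xi}$ structure of $p$. Getting norm convergence (rather than merely convergence on compact subsets) hinges on the uniform-in-$n$ control of $q_n$ in $\mathbf{D}$ and of $\epsilon^2\sup\abs{\xi^{-1/2}(p_n'-p_n'(0))}$, and one must verify that the limit respects the matching at $\xi=-i\tfrac{\sqrt3}{3}\nu_1$ so that the inner representation (3.63)--(3.64) and the outer representation (3.61)--(3.62) glue into a single analytic function on $\mathcal{R}^-$; here the exponential smallness of the homogeneous factors $h_1(-i\tfrac{\sqrt3}{3}\nu_1)h_2(\xi)\sim(\abs{\xi}/\nu_1)^{\gamma^2/\epsilon^2}$ and of $g_1g_2$ (Remark 3.8 and Lemma 6.1) is what kills the would-be homogeneous contributions in the limit. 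Note that uniqueness is not asserted, which is why only a subsequence is extracted rather than the full sequence.
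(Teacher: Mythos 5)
Your proof takes essentially the same approach as the paper's: extract a convergent subsequence via Montel's theorem from the normal family $\{p_n\}$ (whose normality is guaranteed by the uniform bounds of the preceding lemma), then identify the limit as a fixed point of the iteration and invoke Lemmas 3.2 and 3.3. You supply considerably more detail than the paper, which compresses the last step into a single sentence.

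One technical point that both your write-up and the paper glide over: to pass to the limit in the recursion $p_n=T(p_{n-1})$ along a \emph{subsequence} $\{p_{n_k}\}$, it is not enough to know $p_{n_k}\to p$ and $T$ continuous; one also needs $p_{n_k-1}\to p$ along that same subsequence (equivalently, asymptotic regularity $\|p_n-p_{n-1}\|\to0$). Your sentence ``letting $k\to\infty$ in $\beta_n=\beta(p_{n-1})$, $q_n=\mathcal{U}_4[q_{n-1}]$ and (3.73) yields $\beta=\beta(p)$, $q=\mathcal{U}_4[q]$\dots'' tacitly assumes this. The cleanest way to close it, and what the Lipschitz estimates (3.23), (3.27), (3.31) together with $\nu=O(\epsilon)$, $R_\epsilon=O(\epsilon^{-2})$, $\tau\geq6/7$ actually yield, is that the Lipschitz constants are $O(\epsilon^{3/14})$ and hence strictly less than one for small $\epsilon$; that makes the iteration a genuine contraction on the ball of radius $C|\nu|^{1/2}$, the whole sequence is then Cauchy in $\mathbf{A}_0^-$, and Montel's theorem is not even needed. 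Alternatively one can extract a further subsequence so both $p_{n_k}$ and $p_{n_k-1}$ converge, and use the contraction estimate to show the two limits coincide. Either way you should state explicitly why the subsequential limit inherits the fixed-point property; as written, this is the one gap in your argument, though it is a gap shared with the paper's own terse proof.

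The rest — upgrading locally uniform convergence to $\mathbf{A}_0^-$-norm convergence by splitting $\mathcal{R}^-$ into the far field (controlled by the uniform tail bound $\sup|(\xi+2i)^\tau p_n|\le C|\nu|^{1/2}$) and the tip neighborhood $\mathcal{T}$ (controlled by the $\mathbf{D}$-norm bound on $q_n$), and gluing the inner and outer representations via (3.61)--(3.64) with the exponentially small homogeneous factors $g_1g_2$ and $h_1h_2$ — is correct and is a faithful, more explicit reconstruction of what the paper leaves implicit.
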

\begin{proof}
From the above lemma, $\{\beta_n\}$ is a bounded sequence and  $\{p_n(\xi)\}$ is a normal family. From Montel's Theorem, there exist subsequences $\{\beta_{n_k}\}, \{p_{n_k}(\xi)\}$ such that $\lim_{k\to\infty} \beta_{n_k}=\beta, \quad \lim_{k\to\infty} p_{n_k}(\xi)=p(\xi)$ pointwisely. Since $\mathbf{A}_0^-$ is Banach space, $p \in\mathbf{A}_0^-$. The theorem then follows from Lemma 3.2 and Lemma 3.3.
\end{proof}

\section{Selection of Finger Width: Analysis near $\xi = -i$}
\label{sec:4}

\subsection{Derivation of Equation Near $\xi=-i$}
\label{sec:4.1}

In order to investigate whether or not the symmetry condition
Im ~$F = 0$ on $\{~\text{Re ~}\xi =0 \} \cap \mathcal{R}$
is satisfied, it is necessary to investigate a neighborhood of
a turning point ($\xi = -i \gamma$ in our formulation),
as first suggested from formal calculations
of Combescot {\it et al} (1986).
To that effect, we rewrite

\begin{equation}
\label{4.1}
F(\xi)=\epsilon^2I_2(\xi)
+\frac{i\epsilon^2 \left[(F'(\xi)+H)-(F^{\prime}_- (\xi)+\bar{H})\right]}{(F'(\xi)+H)^{1/2}(  F_-'(\xi)+\bar{H})^{1/2}}.
\end{equation}

We introduce
\begin{equation}
\label{4.2}
\alpha = \frac{\gamma-1}{\epsilon^{4/3}} ,
\end{equation}
\begin{equation}
\xi=-i+i2^{1/3}\epsilon^{4/3}y^2,~~G(y)=-i2^{-1/3}\left(i2^{1/3}y^2 F'(\xi)-\frac{1}{2}(2^{1/3}y^2+\alpha)\right)^{-1/2};
\end{equation}
then (\ref{4.1}) becomes:
\begin{equation}
\label{4.4}
\frac{dG}{dy}-\frac{1}{yG^{2}}=-y-\frac{\bar\delta_1}{y}+\epsilon^{2/3}E_2(\epsilon^{2/3},\epsilon^{2/3}y,G,G',y^{-1}),
\end{equation}
where
\begin{equation}
\bar\delta_1=\frac{\alpha}{2^{1/3}},
\end{equation}
and $E_2(\epsilon^{2/3},\epsilon^{2/3}y,G,G',y^{-1})$ is analytic
function of
$\epsilon^{2/3},\epsilon^{2/3}y,G,G',y^{-1}$.

Note that the leading order equation obtained from
dropping $\epsilon$ terms in (\ref{4.4}) is similar to
equation (133)
in Chapman and King \cite{Chapman2}.
In order to get the equation close to the normal
form discussed in Costin \cite{Costin}, it
is convenient to introduce additional change in variables:
\begin{equation}\label{4.6}
\eta=\frac{2}{3}y^{3},~\qquad ~\psi(\eta)=1-yG(y);
\end{equation}
then (\ref{4.4}) becomes:
\begin{multline}
\label{4.11}
\frac{d\psi}{d\eta}+\psi=-\frac{1}{3\eta}-\frac{1}{3\eta}\psi \\
+\frac{\alpha}{6^{2/3}\eta^{2/3}}+\frac{1}{2}\sum_{n=2}^{\infty}(-1)^n(n+1)\psi^n+\epsilon^{2/3}
E_3(\epsilon^{2/3},\epsilon^{2/3}\eta^{2/3},\eta^{-2/3}),
\end{multline}
where
$E_3(\epsilon^{2/3},\epsilon^{2/3}\eta^{2/3},\psi,\eta^{-2/3})$ is analytic in
$\epsilon^{2/3},\epsilon^{2/3}\eta^{2/3},\psi,\eta^{-2/3}$ with a
series representation
convergent for small values of each argument.

It is to be noted
$$ E_3(\epsilon^{2/3},\epsilon^{2/3}\eta^{2/3},\psi,\eta^{-2/3})
= \sum_{m}^{\infty}
E_{m} (\epsilon^{2/3}, \epsilon^{2/3} \eta^{2/3}, \eta^{-2/3}) \psi^m
 $$
and since each of the arguments for $E_{m}$ can be safely be
assumed to be in a compact set, it follows that there exits numbers $A$,
$\rho_2$ are each independent of any parameter so that
\begin{equation}
\label{4.11.8}
|E_{m}| ~<~A \rho_2^{m}.
\end{equation}
\begin{theorem}\label{thm:4.1}
Let $F(\xi)$ be the solution of the Half Problem as in
Theorem 3.17. After change of variables:
\begin{equation}
\label{4.12}
\xi=-i+i\epsilon^{4/3}\frac{3^{2/3}}{2^{1/3}}\eta^{2/3},
\end{equation}
\begin{equation}
\label{4.13}
\psi(\eta,\epsilon,\alpha)=-\left(1+\frac{2^{4/3}\alpha}{3^{2/3}\eta^{2/3}}-2i
F'(\xi (\eta)) \right)^{-1/2}+1,
\end{equation}
$\psi(\eta,\epsilon,\alpha)$ satisfies equation (\ref{4.11})
for $k_0\epsilon^{-2}\leq |\eta|\leq k_1\epsilon^{-2},~ 0\leq\arg \eta<\frac{3\pi}{4}$  ( where
$k_0$ and $k_1$ are some constants
independent of $\epsilon$ ) and the asymptotic condition
\begin{equation}
\label{4.14}
\psi(\eta,\epsilon,\alpha)\to 0,~\quad 0\leq\arg \eta<\frac{3\pi}{4},~as~\epsilon\to 0;
\end{equation}
in that domain.
\end{theorem}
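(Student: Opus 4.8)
The plan is to convert the integral equation satisfied by the Half Problem solution into the inner differential equation (\ref{4.11}) by the substitutions (\ref{4.12})--(\ref{4.13}), then to pin down the set of $\eta$ on which this conversion is legitimate and to extract the decay (\ref{4.14}) from the a priori bound $\norm{F}_0^- = O(\epsilon^2)$ that Theorem 3.17 provides.

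First I would use that, by Theorem 3.17, $F=\epsilon^2(p+V_0-Q)$ solves (\ref{2.34}) in $\mathcal{R}^-$, equivalently the relation (\ref{4.1}). Since $F$ — and with it $I_2$, $F_-$, $H$, $\bar H$ — is analytic in a full neighborhood in $\mathcal{R}^-$ of the arc described by (\ref{4.12}), I may differentiate (\ref{4.1}) in $\xi$; this turns it into a first-order equation for $F'$ whose coefficients are built only from $H,\bar H,F_-',F_-'',I_2,I_2'$. Inserting $\xi=-i+i2^{1/3}\epsilon^{4/3}y^{2}$ together with the definition of $G(y)$, and Taylor-expanding $H(\xi(y))$ and $\bar H(\xi(y))$ in powers of $\epsilon^{4/3}y^{2}$ (licit since $\xi+i=i2^{1/3}\epsilon^{4/3}y^{2}$ is small and $\xi^2+1\neq0$), the leading balance is precisely $-y-\bar\delta_1/y$, while every remaining term — including all contributions of $F_-',F_-'',I_2,I_2'$ — is $\epsilon^{2/3}$ times a function of $(\epsilon^{2/3},\epsilon^{2/3}y,G,G',y^{-1})$ analytic on the relevant compact ranges of its arguments; this is (\ref{4.4}) as an identity between analytic functions. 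The substitution (\ref{4.6}) then yields (\ref{4.11}), with $E_3$ inheriting analyticity from $E_2$ and its coefficients $E_m$ obeying (\ref{4.11.8}) since each argument may be kept in a fixed compact set. Throughout, the radical in (\ref{4.13}) is the principal branch; because $yG\to1$ along the matching directions the radicand stays near $1$, so this choice is consistent and the denominators $(F'+H)^{1/2}(F_-'+\bar H)^{1/2}$ stay bounded away from $0$.

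Next, for the range of validity I would fix $k_0<k_1$ so that the image under (\ref{4.12}) of $\{k_0\epsilon^{-2}\le|\eta|\le k_1\epsilon^{-2},\ 0\le\arg\eta<3\pi/4\}$ lies in $\overline{\mathcal{R}^-}$: there $|\xi+i|=\epsilon^{4/3}\frac{3^{2/3}}{2^{1/3}}|\eta|^{2/3}$ is squeezed between two $\epsilon$-independent positive constants, and the geometry fixed by $b,\nu_1,\varphi_0,\mu$ together with Lemmas 6.3--6.8 is exactly what ensures that, for suitable $k_0,k_1$, this arc stays in $\mathcal{R}^-$ (including the imaginary-axis segment in its closure) and a fixed $O(1)$ distance both from $\partial\mathcal{R}$ and from the turning point $\xi=-i\gamma$. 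On that set $F,F_-,I_2$ are analytic, so $\psi$ given by (\ref{4.13}) is analytic in $\eta$ and satisfies (\ref{4.11}). For (\ref{4.14}) I note that on the same annulus $\xi(\eta)$ is an $O(1)$ distance from the only singularity $\xi=-i\gamma$ of $V_0$, so Cauchy's estimate applied to $F=\epsilon^2(p+V_0-Q)$ — with $V_0,Q$ fixed and $p$ of bounded $\mathbf{A}_0^-$-norm — gives $F'(\xi(\eta))=O(\epsilon^2)$; moreover $\left|\frac{2^{4/3}\alpha}{3^{2/3}\eta^{2/3}}\right|\le C|\alpha|\,|\eta|^{-2/3}\le C|\alpha|\epsilon^{4/3}=C|\gamma-1|$, which is small and tends to $0$ as $\gamma\to1$. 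Hence the radicand in (\ref{4.13}) equals $1+o(1)$ uniformly on the annulus, so $\psi(\eta,\epsilon,\alpha)\to0$ there, which is (\ref{4.14}).

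The step I expect to be hardest is the bookkeeping behind the error term: showing that after differentiating (\ref{4.1}) and rescaling, the contributions of the nonlocal $I_2$ and of the one-sided functions $F_-,F_-',F_-''$ really do assemble into an analytic $E_2$ (equivalently $E_3$) obeying (\ref{4.11.8}). This needs estimates of $F_\pm',F_\pm'',I_2,I_2'$ along the arc (\ref{4.12}) obtained from the contour representations (\ref{2.18}), (\ref{2.30}), (\ref{2.33}) together with Lemmas \ref{lem:2.13}, \ref{lem:2.21}, \ref{lem:2.24} and Cauchy's estimate, and a check that the branch choices in $G_5$, in (\ref{4.13}) and in the definition of $G(y)$ are mutually consistent, so that all radicals and denominators appearing in the derivation remain away from their singularities on the region considered.
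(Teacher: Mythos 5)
Your proposal matches the paper's proof of Theorem 4.1 in its essential structure: differentiate/rescale the Half Problem relation (the paper's terse "(2.35) implies (4.11)" is evidently intended to reference the governing equation (4.1), i.e. (2.34)), check that the $\eta$-annulus $k_0\epsilon^{-2}\le|\eta|\le k_1\epsilon^{-2}$ maps to an $O(1)$ neighbourhood of $\xi=-i$ inside $\overline{\mathcal{R}^-}$ with the ray $\arg\eta=0$ handled by continuity up to the boundary, and read off (4.14) from $F'=O(\epsilon^2)$ and $\eta^{-2/3}=O(\epsilon^{4/3})$. You fill in the error-term bookkeeping and the branch consistency that the paper leaves implicit, but the route is the same.
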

\begin{proof}
Since $\eta=O(\epsilon^{-2})$ in the given domain,
using (\ref{4.12}), we have
$|\xi+i|=O(1)$ as $\epsilon ~\rightarrow ~0$.
Applying Theorem 3.17 and transformations
(\ref{4.12}) and
(\ref{4.13}),
(\ref{2.35}) implies (\ref{4.11}).
For $\xi\in \mathcal{R}^-$, $\frac{\pi}{2}<\arg (\xi+i)<\pi$, which
on using
transformation (\ref{4.12}) implies
$0<\arg\eta<\frac{3}{4}\pi$. Continuity
implies that (\ref{4.11})
is satisfied for $\arg \eta =0$ as well.
Since $F'(\xi)\sim O(\epsilon)$,
and $\eta^{-2/3}\sim O(\epsilon^{4/3})$,
using (\ref{4.13}), we obtain (\ref{4.14}).
\end{proof}

\subsection{Leading Inner problem analysis}
\label{sec:4.2}

Setting $\epsilon=0$ in equation (\ref{4.11}),
we get the leading order
equation:
\begin{equation}
\label{4.15}
\frac{d\psi}{d\eta}+\psi=-\frac{1}{3\eta}-\frac{1}{3\eta}\psi \\
+\frac{\alpha}{6^{2/3}\eta^{2/3}}+\frac{1}{2}\sum_{n=2}^{\infty}(-1)^n(n+1)\psi^n
\end{equation}
with far-field matching condition:
\begin{equation}
\label{4.16}
\psi(\eta,a)\to 0 ~\qquad \text{as }|\eta|\to \infty, \quad 0\leq\arg \eta<\frac{3\pi}{4}.
\end{equation}
We shall prove the following theorem:
\begin{theorem}\label{thm:4.2}
There exists large enough $\rho_0 > 0$ such that (\ref{4.15}),
(\ref{4.16}) have a unique
analytic solution $\psi_0(\eta,\alpha)$ in the region $|\eta|\ge \rho_0,~
\arg\eta\in
(-\frac{\pi}{8},\frac{3\pi}{4})$.
\end{theorem}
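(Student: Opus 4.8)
The plan is to recast the leading inner problem (\ref{4.15})--(\ref{4.16}) as a fixed‑point equation for an integral operator on the sector and solve it by the contraction mapping principle, in the same spirit as the estimates of Lemmas~2.50--2.54 but with the elementary weight $e^{\pm\eta}$ in place of $g_{1,2}$. Write the nonlinearity of (\ref{4.15}) as
$$ N(\eta,\psi):=-\frac{1}{3\eta}-\frac{\psi}{3\eta}+\frac{\alpha}{6^{2/3}\eta^{2/3}}+\frac12\sum_{n=2}^{\infty}(-1)^n(n+1)\psi^n, $$
so that (\ref{4.15}) reads $\psi'+\psi=N(\eta,\psi)$; the homogeneous solution of the linear part is $e^{-\eta}$, which decays for $\mathrm{Re}\,\eta>0$ and grows for $\pi/2<\arg\eta<3\pi/4$. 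For $\eta$ in $\mathcal S_{\rho_0}:=\{\,|\eta|\ge\rho_0,\ -\pi/8<\arg\eta<3\pi/4\,\}$, let $\mathcal C(\eta)$ be the ray $t=\eta+se^{i\phi_0}$, $s\in[0,\infty)$, for a fixed $\phi_0\in(\pi/2,3\pi/4]$. A short geometric check shows that along $\mathcal C(\eta)$ one has $\mathrm{Re}\,(t-\eta)=s\cos\phi_0\le 0$ (so $|e^{t-\eta}|\le1$), that $\arg t$ stays strictly between $\arg\eta$ and $\phi_0$ (so $\mathcal C(\eta)$ stays in the sector), and that $|t|\ge c|\eta|$ for a fixed $c>0$. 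Variation of parameters with base point pushed to $\infty$ along $\mathcal C(\eta)$ — where $|e^{t-\eta}|\to0$ and $\psi\to0$, killing the free $e^{-\eta}$ constant — shows that $\psi$ is an analytic solution of (\ref{4.15})--(\ref{4.16}) on $\mathcal S_{\rho_0}$ if and only if
$$ \psi(\eta)=(\mathcal T\psi)(\eta):=\int_{\mathcal C(\eta)}e^{t-\eta}\,N\!\big(t,\psi(t)\big)\,dt, $$
the matching condition (\ref{4.16}) being automatic from the estimates below.

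Next I would work in the Banach space $X=\{\psi$ analytic in the interior of $\mathcal S_{\rho_0}$, continuous up to its boundary, $\norm{\psi}:=\sup_{\mathcal S_{\rho_0}}|\eta|^{2/3}|\psi(\eta)|<\infty\}$; the weight $|\eta|^{2/3}$ is forced by the dominant source term $\alpha\,6^{-2/3}\eta^{-2/3}$. Using $|e^{t-\eta}|\le1$, $|N(t,0)|\le C(1+|\alpha|)|t|^{-2/3}$ and $|t|\ge c|\eta|$ on $\mathcal C(\eta)$, a Laplace‑type bound gives $\norm{\mathcal T 0}\le C_0$, where $C_0$ depends only on $|\alpha|$ (hence is bounded for $\alpha$ in a compact set). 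For the contraction estimate, on the ball $X_\delta:=\{\psi\in X:\norm{\psi}\le\delta\}$ one has the pointwise bound $|\psi(t)|\le\delta|t|^{-2/3}\le\delta\rho_0^{-2/3}$, which is small for $\rho_0$ large even when $\norm{\psi}$ is not; hence $|\partial_\psi N(t,\psi)|\le \tfrac{1}{3|t|}+C\delta|t|^{-2/3}$, and with $|\psi_1-\psi_2|(t)\le\norm{\psi_1-\psi_2}\,|t|^{-2/3}$ together with $|t|\ge c|\eta|$,
$$ \norm{\mathcal T\psi_1-\mathcal T\psi_2}\le C\big(\rho_0^{-1}+\delta\rho_0^{-2/3}\big)\norm{\psi_1-\psi_2}. $$
The decisive point is that $N(\eta,\psi)-N(\eta,0)$ carries genuine extra smallness — the term $-\psi/(3\eta)$ has an extra factor $\eta^{-1}$ and the remaining terms are at least quadratic in $\psi$ — so this Lipschitz constant tends to $0$ as $\rho_0\to\infty$. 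Taking $\delta=2C_0$ and then $\rho_0$ large enough that $C(\rho_0^{-1}+2C_0\rho_0^{-2/3})\le\tfrac12$, one gets $\mathcal T:X_\delta\to X_\delta$ (from $\norm{\mathcal T\psi}\le C_0+\tfrac12\norm{\psi}\le 2C_0$) and $\mathcal T$ a contraction there.

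The contraction mapping theorem then produces a unique fixed point $\psi_0\in X_\delta$. Since $\mathcal T$ sends analytic integrands to functions holomorphic in $\eta$ (the contour may be deformed freely within the region where $N(\cdot,\psi)$ is analytic and decays, and holomorphy follows by differentiating under the integral), and since the iterates $\mathcal T^n 0$ are analytic and converge uniformly on compact subsets, $\psi_0$ is analytic in the interior of $\mathcal S_{\rho_0}$; the uniform weighted bound gives continuity up to the bounding rays and up to $|\eta|=\rho_0$. Uniqueness among all analytic solutions of (\ref{4.15}) satisfying (\ref{4.16}) follows because any such $\tilde\psi$ is pointwise small on $\{|\eta|\ge\rho_0\}$ for $\rho_0$ large; the variation‑of‑parameters representation (again pushing the base point to $\infty$ along $\mathcal C(\eta)$, which eliminates the $e^{-\eta}$ term because $\tilde\psi\to0$) yields $\tilde\psi=\mathcal T\tilde\psi$, whence a bootstrap gives $\norm{\tilde\psi}\le C_0+\tfrac12\norm{\tilde\psi}$, so $\tilde\psi\in X_\delta$ and $\tilde\psi=\psi_0$. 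Enlarging $\rho_0$ slightly absorbs the gap between ``$|t|\ge c|\eta|$ on $\mathcal C(\eta)$'' and ``$|\eta|\ge\rho_0$'', and the theorem follows as stated; analytic dependence on $\alpha$ (needed for Theorem~\ref{thm:1.8}) is inherited by the fixed point since $\alpha$ enters $N$ analytically.

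The hard part will be the contour: one must exhibit a single family $\{\mathcal C(\eta)\}_{\eta\in\mathcal S_{\rho_0}}$ — covering directions $\arg\eta$ arbitrarily close to $3\pi/4$, where $e^{-\eta}$ is unbounded — such that simultaneously $|e^{t-\eta}|\le1$ along $\mathcal C(\eta)$, $\mathcal C(\eta)$ stays inside the sector and bounded away from $\eta=0$ (so that $\psi$, and hence $N(\cdot,\psi)$, is defined on it and the fixed‑point equation is self‑consistent), and the dependence on $\eta$ is regular enough to give holomorphy of $\mathcal T\psi$. The fixed descent direction $\phi_0$ achieves all three, but checking the geometry ($\mathrm{Im}\,(e^{i\phi_0}\bar\eta)>0$ for arg‑monotonicity, and $|t|\ge c|\eta|$) and matching it to the correct weight $|\eta|^{2/3}$ in the source estimate is where the work lies. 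In contrast to the full equation (\ref{4.11}), no bound of the type (\ref{4.11.8}) is needed here: the leading inner equation (\ref{4.15}) has a genuinely convergent, explicitly visible nonlinearity.
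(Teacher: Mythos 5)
Your proposal is correct and follows essentially the same route as the paper: recast (\ref{4.15})--(\ref{4.16}) as $\psi = \psi_1(\eta)\int_{\infty e^{3\pi i/4}}^{\eta}\psi_2(t)\mathcal{N}_1(t,\psi)\,dt$ (the paper's $\mathcal{L}_1$), work in the Banach space weighted by $|\eta|^{2/3}$, integrate along a straight ray toward $\infty e^{i\phi_0}$ with $\phi_0$ near $3\pi/4$ so that $\mathrm{Re}\,(t-\eta)\le 0$ while $|t|\ge c|\eta|$ and the ray stays in the sector, obtain a Lipschitz constant $O(\rho_0^{-2/3})$ for the nonlinearity, and conclude by contraction plus the variation-of-parameters uniqueness argument. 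Your estimates, contour geometry, and uniqueness bootstrap match the paper's Lemmas 4.6, 4.9, 4.10, and 4.14.5.
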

The proof of this theorem will be given after some  definitions and lemmas.
\begin{definition}\label{def:4.3}
We define the region
$$\mathcal{R}_1=\{\eta:|\eta|> \rho_0,\quad \arg\eta\in (-\frac{\pi}{8},\frac{3\pi}{4})\}$$
for some large $\rho_0$ independent of $\epsilon$.
\end{definition}
\begin{definition}\label{def:4.4}
We define functions
\begin{equation}
\psi_1(\eta)=e^{-\eta},~\quad \psi_2(\eta)=e^{\eta}.
\end{equation}
\end{definition}
$\psi_1(\eta)$ satisfy the following equation exactly:
\begin{equation}
\mathcal{L}\psi\equiv\frac{d\psi}{d\eta}+\psi=0.
\end{equation}

Equation (\ref{4.15}) can be rewritten as
\begin{equation}
\label{4.19}
\mathcal{L}\psi =\mathcal{N}_1(\eta,\psi)
\equiv -\frac{1}{3\eta}-\frac{1}{3\eta}\psi \\
+\frac{\alpha}{6^{2/3}\eta^{2/3}}+\frac{1}{2}\sum_{n=2}^{\infty}(-1)^n(n+1)\psi^n.
\end{equation}
We consider solution $\psi $ of the following integral equation:
\begin{equation}
\label{4.20}
\psi=\mathcal{L}_1\psi
\equiv \psi_1(\eta)\int_{\infty e^{3\pi i/4}}^\eta \psi_2(t)\mathcal{N}_1(t,\psi(t))dt.
\end{equation}
\begin{definition}\label{def:4.5}
We define 
\begin{equation}
\begin{split}
\mathbf{B}_1&=\biggl\{\psi(\eta):\psi(\eta) \text{analytic in $\mathcal{R}_1$}\\
&\text{ and
continuous on $\overline{\mathcal{R}_1}$},
\underset{\mathcal{R}_1}{\sup}~|\eta^{2/3}\psi(\eta)|<\infty \biggr\}.
\end{split}
\end{equation}
\end{definition}
$\mathbf{B}_1$ is a Banach space with norm
\begin{equation}
\norm{\psi}=\underset{\mathcal{R}_1}{\sup}~|\eta^{2/3}\psi(\eta)|.
\end{equation}
\begin{lemma}\label{lem:4.6}
Let $\mathcal{N}\in \mathbf{B}_1$, then
$$\phi_1(\eta):=\psi_1(\eta)\int_{\infty e^{i 3\pi/4}}^{\eta}
\psi_2(t)\mathcal{N}dt\in \mathbf{B}_1,$$
and $\norm{\phi_1}\leq
K\norm{\mathcal{N}},$
where $K$ is independent of $\rho_0$.
\end{lemma}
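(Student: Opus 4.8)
The plan is to estimate $\phi_1$ directly by choosing the integration contour well and then doing one elementary exponential integral. First I would rewrite, using $\psi_1(\eta)\psi_2(t)=e^{t-\eta}$,
\[
\phi_1(\eta)=\int_{\infty e^{3\pi i/4}}^{\eta} e^{t-\eta}\,\mathcal{N}(t)\,dt .
\]
Since $\mathcal{N}$ is analytic in $\mathcal{R}_1$, the integrand is analytic there; because $|e^{t-\eta}\mathcal{N}(t)|$ decays exponentially as $\mathrm{Re}\,t\to-\infty$ the integral converges absolutely, and by Cauchy's theorem on the simply connected sector $\mathcal{R}_1$ it is path independent. This already gives analyticity of $\phi_1$ on $\mathcal{R}_1$ and continuity up to $\overline{\mathcal{R}_1}$. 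Differentiating under the integral (equivalently, applying the fundamental theorem of calculus to $e^{\eta}\phi_1(\eta)$) yields $\phi_1'+\phi_1=\mathcal{N}$, i.e. $\mathcal{L}\phi_1=\mathcal{N}$; so the only real content of the lemma is the norm bound.

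For the bound, fix $\eta\in\mathcal{R}_1$ and take the contour to be the straight ray $t(\rho)=\eta+\rho e^{3\pi i/4}$, traversed from $\rho=+\infty$ down to $\rho=0$. Two elementary geometric facts make this work. First, since $\arg\eta\in(-\pi/8,3\pi/4)$ the angular sector between $\arg w=\arg\eta$ and $\arg w=3\pi/4$ has opening $<7\pi/8<\pi$, hence is convex; as both $\eta$ and $e^{3\pi i/4}$ lie in it, so does $t(\rho)$, giving $\arg t(\rho)\in[\arg\eta,3\pi/4)\subset(-\pi/8,3\pi/4)$. A one–variable minimisation of $|t(\rho)|^2/(|\eta|+\rho)^2$ gives $|t(\rho)|\ge c_0(|\eta|+\rho)$ with $c_0=\sin(\pi/16)$, an absolute constant; in particular $|t(\rho)|\ge c_0|\eta|$, so the ray stays inside $\mathcal{R}_1$ except possibly when $\eta$ is very close to the thin inner corner $\rho_0 e^{-i\pi/8}$, in which case one precedes the ray by a short circular detour along a circle $|t|=\mathrm{const}>\rho_0$ kept within the angular range, contributing only a bounded amount. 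Second, $\mathrm{Re}\big(t(\rho)-\eta\big)=-\rho/\sqrt2$, hence $|e^{t(\rho)-\eta}|=e^{-\rho/\sqrt2}$.

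Combining these with $\mathcal{N}\in\mathbf{B}_1$, which gives $|\mathcal{N}(t)|\le\norm{\mathcal{N}}\,|t|^{-2/3}$ on $\mathcal{R}_1$, together with $|t(\rho)|^{-2/3}\le c_0^{-2/3}(|\eta|+\rho)^{-2/3}\le c_0^{-2/3}|\eta|^{-2/3}$ and $|dt|=d\rho$, yields
\[
|\phi_1(\eta)|\le \norm{\mathcal{N}}\,c_0^{-2/3}|\eta|^{-2/3}\int_0^{\infty} e^{-\rho/\sqrt2}\,d\rho
= K\,\norm{\mathcal{N}}\,|\eta|^{-2/3},\qquad K=\sqrt2\,c_0^{-2/3}.
\]
Multiplying by $|\eta|^{2/3}$ and taking the supremum over $\mathcal{R}_1$ gives $\norm{\phi_1}\le K\norm{\mathcal{N}}$, with $K$ depending only on the opening of the sector and hence independent of $\rho_0$ (and of every other parameter); together with the analyticity and continuity established above this gives $\phi_1\in\mathbf{B}_1$.

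I expect the only real obstacle to be the contour construction, not the integration: one must exhibit, for every $\eta\in\mathcal{R}_1$, a path to $\infty e^{3\pi i/4}$ lying in the sector along which $|e^{t-\eta}|$ has integrable exponential decay in arclength while $|t|$ stays comparable to $|\eta|$. The straight ray achieves all three everywhere except near the thin inner corner of $\mathcal{R}_1$, where a short arc on a circle $|t|=\mathrm{const}>\rho_0$ (staying in the angular range) repairs it while adding only an $O(1)$ contribution; once the contour is fixed the estimate is routine.
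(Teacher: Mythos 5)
Your contour argument is essentially identical to the paper's: take the straight ray $t(\rho)=\eta+\rho e^{3\pi i/4}$, use $|e^{t-\eta}|=e^{-\rho/\sqrt2}$ together with $|\mathcal N(t)|\le\norm{\mathcal N}|t|^{-2/3}$, and do one exponential integral. The paper's proof is terser (and has a typo $e^{\eta-t}$ in place of $e^{t-\eta}$), but the plan is the same; your explicit constants $c_0=\sin(\pi/16)$ and $K=\sqrt2\,c_0^{-2/3}$ are a genuine, correct refinement.

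The one place where I would push back is the fix you offer for the corner. You correctly observe, as the paper does not, that the straight ray can leave $\overline{\mathcal R_1}$: it dips inside the disc $|t|\le\rho_0$ whenever $|\eta|\sin\bigl(\tfrac{3\pi}{4}-\arg\eta\bigr)<\rho_0$, which for $\arg\eta$ near $-\pi/8$ happens on a region of radial width $O(\rho_0)$, not just "very close" to the corner. Your remedy — precede the ray by "a short circular detour along a circle $|t|=\mathrm{const}>\rho_0$" contributing "only a bounded amount" — does not hold up as stated. First, for $\eta$ near $\rho_0 e^{-i\pi/8}$ the detour cannot be short: the whole arc on $|t|=\rho_0$ between $\arg t=-\pi/8$ and $\arg t\approx\pi/4$ is needed before a ray in direction $e^{3\pi i/4}$ stays outside the disc, and that arc has length $O(\rho_0)$. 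Second, and more seriously, on that arc $\mathrm{Re}(t-\eta)=\rho_0\bigl(\cos\arg t-\cos\arg\eta\bigr)$ is \emph{positive} for $|\arg t|<|\arg\eta|$, with maximum $\rho_0\bigl(1-\cos\tfrac{\pi}{8}\bigr)$, so $|e^{t-\eta}|$ can be as large as $e^{c\rho_0}$ on the detour; the arc contribution is therefore $O(\rho_0^{1/3}e^{c\rho_0})\norm{\mathcal N}$, not $O(\norm{\mathcal N})$. In fact for $\eta=\rho_0 e^{-i\pi/8}$ the set $\{t\in\overline{\mathcal R_1}:\ \mathrm{Re}\,t\le\mathrm{Re}\,\eta\}$ reduces to the single point $\eta$ for $\arg t\in(-\pi/8,\pi/8)$, so \emph{no} path in $\overline{\mathcal R_1}$ from $\eta$ to $\infty e^{3\pi i/4}$ keeps $\mathrm{Re}\,t\le\mathrm{Re}\,\eta$ throughout. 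Any honest proof of the uniform bound near the corner must therefore do something the sketch does not: narrow the angular range below the real axis, replace the circular inner boundary of $\mathcal R_1$ by one compatible with the $e^{3\pi i/4}$ direction, or exploit oscillatory cancellation in the arc integral. The paper's proof sweeps this under the rug as well, so you are not worse off than the source — but your claim that the detour contributes "only a bounded amount" is the one substantive step in your write-up that would not survive scrutiny.
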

\begin{proof}
For $\eta\in \mathcal{R}_1$, we use straight lines
in the $t$-plane to connect $\eta$
to $\infty e^{i 3\pi/4}$
so  $Re ~t$ is increasing monotonically
from $\infty e^{i 3 \pi/4}$ to $\eta$
and on that path, characterized by arc-length $s$,
$\frac{d}{ds} ~Re ~t(s) ~>~C ~>~0$. Further,
$ C_1|\eta|\leq |t|$ for nonzero $C_1$. Then,
\begin{equation*}
\begin{split}
|\phi_1(\eta)|&=\left\lvert\int_{\infty e^{i 3\pi/4}}^{\eta}
e^{\eta - t}\mathcal{N}dt\right\rvert\\
&\leq
~C
\int_{\infty e^{i 3\pi/4}}^{\eta}|t|^{-2/3} |e^{\eta - t}||t^{2/3}\mathcal{N}||dt|\\
&\leq
\frac{C\norm{\mathcal{N}}}{|\eta|^{2/3}}
\int_{\infty e^{i 3\pi/4}}^{\eta} |e^{\eta - t}||dt|
\leq \frac{C\norm{\mathcal{N}}}{|\eta|^{2/3}}.
\end{split}
\end{equation*}

\end{proof}
\begin{definition}\label{def:4.7}
Define $T_1(\eta)$ so that $T_1(\eta):= \mathcal{L}_1 0$.
\end{definition}
\begin{remark}
\label{rem:4.1}
Since $|\eta^{2/3} \mathcal{N}_1 (\eta, 0)|$ is bounded,
Lemma \ref{lem:4.6} implies
$T_1\in\mathbf{B}_1$.
\end{remark}
\begin{definition}\label{def:4.8}
We define
$\sigma_1=\norm{T_1}$,~~~
$\mathbf{B}_{\sigma_1}:= \{\psi\in\mathbf{B}_1: \norm{\psi}\leq 2\sigma_1\}.$
\end{definition}
\begin{lemma}\label{lem:4.9}
If $\psi\in\mathbf{B}_{\sigma_1},\phi\in\mathbf{B}_{\sigma_1}$,
then
$\mathcal{N}_1(\eta,\psi)\in \mathbf{B}_1$ and
$$\norm{\mathcal{N}_1(\eta,\psi)}~\leq ~2 K_1 \sigma_1 \left [\rho_0^{-2/3}
\sigma_1 + \rho_0^{-1} \right ] + \left [ \frac{1}{3 \rho_0}
+ \frac{2^{4/3} a}{3^{2/3}} \right ], $$
$$\norm{\mathcal{N}_1(\eta,\psi)
-\mathcal{N}_1(\eta,\phi)}~\leq ~K_1 \left [\rho_0^{-2/3}
\sigma_1 + \rho_0^{-1} \right ]
\left(\norm{\phi-\psi} \right) $$
for some numerical constant $K_1$ and for $8\sigma_1 \rho_0^{-2/3}< 1$.
\end{lemma}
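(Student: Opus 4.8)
The plan is to prove both estimates by direct inspection of the nonlinearity $\mathcal{N}_1(\eta,\psi)$ defined in \eqref{4.19}, using only the bound $|\eta|\ge\rho_0$ on the region $\mathcal{R}_1$ and the elementary fact that for $\psi\in\mathbf{B}_{\sigma_1}$ one has $|\psi(\eta)|\le \sigma_1|\eta|^{-2/3}\le 2\sigma_1\rho_0^{-2/3}$, which we may take to be smaller than $1$ in modulus by the hypothesis $8\sigma_1\rho_0^{-2/3}<1$. First I would split $\mathcal{N}_1$ into the inhomogeneous part $-\tfrac{1}{3\eta}+\tfrac{\alpha}{6^{2/3}\eta^{2/3}}$, the linear part $-\tfrac{1}{3\eta}\psi$, and the higher-order part $S(\psi):=\tfrac12\sum_{n\ge2}(-1)^n(n+1)\psi^n$. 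Multiplying through by $\eta^{2/3}$: the inhomogeneous term contributes $|\eta^{2/3}\mathcal{N}_1(\eta,0)|\le \tfrac{1}{3}\rho_0^{-1/3}+\tfrac{2^{4/3}a}{3^{2/3}}$, which is absorbed into the stated constant $\left[\tfrac{1}{3\rho_0}+\tfrac{2^{4/3}a}{3^{2/3}}\right]$ after adjusting (or simply bounding $\rho_0^{-1/3}\le\rho_0^{-1}$ harmlessly for $\rho_0\ge1$); the linear term gives $|\eta^{2/3}\cdot\tfrac{1}{3\eta}\psi|\le\tfrac13|\eta|^{-1}\|\psi\|\le\tfrac{1}{3}\rho_0^{-1}\cdot 2\sigma_1$; and for the series I would use $|S(\psi)|\le \tfrac12\sum_{n\ge2}(n+1)|\psi|^n = \tfrac12|\psi|^2\sum_{k\ge0}(k+3)|\psi|^k$, which for $|\psi|\le 1/8$ is bounded by $C|\psi|^2$ for an absolute constant $C$, hence $|\eta^{2/3}S(\psi)|\le C|\psi|^2|\eta|^{2/3}\le C\sigma_1^2|\eta|^{-2/3}\le C\sigma_1^2\rho_0^{-2/3}$. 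Collecting the $\psi$-dependent pieces under a single numerical constant $K_1$ yields the first inequality in the form $2K_1\sigma_1[\rho_0^{-2/3}\sigma_1+\rho_0^{-1}]+[\tfrac{1}{3\rho_0}+\tfrac{2^{4/3}a}{3^{2/3}}]$.

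For the Lipschitz estimate, I would write $\mathcal{N}_1(\eta,\psi)-\mathcal{N}_1(\eta,\phi) = -\tfrac{1}{3\eta}(\psi-\phi)+\bigl(S(\psi)-S(\phi)\bigr)$. The first term contributes $|\eta^{2/3}|\cdot\tfrac13|\eta|^{-1}|\psi-\phi|\le\tfrac13\rho_0^{-1}\|\psi-\phi\|$. For the difference of the series, I would use the telescoping identity $\psi^n-\phi^n=(\psi-\phi)\sum_{j=0}^{n-1}\psi^j\phi^{n-1-j}$, so that $|\psi^n-\phi^n|\le n\,r^{n-1}|\psi-\phi|$ where $r:=\max(|\psi(\eta)|,|\phi(\eta)|)\le 2\sigma_1\rho_0^{-2/3}\le\tfrac14$; hence $|S(\psi)-S(\phi)|\le\tfrac12|\psi-\phi|\sum_{n\ge2}(n+1)n\,r^{n-1}\le C\,r\,|\psi-\phi|$ for an absolute constant $C$ (the series $\sum_{n\ge2}(n+1)n\,r^{n-1}$ converges uniformly for $r\le1/4$). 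Multiplying by $\eta^{2/3}$ and using $|\psi-\phi|\le\|\psi-\phi\|\,|\eta|^{-2/3}$ and $r\le\sigma_1|\eta|^{-2/3}$, the series contribution is bounded by $C\sigma_1\rho_0^{-2/3}\|\psi-\phi\|$. Combining the two pieces gives $\|\mathcal{N}_1(\eta,\psi)-\mathcal{N}_1(\eta,\phi)\|\le K_1[\rho_0^{-2/3}\sigma_1+\rho_0^{-1}]\|\psi-\phi\|$ after renaming the constant.

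The only real subtlety — and the step I would be most careful about — is keeping track of where the $\alpha$-dependence enters so that the constant $\tfrac{2^{4/3}a}{3^{2/3}}$ stated in the lemma is genuinely uniform; this is fine because $\alpha$ (written $a$ in \eqref{4.15}--\eqref{4.16}) is a fixed parameter, the only place it appears in $\mathcal{N}_1$ is the linear-in-$\eta^{-2/3}$ inhomogeneous term, and its $\eta^{2/3}$-weighted norm is exactly the constant $\tfrac{2^{4/3}a}{3^{2/3}}$ with no $\rho_0$ decay available — which is precisely why it sits outside the bracket in the statement. The membership claim $\mathcal{N}_1(\eta,\psi)\in\mathbf{B}_1$ follows from the same bounds together with analyticity: $\mathcal{N}_1(\eta,\psi(\eta))$ is a locally uniformly convergent series of analytic functions of $\eta$ on $\mathcal{R}_1$ (the series in $\psi$ converges for $|\psi|<1$, and $|\psi|\le 2\sigma_1\rho_0^{-2/3}<1$ throughout), hence analytic, and it extends continuously to $\overline{\mathcal{R}_1}$ because $\psi$ does. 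No contour-deformation or Stokes-line analysis is needed here — that machinery is reserved for the proof of Theorem \ref{thm:4.2} itself, for which this lemma is merely the contraction-mapping input.
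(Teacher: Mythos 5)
Your proof is correct and follows essentially the same route as the paper: elementary term-by-term estimates of the power series in $\psi$ under the smallness hypothesis $8\sigma_1\rho_0^{-2/3}<1$, with the paper's only organizational difference being that it establishes the Lipschitz estimate first and then derives the absolute bound by adding $\norm{\mathcal{N}_1(\eta,0)}$, rather than re-estimating as you do. One small slip: the parenthetical ``bounding $\rho_0^{-1/3}\le\rho_0^{-1}$ harmlessly for $\rho_0\ge 1$'' has the inequality reversed, and the correct value of $\sup|\eta^{2/3}\cdot\tfrac{1}{3\eta}|$ on $\{|\eta|\ge\rho_0\}$ is $\tfrac{1}{3}\rho_0^{-1/3}$; but the paper's own proof records this quantity as $\tfrac{1}{3}\rho_0^{-2/3}$, which also fails to match the $\tfrac{1}{3\rho_0}$ in the lemma statement, so this is an internal typo in the paper's constants that you were right to flag with ``after adjusting,'' and in any case the precise exponent on $\rho_0$ here is immaterial to the downstream use of the lemma in the contraction argument of Lemma~\ref{lem:4.10}.
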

\begin{proof}
It is clear from (\ref{4.19}) that
\begin{equation}
\label{4.18.5}
|\mathcal{N}_1 (\eta, \psi) - \mathcal{N}_1 (\eta, \phi) |
~\le ~\frac{|\psi (\eta) - \phi(\eta) |}{4 \|\eta \|^2}
+ \frac{1}{2} \sum_{k=2}^\infty (k+1) |\psi^k -\phi^k|.
\end{equation}
Noting,
$|\psi|\leq 2 \sigma_1 |\eta|^{-2/3}, ~|\phi|\leq 2 \sigma_1 |\eta|^{-2/3}$ and
from simple induction,
$$|\psi^k-\phi^k|\leq k(|\psi| + |\phi |)^{k-1}|\psi -\phi|,
\text{ for } k \ge 1,$$
we obtain for $\sigma_1 \rho_0^{-2/3} ~<~\frac{1}{8}$,
\begin{equation}
\label{4.18.8}
\| \mathcal{N}_1 (\eta, \psi) - \mathcal{N}_1 (\eta, \phi) \|
~<~ \left [ \frac{1}{3 \rho_0}
+ \frac{K_2 \sigma_1}{\rho_0^{2/3}} \right ]  ~\norm{\psi - \phi}
\end{equation}
for some numerical constant $K_2$.
On the other hand,
$$ \| \mathcal{N}_1 (\eta, 0) \| ~\le
~\frac{1}{3 \rho_0^{2/3}} +
\frac{2^{4/3} a}{3^{2/3}}. $$
So, it is clear from adding the two results above (with $\phi =0$), it
follows that
that for $\psi \in \mathbf{B}_{\sigma_1}$,
$$ \| \mathcal{N}_1 (\eta, \psi) \| ~<~
~\left [ \frac{1}{3 \rho_0^{2/3}}  +
\frac{2^{4/3} a}{3^{2/3}} \right ]
~+~\sigma_1 \left [ \frac{1}{4 \rho_0}
+ \frac{K_2 \sigma_1}{\rho_0^{2/3}} \right ].
$$
\end{proof}
\begin{lemma}\label{lem:4.10}
For sufficiently large $\rho_0$,
the operator $\mathcal{L}_1$ as defined in (\ref{4.20})
is a contraction from
$\mathbf{B}_{\sigma_1}$ to $\mathbf{B}_{\sigma_1}$;
hence there is a unique solution $\psi$ in this function space.
\end{lemma}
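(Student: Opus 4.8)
The plan is to deduce Lemma~\ref{lem:4.10} from the contraction mapping principle applied to $\mathcal{L}_1$ on the closed ball $\mathbf{B}_{\sigma_1}$ of Definition~\ref{def:4.8}, feeding the nonlinear estimates of Lemma~\ref{lem:4.9} into the linear bound of Lemma~\ref{lem:4.6}. The one preliminary point to secure is that $\sigma_1 = \norm{T_1}$ is bounded \emph{uniformly in $\rho_0$}: since $T_1 = \mathcal{L}_1 0 = \psi_1(\eta)\int_{\infty e^{3\pi i/4}}^\eta \psi_2(t)\,\mathcal{N}_1(t,0)\,dt$, Lemma~\ref{lem:4.6} gives $\sigma_1 \le K\norm{\mathcal{N}_1(\cdot,0)} \le K\bigl[\tfrac{1}{3\rho_0^{2/3}} + \tfrac{2^{4/3}a}{3^{2/3}}\bigr]$, which for $\rho_0\ge 1$ is at most a fixed constant $\sigma_*$ independent of $\rho_0$. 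In particular the hypothesis $8\sigma_1\rho_0^{-2/3}<1$ needed in Lemma~\ref{lem:4.9} holds once $\rho_0$ is chosen large enough.

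To show $\mathcal{L}_1$ maps $\mathbf{B}_{\sigma_1}$ into itself I would not estimate $\norm{\mathcal{L}_1\psi}$ directly — the forcing part of $\mathcal{N}_1(\cdot,0)$ does not decay with $\rho_0$ — but instead subtract off $T_1$:
\begin{equation*}
\mathcal{L}_1\psi - T_1 = \psi_1(\eta)\int_{\infty e^{3\pi i/4}}^\eta \psi_2(t)\bigl[\mathcal{N}_1(t,\psi(t)) - \mathcal{N}_1(t,0)\bigr]\,dt .
\end{equation*}
Applying Lemma~\ref{lem:4.6} and then the Lipschitz estimate of Lemma~\ref{lem:4.9} with $\phi=0$ gives, for $\psi\in\mathbf{B}_{\sigma_1}$,
\begin{multline*}
\norm{\mathcal{L}_1\psi - T_1} \le K\,\norm{\mathcal{N}_1(\cdot,\psi) - \mathcal{N}_1(\cdot,0)} \\
\le K K_1\bigl[\rho_0^{-2/3}\sigma_1 + \rho_0^{-1}\bigr]\,\norm{\psi} \le 2\sigma_1\, K K_1\bigl[\rho_0^{-2/3}\sigma_* + \rho_0^{-1}\bigr].
\end{multline*}
Taking $\rho_0$ so large that $K K_1\bigl[\rho_0^{-2/3}\sigma_* + \rho_0^{-1}\bigr]\le\tfrac12$ yields $\norm{\mathcal{L}_1\psi - T_1}\le\sigma_1$, hence $\norm{\mathcal{L}_1\psi}\le\norm{T_1}+\sigma_1 = 2\sigma_1$, i.e.\ $\mathcal{L}_1\psi\in\mathbf{B}_{\sigma_1}$.

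For the contraction estimate, the same manipulation gives $\mathcal{L}_1\psi - \mathcal{L}_1\phi = \psi_1(\eta)\int_{\infty e^{3\pi i/4}}^\eta \psi_2(t)\bigl[\mathcal{N}_1(t,\psi) - \mathcal{N}_1(t,\phi)\bigr]\,dt$ for $\psi,\phi\in\mathbf{B}_{\sigma_1}$, and Lemmas~\ref{lem:4.6} and~\ref{lem:4.9} then give
\begin{equation*}
\norm{\mathcal{L}_1\psi - \mathcal{L}_1\phi} \le K K_1\bigl[\rho_0^{-2/3}\sigma_* + \rho_0^{-1}\bigr]\,\norm{\psi - \phi} \le \tfrac12\,\norm{\psi - \phi}
\end{equation*}
with the same $\rho_0$. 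Thus $\mathcal{L}_1$ is a contraction on the complete metric space $\mathbf{B}_{\sigma_1}$, and its unique fixed point $\psi_0\in\mathbf{B}_{\sigma_1}$ is the asserted unique solution of the integral equation (\ref{4.20}) (equivalently of (\ref{4.15})--(\ref{4.16})) in this space; analyticity in $\mathcal{R}_1$ comes for free since $\mathbf{B}_1$ is complete and the iterates lie in it. The only step requiring genuine care is the uniform bound on $\sigma_1$: without it the ostensibly small quantity $\rho_0^{-2/3}\sigma_1$ need not tend to $0$ and both halves of the argument would fail, so that is the step I would pin down first.
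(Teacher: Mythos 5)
Your proof is correct and follows essentially the same route as the paper: subtract $T_1=\mathcal{L}_1 0$, apply Lemma~\ref{lem:4.6} followed by the Lipschitz bound of Lemma~\ref{lem:4.9}, and use the triangle inequality for the self-mapping together with the same estimate for the contraction. Your extra care in verifying that $\sigma_1$ is bounded uniformly in $\rho_0$ is a useful clarification of a step the paper leaves implicit, but it does not change the argument.
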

\begin{proof}
From Lemma \ref{lem:4.6} and Lemma \ref{lem:4.9}:
\begin{equation*}
\begin{split}
\norm{\mathcal{L}_1\psi -T_1}\leq \norm{\mathcal{L}_1\psi-\mathcal{L}_1 0}
&\leq 2 K \norm{\mathcal{N}_1(\eta,\psi)-\mathcal{N}_1(\eta,0)} \\
&\leq 2 K K_1 \left [ \rho_0^{-2/3} \sigma_1 + \rho^{-1} \right ]
\norm{\psi},
\end{split}
\end{equation*}
\begin{equation*}
\norm{\mathcal{L}_1\psi -\mathcal{L}_1\phi}\leq 2 K
\norm{\mathcal{N}_1(\eta,\psi)-\mathcal{N}_1(\eta,\phi)}\leq
2 K K_1 [\rho_0^{-2/3}\sigma_1 + \rho^{-1} ] \norm{\psi-\phi}.
\end{equation*}
So,
$$ \norm{\mathcal{L}_1 \psi} \le \norm{\mathcal{L}_1 \psi
-\mathcal{L}_1 0 } + \norm{T_1} \le 2 \sigma_1$$
for sufficiently large $\rho_0$.
\end{proof}
\begin{remark}
\label{rem:4.1.9}
It is easy to see that the previous lemma holds when
we change the restriction on $arg ~\eta$ in
the definition of $\mathcal{R}_1$ to $(0, ~\frac{3 \pi}{4} )$.
This comment is relevant to the following lemma.
\end{remark}
\begin{lemma}
\label{lem:4.14.5}
Any solution $\psi $ to (\ref{4.15})
satisfying condition (\ref{4.16}) in the domain
$\mathcal{R}_1$ must be in $\mathbf{B}_{\sigma_1}$
and satisfy integral equation:
$\psi = \mathcal{L}_1 \psi$  for sufficiently large $\rho_0$
\end{lemma}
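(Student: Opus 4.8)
The plan is to turn the differential equation (\ref{4.15}) into the integral equation by variation of parameters, kill the resulting homogeneous term using the matching condition (\ref{4.16}), and then identify the given $\psi$ with the fixed point of Lemma \ref{lem:4.10} via a contraction argument in a small $L^{\infty}$-ball. \emph{Step 1 (reduction to an integral identity).} By (\ref{4.16}) and analyticity of $\psi$, after possibly enlarging $\rho_{0}$ we may assume $M:=\sup_{\mathcal{R}_{1}}|\psi|$ is small enough that the series $\tfrac12\sum_{n\ge2}(-1)^{n}(n+1)\psi^{n}$ in $\mathcal{N}_{1}(\eta,\psi)$ converges with sum $O(|\psi|^{2})$; hence $t\mapsto\psi_{2}(t)\,\mathcal{N}_{1}(t,\psi(t))$ is analytic on $\mathcal{R}_{1}$ and, along the straight paths to $\infty e^{3\pi i/4}$ used in Lemma \ref{lem:4.6} (on which $\text{Re}\,t$ is monotone and $|t|\geq C_{1}|\eta|$), it is absolutely integrable because $|\psi_{2}(t)|=e^{\text{Re}\,t}\to0$ while $|\mathcal{N}_{1}(t,\psi(t))|\leq C|t|^{-2/3}+CM^{2}$. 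Since $\mathcal{R}_{1}$ is simply connected, $\mathcal{L}_{1}\psi$ (as in (\ref{4.20})) is a single-valued analytic function there, and a direct computation using $\psi_{1}'=-\psi_{1}$, $\psi_{1}\psi_{2}=1$ gives $\mathcal{L}(\mathcal{L}_{1}\psi)=\mathcal{N}_{1}(\eta,\psi)$. Comparing with (\ref{4.19}), the function $\psi-\mathcal{L}_{1}\psi$ solves $\mathcal{L}u=0$, so
\[
\psi(\eta)=C\,\psi_{1}(\eta)+(\mathcal{L}_{1}\psi)(\eta)\qquad\text{on }\mathcal{R}_{1}
\]
for some constant $C$.

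\emph{Step 2 ($C=0$).} Fix $\theta_{0}\in(\tfrac{\pi}{2},\tfrac{3\pi}{4})$ and let $|\eta|\to\infty$ along $\arg\eta=\theta_{0}$; then $|\psi_{1}(\eta)|=e^{-\text{Re}\,\eta}=e^{|\eta||\cos\theta_{0}|}\to\infty$. On the other hand the estimate in Lemma \ref{lem:4.6} (using $|\psi_{1}(\eta)\psi_{2}(t)|=e^{\text{Re}(t-\eta)}\leq1$ on the path) gives $|(\mathcal{L}_{1}\psi)(\eta)|\leq C(|\eta|^{-2/3}+M^{2})$, which stays bounded, while $|\psi(\eta)|\to0$ by (\ref{4.16}) (valid on $\arg\eta=\theta_{0}$ since $\theta_{0}\in[0,\tfrac{3\pi}{4})$; cf. Remark \ref{rem:4.1.9}). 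The identity of Step 1 then forces $|C|\,|\psi_{1}(\eta)|\leq|\psi(\eta)|+|(\mathcal{L}_{1}\psi)(\eta)|=O(1)$; letting $|\eta|\to\infty$ yields $C=0$, so $\psi=\mathcal{L}_{1}\psi$.

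\emph{Step 3 ($\psi\in\mathbf{B}_{\sigma_{1}}$).} Using (\ref{4.18.5}) together with the path estimate, for $\rho_{0}$ large one finds, on any small closed $L^{\infty}(\mathcal{R}_{1})$-ball $\{\|u\|_{\infty}\leq r\}$, that $\|\mathcal{L}_{1}u-\mathcal{L}_{1}v\|_{\infty}\leq(C\rho_{0}^{-1}+Cr)\|u-v\|_{\infty}$ and $\|\mathcal{L}_{1}u\|_{\infty}\leq\|T_{1}\|_{\infty}+(C\rho_{0}^{-1}+Cr)r\leq r$ provided $r\geq 2\|T_{1}\|_{\infty}$, so $\mathcal{L}_{1}$ is a contraction of that ball into itself. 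Choosing $r:=\max\{M,\,2\sigma_{1}\rho_{0}^{-2/3}\}$ — small for $\rho_{0}$ large, since $\psi\to0$ and $|T_{1}(\eta)|\leq\sigma_{1}|\eta|^{-2/3}$ — both our $\psi$ and the solution $\psi_{0}\in\mathbf{B}_{\sigma_{1}}\subset\{\|\cdot\|_{\infty}\leq r\}$ furnished by Lemma \ref{lem:4.10} are fixed points of $\mathcal{L}_{1}$ in this ball; by uniqueness of the fixed point, $\psi\equiv\psi_{0}\in\mathbf{B}_{\sigma_{1}}$, and in particular $\psi$ satisfies $\psi=\mathcal{L}_{1}\psi$.

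\emph{Main difficulty.} I expect Step 2 to be the delicate part: one must construct, uniformly for $\eta$ in the full sector $\arg\eta\in(-\tfrac{\pi}{8},\tfrac{3\pi}{4})$ (which contains both directions where $\psi_{1}$ decays and directions where it blows up), integration paths to $\infty e^{3\pi i/4}$ that remain in $\mathcal{R}_{1}$, keep $\text{Re}\,t$ monotone, and satisfy $|t|\geq C_{1}|\eta|$; and one must extract from (\ref{4.16}) enough smallness of $M$ so that the nonlinear term of $\mathcal{N}_{1}$ is genuinely perturbative. Reconciling the behavior near the borderline direction $\arg\eta\to\tfrac{3\pi}{4}$ with the restricted form of the contraction in Remark \ref{rem:4.1.9} also requires some care.
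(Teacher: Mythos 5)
Your proposal follows exactly the same three-step strategy as the paper's own proof: variation of parameters to obtain $\psi = C\psi_1 + \mathcal{L}_1\psi$, elimination of the homogeneous term $C\psi_1$ by observing that $\psi_1$ is unbounded in $\mathcal{R}_1$ while $\psi$ and $\mathcal{L}_1\psi$ stay bounded under (\ref{4.16}), and then invoking uniqueness of the fixed point of $\mathcal{L}_1$ under the $\|\cdot\|_\infty$ norm to conclude $\psi$ coincides with the $\mathbf{B}_{\sigma_1}$-solution from Lemma \ref{lem:4.10}. You supply somewhat more detail than the paper (e.g., picking a specific direction $\arg\eta = \theta_0 \in (\pi/2,3\pi/4)$ for the growth of $\psi_1$, and making the $L^\infty$-ball radius $r$ explicit), but the argument is the same in substance.
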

\begin{proof}
First, we note that if we use variation of parameter, the
most general solution to (4.11) satisfies the integral
equation
$$ \psi = \mathcal{L}_1 \psi + C_1 \psi_1. $$
Now, if we assume $\| \psi \|_\infty$ to be small, as
implied by condition (\ref{4.16}), when $\rho_0$ is chosen
large, it follows from inspection of
the right hand side of (\ref{4.15}) that
$ \| \mathcal{N}_1 (\eta, \psi) \|_\infty$ is also small.
Since Lemma \ref{lem:4.6} is easily seen to hold when
the norm is replaced by $\| . \|_\infty$, it follows
that $\mathcal{L}_1 \psi$ is also small. However,
$C_1 \psi_1 (\eta) $ is unbounded in
$\mathcal{R}_1$ unless $C_1 = ~0$. Therefore,
any solution to (\ref{4.15}) satisfying
condition (\ref{4.16}) must satisfy integral equation
$\psi = \mathcal{L}_1 \psi$. If we
were to use the norm $\|. \|_\infty$ instead
of the weighted norm $\| . \|$ in the definition of
the Banach Space $\mathbf{B}_1$, it is easily seen that
each of Lemmas \ref{lem:4.6} - \ref{lem:4.10} would remain valid
for small enough $\sigma_1$, as appropriate when condition
(\ref{4.16}) holds and $\rho_0$ is large.
Thus, it can be concluded that
the solution to $\psi = \mathcal{L}_1 \psi$
is unique in the bigger space of functions for
which $\psi$ satisfies (\ref{4.16}) and
$\rho_0$ is chosen large enough.
However, from previous Lemma \ref{lem:4.10}, it follows
that this unique solution must be in the function space
$\mathbf{B}_1$ and therefore satisfies
$\psi ~=~O(\eta^{-2/3})$  for large $\eta$.
\end{proof}

\noindent{\bf Proof of Theorem \ref{thm:4.2}}.
This follows immediately from Lemmas \ref{lem:4.10} and \ref{lem:4.14.5}.

\begin{theorem}\label{thm:4.11}
If $\psi_0(\eta,\alpha)$ is the solution in Theorem \ref{thm:4.2}, then \\
$\mathrm{Im}~ \psi_0(\eta,\alpha)= S(\alpha)
e^{-\eta}\left(1+o(1)\right)$ on the real $\eta$ axis and $\eta\to\infty$.
\end{theorem}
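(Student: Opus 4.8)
\emph{Proof proposal.} The plan is to produce the equation linearized about $\psi_0$ for free by a Schwarz reflection, and to recover $\mathrm{Im}\,\psi_0$ on the real axis as a constant multiple of the distinguished recessive solution of that linear equation. Since $\alpha$ is real and every coefficient of (\ref{4.15}) --- namely $-\frac{1}{3\eta}$, $\frac{\alpha}{6^{2/3}\eta^{2/3}}$ and the numbers $\frac{1}{2}(-1)^n(n+1)$ --- is real and real-analytic, $\bar\psi_0(\eta):=\overline{\psi_0(\bar\eta)}$ also solves (\ref{4.15}) and is analytic in the reflected sector $\{|\eta|>\rho_0,\ \arg\eta\in(-\frac{3\pi}{4},\frac{\pi}{8})\}$ (the reflection of $\mathcal{R}_1$), where along $\arg\eta\to-\frac{3\pi}{4}$ it meets the matching condition (\ref{4.16}); cf.\ Remark \ref{rem:4.1.9}. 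Thus $\psi_0$ and $\bar\psi_0$ are both analytic on the overlap $\mathcal{S}=\{|\eta|>\rho_0,\ \arg\eta\in(-\frac{\pi}{8},\frac{\pi}{8})\}$ --- which is exactly why the lower bound $-\frac{\pi}{8}$ was built into $\mathcal{R}_1$ --- and on the real ray $\eta>\rho_0$ one has $\bar\psi_0(\eta)=\overline{\psi_0(\eta)}$, hence $W(\eta):=\psi_0(\eta)-\bar\psi_0(\eta)=2i\,\mathrm{Im}\,\psi_0(\eta)$. (Whether $W\not\equiv0$, i.e.\ whether the Stokes phenomenon across the real axis is nontrivial, is the separate question settled numerically in Chapman--King and rigorously confirmable via Costin's theory, as the paper notes; it is not needed for the present factorization.)

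Subtracting the two copies of (\ref{4.15}) on $\mathcal{S}$ --- the inhomogeneous terms $-\frac{1}{3\eta}$ and $\frac{\alpha}{6^{2/3}\eta^{2/3}}$ cancel --- and using $\psi_0^n-\bar\psi_0^n=W\sum_{j=0}^{n-1}\psi_0^{\,j}\bar\psi_0^{\,n-1-j}$, one obtains the \emph{exact} linear homogeneous equation $W'+(1-c)W=0$ with
$$ c(\eta)=-\frac{1}{3\eta}+\frac{1}{2}\sum_{n=2}^{\infty}(-1)^n(n+1)\sum_{j=0}^{n-1}\psi_0^{\,j}(\eta)\,\bar\psi_0^{\,n-1-j}(\eta), $$
the series converging and $c$ analytic on $\mathcal{S}$ because $\norm{\psi_0}<2\sigma_1$ by Lemma \ref{lem:4.14.5}. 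Its solution space is one-dimensional, spanned by the nowhere-vanishing $\Phi(\eta):=\exp\bigl(-\eta+\int_{\eta_0}^{\eta}c(t)\,dt\bigr)$; therefore $\mathrm{Im}\,\psi_0(\eta)=S(\alpha)\,\Phi(\eta)$ with $S(\alpha):=\mathrm{Im}\,\psi_0(\eta_0)/\Phi(\eta_0)$, and in particular $\mathrm{Im}\,\psi_0\equiv0$ on $(\rho_0,\infty)$ iff $S(\alpha)=0$ (this is Theorem \ref{thm:1.8}(3)). Analyticity of $S$ in $\alpha$ follows from that of $\psi_0$ --- hence of $c$ and $\Phi$ --- which is inherited from the analytic parameter-dependence of the contraction $\mathcal{L}_1$ in Lemma \ref{lem:4.10}.

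It remains to read off $\Phi(\eta)=e^{-\eta}(1+o(1))$, and this is the delicate point. On the real axis $\psi_0=\mathrm{Re}\,\psi_0+i\,\mathrm{Im}\,\psi_0$ with $\mathrm{Im}\,\psi_0$ exponentially small, while $\mathrm{Re}\,\psi_0$ agrees to every algebraic order with the real formal solution $\psi_a(\eta)=\frac{\alpha}{6^{2/3}}\eta^{-2/3}-\frac{1}{3}\eta^{-1}+O(\eta^{-4/3})$ of (\ref{4.15}); only the $n=2$ term of the sum then contributes to $c$ at these orders, giving $c(\eta)=3\psi_a(\eta)-\frac{1}{3\eta}+O(\eta^{-4/3})$ with primitive $\frac{9\alpha}{6^{2/3}}\eta^{1/3}-\frac{4}{3}\log\eta+c_0+o(1)$, so $\Phi(\eta)=e^{-\eta}\,e^{\frac{9\alpha}{6^{2/3}}\eta^{1/3}}\eta^{-4/3}e^{c_0}(1+o(1))$; the stated form follows upon fixing the ($\eta$-independent) normalization of $S(\alpha)$ that absorbs this explicit sub-exponential and algebraic prefactor. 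The main obstacle is exactly here: the contraction behind Theorem \ref{thm:4.2} only supplies the crude bound $\norm{\psi_0}=O(\eta^{-2/3})$, whereas one needs the first few terms of the algebraic expansion of $\mathrm{Re}\,\psi_0$ on the real axis together with uniform control of the remainders, and since $c$ is merely $O(\eta^{-2/3})$ --- not absolutely integrable at infinity --- its primitive genuinely carries the factor $e^{\frac{9\alpha}{6^{2/3}}\eta^{1/3}}$, which is the whole reason the conclusion is subtler than the bare "$\Phi=e^{-\eta}$". A secondary technical point is verifying that the contraction of Lemma \ref{lem:4.10} really holds on $\arg\eta\in(-\frac{\pi}{8},\frac{3\pi}{4})$ and on its mirror sector, so that $\psi_0$ and $\bar\psi_0$ are simultaneously available on a full neighborhood of the positive real axis.
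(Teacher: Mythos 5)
Your strategy --- show that $\mathrm{Im}\,\psi_0$ solves a first-order linear homogeneous ODE on $(\rho_0,\infty)$ and conclude it equals an $\eta$-independent constant $S(\alpha)$ times a nowhere-vanishing fundamental solution $\Phi$ --- is essentially the paper's. The paper obtains the linear equation by taking imaginary parts of (\ref{4.15}) directly; your Schwarz-reflection route (form $\bar\psi_0(\eta)=\overline{\psi_0(\bar\eta)}$, subtract, and factor $\psi_0^n-\bar\psi_0^n$) yields the very same equation for $W=2i\,\mathrm{Im}\,\psi_0$, and has the pleasant by-product of explaining why $\mathcal{R}_1$ was built to include $\arg\eta\in(-\pi/8,0)$. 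Your ``secondary technical point'' about the contraction in the mirror sector is not a real obstacle: $\bar\psi_0$ inherits existence, uniqueness and the bounds of $\psi_0$ by conjugation, so no separate argument is needed.

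Where you are right to hesitate is the asymptotics of $\Phi$, and here you have spotted a genuine defect in the statement (and in the paper's proof) as written. With $c(\eta)=3\,\mathrm{Re}\,\psi_0(\eta)-\frac{1}{3\eta}+O(\eta^{-4/3})\sim\frac{3\alpha}{6^{2/3}}\eta^{-2/3}$, the coefficient $1-c$ differs from $1$ by a non-integrable amount, so the solution of $\Phi'+(1-c)\Phi=0$ inevitably carries the factors $\exp\bigl(\tfrac{9\alpha}{6^{2/3}}\eta^{1/3}\bigr)\eta^{-4/3}$: thus $\Phi(\eta)$ is \emph{not} $e^{-\eta}(1+o(1))$ (even for $\alpha=0$ the $\eta^{-4/3}$ prefactor persists). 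The paper's appeal to Olver's Theorem~6.2.1 does not repair this, since for a first-order equation $y'+(1+g)y=0$ the conclusion $y=Ce^{-\eta}(1+o(1))$ is equivalent to convergence of $\int^{\infty}g$, and here $g=\tfrac{1}{3\eta}+E(\eta)=O(\eta^{-2/3})$ is not integrable. Do note one slip on your part: the clause that the stated form ``follows upon fixing the normalization of $S(\alpha)$'' cannot be right, because $S(\alpha)$ is $\eta$-independent by construction and cannot absorb $\eta$-dependent prefactors. What \emph{is} correct --- and is all that Theorem~\ref{thm:1.8}(3) and Theorem~\ref{thm:4.23} subsequently use --- is the factorization $\mathrm{Im}\,\psi_0(\eta,\alpha)=S(\alpha)\,\Phi(\eta,\alpha)$ with $\Phi$ nowhere zero on $(\rho_0,\infty)$ and $S$ analytic in $\alpha$; you establish this cleanly, and the theorem should be read (and ideally restated) in that weaker form, or with the subexponential and algebraic prefactors made explicit.
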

\begin{proof}
Plugging $\psi_0 =\text{ Re ~}\psi_0 +i\text{ Im ~}\psi_0$ in equation (\ref{4.15}),
then taking imaginary part, we find that
Im $\psi_0(\eta)$ satisfies the following linear homogeneous equation on the real positive $\eta$ axis:
\begin{equation}
\label{4.22}
\frac{d~\text{Im}~\psi_0}{d\eta}+(1+\frac{1}{3\eta}+E(\eta))~\text{Im}~\psi_0 =0,
\end{equation}
where $E(\eta)$ is obtained from an homogeneous expression of Re~ $\psi_0$ and Im~ $\psi_0$.
Since {\it apriori} both
$\text{Re~ }\psi_0\sim O(\eta^{-2/3})$ and Im $~\psi_0\sim O(\eta^{-2/3}) $
as $\eta\to\infty$, we obtain
\begin{equation}
E(\eta)\sim O(\eta^{-2/3}),\quad \text{ as } \eta\to\infty.
\end{equation}
From Theorem 6.2.1 in Olver \cite{Olver}, there is a solution $\tilde \phi_0(\eta)=e^{-\eta}(1+o(1))$ of (\ref{4.22}).
 Hence there is a constant  $S(\alpha)$ so that
Im~ $\psi_0(\eta)=S(\alpha)e^{-\eta}(1+o(1))$.

\end{proof}
\begin{remark}
\label{rem:4.2}
From Theorem \ref{thm:4.11},
Im~ $\psi_0(\eta,\alpha)=0$ iff $ S(\alpha)=0$.
Previous numerical results and formal
asymptotic results ( Chapman and King \cite{Chapman2})
suggest that $S(\alpha)=0$ if and only if $\alpha$ takes on a
discrete set of values.
\end{remark}

\subsection{Full Inner Problem Analysis}
\label{sec:4.3}

Now we go back to the full inner equation (\ref{4.11}).
From Theorem \ref{thm:4.1}, (\ref{4.11})
with matching condition (\ref{4.14})
has unique solution in the domain
$k_1\epsilon^{-2}\ge |\eta|\geq k_0\epsilon^{-2},\quad \arg\eta
\in (0,\frac{3\pi}{4})$.
We shall first prove that this solution can be extended to the
region: \\
$\mathcal{R}_{2}=\{ \eta:\rho_0 < \text{Im~ }\eta + \text{Re~ }\eta <\tilde k_0\epsilon^{-2},\quad \arg \eta\in [0,\frac{3\pi}{4});\quad -\text{Im~ }\eta +\rho_0 < \text{Re~ }\eta < \text{Im~ }\eta +\tilde k_0\epsilon^{-2},\quad \arg \eta\in (-\frac{\pi}{8},0]\}$,
where $k_0 <\tilde k_0< k_1$.
\begin{definition}\label{def:4.12}
Let $\psi = {\tilde \psi} (\eta)$ be the unique analytic solution in
Theorem \ref{thm:4.1}
for $|\eta|\geq k_0\epsilon^{-2
},\quad \arg\eta\in (0,\frac{3\pi}{4})$, restricted
to the line segment $\{\eta: \text{Im~ }\eta + \text{Re~ }\eta=\tilde k_0\epsilon^{-2},
\quad \arg\eta\in [0,\frac{3\pi}{4})\}$.
\end{definition}
\begin{definition}\label{def:4.13}
We define
\begin{equation}
\eta_0=\tilde k_0\epsilon^{-2}, ~~\quad \eta_1=\tilde k_0\epsilon^{-2}
\frac{1}{\sin\frac{\pi}{4}-1}e^{\frac{3i\pi}{4}},~~\quad \eta_2=i\tilde k_0\epsilon^{-2}.
\end{equation}
\end{definition}

Equation (\ref{4.11}) can be rewritten as
\begin{equation}
\label{4.26}
\begin{split}
\mathcal{L}\psi &=\mathcal{N}_2(\eta,\epsilon,\psi)\\
&\equiv -\frac{1}{3\eta}-\frac{1}{3\eta}\psi
+\frac{\alpha}{6^{2/3}\eta^{2/3}}\\
&+\frac{1}{2}\sum_{n=2}^{\infty}(-1)^n(n+1)\psi^n+\epsilon^{2/3}
E_3(\epsilon^{2/3},\epsilon^{2/3}\eta^{2/3},\psi,\eta^{-2/3}).
\end{split}
\end{equation}
We consider solution $\psi $ of the following integral equation:
\begin{equation}
\label{4.27}
\psi=\mathcal{L}_2\psi
\equiv \psi_1(\eta)\int_{\eta_1}^\eta \psi_2(t)
\mathcal{N}_2(t,\psi)dt+\tilde\psi(\eta_1).
\end{equation}

\begin{definition}\label{def:4.14}
We define
\begin{equation}
\mathbf{B}_2=\{\psi(\eta):\psi(\eta)
~\text{analytic in }\mathcal{R}_2,\text{ and continuous on } \overline{\mathcal{R}}_2~\}.
\end{equation}
$\mathbf{B}_2$ is a Banach space with norm
\begin{equation}
\norm{\psi}=\underset{\overline{\mathcal{R}_2}}{\sup}~\rho_1|\psi(\eta)|,\text{ where } \rho_1=\rho_0^{2/3}.
\end{equation}
\end{definition}
\begin{lemma}\label{lem:4.15}
Let $\mathcal{N}\in \mathbf{B}_2$. Then
$$\phi_1(\eta):=\psi_1(\eta)
\int_{\eta_1}^{\eta} \psi_2(t)
\mathcal{N} (t) dt\in \mathbf{B}_2;$$
and $\norm{\phi_1}\leq
K \norm{\mathcal{N}} $, where $K$ is a numerical
constant independent of any parameters.
\end{lemma}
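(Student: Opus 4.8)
The plan is to proceed exactly as in Lemma~\ref{lem:4.6}, now with the constant weight $\rho_1=\rho_0^{2/3}$ in place of the factor $|\eta|^{2/3}$ and with the unbounded contour replaced by a contour joining $\eta_1$ to $\eta$ inside $\overline{\mathcal{R}_2}$. Since $\psi_1(\eta)=e^{-\eta}$ and $\psi_2(t)=e^{t}$, the operator is
\begin{equation*}
\phi_1(\eta)=\int_{\eta_1}^{\eta}e^{\eta-t}\,\mathcal{N}(t)\,dt ,
\end{equation*}
so everything reduces to bounding $\int|e^{\eta-t}|\,|dt|$ along a well-chosen path.

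First I would construct, for each $\eta\in\overline{\mathcal{R}_2}$, a polygonal path $\mathcal{P}(\eta,\eta_1)$ contained in $\overline{\mathcal{R}_2}$ whose finitely many straight segments have directions drawn from a fixed finite set of angles --- independent of $\epsilon$, of $\rho_0$, and of $\eta$ --- and along which $\mathrm{Re}\,t$ is strictly increasing as $t$ runs from $\eta$ to $\eta_1$. The geometry of $\mathcal{R}_2$ permits this: in the part $\arg\eta\in[0,\tfrac{3\pi}{4})$ the region is the strip $\rho_0<\mathrm{Re}\,t+\mathrm{Im}\,t<\tilde k_0\epsilon^{-2}$, whose two boundary lines have fixed slope; in the part $\arg\eta\in(-\tfrac{\pi}{8},0]$ it is a wedge whose bounding lines again meet fixed angles; and the base point $\eta_1$ of Definition~\ref{def:4.13} has $\mathrm{Re}\,\eta_1$ larger than $\mathrm{Re}$ of every point of $\mathcal{R}_2$. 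One therefore routes $\eta$ first along one fixed direction with strictly positive real part, until reaching a line parallel to the outer boundary, and then travels along that line in the direction $e^{-i\pi/4}$ toward $\eta_1$; on every segment $\frac{d}{ds}\mathrm{Re}\,t(s)\ge C_0>0$ for arc length $s$ measured from $\eta$, where $C_0$ is the smallest of the cosines of the finitely many admissible segment angles, a purely numerical constant.

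Granting such a path, $\mathrm{Re}\bigl(\eta-t(s)\bigr)\le -C_0 s$ along it, hence $|e^{\eta-t(s)}|\le e^{-C_0 s}$, and therefore
\begin{equation*}
|\phi_1(\eta)|\le\Bigl(\sup_{\overline{\mathcal{R}_2}}|\mathcal{N}|\Bigr)\int_{0}^{\infty}e^{-C_0 s}\,ds\le\frac{1}{C_0}\sup_{\overline{\mathcal{R}_2}}|\mathcal{N}|=\frac{1}{C_0\rho_1}\norm{\mathcal{N}} .
\end{equation*}
Multiplying by $\rho_1$ gives $\norm{\phi_1}\le K\norm{\mathcal{N}}$ with $K=1/C_0$ independent of all parameters. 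Analyticity of $\phi_1$ in $\mathcal{R}_2$ is then immediate: $\mathcal{N}$ is analytic there, $\mathcal{R}_2$ is simply connected, so the integral is path independent and $\phi_1$ is analytic, and in fact $\phi_1'+\phi_1=\mathcal{N}$; continuity up to $\overline{\mathcal{R}_2}$ follows from the uniform bound above together with uniform continuity of the integrand. Hence $\phi_1\in\mathbf{B}_2$.

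I expect the contour construction of the second paragraph to be the only real obstacle: one must exhibit, uniformly in $\eta\in\overline{\mathcal{R}_2}$ and in $\epsilon$, a path that stays inside $\overline{\mathcal{R}_2}$ and on which $\mathrm{Re}\,t$ increases at a rate bounded below by a positive constant. This is where the precise shape of $\mathcal{R}_2$ and the choice of $\eta_1,\eta_0,\eta_2$ are used; once the routing is pinned down, the remaining estimates are the same one-line computations as in Lemma~\ref{lem:4.6}.
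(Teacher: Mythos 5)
The overall strategy --- route a polygonal path in $\overline{\mathcal{R}_2}$ along which the kernel decays at a uniform exponential rate in arclength, then integrate --- is the same as the paper's, but you have the sign of the kernel reversed, and this inverts every subsequent step of the argument. Since $\psi_1(\eta)=e^{-\eta}$ and $\psi_2(t)=e^{t}$, the kernel is $\psi_1(\eta)\psi_2(t)=e^{t-\eta}$, so
\begin{equation*}
\phi_1(\eta)=\int_{\eta_1}^{\eta}e^{t-\eta}\,\mathcal{N}(t)\,dt,
\end{equation*}
not $\int e^{\eta-t}\mathcal{N}\,dt$ as you write; in fact with your kernel one would compute $\phi_1'-\phi_1=\mathcal{N}$ rather than $\phi_1'+\phi_1=\mathcal{N}$, the latter of which you also (inconsistently) assert at the end. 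With the correct kernel, your chosen path --- from $\eta$ toward $\eta_1$ with $\mathrm{Re}\,t$ strictly increasing at rate $\ge C_0$ --- gives $|e^{t(s)-\eta}|=e^{\mathrm{Re}(t(s)-\eta)}\ge e^{C_0 s}$, an exponentially \emph{growing} integrand over an arclength of order $\epsilon^{-2}$, so the estimate does not close. The correct requirement is the one stated in the paper's proof: $\mathrm{Re}\,t$ must increase \emph{from $\eta_1$ to $\eta$}, i.e.\ decrease as $t$ moves away from $\eta$, so that $|e^{t-\eta}|\le e^{-C_0 s}$. This also shows your geometric premise, that $\mathrm{Re}\,\eta_1$ exceeds $\mathrm{Re}\,\eta$ for all $\eta\in\mathcal{R}_2$, cannot be what the lemma relies on: the base point needs to sit far out along the matching line in the direction $\arg\eta\to 3\pi/4$, where $\mathrm{Re}\,\eta_1$ is large and negative. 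The literal formula of Definition~\ref{def:4.13}, which as written gives $\arg\eta_1=-\pi/4$ and thus places $\eta_1$ off the matching segment of Definition~\ref{def:4.12}, is evidently a misprint, and you should not build the argument on it. In short, your two sign reversals cancel, so the final inequality happens to take the right form, but the derivation is unsound; once the kernel is written as $e^{t-\eta}$ and the path direction is reversed so that $\mathrm{Re}\,t$ decreases from $\eta$ toward $\eta_1$, the remainder of your argument agrees with the paper's.
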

\begin{proof}
For $\eta\in \mathcal{R}_2$,
we use straight lines to connect$\eta$ to $\eta_1$
so that Re $t$ is increasing from $\eta_1$ to $\eta$ and $ C_1|\eta|\leq |t|\leq C_2|\eta|$. Then
\begin{equation*}
\begin{split}
|\phi_1(\eta)|&=\left\lvert\int_{\eta_1}^{\eta}
e^{-\eta + t}\mathcal{N}dt\right\rvert\\
&\leq \frac{K}{\rho_0^{2/3}}\int_{\eta_1}^{\eta} |e^{-\eta + t}| |
\rho_0^{2/3}
\mathcal{N}||dt|\\
&\leq \frac{K\norm{\mathcal{N}}}{\rho_0^{2/3}}\int_{\eta_1}^{\eta}
|e^{\eta - t}||dt|\leq \frac{K \norm{\mathcal{N}}}{\rho_0^{2/3}}.
\end{split}
\end{equation*}
\end{proof}
\begin{definition}
We define
$T_2(\eta): =\mathcal{L}_2 0$,~~~
$\sigma_2 \equiv \norm{T_2}.$
\end{definition}
\begin{remark}
\label{rem:4.2.5}
Since $\mathcal{N}_2 (\eta, \epsilon, 0, 0) =
O(\eta^{-4/3}, \epsilon^{2/3})$
in $\mathcal{R}_2$, it follows from Lemma \ref{lem:4.15} that
$\sigma_2 = O(1)$, as
$\epsilon ~\rightarrow~0^+$.
\end{remark}
We define space
$$\mathbf{B}_{\sigma_2}=\{\psi\in\mathbf{B}_2:\norm{\psi}\leq 2\sigma_2\}.$$
\begin{lemma}
\label{lem:4.16}
If $\psi\in\mathbf{B}_{\sigma_2}$, $ \phi\in\mathbf{B}_{\sigma_2}$,
then for $\rho_1 > \max~ \{ 4 \sigma_2 \rho_2, 8 \sigma_2 \}$, $\rho_2$
as defined in equation (\ref{4.11.8}),
\begin{equation*}
\begin{split}
\norm{\mathcal{N}_2(\eta,\epsilon,\psi)
-\mathcal{N}_2(\eta,\epsilon, \phi)}
\leq
&\left [ \frac{1}{3 \rho_0} + \frac{K_2 \sigma_1}{\rho_0^{2/3}}
+ 8 A \sigma_2 \epsilon^{2/3} \right ] \norm{\phi-\psi}
\end{split}
\end{equation*}
for sufficiently small $\epsilon$, where $K_2$ is a numerical constant.
\end{lemma}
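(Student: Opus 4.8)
The plan is to split $\mathcal{N}_2 = \mathcal{N}_1 + \epsilon^{2/3}E_3$ and to estimate the two pieces of the difference separately in the norm of $\mathbf{B}_2$. The $\eta$-only terms $-\frac{1}{3\eta}$ and $\frac{\alpha}{6^{2/3}\eta^{2/3}}$ are independent of $\psi$ and hence cancel, leaving
\begin{multline*}
\mathcal{N}_2(\eta,\epsilon,\psi) - \mathcal{N}_2(\eta,\epsilon,\phi)
= -\frac{1}{3\eta}(\psi-\phi) + \frac{1}{2}\sum_{n=2}^{\infty}(-1)^n(n+1)(\psi^n-\phi^n) \\
+ \epsilon^{2/3}\bigl[E_3(\epsilon^{2/3},\epsilon^{2/3}\eta^{2/3},\psi,\eta^{-2/3}) - E_3(\epsilon^{2/3},\epsilon^{2/3}\eta^{2/3},\phi,\eta^{-2/3})\bigr].
\end{multline*}

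First I would treat the first two groups of terms, which together are precisely $\mathcal{N}_1(\eta,\psi)-\mathcal{N}_1(\eta,\phi)$. Here the argument is identical to that in the proof of Lemma~\ref{lem:4.9}: on $\mathcal{R}_2$ one has $\abs{\eta}\geq\rho_0$, while $\psi,\phi\in\mathbf{B}_{\sigma_2}$ gives $\abs{\psi},\abs{\phi}\leq 2\sigma_2/\rho_1$, and the elementary bound $\abs{\psi^n-\phi^n}\leq n(\abs{\psi}+\abs{\phi})^{n-1}\abs{\psi-\phi}$ reduces the polynomial part to a geometric-type series in $4\sigma_2/\rho_1$, convergent because $\rho_1 > 8\sigma_2$. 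This reproduces estimate~(\ref{4.18.8}) and contributes $\bigl[\tfrac{1}{3\rho_0}+\tfrac{K_2\sigma_1}{\rho_0^{2/3}}\bigr]\norm{\psi-\phi}$.

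Next I would estimate the $E_3$-piece using the series representation $E_3=\sum_m E_m(\epsilon^{2/3},\epsilon^{2/3}\eta^{2/3},\eta^{-2/3})\psi^m$ together with the coefficient bound $\abs{E_m}<A\rho_2^m$ from~(\ref{4.11.8}). The $m=0$ term is independent of $\psi$ and cancels; for $m\geq 1$ the same elementary inequality gives
\begin{multline*}
\abs{E_3(\epsilon^{2/3},\epsilon^{2/3}\eta^{2/3},\psi,\eta^{-2/3})-E_3(\epsilon^{2/3},\epsilon^{2/3}\eta^{2/3},\phi,\eta^{-2/3})}\\
\leq \abs{\psi-\phi}\sum_{m\geq 1}A\rho_2^m\, m\,(2\sigma_2/\rho_1)^{m-1}
= A\rho_2\,\abs{\psi-\phi}\sum_{m\geq 1} m\,(2\sigma_2\rho_2/\rho_1)^{m-1}.
\end{multline*}
The hypothesis $\rho_1>4\sigma_2\rho_2$ makes $2\sigma_2\rho_2/\rho_1<\tfrac12$, so the last sum is bounded by a universal constant; multiplying through by $\rho_1$ to pass to the $\mathbf{B}_2$-norm and using that $\sigma_2=O(1)$ (Remark~\ref{rem:4.2.5}) with $\rho_2$ fixed yields the contribution $8A\sigma_2\epsilon^{2/3}\norm{\psi-\phi}$. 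Adding the two contributions gives the asserted inequality; the hypothesis of ``sufficiently small $\epsilon$'' is used only to keep the arguments of $E_3$ inside its disk of convergence.

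I do not expect a genuine obstacle here, as this is a routine Lipschitz estimate. The one point deserving care is that $\mathcal{R}_2$ extends out to $\abs{\eta}\sim\epsilon^{-2}$, so the $\abs{\eta}^{-2/3}$ decay available for the leading inner problem is not available uniformly; this is exactly why $\mathbf{B}_2$ carries the constant weight $\rho_1=\rho_0^{2/3}$ and why the $\epsilon^{2/3}E_3$-term must be handled on its own rather than inherited from Lemma~\ref{lem:4.9}. Checking that the two stated conditions $\rho_1>8\sigma_2$ and $\rho_1>4\sigma_2\rho_2$ are precisely what force the two series to converge geometrically is the substance of the bookkeeping.
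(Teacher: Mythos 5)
Your proposal matches the paper's proof in both structure and substance: split $\mathcal{N}_2$ into the $\mathcal{N}_1$ part (handled exactly as in Lemma~\ref{lem:4.9}, reproducing~(\ref{4.18.8})) and the $\epsilon^{2/3}E_3$ part (handled via the series bound~(\ref{4.11.8}) and the elementary inequality $|\psi^m-\phi^m|\le m(|\psi|+|\phi|)^{m-1}|\psi-\phi|$, with the hypothesis on $\rho_1$ forcing geometric convergence). Apart from minor looseness in constants (which the paper itself shares), this is the same argument.
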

\begin{proof}
Using (\ref{4.26}),
$|\psi|\leq 2\sigma_2 \rho_1^{-1},~|\phi|\leq 2\sigma_2 \rho_1^{-1}$
and inequality
$$|\psi^k-\phi^k|\leq k(|\psi|+|\phi|)^{k-1}|\psi -\phi|,\text{ for }
k \geq 2$$
we have from (\ref{4.11.8})
\begin{equation*}
\begin{split}
&\epsilon^{2/3} | E_3(\epsilon^{2/3}, \epsilon^{2/3} \eta^{2/3},
\psi, \eta^{-2/3} )-E_3 (\epsilon^{2/3}, \epsilon^{2/3} \eta^{2/3},
\phi, \eta^{-2/3} ) | \\
&\leq \frac{8 A \rho_2 \epsilon^{2/3}}{\rho_1} \norm{\phi - \psi}.
\end{split}
\end{equation*}
Combining with (\ref{4.18.8}), the lemma follows.
\end{proof}

\begin{theorem}\label{thm:4.18}
There exists a unique solution $\psi\in\mathbf{B}_{\sigma_2}$ of equation
(\ref{4.27}) for all sufficiently large $\rho_0$ and small $\epsilon$.
\end{theorem}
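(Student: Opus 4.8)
The plan is to realize $\psi$ as the unique fixed point of the map $\mathcal{L}_2$ defined in (\ref{4.27}) on the closed ball $\mathbf{B}_{\sigma_2}\subset\mathbf{B}_2$, via the Banach fixed point theorem. First I would verify that $\mathcal{L}_2$ is well defined on $\mathbf{B}_{\sigma_2}$ with values in $\mathbf{B}_2$: for $\psi\in\mathbf{B}_{\sigma_2}$ the quantity $\mathcal{N}_2(t,\epsilon,\psi(t))$ is continuous on $\overline{\mathcal{R}_2}$ and analytic in $\mathcal{R}_2$, since $\abs{\psi}$ is uniformly small there so that the series in (\ref{4.26}) converges by the bound (\ref{4.11.8}) and the argument of $E_3$ stays in its region of analyticity; integrating $\psi_2(t)\mathcal{N}_2$ along the straight-line paths from $\eta_1$ used in Lemma \ref{lem:4.15} (which remain in $\overline{\mathcal{R}_2}$ with $\mathrm{Re}\,t$ increasing) gives a path-independent analytic function of $\eta$, and adding the constant $\tilde\psi(\eta_1)$ keeps the result in $\mathbf{B}_2$.

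Next I would establish the self-map property. Writing $\norm{\mathcal{L}_2\psi}\le\norm{\mathcal{L}_2\psi-\mathcal{L}_2 0}+\norm{T_2}$ and applying Lemma \ref{lem:4.15} with $\mathcal{N}=\mathcal{N}_2(\cdot,\epsilon,\psi)-\mathcal{N}_2(\cdot,\epsilon,0)$ together with the Lipschitz estimate of Lemma \ref{lem:4.16},
\[
\norm{\mathcal{L}_2\psi-\mathcal{L}_2 0}\ \le\ K\Big[\frac{1}{3\rho_0}+\frac{K_2\sigma_1}{\rho_0^{2/3}}+8A\sigma_2\epsilon^{2/3}\Big]\norm{\psi}\ \le\ 2\sigma_2 K\Big[\frac{1}{3\rho_0}+\frac{K_2\sigma_1}{\rho_0^{2/3}}+8A\sigma_2\epsilon^{2/3}\Big].
\]
By Remark \ref{rem:4.2.5}, $\sigma_2=O(1)$ as $\epsilon\to 0^+$ and $\sigma_1$ is a fixed numerical quantity, so I would first fix $\rho_0$ large and then $\epsilon$ small so that the bracket is at most $\tfrac{1}{2K}$; this gives $\norm{\mathcal{L}_2\psi-\mathcal{L}_2 0}\le\sigma_2=\norm{T_2}$, hence $\norm{\mathcal{L}_2\psi}\le 2\sigma_2$, i.e.\ $\mathcal{L}_2(\mathbf{B}_{\sigma_2})\subset\mathbf{B}_{\sigma_2}$. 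The contraction estimate follows the same way: for $\psi,\phi\in\mathbf{B}_{\sigma_2}$, Lemmas \ref{lem:4.15} and \ref{lem:4.16} give
\[
\norm{\mathcal{L}_2\psi-\mathcal{L}_2\phi}\ \le\ K\Big[\frac{1}{3\rho_0}+\frac{K_2\sigma_1}{\rho_0^{2/3}}+8A\sigma_2\epsilon^{2/3}\Big]\norm{\psi-\phi}\ \le\ \frac{1}{2}\norm{\psi-\phi}
\]
for the same $\rho_0,\epsilon$. Since $\mathbf{B}_{\sigma_2}$ is a closed subset of the Banach space $\mathbf{B}_2$, the contraction mapping principle then yields a unique $\psi\in\mathbf{B}_{\sigma_2}$ with $\psi=\mathcal{L}_2\psi$, which is the asserted solution of (\ref{4.27}); differentiating the integral equation recovers (\ref{4.11}).

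The delicate point is not the fixed-point mechanics but keeping all constants genuinely uniform and consistently ordered: one must choose $\rho_0$ \emph{first} — large enough that the $\rho_0$-dependent terms above are small, that $\sigma_2=\norm{T_2}$ is the $O(1)$ quantity of Remark \ref{rem:4.2.5}, and that the threshold conditions $8\sigma_1\rho_0^{-2/3}<1$ (from Lemma \ref{lem:4.9}) and $\rho_1>\max\{4\sigma_2\rho_2,\,8\sigma_2\}$ (from Lemma \ref{lem:4.16}) hold — and only \emph{afterwards} shrink $\epsilon$ to absorb the $\epsilon^{2/3}E_3$ perturbation; the order matters because $\sigma_2$ itself depends on $\rho_0$. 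I would also double-check that the straight-line paths from $\eta_1$ stay inside $\overline{\mathcal{R}_2}$ for every $\eta\in\mathcal{R}_2$ with $\mathrm{Re}\,t$ monotone, since this is exactly what makes $\mathcal{L}_2\psi$ analytic and path-independent and what controls the exponential factor $e^{\eta-t}$ in Lemma \ref{lem:4.15}. Beyond this the proof is routine, and uniqueness is only claimed within $\mathbf{B}_{\sigma_2}$, which the contraction delivers directly.
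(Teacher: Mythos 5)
Your proof is correct and follows essentially the same route as the paper: both apply the Banach fixed point theorem to $\mathcal{L}_2$ on $\mathbf{B}_{\sigma_2}$, using Lemma \ref{lem:4.15} and the Lipschitz bound of Lemma \ref{lem:4.16} to get the self-map and contraction estimates, with the bracket made small by taking $\rho_0$ large and then $\epsilon$ small. Your explicit discussion of the order of choice ($\rho_0$ before $\epsilon$, because $\sigma_2$ depends on $\rho_0$) is a useful clarification that the paper leaves implicit.
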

\begin{proof}
Using (\ref{4.27}), Lemma \ref{lem:4.15} and Lemma \ref{lem:4.16},
it is easily seen that
\begin{equation*}
\begin{split}
\norm{\mathcal{L}_2 (u) }
&\le \norm{\mathcal{L}_2 (u) - \mathcal{L}_2 (0) } + \norm{T}_2 \\
&\le \sigma_2 + 4 \sigma_2 K \left [ \frac{1}{3 \rho} +
\frac{\sigma_1 K_2}{\rho_0^{2/3}} + 8 A \sigma_2 \epsilon^{2/3} \right ]
< 2 \sigma_2
\end{split}
\end{equation*}
for sufficiently large $\rho_0$ and small $\epsilon$.

On the other hand,
$$\norm{\mathcal{L}_2 (u_1) -\mathcal{L}_2 (u_2) }
\le K \left [ \frac{1}{3 \rho_0} +
\frac{\sigma_1 K_2}{\rho_0^{2/3}} + 8 A \sigma_2 \epsilon^{2/3} \right ]
\norm{(u_1-u_2)}. $$
Hence the
proof follows from contraction mapping theorem on a Banach space.
\end{proof}
\begin{lemma}\label{lem:4.19}
Let $\psi(\eta)$ be the solution of (\ref{4.27})
as in Theorem \ref{thm:4.18} then $\psi(\eta)$ is a solution of (\ref{4.26})
with $\psi(t)\equiv \tilde\psi(t)$ for $t\in\{\eta : ~\text{Re ~}\eta +~\text{Im~ }\eta =\tilde k_0\epsilon^{-2},~\arg \eta\in[0,\frac{3\pi}{4}]\}$.
\end{lemma}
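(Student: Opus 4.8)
The plan is to read off the differential equation~(\ref{4.26}) directly from the integral equation~(\ref{4.27}) and then to identify the solution so obtained with $\tilde\psi$ by a uniqueness argument on the overlap of their domains; write $L:=\{\eta:\ \mathrm{Re}\,\eta+\mathrm{Im}\,\eta=\tilde k_0\epsilon^{-2},\ \arg\eta\in[0,\frac{3\pi}{4}]\}$ for the boundary segment appearing in the statement. First I would differentiate~(\ref{4.27}). By Theorem~\ref{thm:4.18} the solution $\psi$ lies in $\mathbf{B}_{\sigma_2}\subset\mathbf{B}_2$, so $\psi$ is holomorphic in $\mathcal{R}_2$ and continuous on $\overline{\mathcal{R}_2}$; hence $t\mapsto\psi_2(t)\,\mathcal{N}_2(t,\epsilon,\psi(t))$ is holomorphic in $\mathcal{R}_2$ and the integral $\int_{\eta_1}^{\eta}\psi_2(t)\,\mathcal{N}_2(t,\epsilon,\psi(t))\,dt$ is a holomorphic function of $\eta$. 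Applying $\frac{d}{d\eta}$ to~(\ref{4.27}) and using the elementary identities $\frac{d}{d\eta}\psi_1=-\psi_1$ and $\psi_1\psi_2\equiv1$, the fundamental theorem of calculus, and the fact that the additive term in~(\ref{4.27}) is a multiple of the homogeneous solution $\psi_1$ and so is annihilated by $\mathcal{L}$, one obtains at once $\mathcal{L}\psi=\mathcal{N}_2(\eta,\epsilon,\psi)$, which is exactly~(\ref{4.26}). Evaluating~(\ref{4.27}) at the base point $\eta=\eta_1$ makes the integral vanish and gives $\psi(\eta_1)=\tilde\psi(\eta_1)$, so $\psi$ is the solution of~(\ref{4.26}) carrying the Cauchy datum $\tilde\psi(\eta_1)$ at $\eta_1$.

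Next I would invoke uniqueness. By Theorem~\ref{thm:4.1}, $\tilde\psi$ satisfies~(\ref{4.11})---which is the same equation as~(\ref{4.26})---in $\{|\eta|\ge k_0\epsilon^{-2},\ 0\le\arg\eta<\frac{3\pi}{4}\}$; with $k_0<\tilde k_0<k_1$ chosen so that $L$ lies in this domain as well as on $\partial\mathcal{R}_2$, both $\psi$ and $\tilde\psi$ are holomorphic solutions of the first order equation~(\ref{4.26}) in a common open neighbourhood of $L\cup\{\eta_1\}$ in which the coefficients of~(\ref{4.26}) are analytic and bounded (the turning point $\eta=0$ and the point at infinity being excluded). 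Since the two solutions agree at $\eta_1$, joining $\eta_1$ to an arbitrary point of $L$ by a path inside this common region and applying the uniqueness theorem for the holomorphic first order ODE~(\ref{4.26}) along that path---the same mechanism already used in Lemma~\ref{lem:4.14.5}---forces $\psi\equiv\tilde\psi$ on $L$. This completes the proof.

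The differentiation step is routine; the point requiring care is the geometric bookkeeping behind the uniqueness step, namely checking that $\eta_1$, the segment $L$, and the straight-line contours of integration from $\eta_1$ into $\mathcal{R}_2$ can all be kept inside one connected region where $\mathcal{N}_2$ is analytic and bounded away from the turning point, while at the same time preserving the monotonicity of $\mathrm{Re}\,t$ along those contours that underlies the estimate of Lemma~\ref{lem:4.15}. Fixing $\rho_0$ and the constants $k_0<\tilde k_0<k_1$ so that all of these requirements hold simultaneously is the main obstacle.
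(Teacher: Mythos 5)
Your proposal is correct, and the core idea is the same as the paper's: pin down the common Cauchy datum $\psi(\eta_1)=\tilde\psi(\eta_1)$ and then conclude agreement along the segment $L$ by a uniqueness argument. The difference is in how uniqueness is realized. The paper does not differentiate~(\ref{4.27}); instead it applies variation of parameters to $\tilde\psi$ (which already solves the ODE on $L$) to get $\tilde\psi=\mathcal{L}_2\tilde\psi+\tilde\psi(\eta_1)$, subtracts this from~(\ref{4.27}) to obtain $\psi-\tilde\psi=\mathcal{L}_2\psi-\mathcal{L}_2\tilde\psi$, and then reuses the already-established Lipschitz estimates of Lemma~\ref{lem:4.15} and Lemma~\ref{lem:4.16} to get $\norm{\psi-\tilde\psi}\le c\,\norm{\psi-\tilde\psi}$ with $c<1$, forcing $\psi\equiv\tilde\psi$ on $L$. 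Your route instead recovers the differential equation from~(\ref{4.27}) by differentiation and then invokes the classical uniqueness theorem for a holomorphic first-order ODE. Both are sound, but the paper's version is the cleaner fit here: the contraction estimates already built for Theorem~\ref{thm:4.18} supply the Lipschitz control that your appeal to ODE uniqueness would require you to verify separately, and the ``geometric bookkeeping'' you flag as the main obstacle is resolved automatically because $\eta_1$ is an endpoint of $L$ and the integration contour can be taken to be $L$ itself, which is exactly the configuration Lemma~\ref{lem:4.15} is set up to handle. One small caveat in your write-up: $\tilde\psi$ is, by Definition~\ref{def:4.12}, the restriction of the Theorem~\ref{thm:4.1} solution to $L$, so when you speak of $\tilde\psi$ being holomorphic in an open neighborhood you are implicitly referring to the underlying solution from Theorem~\ref{thm:4.1} rather than to $\tilde\psi$ itself.
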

\begin{proof}
Since $\tilde\psi(t)$ is a solution of (\ref{4.26}) for $t\in\{\eta :
~\text{Re ~}\eta +~\text{Im~ }\eta =\tilde k_0\epsilon^{-2},~\arg \eta\in[0,\frac{3\pi}{4}]\}$,
by variation of parameters:

\begin{equation}
\label{4.29}
\tilde\psi(t)=\mathcal{L}_2\tilde\psi+\tilde\psi(\eta_1).
\end{equation}

By equation (\ref{4.27}), we have
\begin{equation}
\begin{split}
\psi(t)-\tilde\psi(t)&=\psi_1(t)\int_{\eta_1}^{t}\psi_2(t)\left(\mathcal{N}_2(\psi)-\mathcal{N}_2(\tilde\psi)\right)dt.
\end{split}
\end{equation}
Using Lemma \ref{lem:4.15} and Lemma \ref{lem:4.16} , we have
$$\norm{\psi-\tilde\psi}\leq
K \left [ \frac{1}{3 \rho^1} +
\frac{\sigma_1 K_2}{\rho_0^{2/3}} + 8 A \sigma_2 \epsilon^{2/3} \right ]
\norm{(\psi-\tilde\psi)}$$
for sufficiently large $\rho_0$ and small $\epsilon_0$.
So $\psi(t)\equiv \tilde\psi(t)$.
\end{proof}
\begin{theorem}\label{thm:4.20}
For large enough $\rho_0$,
there exists a unique solution \\
$\psi(\eta,\epsilon,\alpha)$ of (4.7),
(\ref{4.14}) in region $ R_1\ge |\eta| \geq \rho_0$ for $\arg \eta\in [0,3\pi /4]$, where $R_1$ is some constant chosen to be independent of $\epsilon$. Furthermore the solution $\psi(\rho_0,\epsilon,\alpha)$ 
 is analytic in $\epsilon^{2/3}$ and $\alpha$, as $\epsilon\to 0$ and satisfies $\lim_{\epsilon \rightarrow 0^+}
\psi(\eta,\epsilon,\alpha)=\psi_0(\eta,\alpha)$
for $R_1\ge |\eta| \ge \rho_0$.
\end{theorem}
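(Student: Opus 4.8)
The plan is to glue together the solution pieces already constructed, and then upgrade the construction so as to get uniqueness in the full solution class together with holomorphic dependence on the parameters. First I would fix $R_1$ and observe that, since $\tilde k_0\epsilon^{-2}\to\infty$ as $\epsilon\to 0^+$, for all sufficiently small $\epsilon$ the region $\{R_1\ge|\eta|\ge\rho_0,\ \arg\eta\in[0,3\pi/4]\}$ lies inside the union of $\mathcal{R}_2$ and the far region $\{|\eta|\ge k_0\epsilon^{-2},\ \arg\eta\in[0,3\pi/4)\}$ of Theorem \ref{thm:4.1}. On $\mathcal{R}_2$, Theorem \ref{thm:4.18} gives the unique fixed point $\psi\in\mathbf{B}_{\sigma_2}$ of (\ref{4.27}); by Lemma \ref{lem:4.19} this $\psi$ solves the ODE (\ref{4.26}) and coincides with the far-field solution $\tilde\psi$ of Theorem \ref{thm:4.1} on their common line; and $\tilde\psi$ itself solves (\ref{4.26}) and satisfies the matching condition (\ref{4.14}) on $|\eta|\ge k_0\epsilon^{-2}$. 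The two patches then glue to a single analytic $\psi(\eta,\epsilon,\alpha)$ solving (\ref{4.26}) together with (\ref{4.14}) on the stated region.

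For uniqueness, let $\widehat\psi$ be any solution of the full inner equation with matching condition (\ref{4.14}) on $\{R_1\ge|\eta|\ge\rho_0,\ \arg\eta\in[0,3\pi/4]\}$. In the far subregion $|\eta|\sim\epsilon^{-2}$ the matching condition together with the variation-of-parameters argument of Lemma \ref{lem:4.14.5} — applied now to $\mathcal{N}_2$, whose extra $\epsilon^{2/3}E_3$ part is uniformly $O(\epsilon^{2/3})$ and hence does not spoil the smallness estimates — forces $\widehat\psi$ to satisfy the integral equation without the unbounded homogeneous term $C_1\psi_1$, so $\widehat\psi=\tilde\psi$ there. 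Continuing analytically into $\mathcal{R}_2$ and invoking uniqueness of the fixed point of $\mathcal{L}_2$ in $\mathbf{B}_{\sigma_2}$ (Theorem \ref{thm:4.18}) gives $\widehat\psi=\psi$ throughout.

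For the analyticity in $(\epsilon^{2/3},\alpha)$ and the limit $\epsilon\to 0^+$, the key point is that $\psi$ can be realized as the fixed point of a contraction that depends holomorphically on $(\epsilon^{2/3},\alpha)$. The operator $\mathcal{L}_2$ of (\ref{4.27}) has this property, since $\mathcal{N}_2$ depends on $\alpha$ affinely through the term $\frac{\alpha}{6^{2/3}\eta^{2/3}}$ and on $\epsilon^{2/3}$ through the analytic term $\epsilon^{2/3}E_3(\epsilon^{2/3},\epsilon^{2/3}\eta^{2/3},\psi,\eta^{-2/3})$; the base datum $\tilde\psi(\eta_1)$ entering $\mathcal{L}_2$ is obtained from the far-region problem, which — to expose its parameter dependence — I would re-solve by a contraction of the same type as in Lemmas \ref{lem:4.6}--\ref{lem:4.10} but with $\mathcal{N}_2$ in place of $\mathcal{N}_1$, the additional term being uniformly small, and then identify its solution with $\tilde\psi$ by the far-region uniqueness noted at the start of \S\ref{sec:4.3}. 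Since the Picard iterates of $\mathcal{L}_2$ are holomorphic in $(\epsilon^{2/3},\alpha)$ with values in $\mathbf{B}_2$ and converge uniformly by the contraction estimates of Lemmas \ref{lem:4.15}--\ref{lem:4.16}, the fixed point is holomorphic in $(\epsilon^{2/3},\alpha)$ (a Banach-valued Weierstrass, or Cauchy-estimate, argument), and evaluating the jointly analytic $\psi(\eta,\epsilon,\alpha)$ at $\eta=\rho_0$ gives the asserted analyticity of $\psi(\rho_0,\epsilon,\alpha)$. Finally, setting $\epsilon=0$ degenerates $\mathcal{N}_2$ to $\mathcal{N}_1$ and pushes the base point $\eta_1$ to $\infty e^{3\pi i/4}$, so the $\epsilon=0$ fixed point satisfies $\psi=\mathcal{L}_1\psi$ and hence equals $\psi_0(\eta,\alpha)$ by Theorem \ref{thm:4.2}; continuity of the fixed point in $\epsilon$, from the $\epsilon$-uniform contraction constant, yields $\lim_{\epsilon\to 0^+}\psi(\eta,\epsilon,\alpha)=\psi_0(\eta,\alpha)$ uniformly for $R_1\ge|\eta|\ge\rho_0$.

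The step I expect to be the main obstacle is this last one, because the geometry of the construction moves with $\epsilon$: the outer radius $\tilde k_0\epsilon^{-2}$, the base point $\eta_1$, and the region $\mathcal{R}_2$ all depend on $\epsilon$, so one cannot quote ``a holomorphically parametrized contraction has a holomorphically parametrized fixed point'' verbatim. The remedy I have in mind is to run the iteration on a fixed reference sector $\{R_1\ge|\eta|\ge\rho_0\}$ (or a fixed compact exhaustion independent of $\epsilon$), use the uniqueness established above to identify the restriction of the moving-domain solution with the fixed-domain one, and transport holomorphic dependence and the $\epsilon\to 0$ limit through that identification; a secondary difficulty is exactly the point above where one must replace the Montel construction of Theorem 3.17 by a contraction characterization of $\tilde\psi$ so that its analytic dependence on $(\epsilon^{2/3},\alpha)$ becomes available.
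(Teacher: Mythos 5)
Your existence/uniqueness gluing (Theorem \ref{thm:4.1} for the far field, Theorem \ref{thm:4.18} for the fixed point in $\mathbf{B}_{\sigma_2}$, Lemma \ref{lem:4.19} to identify the two on the overlap) is precisely the paper's first step. For the analyticity in $(\epsilon^{2/3},\alpha)$ and the limit $\epsilon\to 0^+$, however, you diverge from the paper: the paper dispatches this in one line by invoking a classical ODE theorem on analytic dependence of solutions on parameters (Hille, Theorem 3.8.5), whereas you propose to manufacture analyticity directly from a holomorphically parametrized contraction, via Banach-valued Weierstrass/Cauchy estimates on the Picard iterates. Both routes are viable, and your route is actually the more self-contained one, but the paper's route is cleaner when it applies: once the ODE is recognized as having coefficients analytic in $(\eta,\psi,\epsilon^{2/3},\alpha)$, analyticity of the solution on a fixed $\eta$-compactum propagates from analyticity of a single boundary datum, and one need not wrestle with a family of contractions on $\epsilon$-dependent regions.

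That said, the two caveats you flag at the end are genuine and apply to the paper's proof as well as to yours. First, the base point $\eta_1$ and the outer radius $\tilde k_0\epsilon^{-2}$ move with $\epsilon$, so neither the parametrized-contraction lemma nor the naive reading of the parameter-dependence theorem can be applied ``verbatim''; one must either pass to a fixed reference sector and identify solutions by uniqueness (as you propose) or carefully parametrize by the value on a fixed $\eta$-circle. Second, and more seriously, both arguments ultimately require the boundary datum $\tilde\psi(\eta_1)$ to depend analytically on $(\epsilon^{2/3},\alpha)$, and $\tilde\psi$ descends from the Half-Problem solution of Theorem 3.17, which is obtained by Montel compactness and therefore carries no a priori parameter-regularity. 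Your suggested remedy — recharacterize $\tilde\psi$ as the fixed point of a contraction analogous to Lemmas \ref{lem:4.6}--\ref{lem:4.10} with $\mathcal{N}_2$ in place of $\mathcal{N}_1$, then transport analyticity through that characterization — is exactly the kind of supplement the paper's terse citation leaves implicit and unaddressed. So your proposal is correct in outline, agrees with the paper on existence/uniqueness, takes a different (more elementary, contraction-based) route for analyticity, and is, if anything, more candid than the paper about the two places where the argument needs care.
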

\begin{proof}
The first part  follows from Theorem \ref{thm:4.1} and
Theorem \ref{thm:4.18} and Theorem \ref{lem:4.19} .
Note that as $\epsilon\to 0$,
$\epsilon^{2/3}E_3(\epsilon,\eta,\psi)\to 0$
uniformly in the region given.
The second part  follows from the theorem of dependence
of solution on parameters (see, for instance, Theorem 3.8.5 in Hille \cite{7}).
\end{proof}
\begin{lemma}\label{lem:4.21}
Let $F(\xi)$ be the solution of the Half Problem in Theorem 3.17, we define $q(\xi)$
so that $q(\xi)=\frac{F(\xi)-[F(-\xi^*)]^*}{2i}$
(Note this is the same as $\mathrm{Im} ~F$ on
$\{~\mathrm{ Re } ~ \xi = 0 \}\cap\mathcal{R}$).
Then $q$ satisfies the following
homogeneous equation on the imaginary $\xi$ axis: $\{\xi=is\}$:
\begin{equation}
\label{4.31}
\epsilon^2\frac{dq}{ds}+(2iH(is)Q(is) +\tilde L(s))q=0,
\end {equation}
where $\tilde L(s)$ is some real function and $\tilde L(s)\sim O(\epsilon^2)$ as $\epsilon\to 0$.
\end{lemma}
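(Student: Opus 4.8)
The plan is to exploit the symmetry structure of equation (\ref{2.34}) (equivalently (\ref{3.12})) together with the Schwarz reflection principle. Recall that the Half Problem solution $F$ need not satisfy the symmetry condition $\mathrm{Im}\,F=0$ on $\{\mathrm{Re}\,\xi=0\}\cap\mathcal{R}$; the whole point of $q(\xi)=\frac{1}{2i}\bigl(F(\xi)-[F(-\xi^*)]^*\bigr)$ is that it measures the failure of that condition, and on the imaginary axis $\xi=is$ it equals $\mathrm{Im}\,F(is)$. First I would write down equation (\ref{2.34}) both for $F(\xi)$ and, after applying the reflection $\xi\mapsto -\xi^*$ and conjugating, for $\tilde F(\xi):=[F(-\xi^*)]^*$. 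Since $H$, $\bar H$, $Q$, $V_0$, and all the $G_j$ have the appropriate symmetry under $\xi\mapsto -\xi^*$ (this is exactly why Definitions \ref{def:2.4}, \ref{def:2.12} were set up with symmetric contours $r_0$, $r_1$, $r_2$ — see Remarks \ref{rem:2.6}, \ref{rem:2.11}, \ref{rem:2.14}), both $F$ and $\tilde F$ satisfy an equation of the same form, with $F_-$, $I_2$ etc.\ replaced by their reflected counterparts, which on the imaginary axis coincide with the originals up to conjugation.

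The key step is then to subtract the two equations and linearize in the difference $F-\tilde F=2iq$. Because $F$ and $\tilde F$ are both $O(\epsilon^2)$ (indeed $F\in\mathbf{A}_{0,\delta}$ with $\delta$ small), the difference of the nonlinear terms in (\ref{2.34}) — or more transparently in the form (\ref{3.12}) with the linear part $\epsilon^2 p'+\tilde Q p$ already isolated — produces, on $\xi=is$, a linear homogeneous ODE for $q$: the leading linear operator contributes $\epsilon^2\frac{dq}{ds}$ from the derivative term and $2iH(is)Q(is)\,q$ from evaluating the coefficient $\tilde Q$ (using $\tilde Q=\frac{2iH^{3/2}\bar H^{1/2}}{H+\bar H}$ from (\ref{3.13}) and that on the imaginary axis $\bar H(is)=H(is)^*$ combine to the stated form $2iH(is)Q(is)$, where $Q$ is the integral defined in (\ref{3.2})); all remaining contributions — the differences of $I_2$, of $G_6,G_7,G_8$, and of $F_-'+\bar H$ versus its reflection — are each proportional to $q$ (or to $q'$, which can be absorbed by Cauchy-type estimates as in Lemmas \ref{lem:2.45}–\ref{lem:2.46}) with a coefficient that is $O(\epsilon^2)$ because it carries an overall factor of $F$ or $I_2\sim O(\epsilon^2)$. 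Collecting these into a single function $\tilde L(s)$ gives (\ref{4.31}). That $\tilde L(s)$ is \emph{real} follows from the reflection symmetry: the difference construction is antisymmetric under $\xi\mapsto -\xi^*$, so on the axis the coefficient multiplying $q$ must be real-valued (one checks that each building block, e.g.\ $H(is)\bar H(is)$ and the real integral $Q(is)$-type contributions, is real there), and that it is $O(\epsilon^2)$ is inherited term by term from the $O(\epsilon^2)$ bounds already established in Section 3 (Lemmas \ref{lem:2.46}, 3.3–3.13).

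The main obstacle I anticipate is the careful bookkeeping near $\xi=0$ (the finger tip, $s\to 0^+$): there $\tilde Q$ and hence $2iH(is)Q(is)$ is singular like $-\gamma^2/\xi$, and the nonanalyticity of the coefficients — the $\sqrt{\xi}$ and $\log\xi$ behavior flagged in Remark after (\ref{3.14}) and used in Lemmas 2.54, 3.9 — means one must check that the linearized leftover terms genuinely combine into a \emph{single smooth real} function $\tilde L(s)$ rather than leaving a residual singular or complex piece. This is handled by invoking the structural decomposition (\ref{3.32})–(\ref{3.54}): the $-\gamma^2/\xi$ singularity is precisely the piece that lands in $2iH Q$, while the analytic remainder $Q_1$ and the $N_1$-type terms, evaluated on the difference $q$, supply $\tilde L(s)$ with the claimed regularity and size. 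Once the algebraic identity is set up correctly, verifying reality and the $O(\epsilon^2)$ bound is routine given the estimates of Sections 2–3, so I would spend the bulk of the write-up on the clean derivation of the subtracted equation and the identification of the coefficient of $q$.
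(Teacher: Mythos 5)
Your approach---apply the reflection $\xi\mapsto -\xi^*$ together with conjugation to the governing equation, subtract, and read off the resulting linear homogeneous first-order ODE for $q=\mathrm{Im}\,F$ on $\{\mathrm{Re}\,\xi=0\}$ by evaluating the coefficients of (\ref{3.12}) on the imaginary axis and invoking the built-in symmetry of $F_-$, $F_+$, $I_2$ (Remarks \ref{rem:2.6}, \ref{rem:2.11}, \ref{rem:2.14})---is essentially the same as the paper's proof, which simply takes the imaginary part of the governing relation on $\xi=is$ after noting that the relevant coefficients are real there. One small slip: the identity $\bar H(is)=H(is)^*$ is false (by Definition \ref{def:2.1.5}, $\bar H(is)=[H(-is)]^*$, which is $\frac{i(s-\gamma)}{1-s^2}$ rather than $\frac{-i(s+\gamma)}{1-s^2}$), so your route to the stated form $2iH(is)Q(is)$ does not quite go through as written; however, a direct evaluation gives $i\tilde Q(is)=-\frac{(s+\gamma)\sqrt{\gamma^2-s^2}}{s(1-s^2)}$, which is real, so the reality of the leading coefficient of $q$ and hence the structure of (\ref{4.31}) still follows and the argument is not derailed.
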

\begin{proof}
On the imaginary axis $\{\xi=is\}$:
Using (\ref{2.11}),
$L(is)=\frac{\sqrt{\gamma^2-s^2}(s+\gamma)}{(1-s^2)^2}$ is real.
Using (\ref{2.58}), $i\left(\bar H-H\right)(is)
=\frac{2\gamma}{(1-s^2)}$ is real.
By taking imaginary part in equation (\ref{2.42}),
we have the lemma.
\end{proof}
\begin{remark}
\label{rem:4.3}
$F$ is analytic in $[-i+i\epsilon ^{2/3}\rho_0,-ib)$.
\end{remark}
\begin{lemma}\label{lem:4.22}
If $q(is_1)=0$ with $-1+\epsilon^{4/3}\rho_0\leq s_1<-b$,
then $q(is)\equiv 0$ for all $s\in [-i+\epsilon^{4/3}\rho_0,0]$.
Conversely, if $q(s_1) \ne 0$ for
$s_1 \in [-1+\epsilon^{4/3} \rho_0, -b)$, then $F$ cannot
satisfy symmetry condition: $\mathrm{ Im }~F=0$ on
$\{ ~\mathrm{ Re ~}\xi = 0\} \cap \mathcal{R}$.
\end{lemma}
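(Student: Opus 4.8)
The idea is to read both assertions off the first order \emph{linear homogeneous} equation that $q$ satisfies along the imaginary axis. On $\{\xi=is\}$ the definition of $q$ gives $q(is)=\mathrm{Im}\,F(is)$, and Lemma~\ref{lem:4.21} says this real function solves $\epsilon^{2}\frac{dq}{ds}+K(s)\,q=0$ with $K(s)=2iH(is)Q(is)+\tilde L(s)$ real. The entire content of the lemma is then the elementary dichotomy for such an equation: on any connected interval on which $K$ is locally integrable, a solution either vanishes identically or is nowhere zero. Both parts are instances of this dichotomy, so the work is to fix the interval on which (\ref{4.31}) is valid and to control $K$ near its two endpoints.

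First I would pin down the interval and the regularity of $K$. The Half Problem solution $F$ of Theorem~3.17 lies in $\mathbf{A}^{-}_{0,\delta}$, hence is analytic in $\mathcal{R}^{-}$ and continuous up to its closure; the inner analysis (Theorem~\ref{thm:4.20}, Remark~\ref{rem:4.3}) continues $F$ analytically onto the imaginary axis toward $\xi=-i$ up to distance $O(\epsilon^{4/3})$, and the reflection $F(\xi)=[F(-\xi^{*})]^{*}$ makes $q$ analytic on a neighbourhood of the whole segment $\{is:\,-1+\epsilon^{4/3}\rho_{0}<s<0\}$ (I read the ``$-i$'' in the statement as the real parameter $-1$). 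Using the explicit formula (\ref{3.2}) for $Q$ and the analyticity of $H$ away from $\pm i,\pm i\gamma$, $K$ is analytic on the open segment $(-1+\epsilon^{4/3}\rho_{0},0)$, in particular bounded on every compact subinterval thereof: the turning point $s=-1$ is excluded by the hypothesis $s_{1}\ge -1+\epsilon^{4/3}\rho_{0}$, the branch points $\pm i,\pm i\gamma$ of $H^{1/2},\bar H^{1/2}$ lie off the segment (recall $\gamma>1$ in the range considered), and at the remaining endpoint $s=0$ --- the finger tip, a singular point of the coefficients throughout the problem, cf.\ (\ref{3.32}) --- $K$ may have a non-integrable singularity, which is immaterial below since one only ever integrates $K$ over compact subintervals of the open segment.

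Now the two statements. (i) If $q(is_{1})=0$ with $s_{1}\in[-1+\epsilon^{4/3}\rho_{0},-b)$, then for any $s_{2}<s_{3}$ in the open segment, integrating (\ref{4.31}) gives $q(is_{3})=q(is_{2})\exp\bigl(-\epsilon^{-2}\int_{s_{2}}^{s_{3}}K(\sigma)\,d\sigma\bigr)$; the exponential factor is finite and nonzero, so $q$ vanishes at $is_{3}$ whenever it vanishes at $is_{2}$. Starting from $is_{1}$ and chaining over compact subintervals yields $q\equiv0$ on $(-1+\epsilon^{4/3}\rho_{0},0)$, and continuity of $F$ (hence of $q$) up to $\xi=0$ and up to the left endpoint extends this to the closed segment $[-1+\epsilon^{4/3}\rho_{0},0]$, which is the claim; indeed $q(0)=\mathrm{Im}\,F(0)=0$ follows already from continuity of $F\in\mathbf{A}^{-}_{0,\delta}$ at the finger tip, so the behaviour of $K$ at $s=0$ never enters. (ii) Conversely, if $F$ satisfied $\mathrm{Im}\,F=0$ on $\{\mathrm{Re}\,\xi=0\}\cap\mathcal{R}$, then by Remark~\ref{rem:1.5} we would have $q=\mathrm{Im}\,F\equiv0$ on the imaginary segment $(-b,0)\subset\mathcal{R}$; since $q$ is analytic on the connected segment $(-1+\epsilon^{4/3}\rho_{0},0)$ and vanishes on the subinterval $(-b,0)$, the identity theorem (equivalently the uniqueness used in (i)) forces $q\equiv0$ there, so $q(is_{1})=0$ for every $s_{1}\in[-1+\epsilon^{4/3}\rho_{0},-b)$. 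Contrapositively, $q(is_{1})\ne0$ for some such $s_{1}$ is incompatible with the symmetry condition (\ref{1.19}).

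The step I expect to be the main obstacle is the endpoint bookkeeping behind the second paragraph: showing from (\ref{3.2}) and $F\in\mathbf{A}^{-}_{0,\delta}$ that $Q(is)$ and $\tilde L(s)$ really are analytic on $(-1+\epsilon^{4/3}\rho_{0},0)$ with only a controlled singularity at $s=0$, and checking that the derivation of Lemma~\ref{lem:4.21} --- which takes imaginary parts in the integral representation of $F$ and uses the reality properties of $H(is)Q(is)$ and $i(\bar H-H)(is)$ on the imaginary axis recorded there --- remains valid right up to $s=-1+\epsilon^{4/3}\rho_{0}$ and throughout the segment interior. Away from the two endpoints the remainder is the standard uniqueness theorem for scalar first order linear homogeneous ordinary differential equations.
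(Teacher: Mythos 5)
Your argument is correct and is essentially the paper's: both proofs read the conclusion off the fact that $q$ solves the first-order linear homogeneous ODE of Lemma~\ref{lem:4.21} on the imaginary segment, so that $q$ is a constant multiple of a nowhere-vanishing exponential and therefore either vanishes identically or nowhere. The paper writes this as $q(is)=c_1\exp\{\epsilon^{-1}\int_{s_1}^{s}L(it)\,dt\}(1+o(1))$ and concludes $q(is_1)=0\Leftrightarrow c_1=0$; you phrase the same dichotomy via integration over compact subintervals plus the identity theorem, and you are somewhat more explicit about the regularity and endpoint bookkeeping for the coefficient, but the substance is identical. (One small remark: your aside that $\mathrm{Im}\,F(0)=0$ ``follows already from continuity of $F\in\mathbf{A}^-_{0,\delta}$'' is not a priori true on its own; what you actually use, and what does work, is continuity of $q$ to extend $q\equiv 0$ from the open segment to the closed one.)
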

\begin{proof}
By equation (\ref{4.31}),
\begin{equation*}
q(is)=c_1\exp \{\frac{1}{\epsilon}\int_{s_1}^{s}L(it)dt\}\{1+o(1)\};
\end{equation*}
hence
if $q(is_1)=0$, then $c_1=0$. Conversely, if
$q(i s_1) ~\ne ~0$, then $c_1\ne 0$. Hence $F$ cannot satisfy the symmetry condition
Im ~$F =0 $ on $\{~\text{Re ~}\xi = 0 \} \cap \mathcal{R}$.
\end{proof}
\begin{theorem}\label{thm:4.23}
Assume $S(\beta_0) = 0$, but $S'(\beta_0)\neq 0$,
then for small enough $\epsilon$ and large enough
$\rho_0$, there is analytic function
$\beta(\epsilon^{2/3})$ such that\\
$\lim_{\epsilon\to 0}\beta(\epsilon^{2/3})=\beta_0$,
and if $\lambda$ satisfies (\ref{1.28}),
then $\mathrm{Im }~ F(\xi)=0$ on $\{\xi =is \}\cap \mathcal{R}$.
\end{theorem}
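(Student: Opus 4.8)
The plan is to reduce the symmetry condition to a single scalar equation in the selection parameter and then solve it by the analytic implicit function theorem, feeding in the inner analysis of \S4. By Remark~\ref{rem:1.5} we must arrange $\mathrm{Im}\,F=0$ on $\{\mathrm{Re}\,\xi=0\}\cap\mathcal{R}$. With $q$ as in Lemma~\ref{lem:4.21} (it equals $\mathrm{Im}\,F$ on the imaginary axis and solves the homogeneous linear equation \eqref{4.31}), Lemma~\ref{lem:4.22} shows it suffices to make $q$ vanish at a single point $\xi_\ast=is_\ast$ with $s_\ast\in[-1+\epsilon^{4/3}\rho_0,-b)$. First I would transport this to the inner variables: fixing $\eta_\ast$ a large constant independent of $\epsilon$ in the common domain of validity of Theorems~\ref{thm:4.1} and~\ref{thm:4.20}, the point $\xi_\ast$ assigned by \eqref{4.12} lies on the imaginary axis at distance $O(\epsilon^{4/3})$ from $-i$; since the transformation \eqref{4.13} carries ``$\psi$ real'' to ``$F'$ purely imaginary'' on the imaginary axis, $\mathrm{Im}\,\psi(\eta_\ast,\epsilon,\beta)=0$ forces, via \eqref{4.13} and \eqref{4.31}, that $q(\xi_\ast)=0$. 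Here $\psi(\cdot,\epsilon,\alpha)$ with $\alpha=\beta$ is the full inner solution of Theorem~\ref{thm:4.20}, which by Theorem~\ref{thm:4.1} is the transform of the Half-Problem solution $F$ near $\xi=-i$.

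I would then set
\[
\Phi(\epsilon^{2/3},\beta):=e^{\eta_\ast}\,\mathrm{Im}\,\psi(\eta_\ast,\epsilon,\beta),\qquad \alpha=\beta,
\]
and verify that $\Phi$ extends to a holomorphic function of $(\epsilon^{2/3},\beta)$ near $(0,\beta_0)$. Holomorphy of $\psi(\eta_\ast,\epsilon,\alpha)$ in $(\epsilon^{2/3},\alpha)$ is Theorem~\ref{thm:4.20}; to take the imaginary part holomorphically one writes, for real arguments, $\mathrm{Im}\,\psi=(\psi-\psi^{\#})/(2i)$ with $\psi^{\#}(\eta,\epsilon,\alpha):=\overline{\psi(\bar\eta,\overline{\epsilon^{2/3}},\bar\alpha)}$, a holomorphic function solving the complex conjugate equation. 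Setting $\epsilon=0$, Theorem~\ref{thm:4.20} gives $\Phi(0,\beta)=e^{\eta_\ast}\mathrm{Im}\,\psi_0(\eta_\ast,\beta)$, and by Theorem~\ref{thm:4.11} together with integration of the linear equation \eqref{4.22} from $\eta_\ast$ to $\infty$ this is $S(\beta)$ times a nowhere-vanishing factor; hence $\Phi(0,\beta_0)=0$ and $\partial_\beta\Phi(0,\beta_0)\neq0$ because $S(\beta_0)=0$, $S'(\beta_0)\neq0$. The analytic implicit function theorem now produces a holomorphic $\beta(\epsilon^{2/3})$ with $\beta(0)=\beta_0$ solving $\Phi(\epsilon^{2/3},\beta(\epsilon^{2/3}))=0$ for all small $\epsilon$.

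To conclude, for this choice of $\beta$ one has $\mathrm{Im}\,\psi(\eta_\ast,\epsilon,\beta)=0$; since $\mathrm{Im}\,\psi$ satisfies a homogeneous first order linear ODE on the real $\eta$-axis (obtained, as in the passage from \eqref{4.15} to \eqref{4.22}, by subtracting the equation for $\overline\psi$), it vanishes identically there, so, via \eqref{4.13} and \eqref{4.31}, $q$ vanishes at a point $is_\ast$ in the interval of Lemma~\ref{lem:4.22} and therefore $q\equiv0$ on $\{\xi=is\}\cap\mathcal{R}$, i.e.\ $\mathrm{Im}\,F=0$. Taking $\lambda$ according to \eqref{1.28} with this $\beta$ completes the proof.

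The step I expect to be the main obstacle is the transversality $\partial_\beta\Phi(0,\beta_0)\neq0$: one must show that the factor relating $\mathrm{Im}\,\psi_0(\eta_\ast,\cdot)$ to the true Stokes constant $S(\cdot)$ --- constructed from \eqref{4.22}, whose coefficient is only a real-analytic (not holomorphic) function of $\beta$ --- is genuinely nonvanishing, so that the simple zero of $S$ at $\beta_0$ transfers to a simple zero of the holomorphic function $\Phi(0,\cdot)$; without this one would only get fractional-power branches in place of an analytic $\beta(\epsilon^{2/3})$. A subsidiary bookkeeping issue is checking that the matching value $\eta_\ast$ maps into the admissible $s$-interval of Lemma~\ref{lem:4.22} and that the identification of $\mathrm{Im}\,\psi(\eta_\ast,\cdot)$ with $q$ via \eqref{4.13} is exact. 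The finger-tip non-analyticity highlighted in the introduction has already been absorbed into the construction of $F$ (Theorem~3.17); the coefficients of the inner problem near $\xi=-i$ are analytic, so it does not re-enter at this stage.
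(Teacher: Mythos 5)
Your proposal takes essentially the same approach as the paper: reduce the symmetry condition via Lemmas~\ref{lem:4.21}--\ref{lem:4.22} to $\mathrm{Im}\,\psi(\eta_\ast,\epsilon,\beta)=0$ at a single matching point, then solve this scalar equation in $\beta$ using Theorem~\ref{thm:4.20}'s joint analyticity in $(\epsilon^{2/3},\alpha)$, the identification $\mathrm{Im}\,\psi_0(\eta_\ast,\cdot)\propto S(\cdot)$ from Theorem~\ref{thm:4.11}, and the analytic implicit function theorem. The paper's own proof is terser and does not spell out the holomorphic extension of the imaginary part (your reflection trick $\psi^{\#}(\eta,\epsilon,\alpha)=\overline{\psi(\bar\eta,\overline{\epsilon^{2/3}},\bar\alpha)}$) nor the transfer of transversality from $S'(\beta_0)\neq0$ to $\partial_\alpha\mathrm{Im}\,\psi_0(\eta_\ast,\beta_0)\neq0$ through the nonvanishing integrating factor of \eqref{4.22}, so your elaborations of those two points are genuine and correct supplements rather than departures from the paper's argument.
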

\begin{proof}
For fixed large enough $\rho_0>0$, $S'(\beta_0)\neq 0$ implies
\begin{equation}\label{4.42}
\frac{\partial\text{ Im }~\psi_0}{\partial \alpha}(\rho_0, \beta_0)\neq 0,
\end{equation}
Using Theorem \ref{thm:4.20}, (\ref{4.42}) and the
implicit function theorem, there exists analytic function
$\beta(\epsilon^{2/3})$
so that Im ~$\psi(\rho_0,\epsilon,\beta(\epsilon^{2/3}))=0$.
This implies that $q(is)$ is zero at some point in
$[-i+i \rho_0^{4/3}, -bi)$. Then using Lemma \ref{lem:4.22},
we complete the proof.
\end{proof}

{\bf Proof of Theorem \ref{thm:1.9}}: If $\lambda$ satisfies restriction (1.31),
from Theorem \ref{thm:4.23} and Theorem 2.39, $F(\xi)$
is a solution of the Finger Problem.

\section{Conclusion and Discussion}

In this paper we are concerned with the existence and selection of steadily translating
symmetric finger solutions in a Hele-Shaw cell by small but non-zero
kinetic undercooling $\epsilon^2$.
We rigorously conclude that for relative finger width
$\lambda$ in the range
$[\frac{1}{2}, \lambda_m ] $, with $\lambda_m - \frac{1}{2}$ small,
symmetric finger solutions exist in the asymptotic limit
of undercooling $\epsilon^2 ~\rightarrow ~0$ if
the Stokes multiplier for a relatively simple nonlinear differential
equation is zero. This Stokes multiplier $S$
depends on the parameter
$\alpha \equiv \frac{2 \lambda -1}{(1-\lambda)}\epsilon^{-\frac{4}{3}} $ and
earlier calculations \cite{Chapman2}  have shown
this to be zero for  a discrete set of values of $\alpha$. While this result is similar to that obtained in \cite{Xie1} previously for the Saffman-Taylor fingers by surface tension, the analysis for the problem with kinetic undercooling exhibits a number of subtleties as pointed out by Chapman and King \cite{Chapman2}. The main subtlety is the behavior of the Stokes lines at the finger tip, where the analysis is complicated by non-analyticity of the coefficients in the governing equation. \\

A recent study by Dallaston and McCue \cite{Dallaston} showed that for a given kinetic undercooling parameter $\epsilon^2$, a continuous family of corner-free finger solutions exist with width $\lambda\in (\lambda_{\min},1)$, where $\lambda_{\min}\to 0$ as $\epsilon\to 0$. This result did not need to contradict with the selection result that was obtained asymptotically in Chapman and King \cite{Chapman2} and was confirmed in this paper, since the numerical scheme in \cite{Dallaston} can not distinguish between solutions with analytic fingers and those with fingers that are corner-free but may not be analytic at the nose. However results in \cite{Xie3} showed that for sufficiently small $\epsilon$, no symmetric analytic finger solution exists for $\lambda<\frac{1}{2}$. The methods developed in this paper suggest that any corner-free symmetric solution satisfying some non-degenerate condition at the finger tip must be analytic at the finger nose. We will elaborate about this in a forthcoming paper.\\

More recently, Gardiner et al \cite{Gardiner} have constructed numerical solutions to the finger problem with undercooling that are analytic. Their strategy is to add surface tension $\sigma$ to the model, so that (\ref{1.6}) is replaced by $\phi=\sigma\kappa +cv_n$, where $\kappa$ is the curvature of the finger. A key hypothesis in \cite{Gardiner} is that the work of Tanveer and Xie \cite{Xie1,Xie2} carries over to the new model so that solutions to the problem with kinetic undercooling and surface tension must be analytic at the nose. Thus with kinetic undercooling fixed at some value, by taking the limit $\sigma\to 0$, one selects the analytic solutions studied in \cite{Chapman2} and in this paper. We tried to verify rigorously this strategy and hypothesis in \cite{Gardiner}, but we were not able to do so. The Stokes line picture at the finger tail and nose for the problem with kinetic undercooling and surface tension is quite different from that for the problem with only surface tension, thus one can not use the same methods as in \cite{Xie1,Xie2} to control an integral operator that is related to the surface tension $\sigma$.\\

For the time dependent finger problem with kinetic undercooling, local existence of analytic solution was obtained in \cite{24,25} for analytic initial data. Dallaston and McCue \cite{Dallaston} numerically demonstrated corner formation for sufficiently high kinetic undercooling in finite time. Stability analysis in Chapman and King \cite{Chapman2} showed that all wave numbers are unstable. Thus it seems impossible for a global solution to exist. We believe that the techniques developed in this paper are also useful in studying the linear stability of the Saffman-Taylor fingers with symmetric or antisymmetric disturbances.

\section{Appendix A: Properties of the function $P(\xi)$}

We look into some properties of $P(\xi)$ which is defined by (\ref{2.36}) and (\ref{2.47}).

\begin{lemma}
 $\mathrm{Re}~ P(\xi)$ increases
along negative $\mathrm{Re}~ \xi$ axis $(-\infty,0)$
with $\mathrm{Re}~ P(-\infty)=constant\ne 0$ and $C_1|t-2i|^{-2}\leq |\frac{d}{dt}\mathrm{ Re }~P(t)|\leq \frac{C_2}{\nu} |t-2i|^{-2}$, for $t\in (-\infty,-\nu)$
where $C_1$ and $C_2$ are positive constants, independent
of $\epsilon$ and $\nu$.
\end{lemma}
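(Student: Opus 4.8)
The plan is to reduce everything to an explicit evaluation of $\tilde Q = P'$ on the negative real axis. First I would observe that for real $t$ one has $\frac{d}{dt}\mathrm{Re}\,P(t) = \mathrm{Re}\,\tilde Q(t)$, so the whole statement is a claim about $\mathrm{Re}\,\tilde Q(t)$ for $t\in(-\infty,0)$. On that ray the functions $H,\bar H$ of (\ref{1.15}), (\ref{2.1}) satisfy $H(t)+\bar H(t)=\frac{2t}{t^2+1}$ (real and negative) and $H(t)\bar H(t)=\frac{t^2+\gamma^2}{(t^2+1)^2}$ (real and positive); with the branch of the roots fixed as in \S2 this forces $H^{3/2}\bar H^{1/2}=(H\bar H)^{1/2}H=\frac{\sqrt{t^2+\gamma^2}}{t^2+1}\,H(t)$. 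Substituting into $\tilde Q=\frac{2iH^{3/2}\bar H^{1/2}}{H+\bar H}$ from (\ref{2.36}) and using $H(t)=\frac{t+i\gamma}{t^2+1}$ gives the closed form
\[
\tilde Q(t)=\frac{i\sqrt{t^2+\gamma^2}\,(t+i\gamma)}{t(t^2+1)}=\frac{\sqrt{t^2+\gamma^2}\,(-\gamma+it)}{t(t^2+1)},\qquad t<0,
\]
so $\mathrm{Re}\,\tilde Q(t)=\dfrac{\gamma\sqrt{t^2+\gamma^2}}{\abs{t}\,(t^2+1)}>0$. This immediately yields that $\mathrm{Re}\,P$ is strictly increasing along $(-\infty,0)$.

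For the two‑sided bound I would write $\mathrm{Re}\,\tilde Q(t)=\dfrac{g(t)}{t^2+4}$ with $g(t):=\dfrac{\gamma\sqrt{t^2+\gamma^2}\,(t^2+4)}{\abs{t}\,(t^2+1)}$, and note $\abs{t-2i}^{-2}=(t^2+4)^{-1}$ for real $t$. The function $g$ is continuous and strictly positive on $(-\infty,0)$, with $g(t)\to\gamma$ as $t\to-\infty$ and $g(t)\to+\infty$ as $t\to0^-$; hence $C_1:=\inf_{t<0}g(t)>0$, which gives the lower bound (for all $t<0$, in particular on $(-\infty,-\nu)$). For the upper bound, on $(-\infty,-\nu)$ I would estimate $\frac{\sqrt{t^2+\gamma^2}}{\abs{t}}=\sqrt{1+\gamma^2/t^2}\le\sqrt{1+\gamma^2/\nu^2}\le 2\gamma/\nu$ (valid once $\nu<\gamma$, which holds as $\nu\to0$) together with $\frac{t^2+4}{t^2+1}\le 4$, so $g(t)\le 8\gamma^2/\nu$; thus $C_2:=8\gamma^2$ works. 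Both $C_1$ and $C_2$ depend only on $\gamma$, hence may be taken uniform over $\lambda$ in a compact subset of $(0,1)$, and are manifestly independent of $\epsilon$ and $\nu$.

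Finally, since $\mathrm{Re}\,\tilde Q(t)=O(t^{-2})$ as $t\to-\infty$ (either from the closed form above, or from $\tilde Q(\xi)\sim i/\xi$ at infinity so that its real part on the negative axis is of lower order), it is absolutely integrable near $-\infty$; therefore $\mathrm{Re}\,P(t)=\mathrm{Re}\,P(t_0)+\int_{t_0}^{t}\mathrm{Re}\,\tilde Q(s)\,ds$ has a finite limit as $t\to-\infty$. Fixing the free additive constant in $P$ as in the definitions of \S2 (equivalently, so that $g_1(\xi)=e^{-P(\xi)/\epsilon^2}$ is the normalisation used in Lemma 2.47 and Section 3), this limit is the claimed nonzero constant, and $g_1$ is then bounded and bounded away from $0$ as $\mathrm{Re}\,\xi\to-\infty$.

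I expect the only delicate point to be the branch identification $H^{3/2}\bar H^{1/2}=(H\bar H)^{1/2}H$ on $(-\infty,0)$: one must check that the branches of $(\xi+i\gamma)^{3/2}$ and $(\xi-i\gamma)^{1/2}$ fixed earlier — whose branch points $\pm i\gamma$ lie outside $\overline{\mathcal R^-}$ since $b<\min(1,\gamma)$ — combine on the negative real axis to the \emph{positive} square root of $H\bar H$ times $H$, rather than its negative. Once that sign is pinned down, the monotonicity and the estimates are the elementary computations above; there is no further obstacle.
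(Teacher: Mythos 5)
Your proposal is correct and takes essentially the same approach as the paper: both reduce the lemma to the explicit formula $\frac{d}{dt}\mathrm{Re}\,P(t)=\mathrm{Re}\,\tilde Q(t)=\dfrac{\gamma\sqrt{t^2+\gamma^2}}{\lvert t\rvert(t^2+1)}$ on $(-\infty,0)$ (the paper simply records $\mathrm{Re}\,P(\xi)=-\gamma\int_{-\nu}^{\xi}\frac{(s^2+\gamma^2)^{1/2}}{s(s^2+1)}\,ds+\text{const}$ and says the lemma follows). The branch point you flag is indeed the only non-routine step, and it closes exactly as you anticipate: for real $t$ the arguments of $t+i\gamma$ and $t-i\gamma$ are opposite, so $(t+i\gamma)^{3/2}(t-i\gamma)^{1/2}$ has argument $\arg(t+i\gamma)$ and modulus $t^2+\gamma^2$, giving $\sqrt{t^2+\gamma^2}\,(t+i\gamma)$ — the positive root, as you need. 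Your constants $C_1=\gamma$, $C_2=8\gamma^2$ are uniform over $\lambda$ in a compact subset of $(0,1)$ as required, and the integrability of $\mathrm{Re}\,\tilde Q$ near $-\infty$ gives finiteness of $\mathrm{Re}\,P(-\infty)$; with the normalization $P(-\nu)=0$ implicit in the paper's displayed formula this limit is strictly negative, hence nonzero.
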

\begin{proof}
Using (\ref{2.36}) and (\ref{2.47}), we obtain
\begin{equation*}
\text{Re }P(\xi)=-\gamma\int^\xi_{-\nu} \frac{(s^2+\gamma^2)^{1/2}}{s(s^2+1)}ds + constant.
\end{equation*}
The lemma follows from the above equation.
\end{proof}
\begin{lemma}
$\mathrm{Re}~ P(\xi)$ increases monotonically on the imaginary
$\xi$ axis from $-ib$ to $0$ where $0< b <\min\{1,\gamma\}$.
\end{lemma}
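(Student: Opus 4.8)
The plan is to reduce the statement to a one‑variable monotonicity check. I would parametrize the imaginary‑axis segment by $\xi = is$ with $s \in (-b,0)$, so that travelling from $-ib$ to $0$ corresponds to $s$ increasing from $-b$ to $0$. Since $P'(\xi) = \tilde Q(\xi)$ by (\ref{2.47}), the chain rule gives $\frac{d}{ds}P(is) = i\,\tilde Q(is)$, and it suffices to show that this quantity is real and strictly positive on $(-b,0)$; monotonic increase of $\mathrm{Re}\,P$ then follows and the additive constant in the definition of $P$ is irrelevant.

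Next I would evaluate $\tilde Q$ along the segment using the closed form $\tilde Q(\xi) = i(\xi+i\gamma)^{3/2}(\xi-i\gamma)^{1/2}/[\xi(\xi^2+1)]$ from (\ref{2.36}). For $\xi = is$ with $-b < s < 0$ one has $\xi + i\gamma = i(s+\gamma)$ on the positive imaginary axis (as $s+\gamma > \gamma-b > 0$), $\xi - i\gamma = -i(\gamma-s)$ on the negative imaginary axis (as $\gamma-s > \gamma > 0$), $\xi^2+1 = 1-s^2 > 0$ (as $|s| < b < 1$), and $\xi = is$ on the negative imaginary axis. With the branch of the numerator fixed by consistency with (\ref{3.32}) — equivalently, $\arg(\xi+i\gamma)=\tfrac{\pi}{2}$ and $\arg(\xi-i\gamma)=-\tfrac{\pi}{2}$ on this segment — one gets $(\xi+i\gamma)^{3/2} = (s+\gamma)^{3/2}e^{3\pi i/4}$ and $(\xi-i\gamma)^{1/2} = (\gamma-s)^{1/2}e^{-\pi i/4}$, hence
\[
\tilde Q(is) = i\,\frac{(s+\gamma)^{3/2}(\gamma-s)^{1/2}\,e^{\pi i/2}}{is(1-s^2)} = \frac{i\,(s+\gamma)^{3/2}(\gamma-s)^{1/2}}{s(1-s^2)},
\]
so that $\frac{d}{ds}\mathrm{Re}\,P(is) = \mathrm{Re}\big(i\,\tilde Q(is)\big) = -\,(s+\gamma)^{3/2}(\gamma-s)^{1/2}/[s(1-s^2)]$. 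This is manifestly real, and on $(-b,0)$ the numerator and $1-s^2$ are positive while $s<0$, so the derivative is strictly positive; thus $\mathrm{Re}\,P(is)$ increases monotonically as $s$ runs from $-b$ to $0$. As a sanity check, the derivative behaves like $-\gamma^2/s$ as $s\to 0^-$, matching the logarithmic behaviour $P(\xi)\sim -\gamma^2\log\xi$ near the finger tip recorded in (\ref{3.32}).

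The only genuinely delicate step is the branch bookkeeping: one must confirm that the branches of $(\xi+i\gamma)^{3/2}$ and $(\xi-i\gamma)^{1/2}$ that make $\tilde Q$, and hence $P$, analytic on a neighbourhood of the open segment $(-ib,0)$ — such a neighbourhood exists precisely because $b < \min\{1,\gamma\}$ keeps the segment away from the branch points $\pm i\gamma$ and the poles $\pm i$ — are indeed the ones producing the arguments used above at the origin. I would pin this down exactly as in the appendix: the same branch choice reproduces the formula $\mathrm{Re}\,P'(t) = -\gamma(t^2+\gamma^2)^{1/2}/[t(t^2+1)]$ on the negative real axis used in the proof of Lemma 6.1, and it is consistent with (\ref{3.32}), so no branch analysis beyond what already appears in the paper is needed. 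Apart from this, the argument is elementary.
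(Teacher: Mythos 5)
Your proposal is correct and follows essentially the same route as the paper: restrict $\tilde Q = P'$ to the segment $\xi = is$, simplify to a manifestly real expression, and read off the sign of $\tfrac{d}{ds}\mathrm{Re}\,P(is) = -(s+\gamma)^{3/2}(\gamma-s)^{1/2}/[s(1-s^2)] > 0$ for $s\in(-b,0)$. The paper just records the resulting integral expression for $\mathrm{Re}\,P$ without spelling out the branch bookkeeping, which you carry out explicitly and cross-check against (\ref{3.32}) and Lemma 6.1 — careful, but not a different argument.
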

\begin{proof}
Using (\ref{2.36}) and (\ref{2.47}), we obtain for $\xi=i\eta$
\begin{equation*}
\text{Re }P(\xi)=-\int_{-b}^\eta\frac{(\gamma +t)(\gamma^2-t^2)^{1/2}}{t
(1-t^2)} + constant.
\end{equation*}
The lemma follows from the above equation.
\end{proof}

\begin{lemma} There exists a constant $R$
independent of $\epsilon$ so that for $|\xi|\geq R$,
$\mathrm{Re}~ P(t)$ increases with decreasing $s$ along any ray
$r=\{t:t=\xi-se^{i\varphi},~0<s<\infty,~0 \le \varphi  <
\frac{\pi}{2} \}$
in $\mathcal{R}$ from $\xi$ to
$\xi-\infty e^{i\varphi}$ and
$C_1|t-2i|^{-2}\leq |\frac{d}{ds}~\mathrm{ Re }~P(t(s)|\leq C_2 |t-2i|^{-2}$,
where $C_1$ and $C_2$ are constants, independent
of $\epsilon$, with $C_1 ~>~0$.
\end{lemma}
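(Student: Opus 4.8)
The plan is to reduce the whole statement to a sufficiently precise asymptotic expansion of $P'(\xi)=\tilde Q(\xi)$ at infinity, and then to differentiate $\mathrm{Re}\,P$ along the ray and estimate. First I would rewrite $\tilde Q$ by merging the two fractional powers in (\ref{3.13}) as $\tilde Q(\xi)=i\,(\xi^2+\gamma^2)^{1/2}(\xi+i\gamma)/(\xi(\xi^2+1))$, and pin down the branch: on the real axis the half-powers $(\xi\pm i\gamma)^{1/2}$ have conjugate arguments, so their product is the positive root $\sqrt{\xi^2+\gamma^2}$, and this determination extends analytically throughout $\mathcal{R}$ since the branch points $\pm i\gamma$ and the poles $0,\pm i$ lie outside $\mathcal{R}$ (here $0<b<\min(1,\gamma)$ is exactly what is needed, together with the cuts being run along the imaginary axis away from $(-i\gamma,i\gamma)$). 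Comparing with Lemma 6.1 on $(-\infty,0)$ identifies this as the branch for which, as $|\xi|\to\infty$ in the part of $\mathcal{R}$ with $\mathrm{Re}\,\xi<0$,
\[
\tilde Q(\xi)=-\frac{i}{\xi}+\frac{\gamma}{\xi^2}+O(|\xi|^{-3});
\]
in particular $|\tilde Q(\xi)|$ is comparable to $|\xi|^{-1}$ and $|t-2i|$ is comparable to $|t|$, so all the $|t-2i|$ weights may be replaced by $|t|$. By the symmetry $\xi\mapsto-\xi^{*}$ it is enough to treat $\mathrm{Re}\,\xi\le0$.

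Next, along $t=t(s)=\xi-se^{i\varphi}$ one has $\frac{d}{ds}P(t(s))=-e^{i\varphi}\tilde Q(t(s))$, hence $\frac{d}{ds}\mathrm{Re}\,P(t(s))=-\mathrm{Re}\bigl[e^{i\varphi}\tilde Q(t(s))\bigr]$. Writing $t=|t|e^{i(\pi+\varphi+\delta)}$, where $\delta=\arg t-(\pi+\varphi)$ measures the deviation of $t$ from the ray direction $-e^{i\varphi}$, one checks that $|t(s)|$ is increasing for $s>0$ (the quadratic $|t(s)|^2$ has its vertex at $s=\mathrm{Re}(\bar\xi e^{i\varphi})<0$ for the admissible $\varphi$), so $|t|\ge|\xi|\ge R$ along the whole ray, and that $\delta=-\mathrm{Im}(\xi e^{-i\varphi})/|t|+O(|t|^{-2})\to0$ as $s\to\infty$. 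Substituting the expansion of $\tilde Q$ and using the identity $e^{i\varphi}(-i/t)=ie^{-i\delta}/|t|$, one obtains
\[
\frac{d}{ds}\mathrm{Re}\,P(t(s))=-\frac{\sin\delta}{|t|}-\frac{\gamma\cos\varphi}{|t|^{2}}+O(|t|^{-3}).
\]
The structural point is that the apparently $O(|t|^{-1})$ leading contribution is in fact $O(|t|^{-2})$: because the leading term $-i/t$ of $\tilde Q$ is asymptotically aligned with the ray, it produces $\mathrm{Re}[e^{i\varphi}(-i/t)]=\sin\delta/|t|$ with $\delta=O(|t|^{-1})$. Since $0\le\varphi<\pi/2$ keeps $\cos\varphi$ bounded below, the term $-\gamma\cos\varphi/|t|^{2}$ is negative and of the asserted size; provided $|\sin\delta|/|t|$ is genuinely $o(|t|^{-2})$ — equivalently, provided $\mathrm{Im}(\xi e^{-i\varphi})$ is controlled (which is automatic for the rays actually used in \S\,2--\S\,3, where $\xi$ lies on or near the boundary curve whose direction the ray follows, so $\xi e^{-i\varphi}$ is nearly real) — this term dominates, giving $\frac{d}{ds}\mathrm{Re}\,P(t(s))<0$ and $C_1|t-2i|^{-2}\le\bigl|\frac{d}{ds}\mathrm{Re}\,P(t(s))\bigr|\le C_2|t-2i|^{-2}$ with $C_1,C_2$ depending only on $\varphi_0,\mu,\gamma$. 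Thus $\mathrm{Re}\,P$ increases as $s$ decreases, which is the claim.

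The main obstacle is precisely this cancellation: the sign and the two-sided $|t|^{-2}$ bound for $\frac{d}{ds}\mathrm{Re}\,P(t(s))$ are invisible from the crude estimate $|\tilde Q(t)|\le C|t|^{-1}$ and emerge only after carrying one further order in the expansions of $\tilde Q$, of $\delta$, and of $|t|$, and after using that $t$ remains in $\mathcal{R}$ to keep $\mathrm{Im}(\xi e^{-i\varphi})$ (hence $\delta$) from overwhelming the $-\gamma\cos\varphi/|t|^{2}$ term; it is here that the choices ``$R$ large, $\nu_1$ small'' and the restrictions on $b,\varphi_0,\mu$ in Definition 1.1 must be invoked. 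By contrast, the branch-point non-analyticity that complicates the analysis near $\xi=0$ and $\xi=-i$ plays no role, since the ray stays bounded away from $0,\pm i,\pm i\gamma$.
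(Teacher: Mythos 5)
Your proof is a genuinely different (and far more careful) argument than the paper's. The paper's proof of this lemma is a single sentence: "Using (2.36) and (2.47), we obtain for $|\xi|\to\infty$, $P(\xi)\sim -i\log\xi$; hence the lemma follows." That asymptotic alone only gives $\tilde Q=P'=O(|\xi|^{-1})$, which would naively yield $|\tfrac{d}{ds}\mathrm{Re}\,P|=O(|t|^{-1})$, not the claimed two-sided $|t-2i|^{-2}$ bound. You carry the expansion one order further, $\tilde Q(\xi)=-i/\xi+\gamma/\xi^2+O(\xi^{-3})$, introduce the deviation angle $\delta=\arg t-(\pi+\varphi)$, and correctly identify the cancellation that collapses the apparent $O(|t|^{-1})$ contribution $\mathrm{Re}[e^{i\varphi}(-i/t)]=\sin\delta/|t|$ down to the $|t|^{-2}$ scale because $\delta\to0$ along the ray. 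That is the right mechanism, and it is real information that the paper's one-liner simply does not supply.

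However, the step you hedge on is a genuine gap, and not one that the geometry of $\mathcal{R}$ closes for you. The exact identity is $\mathrm{Re}\bigl[e^{i\varphi}(-i/t)\bigr]=-\mathrm{Im}(\xi e^{-i\varphi})/|t|^2$, so the "small" term is $\mathrm{Im}(\xi e^{-i\varphi})/|t|^2$, and $\mathrm{Im}(\xi e^{-i\varphi})$ is the signed perpendicular offset of the ray's supporting line from the origin. For $\xi$ near the upper boundary $r_{u,1}$ (which recedes at angle $\pi-\varphi_0$) this offset is of order $|\xi|$, not $O(1)$: take $\arg\xi=\pi-\varphi_0$, $|\xi|=L\gg R$, $\varphi=0$; the ray $t=\xi-s$ stays in $\mathcal{R}$, yet $\mathrm{Im}(\xi e^{-i\varphi})=L\sin\varphi_0$, so at $s=0$ one gets $\tfrac{d}{ds}\mathrm{Re}\,P\approx(L\sin\varphi_0-\gamma)/L^2\sim \sin\varphi_0/L>0$, violating both the asserted sign and the upper bound $C_2|t-2i|^{-2}$ with $C_2$ uniform. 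Thus "using that $t$ remains in $\mathcal{R}$" does not keep $\mathrm{Im}(\xi e^{-i\varphi})$ controlled — $\mathcal{R}$ opens linearly, and this offset is unbounded over the class of rays the lemma nominally allows. Your argument therefore proves the lemma only under the extra hypothesis $|\mathrm{Im}(\xi e^{-i\varphi})|\le c<\gamma\cos\varphi$ (which does hold for the rays the paper actually deforms onto, e.g.\ along $r_l^-$ where $\mathrm{Im}(\xi e^{-i\varphi})=-b\cos(\varphi_0+\mu)$); you should either add that hypothesis explicitly or verify it for each ray on which the lemma is invoked, rather than assert it is "automatic."
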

\begin{proof}
Using (\ref{2.36}) and (\ref{2.47}), we obtain for $|\xi|\to \infty$, $P(\xi)\sim -i\log \xi$ ; hence the lemma follows.
\end{proof}

\begin{corollary}
On line segment $r_{u_1}$ with decreasing $s$, $\mathrm{Re }~P$ increases  with
$\frac{d}{ds} ~\mathrm{Re} ~P(t(s)) ~>~\frac{C}{|t(s)-2i|^2} ~>~0$ for constant
$C$ independent of $\epsilon$ and $\lambda$ when the latter is restricted
to a compact subset of (0, 1).
\end{corollary}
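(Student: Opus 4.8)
The plan is to prove this by a direct computation of $\frac{d}{ds}\,\mathrm{Re}\,P$ along $r_{u,1}$, using only the large-$\xi$ behaviour of $P$ (the same behaviour that drives Lemma~6.3) together with the freedom to enlarge $R$. First I would recall from \eqref{2.47} and \eqref{3.13} that $P'(\xi)=\tilde Q(\xi)=i\,\dfrac{(\xi+i\gamma)(\xi^2+\gamma^2)^{1/2}}{\xi(\xi^2+1)}$, and expand this for $|\xi|$ large: writing $(\xi+i\gamma)(\xi^2+\gamma^2)^{1/2}=\xi^2\big(1+\tfrac{i\gamma}{\xi}+O(\gamma^2/\xi^2)\big)$ gives
\[
P'(\xi)=\frac{i}{\xi}+E(\xi),\qquad |E(\xi)|\le \frac{c_0\,\gamma}{|\xi|^{2}}\quad\text{for }|\xi|\ge 2\gamma,
\]
with $c_0$ an absolute constant. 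Since $\lambda$ is confined to a compact subset of $(0,1)$, $\gamma=\lambda/(1-\lambda)\le\gamma_{\max}$ for a fixed $\gamma_{\max}$.

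Next I would parametrise $r_{u,1}$ by $t(r)=\nu_1 i-R+r\,e^{i(\pi-\varphi_0)}$, $r\ge 0$, whose unit tangent is $e^{i(\pi-\varphi_0)}$, and record two elementary geometric facts, valid once $R$ is large compared with $\nu_1$ and $\varphi_0$ (both fixed, independent of $\epsilon$): (i) $R+r\cos\varphi_0\le |t(r)|\le (R+r\cos\varphi_0)/\cos\varphi_0$, so in particular $|t(r)|\ge R$; and (ii), writing $\theta(r)=\arg t(r)$ and $\beta(r)=\varphi_0+\theta(r)-\pi=\varphi_0-\arctan\!\big(\tfrac{\nu_1+r\sin\varphi_0}{R+r\cos\varphi_0}\big)$, one has $\beta(r)\in(0,\varphi_0)$ and, by the mean value theorem applied to $\arctan$ (using $R\sin\varphi_0\ge 2\nu_1\cos\varphi_0$),
\[
\beta(r)\;\ge\;\frac{c_1\,R}{R+r\cos\varphi_0},\qquad c_1=\tfrac12\sin\varphi_0\cos\varphi_0>0 .
\]

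Then I would compute, using $t'(r)=e^{i(\pi-\varphi_0)}$,
\[
\frac{d}{dr}\,\mathrm{Re}\,P(t(r))=\mathrm{Re}\big(P'(t(r))\,e^{i(\pi-\varphi_0)}\big)
=\mathrm{Re}\!\Big(\frac{i\,e^{i(\pi-\varphi_0)}}{t(r)}\Big)+\mathrm{Re}\big(E(t(r))\,e^{i(\pi-\varphi_0)}\big).
\]
A one-line trigonometric identity gives $\mathrm{Re}\big(i e^{i(\pi-\varphi_0)}/t(r)\big)=\sin\beta(r)/|t(r)|$, and combining this with $\sin\beta(r)\ge\frac{2}{\pi}\beta(r)$ and facts (i)--(ii) yields $\sin\beta(r)/|t(r)|\ge c_2 R/|t(r)|^2$ with $c_2=\sin\varphi_0\cos\varphi_0/\pi>0$ depending only on $\varphi_0$. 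The remaining term is bounded by $|E(t(r))|\le c_0\gamma_{\max}/|t(r)|^2$. Choosing $R$ large enough (beyond the constraints already imposed in Definition~\ref{def:1.1} and Lemmas~6.3--6.8) that $c_2 R\ge 2c_0\gamma_{\max}$, I get $\frac{d}{dr}\mathrm{Re}\,P(t(r))\ge \tfrac12 c_2 R\,|t(r)|^{-2}>0$; and $|t(r)|\ge R\ge 4$ forces $|t(r)-2i|\le\tfrac32|t(r)|$, so finally
\[
\frac{d}{dr}\,\mathrm{Re}\,P(t(r))\;\ge\;\frac{2c_2 R/9}{|t(r)-2i|^{2}}\;=:\;\frac{C}{|t(r)-2i|^{2}}\;>\;0 ,
\]
with $C>0$ independent of $\epsilon$ and of $\lambda$ (the latter entering only through $\gamma_{\max}$); strict monotonicity of $\mathrm{Re}\,P$ along $r_{u,1}$ is then immediate, and re-parametrising by arc length gives the assertion.

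The hard part is that the ``principal part'' $\sin\beta(r)/|t(r)|$ of the derivative is \emph{not} of size $|t(r)|^{-1}$ uniformly in $r$: the tangent direction $e^{i(\pi-\varphi_0)}$ becomes asymptotically aligned with the level curves of $\mathrm{Re}\,P$ (which look like the rays $\arg\xi=\text{const}$, since $P(\xi)\sim i\log\xi$), so this term degenerates to order $|t(r)|^{-2}$ as $r\to\infty$ --- precisely the order of the error term $E(t(r))$. One therefore cannot discard $E$. What rescues the argument is that the lower bound on the principal part carries the free large parameter $R$ as a factor, whereas the bound on $E$ carries only the fixed $\gamma_{\max}$, so enlarging $R$ restores dominance at no cost. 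Establishing the \emph{uniform} lower bound $\beta(r)\gtrsim R/(R+r\cos\varphi_0)$ --- rather than merely $\beta(r)>0$ --- is the one computation that needs genuine care, and is where the hypotheses $0<\varphi_0<\pi/2$ and $\nu_1$ small enter.
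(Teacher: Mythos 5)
Your proof is correct and rests on the same core asymptotics the paper implicitly invokes---that $P'(\xi)\approx i/\xi$ for large $|\xi|$, so $\mathrm{Re}\,P(\xi)\approx -\arg\xi$ and the derivative of $\mathrm{Re}\,P$ along a ray is governed by the rate of change of the argument. But what you produce is in fact substantially more than what the paper's one-line proof (``This follows very simply from the previous lemma'') can deliver: Lemma~6.3 is stated for rays $t=\xi-se^{i\varphi}$ with $\varphi\in[0,\pi/2)$, i.e., with emanating direction whose argument lies in $[\pi,3\pi/2)$, whereas $r_{u,1}$ emanates from $\nu_1 i-R$ in direction $e^{i(\pi-\varphi_0)}=-e^{-i\varphi_0}$, which corresponds to $\varphi=-\varphi_0<0$ and thus falls \emph{outside} the stated range; neither $r_{u,1}$ nor its reversal is a Lemma~6.3 ray. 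Your self-contained computation---isolating the principal term $\sin\beta(r)/|t(r)|$ with $\beta(r)=\varphi_0+\arg t(r)-\pi\in(0,\varphi_0)$, establishing the uniform lower bound $\beta(r)\gtrsim R/(R+r\cos\varphi_0)$, and exploiting the free large parameter $R$ to dominate the $O(1/|t|^2)$ error in $P'(\xi)-i/\xi$---is exactly the argument needed to obtain the $|t-2i|^{-2}$ lower bound, and it correctly identifies (and explains in the final paragraph) why the estimate is genuinely delicate rather than immediate. Two incidental remarks that your proof implicitly corrects: the paper's proof of Lemma~6.3 writes $P(\xi)\sim -i\log\xi$, whereas the correct leading behaviour from $P'=\tilde Q\approx i/\xi$ is $P(\xi)\sim i\log\xi$; and the corollary's phrasing ``$\mathrm{Re}\,P$ increases with decreasing $s$'' is inconsistent with the displayed inequality $\tfrac{d}{ds}\mathrm{Re}\,P>0$---your parametrisation (arc length increasing outward along $r_{u,1}$, with $\tfrac{d}{dr}\mathrm{Re}\,P>0$) resolves this in favour of the displayed inequality, which is the form actually used later in Remark~2.31.
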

\begin{proof}
This follows very simply from the previous lemma.
\end{proof}

\begin{lemma}
There exists a constant $R$
independent of $\epsilon$ so that for $|\xi|\geq R$,
$\mathrm{Re}~ P(t)$ increases with increasing $s$ along any arc
$r=\{t:t=|\xi|e^{is},~\pi/2<s<3\pi/2\}$
in $\mathcal{R}$ .
\end{lemma}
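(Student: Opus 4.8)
The plan is to read off the needed information from the large-$|\xi|$ asymptotics of $P$ that were already established in the proof of Lemma 6.3, where it is shown that $P(\xi)\sim -i\log\xi$ as $|\xi|\to\infty$. Since $P$ is an integral of the explicit algebraic function $\tilde Q$ whose only singularities (poles at $0,\pm i$ and branch points at $\pm i\gamma$ together with the associated cut) lie at a bounded distance from the origin, this asymptotic relation holds uniformly in $\arg\xi$ and may be differentiated term by term, giving $\tilde Q(\xi)=P'(\xi)=-i\xi^{-1}+O(|\xi|^{-2})$ uniformly as $|\xi|\to\infty$ (the implied constants depending only on $\gamma=\lambda/(1-\lambda)$, hence uniform for $\lambda$ in a compact subset of $(0,1)$). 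In particular there is $R$ independent of $\epsilon$ such that for $|\xi|\ge R$ the circle $\{|t|=|\xi|\}$ avoids all singularities of $P$ and the expansion of $\tilde Q$ is valid on it.

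First I would parametrize the arc by $t(s)=|\xi|e^{is}$ and compute $\frac{d}{ds}P(t(s))=P'(t(s))\,t'(s)=\tilde Q(t(s))\cdot i\,t(s)$. Substituting the expansion of $\tilde Q$ gives $\tilde Q(t)\cdot it=(-i t^{-1}+O(|t|^{-2}))\cdot it=1+O(|t|^{-1})=1+O(|\xi|^{-1})$, uniformly in $s\in(\pi/2,3\pi/2)$. Taking real parts, $\frac{d}{ds}\mathrm{Re}\,P(t(s))=1+O(|\xi|^{-1})$. Enlarging $R$ if necessary so that the error term stays below $\frac{1}{2}$ for $|\xi|\ge R$, we obtain $\frac{d}{ds}\mathrm{Re}\,P(t(s))\ge\frac{1}{2}>0$ along the whole arc, so $\mathrm{Re}\,P(t)$ is strictly increasing in $s$; restricting to the portion of the arc lying in $\mathcal{R}$ then yields the lemma.

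There is essentially no obstacle here: the entire content is that the leading term $-i\log\xi$ of $P$ has real part $\arg\xi=s$ on the circle $|t|=|\xi|$, so its $s$-derivative is exactly $1$, while the correction is $O(|\xi|^{-1})$. The only mild point to watch is the uniformity of the $O(|\xi|^{-1})$ bound in both $\arg t$ and $\lambda$, which follows because the correction terms in the expansion of $\tilde Q$ are rational in $\xi$ with coefficients that are bounded functions of $\gamma$, and $\gamma$ ranges over a compact subset of $(0,\infty)$ as $\lambda$ ranges over a compact subset of $(0,1)$.
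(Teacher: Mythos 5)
Your argument is correct and follows the same route as the paper: both proofs rest on the asymptotic $P(\xi)\sim -i\log\xi$ (equivalently $\tilde Q(\xi)\sim -i/\xi$) for large $|\xi|$, and you have simply written out the one-line computation $\frac{d}{ds}\mathrm{Re}\,P(|\xi|e^{is})=\mathrm{Re}\bigl[\tilde Q(t)\cdot i t\bigr]=1+O(|\xi|^{-1})>0$ that the paper leaves implicit in the phrase ``hence the lemma follows.'' The only minor looseness is the claim that the asymptotics hold ``uniformly in $\arg\xi$'': what is actually needed (and true) is uniformity on the sector about $\arg\xi=\pi$ containing the arc $\cap\,\mathcal{R}$, away from the branch cuts emanating from $\pm i\gamma$, and your closing restriction to $\mathcal{R}$ takes care of this.
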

\begin{proof}
Using (\ref{2.36}) and (\ref{2.47}), we obtain for $|\xi|\to \infty$, $P(\xi)\sim -i\log \xi$ ; hence the lemma follows.
\end{proof}

\begin{lemma}
\label{lem:A.5}
For $\lambda$ in a compact subset of (0, 1) and for  $R$ independent of $\epsilon$,  consider the line segment
$$ t = i \nu_1 - \nu_1 - s e^{i \phi} ~~,~~0 \le s \le R$$
with $\nu_1~>0$.
Then, there exists real $\nu_1$, $\phi$ sufficiently small
in absolute value and depending only on  $R$ so that on this
line segment
$$ -\frac{d}{ds} ~\mathrm{Re} ~P(t(s)) ~>~C ~>~0 ,$$
where $C$ is independent of $\epsilon$.
\end{lemma}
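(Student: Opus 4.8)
The plan is to differentiate explicitly and estimate the result, treating separately the part of the segment near the finger tip $\xi=0$, where $\tilde Q$ has a pole, and the part bounded away from it. Since $P'=\tilde Q$ by (2.47) and $t'(s)=-e^{i\phi}$, one has
\[
-\frac{d}{ds}\,\mathrm{Re}\,P(t(s))=\mathrm{Re}\bigl(e^{i\phi}\tilde Q(t(s))\bigr),
\]
so it suffices to bound $\mathrm{Re}\bigl(e^{i\phi}\tilde Q(t(s))\bigr)$ below by a positive constant on $0\le s\le R$ for $\nu_1$ and $|\phi|$ small. Here $\gamma=\lambda/(1-\lambda)$ lies in a compact subset of $(0,\infty)$; since neither $\tilde Q$ nor the segment involves $\epsilon$, any constant produced below is automatically $\epsilon$-independent, the real content being uniformity in $s$, in $\lambda$, and in the (small) choices of $\nu_1,\phi$. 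I would first fix a small $\kappa>0$, depending only on the compact $\lambda$-set, with $2\kappa<\min(1,\gamma)$ for all admissible $\gamma$, and then split $[0,R]=[0,\kappa]\cup[\kappa,R]$.

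On $[0,\kappa]$: requiring $\nu_1\le\kappa/8$ and $|\phi|\le\pi/8$, the point $t(s)$ has $|t(s)|\le\sqrt2\,\nu_1+\kappa\le 2\kappa$, so (3.32) applies and $\tilde Q(t)=-\gamma^2/t+Q_1(t)$ with $Q_1$ analytic at $0$ uniformly in $\gamma$. Writing $a:=\cos\phi-\sin\phi\in(\tfrac12,\sqrt2)$, a short computation gives $|t(s)|^2=(s+a\nu_1)^2+(2-a^2)\nu_1^2$ and $\mathrm{Re}\bigl(e^{i\phi}\,\overline{t(s)}\bigr)=-(s+a\nu_1)$, hence
\[
\mathrm{Re}\!\left(-\frac{\gamma^2 e^{i\phi}}{t(s)}\right)=\frac{\gamma^2\,(s+a\nu_1)}{(s+a\nu_1)^2+(2-a^2)\nu_1^2}\;>\;0 .
\]
Setting $u=s+a\nu_1\in[a\nu_1,\kappa+a\nu_1]$ this is $\gamma^2 u/(u^2+c)$ with $c=(2-a^2)\nu_1^2>0$; since $u\mapsto u/(u^2+c)$ is unimodal, its minimum over the interval occurs at an endpoint, and for $\nu_1\le\kappa/8$ that minimum is $\ge \gamma^2/(8\kappa)$. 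Since $Q_1$ is bounded on $\{|t|\le 2\kappa\}$ uniformly in $\gamma$, with bound $M(\kappa)$ that stays bounded as $\kappa\to 0$, shrinking $\kappa$ forces $\gamma^2/(8\kappa)\ge 2M(\kappa)$, and then $\mathrm{Re}\bigl(e^{i\phi}\tilde Q(t(s))\bigr)\ge \gamma^2/(8\kappa)-M(\kappa)\ge C_A>0$ on $[0,\kappa]$, with $C_A$ independent of $\nu_1,\phi,\epsilon$.

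On $[\kappa,R]$: for $\nu_1$ and $|\phi|$ small one has $|t(s)-(-s)|\le\sqrt2\,\nu_1+R|\phi|$ and $|e^{i\phi}-1|\le|\phi|$, so $t(s)$ stays at distance $\ge\kappa/2$ from $0$, at distance $\ge\tfrac12\min(1,\gamma)$ from the poles $\pm i$ and the branch points $\pm i\gamma$, and within a fixed neighbourhood of the real interval $[-R,-\kappa]$; on such a neighbourhood $\tilde Q$ is analytic with $\tilde Q,\tilde Q'$ uniformly bounded. On the negative real axis, differentiating the expression in Lemma 6.1 gives $\mathrm{Re}\,\tilde Q(-s)=\gamma\sqrt{s^2+\gamma^2}/\bigl(s(s^2+1)\bigr)$, which is continuous and strictly positive on $[\kappa,R]$, hence $\ge 2C_B>0$ there for some $C_B$ depending only on $R$, $\kappa$ and the $\lambda$-set. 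By the uniform estimate just noted, $\bigl|\mathrm{Re}\bigl(e^{i\phi}\tilde Q(t(s))\bigr)-\mathrm{Re}\,\tilde Q(-s)\bigr|\le C_B$ once $\nu_1$ and $|\phi|$ are taken small enough (depending on $R$), so $\mathrm{Re}\bigl(e^{i\phi}\tilde Q(t(s))\bigr)\ge C_B$ on $[\kappa,R]$. Taking $C=\min(C_A,C_B)$ completes the argument.

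The main obstacle is the piece near the finger tip. There one cannot argue by perturbing off the negative real axis, because the base point $\nu_1(-1+i)$ genuinely lies off it (at argument close to $3\pi/4$) and the pole of $\tilde Q$ is nearby; one must instead verify the explicit positivity of $\mathrm{Re}\bigl(e^{i\phi}(-\gamma^2/t(s))\bigr)$, which is exactly where the geometry of the segment enters through the sign of $a=\cos\phi-\sin\phi$, and then check that the analytic remainder $Q_1$—whose size is controlled independently of $\nu_1$—is dominated by the $\gamma^2/\kappa$-sized principal term, which requires choosing $\kappa$, and only afterwards $\nu_1$ and $\phi$, small in the correct order and uniformly over $\lambda$ in the compact set.
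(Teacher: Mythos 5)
Your proof is correct, and it is a more careful and more explicit argument than the one the paper actually gives. The paper's proof of this lemma is a one-step perturbation: it observes that for $\nu_1=\phi=0$ (when the segment degenerates to $[-R,0]$) the bound follows from the monotonicity lemma for $\mathrm{Re}\,P$ along the negative real axis, and then invokes continuity of $\tfrac{d}{ds}\mathrm{Re}\,P(t(s))$ in $(\nu_1,\phi)$, ``uniformly continuous for $s$ restricted in a compact set,'' to transfer the bound to small nonzero $\nu_1,\phi$. As you note yourself, that continuity claim is delicate precisely at the endpoint $s=0$, where $t(s)\to 0$ as $\nu_1\to 0$ and $\tilde Q=P'$ has a pole; the paper does not address this. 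Your version fills exactly that gap: you split $[0,R]$ at a small $\kappa$, handle $[\kappa,R]$ by the same perturbation idea as the paper (there $\tilde Q$ and $\tilde Q'$ are uniformly bounded in a neighborhood of $[-R,-\kappa]$, so the continuity argument is legitimate and you can compare against the explicit positive expression $\mathrm{Re}\,\tilde Q(-s)=\gamma\sqrt{s^2+\gamma^2}/(s(s^2+1))$), and on $[0,\kappa]$ you compute $\mathrm{Re}\bigl(e^{i\phi}(-\gamma^2/t(s))\bigr)=\gamma^2(s+a\nu_1)/\bigl((s+a\nu_1)^2+(2-a^2)\nu_1^2\bigr)$ with $a=\cos\phi-\sin\phi$, verify its positivity and a lower bound $\gtrsim\gamma^2/\kappa$ using the local expansion $\tilde Q=-\gamma^2/t+Q_1$ of (3.32), and then dominate the analytic remainder $Q_1$ by shrinking $\kappa$ first (and only afterwards $\nu_1,\phi$). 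What your approach buys is a genuinely uniform, quantitative constant $C$ with an order of choices ($\kappa$, then $\nu_1$ and $\phi$) that makes clear why the pole helps rather than hurts; the paper's terse continuity argument reaches the same conclusion but leaves the reader to supply precisely the near-tip analysis you wrote out.
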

\begin{proof}
Note for $\nu_1=0$ and $\phi=0$, result holds from Lemma 5.1,
with $C=C_1$ only depending on the lower bound for $\nu$.
Since $\frac{d}{ds} ~\text{Re} ~P (t(s)) $ is clearly a continuous function
of $\phi$ and $\nu_1$, and uniformly continuous for
for $s$ restricted in a compact set, it follows that there exists
$\phi$ and $\nu_1$ small enough that
$$ -\frac{d}{ds} ~\text{Re} ~P(t(s)) ~>~\frac{C_1}{2} = C ~>~0 ,$$
where $C$ is only dependent on $\nu_0$
\end{proof}
\begin{corollary}
\label{cor:A.8}
For small enough  $\nu_1 ~>~0$,
on line segment $r_{u,2}$, parameterized by arclength $s$ increasing
towards $R$,
$$-\frac{d}{ds} ~\mathrm{Re} ~P (t (s) ) ~>~C ~>~0 ,$$
where constant $C$ is independent of $\epsilon$ and $\lambda$.
\end{corollary}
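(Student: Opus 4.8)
The plan is to obtain Corollary~\ref{cor:A.8} as the special case $\phi=0$ of Lemma~\ref{lem:A.5}. First I would observe that, after replacing $R$ by $R-\nu_1$ (harmless, since $R$ is only required to be ``large enough''), the segment $r_{u,2}=\{\xi:\mathrm{Re}\,\xi\in[-R,-\nu_1],\ \mathrm{Im}\,\xi=\nu_1\}$, parameterized by arclength $s$ increasing from the right endpoint $-\nu_1+i\nu_1$ toward the left, is exactly the segment $t(s)=i\nu_1-\nu_1-se^{i\phi}$, $0\le s\le R$, appearing in Lemma~\ref{lem:A.5} with $\phi=0$. Nothing in Lemma~\ref{lem:A.5} forces $\phi\ne 0$: its proof produces the inequality $-\frac{d}{ds}\mathrm{Re}\,P(t(s))>C>0$ on a whole neighbourhood of $(\nu_1,\phi)=(0,0)$, so the choice $\phi=0$ is admissible and the corollary follows immediately. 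The author's proof is presumably a one-line appeal to the previous lemma.

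For completeness I would spell out the two ingredients. First, at $(\nu_1,\phi)=(0,0)$ the segment degenerates to the real interval $[-R,0]$: on $[-R,-\nu_1]$, Lemma~6.1 gives $\left|\frac{d}{dt}\mathrm{Re}\,P(t)\right|\ge C_1|t-2i|^{-2}\ge C_1'>0$ with $C_1'$ depending only on $R$, while as $t\to 0^-$ the local behaviour $\mathrm{Re}\,P\sim-\gamma^2\log|\xi|$ forces $-\frac{d}{ds}\mathrm{Re}\,P(t(s))\to+\infty$; hence $-\frac{d}{ds}\mathrm{Re}\,P(t(s))\ge C_1'>0$ on all of $(0,R]$. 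Second, $(\nu_1,\phi,s)\mapsto\frac{d}{ds}\mathrm{Re}\,P(t(s))$ is continuous, and once $\nu_1>0$ (so that the segment stays at distance $\ge\nu_1$ from the finger-tip singularity of $P$ at $\xi=0$) it is uniformly continuous in $(\nu_1,\phi)$ for $s$ in the compact parameter interval. Combining the two, with $\phi$ held at $0$, gives $-\frac{d}{ds}\mathrm{Re}\,P(t(s))>C_1'/2=:C>0$ on $r_{u,2}$ for all sufficiently small $\nu_1>0$.

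Next I would check the uniformity of $C$. It depends only on $R$ (fixed independently of $\epsilon$), on the smallness threshold for $\nu_1$, and on the coefficient $\tilde Q$ entering $P$; the latter involves $\lambda$ only through $\gamma=\lambda/(1-\lambda)$, continuously, and $\gamma$ ranges over a compact subset of $(0,\infty)$ when $\lambda$ is confined to a compact subset of $(0,1)$. Hence $C$ can be taken independent of both $\epsilon$ and $\lambda$, as asserted.

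The one delicate point — and the only real obstacle — is the behaviour near the right endpoint $\mathrm{Re}\,\xi=-\nu_1$, which lies close to the logarithmic singularity of $P$ at $\xi=0$; there the naive continuity-in-$\nu_1$ step is inadequate, because at $\nu_1=0$ that endpoint is the singularity itself. I would handle this by splitting $r_{u,2}$ into the piece inside a fixed small ball about $\xi=0$ and the complementary bulk: on the former, the asymptotics $\mathrm{Re}\,P\sim-\gamma^2\log|\xi|$ yield $-\frac{d}{ds}\mathrm{Re}\,P\sim\gamma^2/|\xi|$, which is \emph{larger} than $C$ uniformly in $\nu_1$; on the latter, $t$ stays bounded away from $0$, so Lemma~6.1 and the continuity perturbation in $\nu_1$ apply directly. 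This makes the estimate uniform right up to the endpoint and completes the argument.
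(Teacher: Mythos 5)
Your proposal takes the same route as the paper: the paper's proof is literally the one line ``For $r_{u,2}$, we use previous Lemma~\ref{lem:A.5} with $\phi = 0$ to obtain desired result,'' which is exactly your opening move. Your elaboration — why $\phi = 0$ is admissible, why the ``$R$ vs.\ $R-\nu_1$'' discrepancy is harmless, and why $C$ can be taken independent of both $\epsilon$ and $\lambda$ — is all correct and consistent with the paper's intent.

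Your paragraph on the ``delicate point'' near $\mathrm{Re}\,\xi = -\nu_1$ goes meaningfully beyond what the paper writes down. The proof of Lemma~\ref{lem:A.5} reduces to the degenerate case $(\nu_1,\phi)=(0,0)$, where the segment hits the logarithmic singularity of $P$ at $\xi=0$, and then invokes continuity in $(\nu_1,\phi)$; as you observe, plain continuity cannot control the endpoint that converges to the singular point. Your fix — splitting $r_{u,2}$ into a small fixed ball about $0$ (where the local asymptotics $\mathrm{Re}\,P \sim -\gamma^2 \log|\xi|$ make $-\tfrac{d}{ds}\mathrm{Re}\,P \sim \gamma^2/|\xi|$ favourably large, uniformly in $\nu_1$) and the complementary bulk (where Lemma~6.1 plus continuity applies) — is correct and is exactly what is needed to make the paper's terse argument airtight. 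So: same approach, but you have filled in a genuine gap in the rigor of the underlying Lemma~\ref{lem:A.5} that the paper leaves implicit.
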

\begin{proof}
For $r_{u_2}$, we use previous Lemma 5.6
with  $\phi=0$
to obtain desired result.
\end{proof}

\begin{lemma} There exists sufficiently small $\nu_1 > 0$ independent of $\epsilon$  so that $\frac{d}{ds}~ \mathrm{Re}~ P(t(s))\geq C >0$ on the parameterized straight lines $\{t(s)=-\nu_1 +se^{-i\frac{\pi}{6}},0\leq s\leq 2\sqrt{3}\nu_1/3\}$ and $\{t(s)=-\nu_1 +se^{i\frac{\pi}{6}},0\leq s\leq 2\sqrt{3}\nu_1/3\}$,  $C$ is some constant independent of $\epsilon$ and $\nu_1$.
\end{lemma}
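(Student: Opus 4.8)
The plan is to strip off the logarithmic singularity of $P$ at the finger tip and reduce the claim to an elementary estimate along each of the two line segments. By (\ref{3.32}), on the neighbourhood $\mathcal{T}$ of the origin one has $\tilde Q(\xi) = -\gamma^2/\xi + Q_1(\xi)$ with $Q_1$ analytic at $\xi=0$, so that $P(\xi) = -\gamma^2\log\xi + \tilde P(\xi)$ where $\tilde P' = Q_1$ and $\tilde P$ is analytic on $\mathcal{T}$. Since $\overline{\mathcal{T}}$ is compact and $\gamma = \lambda/(1-\lambda)$ stays bounded as $\lambda$ runs over a compact subset of $(0,1)$, the constant $M_1 := \sup_{\overline{\mathcal{T}}}|Q_1|$ is finite and, once $\nu_1$ is fixed small, independent of $\epsilon$ and of $\nu_1$.

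Next I would parametrise either segment as $t(s) = -\nu_1 + s\,e^{i\varphi}$ with $\varphi = \pm\pi/6$, so that $t'(s)=e^{i\varphi}$ and $|t(s)|^2 = s^2 - 2\nu_1 s\cos\varphi + \nu_1^2$. Then
\[
\frac{d}{ds}\,\mathrm{Re}\,P(t(s)) \;=\; \mathrm{Re}\big(\tilde Q(t(s))\,e^{i\varphi}\big) \;=\; -\gamma^2\,\frac{s-\nu_1\cos\varphi}{|t(s)|^2} \;+\; \mathrm{Re}\big(Q_1(t(s))\,e^{i\varphi}\big),
\]
where I used $\mathrm{Re}\,\dfrac{e^{i\varphi}}{t(s)} = \dfrac{s-\nu_1\cos\varphi}{|t(s)|^2}$, and the last term has modulus at most $M_1$. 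The remaining, purely geometric, step is to check, using $\cos(\pm\pi/6)=\sqrt3/2$ and the stated range $0\le s\le \tfrac{2\sqrt3}{3}\nu_1$, that the singular term $-\gamma^2(s-\nu_1\cos\varphi)/|t(s)|^2$ stays bounded below by a positive constant times $1/\nu_1$, uniformly in $s$ and for both orientations $\varphi=\pm\pi/6$ — equivalently, that $|t(s)|$ is monotone decreasing along the segment so that $\mathrm{Re}\,P \sim -\gamma^2\ln|t(s)|$ increases at a rate bounded below by a positive multiple of $1/\nu_1$. Since $M_1$ is fixed while this term blows up as $\nu_1\to 0$, choosing $\nu_1$ small enough (independently of $\epsilon$) yields $\tfrac{d}{ds}\mathrm{Re}\,P(t(s))\ge C>0$ with $C$ independent of $\epsilon$ and $\nu_1$. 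This is exactly the monotonicity of $\mathrm{Re}\,P$ along the sides of $\mathcal{T}$ that makes the contour of the operator $\mathcal{U}$ in (\ref{3.16}) routable through $\mathcal{T}$, so that the kernel $g_1(\xi)g_2(t)=\exp\{-\epsilon^{-2}(P(\xi)-P(t))\}$ is controlled as in Lemmas 2.50--2.53, in parallel with Remark 3.8.

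The main obstacle is precisely this last geometric estimate: one must verify that $-\gamma^2(s-\nu_1\cos\varphi)/|t(s)|^2$ does not change sign and keeps its $1/\nu_1$ scaling over the whole interval and for $\varphi=\pm\pi/6$, which is where the particular angles and endpoints of the two segments — chosen so that together with $r_{u,3}$ and the negative imaginary axis they trace out $\partial\mathcal{T}$, forcing the integration path to wind around the finger tip $\xi=0$ in the correct direction — are essential. A secondary subtlety, peculiar to the kinetic-undercooling problem and flagged in the introduction, is the non-analyticity of $\tilde Q$ at $\xi=0$: $\tilde Q$ has branch points at $\pm i\gamma$ lying close to $\mathcal{T}$, so one must fix a single consistent branch of $(\xi+i\gamma)^{3/2}(\xi-i\gamma)^{1/2}$ on $\mathcal{T}$ — the one for which this product tends to $i\gamma^2$ as $\xi\to 0$, which produces the coefficient $-\gamma^2$ in (\ref{3.32}) — and check that, with this branch, $Q_1$ is genuinely analytic at the origin; only then is $M_1<\infty$ legitimate and the reduction above valid.
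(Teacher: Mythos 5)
Your reduction $\tilde Q(\xi)=-\gamma^2/\xi+Q_1(\xi)$ with $Q_1$ analytic near the origin, and the identity
\begin{equation*}
\frac{d}{ds}\,\mathrm{Re}\,P(t(s))=-\gamma^2\,\frac{s-\nu_1\cos\varphi}{|t(s)|^2}+\mathrm{Re}\big(Q_1(t(s))\,e^{i\varphi}\big),
\end{equation*}
are correct and amount to the same idea as the paper's one-line appeal to $P(\xi)\sim-\gamma^2\log\xi$. However, the ``purely geometric step'' you defer is exactly where the argument fails. With $\cos(\pm\pi/6)=\sqrt3/2$ the quadratic $|t(s)|^2=\nu_1^2-2\nu_1 s\cos\varphi+s^2$ has its minimum at $s^*=\nu_1\sqrt3/2$, and $\nu_1\sqrt3/2<2\sqrt3\,\nu_1/3$, so $s^*$ lies strictly inside the stated interval $[0,\,2\sqrt3\,\nu_1/3]$. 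Thus $|t(s)|$ is not monotone: it decreases from $\nu_1$ to $\nu_1/2$ on $[0,s^*]$ and then increases to $\nu_1/\sqrt3$ at the right endpoint. Correspondingly the singular term changes sign at $s^*$ (it vanishes there), and at the right endpoint it equals $-\gamma^2\cdot\frac{\nu_1/(2\sqrt3)}{\nu_1^2/3}=-\frac{\sqrt3\,\gamma^2}{2\nu_1}$, which is negative and unbounded as $\nu_1\to 0$.

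So your closing step, ``choosing $\nu_1$ small enough yields $\frac{d}{ds}\mathrm{Re}\,P(t(s))\ge C>0$'', is not supported; shrinking $\nu_1$ makes the derivative more negative on $(s^*,\,2\sqrt3\,\nu_1/3]$, not more positive. This is a genuine gap, not an omitted computation: the pointwise lower bound cannot hold over the full stated range with this parametrization. Your remark about fixing the branch of $(\xi+i\gamma)^{3/2}(\xi-i\gamma)^{1/2}$ so that the product tends to $i\gamma^2$ at the origin is correct but orthogonal to this obstruction, and the paper's own terse proof does not address the sign change either. A repaired statement would need either to restrict $s$ to $[0,s^*)$ (stopping at the foot of the perpendicular from the origin to the line), to alter the boundary arcs of $\mathcal{T}$ so that $|t|$ decreases monotonically along them toward the corner, or to weaken the conclusion from a pointwise lower bound on $\frac{d}{ds}\mathrm{Re}\,P$ to a statement about the net increase of $\mathrm{Re}\,P$ along the segment.
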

\begin{proof}
Note that $\tilde Q(\xi)\sim -\frac{\gamma^2}{\xi}$ near $\xi=0$, hence $P(\xi)\sim -\gamma^2 \log \xi$ which implies the lemma.
\end{proof}
\begin{lemma}
\label{lem:A.7}
There exist $~0<b<1$ and $0<\alpha_0<\pi/2$ so that $\mathrm{ Re }~P(t)$ is decreasing along ray $r_l=\{\xi=-bi+se^{i(\pi+\alpha_0)},0\leq s<\infty\}$.
\end{lemma}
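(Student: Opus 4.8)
The plan is to reduce the claim to a one-line sign condition on a derivative and then verify it by splitting the ray into three ranges. Write $t(s)=-bi+se^{i(\pi+\alpha_0)}$, so $|t(s)|^2=b^2+s^2+2bs\sin\alpha_0\ge b^2+s^2$; hence $r_l$ stays at distance $\ge b$ from the only pole of $\tilde Q$ in $\mathcal{R}$ (at $\xi=0$), and since $0<b<\min(1,\gamma)$ and $0<\alpha_0<\pi/2$ it never meets $\pm i$ or $\pm i\gamma$, so $P$ is analytic on a neighbourhood of $r_l$. It therefore suffices to choose $b,\alpha_0$ small so that
\[
\frac{d}{ds}\,\mathrm{Re}\,P(t(s))=\mathrm{Re}\!\left(\tilde Q(t(s))\,e^{i(\pi+\alpha_0)}\right)<0 \quad\text{for all }s\ge0 .
\]
I would treat separately: (A) a short initial arc $0\le s\le\delta_0$; (B) a compact middle range $\delta_0\le s\le R$; and (C) the far field $s\ge R$ (equivalently $|t(s)|\ge R$, since $|t(s)|\ge s$), with $\delta_0,R$ fixed below. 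This lemma is the last building block justifying the admissible choices of $b,\varphi_0,\mu$ in Definition~1.1 and the fact (used after Lemma~3.2) that $\mathrm{Re}\,P$ attains its minimum over $\mathcal{R}^-\cap\{|\xi|\le R_\epsilon\}$ at $R^l_\epsilon$.

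For (A) I would use (\ref{3.32}), $\tilde Q(\xi)=-\gamma^2/\xi+Q_1(\xi)$ with $Q_1$ analytic and bounded, say $|Q_1|\le C_1$, on $\{|\xi|\le\nu_1\}$; this applies throughout (A) as long as $b+\delta_0\le\nu_1$. Using $\overline{t(s)}=bi-se^{-i\alpha_0}$, a direct computation gives $\mathrm{Re}\!\bigl((-\gamma^2/t(s))\,e^{i(\pi+\alpha_0)}\bigr)=-\gamma^2(s+b\sin\alpha_0)/(b^2+s^2+2bs\sin\alpha_0)$, hence $\frac{d}{ds}\mathrm{Re}\,P(t(s))\le-\gamma^2\phi(s)+C_1$ with $\phi(s)=(s+b\sin\alpha_0)/(b^2+s^2+2bs\sin\alpha_0)$. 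Since $\phi$ increases then decreases, its minimum on $[0,\delta_0]$ is at an endpoint: $\phi(0)=\sin\alpha_0/b$ and, for $b\le\delta_0$, $\phi(\delta_0)\ge1/(4\delta_0)$. So first fixing $\delta_0<\gamma^2/(8C_1)$ and then taking $b<\gamma^2\sin\alpha_0/(2C_1)$ makes $\gamma^2\phi(s)>C_1$ on all of (A), so $\frac{d}{ds}\mathrm{Re}\,P<0$ there.

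For (B), $t(s)$ with $\delta_0\le s\le R$ lies in a compact subset of $\{\delta_0\le|\xi|\le R\}$ which, as $b,\alpha_0\to0$, collapses onto the segment $[-R,-\delta_0]$ of the negative real axis; there $\tilde Q$ is analytic and, as in the earlier computation of $\mathrm{Re}\,P$ on the real axis, $\mathrm{Re}\,\tilde Q(\xi)=-\gamma|H(\xi)|/\xi=\gamma|H(\xi)|/|\xi|\ge c_0>0$, so at $b=\alpha_0=0$ one has $\frac{d}{ds}\mathrm{Re}\,P(t(s))=-\mathrm{Re}\,\tilde Q(-s)\le-c_0$ uniformly in $s\in[\delta_0,R]$; by uniform continuity of $(\xi,\alpha_0)\mapsto\mathrm{Re}(\tilde Q(\xi)e^{i(\pi+\alpha_0)})$ on a compact neighbourhood of $[-R,-\delta_0]\times\{0\}$ there are $b_1,\alpha_1>0$ with $\frac{d}{ds}\mathrm{Re}\,P(t(s))\le-c_0/2<0$ on (B) whenever $0<b\le b_1$, $0<\alpha_0\le\alpha_1$. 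For (C) I would sharpen the earlier appendix lemma $P(\xi)\sim-i\log\xi$ to $\tilde Q(\xi)=-i/\xi+\gamma/\xi^2+O(|\xi|^{-3})$ for $|\xi|\to\infty$ with $\arg\xi$ near $\pi$; substituting $t(s)$ gives $\tilde Q(t(s))e^{i(\pi+\alpha_0)}=-i/s-(b+\gamma)e^{-i\alpha_0}/s^2+O(s^{-3})$, whose real part equals $-(b+\gamma)\cos\alpha_0/s^2+O(s^{-3})<0$ for $R$ large (independent of $\epsilon,b,\alpha_0$). Finally, fixing $\alpha_0=\alpha_1$ and $b\le\min\{\delta_0,b_1,\gamma/2,\gamma^2\sin\alpha_0/(2C_1)\}$ covers (A), (B), (C) simultaneously, so $\mathrm{Re}\,P$ is strictly decreasing along $r_l$; the same estimates also yield $\frac{d}{ds}\mathrm{Re}\,P(t(s))\le-C/|t(s)|^2$ if a quantitative form is wanted.

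The main obstacle is that the two boundary ranges (A) and (C) are both degenerate: near $t=-bi$ the value $\tilde Q(-bi)$ is purely imaginary (consistent with Lemma~6.2), so $\frac{d}{ds}\mathrm{Re}\,P$ there is only of size $\sin\alpha_0$ and vanishes at $\alpha_0=0$; and in the far field the leading term $-i/s$ of $\tilde Q(t(s))t'(s)$ is again purely imaginary, so the sign is decided only at order $s^{-2}$. In each case the correct sign is rescued by a specific subleading term, which dictates the order of quantifiers — choose $\alpha_0$ first, then $b$ small relative to $\sin\alpha_0$, then check that a single pair $(b,\alpha_0)$ serves on the whole ray. The remaining estimates are routine.
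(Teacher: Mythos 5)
Your proposal is correct but follows a genuinely different route than the paper. The paper proves the inequality in one stroke: it writes $P'(\xi)$ in factored form, computes $\arg\bigl(P'(\xi)\,e^{i(\pi+\alpha_0)}\bigr)$ as a sum of arguments of the linear factors, and shows that this argument stays in the interval $[\pi,\,\tfrac{3\pi}{2}-\tfrac{\alpha_0}{2}]$ for all $s\ge 0$, which forces the real part to be strictly negative along the entire ray. You instead split $r_l$ into a near-tip arc, a compact middle segment, and a far field, and establish the sign separately in each regime: near $-bi$ via the local expansion $\tilde Q=-\gamma^2/\xi+Q_1$ of (3.32) (choosing $b$ small relative to $\sin\alpha_0$ so the singular term dominates the bounded remainder); in the middle by a compactness/uniform-continuity argument anchored at the $b=\alpha_0=0$ limit where the ray collapses onto $[-R,-\delta_0]\subset(-\infty,0)$ and Lemma 6.1's computation gives strict negativity; and at infinity by the two-term expansion $\tilde Q(\xi)=-i/\xi+\gamma/\xi^2+O(\xi^{-3})$ (you correctly handle the branch so the sign of the $\xi^{-2}$ term comes out right, a subtlety since the "naive" $\xi^{3/2}\xi^{1/2}$ factorization flips it). The paper's argument-tracking is shorter and treats the whole ray at once; your decomposition is longer but is more transparent about where each smallness condition comes from, correctly isolates the two "degenerate" regimes where the leading term of $\tfrac{d}{ds}\mathrm{Re}\,P$ is purely imaginary and the sign is decided only at subleading order, makes the quantifier order ($\alpha_0$ before $b$) explicit, and yields the quantitative bound $\tfrac{d}{ds}\mathrm{Re}\,P(t(s))\le -C/|t(s)|^2$ that is actually used elsewhere in the appendix. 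Both arguments are valid.
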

\begin{proof}
We want to show that
\begin{equation}
\label{A.8}
\frac{d}{ds}\text{ Re }P(\xi(s))=\text{ Re }\{P'(\xi)e^{i(\pi +\alpha_0)}\}<0 \quad \text{ on }r_l.
\end{equation}
Note:
\begin{equation}
P'(\xi)=i\frac{(\xi+i\gamma)}{\xi}\frac{\left((-i\gamma+\xi)(\gamma+\xi)\right)^{1/2}}{(\xi+i)(\xi-i)},
\end{equation}
so
\begin{multline}
\arg \left(P'(\xi)e^{i(\pi +\alpha_0}\right)=\frac{3\pi}{2}+\alpha_0-\left(\arg \xi(s)-\arg(\xi+i\gamma)\right)\\
-[\left(\arg(\xi+i)+\arg (\xi-i)\right)-\frac{1}{2}\left(\arg(\xi+i)+\arg (\xi-i)\right)]\in [\pi,\frac{3\pi}{2}-\frac{\alpha_0}{2}]
\end{multline}
which leads to (\ref{A.8}).
\end{proof}

\noindent {\bf Acknowledgment:} The author thanks Professor Saleh Tanveer of The Ohio State University for suggesting the problem and helpful discussions. The author also thanks the associate editor and the reviewers for their very helpful comments.

\end{document}